\setlist[itemize]{topsep=0.2em, itemsep=0.2em, leftmargin=2em}
\setlist[enumerate]{topsep=0.2em, itemsep=0.2em, leftmargin=2em}
\newcommand{\R}{\mathbb{R}}
\newcommand{\C}{\mathbb{C}}
\newcommand{\s}{\mathbb{S}}
\newcommand{\h}{\mathbb{H}}
\newcommand{\E}{\mathbb{E}}
\newcommand{\M}{\mathbb{M}}
\newcommand{\Sb}{\mathbb{S}^3_{\mathrm{B}}}
\newcommand{\SL}{\widetilde{\mathrm{SL}}_2(\R)}
\newcommand{\Nil}{\mathrm{Nil}_3}
\newcommand{\X}{\mathfrak{X}}
\newcommand{\df}{\mathrm{d}}
\newcommand{\id}{\mathrm{id}}
\newcommand{\Rot}{\mathrm{Rot}}
\newcommand{\prodesc}[2]{\left\langle#1,#2\right\rangle}
\newcommand{\Iso}{\mathrm{Iso}}
\DeclareMathOperator{\arcsinh}{arcsinh}
\DeclareMathOperator{\arctanh}{arctanh}
\DeclareMathOperator{\sech}{sech}
\DeclareMathOperator{\pIm}{Im}
\DeclareMathOperator{\length}{Length}
\DeclareMathOperator{\Area}{Area}
\DeclareMathOperator{\Length}{Length}
\DeclareMathOperator{\Div}{div}
\newtheorem{theorem}{Theorem}[section]
\newtheorem{proposition}[theorem]{Proposition}
\newtheorem{lemma}[theorem]{Lemma}
\theoremstyle{definition}
  \newtheorem{definition}[theorem]{Definition}
  \newtheorem{remark}[theorem]{Remark}
  \newtheorem{example}[theorem]{Example}
\numberwithin{equation}{section}
\title{Conjugate Plateau constructions in product spaces}
  \author{Jesús Castro-Infantes}
  \address{Departamento de Geometría y Topología, Universidad de Granada, Spain.}
  \email{jcastroinfantes@ugr.es}
  \author{José M. Manzano}
  \address{Departamento de Matemáticas, Universidad de Jaén, Spain.}
  \email{jmprego@ujaen.es}
  \author{Francisco Torralbo}
  \address{Departamento de Geometría y Topología, Universidad de Granada, Spain.}
  \email{ftorralbo@ugr.es}
\subjclass[2020]{Primary 53A10; Secondary 53C30}
\keywords{Constant mean curvature, compact surfaces, homogeneous three-manifolds, product spaces, conjugate constructions}
\begin{document}

\begin{abstract}
  This survey paper investigates, from a purely geometric point of view, Daniel's isometric conjugation between minimal and constant mean curvature surfaces immersed in homogeneous Riemannian three-manifolds with isometry group of dimension four. On the one hand, we collect the results and strategies in the literature that have been developed so far to deal with the analysis of conjugate surfaces and their embeddedness. On the other hand, we revisit some constructions of constant mean curvature surfaces in the homogeneous product spaces $\mathbb{S}^2\times\mathbb{R}$, $\mathbb{H}^2\times\R$ and $\R^3$ having different topologies and geometric properties depending on the value of the mean curvature. Finally, we also provide some numerical pictures using Surface Evolver.
\end{abstract}

\maketitle

\setcounter{tocdepth}{1}
\tableofcontents


\section{Introduction}

The study of minimal and constant mean curvature surfaces ($H$-surfaces in the sequel) represents a central topic in Surface Theory with a long trajectory dating back to works of Euler and Lagrange. Minimal surfaces were popularized by Plateau's experiments on soap films, which gave rise to the so-called Plateau problem of finding the least-area surface spanning a given contour. Likewise, $H$-surfaces with $H>0$ were first investigated in connection with the isoperimetric problem of finding the least-area surface enclosing a given volume. A geometrically appealing question that have caught the attention of many geometers during the last century is to produce non-trivial examples of $H$-surfaces with a high number of symmetries. This is well exemplified by the vast number of triply periodic minimal surfaces in Euclidean space $\mathbb{R}^3$ that appear in the literature, e.g., see Schoen's report~\cite{Schoen} and Weber's Minimal Surface Archive~\cite{Weber}.

However, in the case $H>0$, very few examples of $H$-surfaces were known prior to Lawson's discovery~\cite{Law} of an isometric duality or conjugation for $H$-surfaces in space forms $\mathbb{M}^3(c)$ of constant sectional curvature $c\in\R$. It allowed him to produce the first examples of doubly-periodic embedded $1$-surfaces in $\R^3=\mathbb{M}^3(0)$ by means of minimal surfaces in the round sphere $\mathbb{S}^3=\mathbb{M}^3(1)$. The minimal surfaces he employed were indeed solutions to appropriate Plateau problems over certain spherical geodesic polygons whose sides become planar lines of curvature in the conjugate $1$-surface. Furthermore, he noticed that an extension of the minimal surface by axial symmetries about the boundary components agrees with an extension of the conjugate $1$-surface by mirror symmetry about the planes containing the conjugate boundary. This technique is known as the \emph{conjugate Plateau construction} and has been used to obtain many examples of complete $1$-surfaces in $\R^3$ as we explain below. It is important to say that Lawson's result actually yields a $2$-parameter isometric deformation in which we can change not only the ambient curvature but also a phase parameter which rotates the second fundamental form. In particular, Lawson's correspondence generalizes the classical notion of associate Bonnet family of minimal surfaces in $\R^3$, see also~\cite{KP}. However, when we speak of \emph{conjugation}, we are prescribing the phase angle equal to $\frac\pi 2$, which gives a more precise control of the corresponding surfaces, as we will explain throughout this work. 

As for the conjugation from minimal surfaces in $\s^3$ to $1$-surfaces in $\R^3$, Lawson's constructions were resumed by Karcher~\cite{K89a}, who realized that many of Schoen's triply periodic minimal surfaces in~\cite{Schoen} admit many planes of mirror symmetry and their fundamental pieces can be obtained as conjugate Plateau constructions. This allowed him to deform Schoen's minimal examples into $H$-surfaces by considering geodesic polygons in the $\s^3$ similar to those needed in $\R^3$. In his paper, Karcher also improved some of Lawson's ideas about conjugate curves and developed their connection with the different Hopf fibrations in $\s^3$, making it clear that the conjugate technique had the capability to produce beautiful highly-symmetric $H$-surfaces. Gro\ss{}e-Brauckmann~\cite{G} took it one step further and produced many interesting examples of $1$-surfaces in $\R^3$, including $k$-unduloids, whose $k\geq 2$ ends are asymptotic to the classical Delaunay unduloids. In fact, triunduloids (with $k=3$) were later proved to be properly embedded $1$-surfaces in $\R^3$ by Gro\ss{}e-Brauckmann, Kusner and Sullivan~\cite{GKS}. Gro\ss{}e-Brauckmann and Wohlgemuth~\cite{GW} also used the conjugation to prove that the triply-periodic minimal surface known as \emph{gyroid} is embedded and can be deformed into an $H$-surface (see also~\cite{Grosse-Brauckmann1997} for numerical experiments). The conjugation has also been used in different ambient spaces: Karcher, Pinkall and Sterling~\cite{KPS88} constructed minimal surfaces in $\mathbb{S}^3$ by conjugating minimal surfaces in $\mathbb{S}^3$; Karcher~\cite{K05b} and Rossman~\cite{Rossman} obtained $1$-surfaces in hyperbolic space $\mathbb{H}^3=\mathbb{M}^3(-1)$ from minimal surfaces in $\mathbb{R}^3$; and Karcher and Polthier~\cite{KP} also used related techniques to revisit conjugate minimal surfaces in $\R^3$.

There have been other approaches to produce $H$-surfaces in space forms, three of which will be highlighted here concerning compact examples. First, using implicit methods, Karcher, Pinkall and Sterling~\cite{KPS88} obtained compact embedded minimal surfaces in $\mathbb{S}^3$ with arbitrary genus, and Kapouleas~\cite{Kapouleas1991, Kapouleas1995} found compact immersed $H$-surfaces in $\mathbb{R}^3$ with arbitrary genus. The second technique is often referred to as the Dorfmeister--Pedit--Wu (DPW) method~\cite{DPW1998}, which uses integrable systems and can be thought of as a global version of the Weierstra\ss\ representation. The DPW method was pioneered by Pinkall and Sterling~\cite{PS1989} and Hitchin~\cite{Hitchin1990} to study $H$-tori. Heller, Heller and Traizet~\cite{HHT2021} (see also~\cite{BHS2021} for numerically constructed surfaces) have recently shown the existence, for large genus $g$, of a complete and smooth family of compact $H$-surfaces in the round $3$-sphere deforming the Lawson surface $\xi_{1,g}$ (see~\cite[\S6]{Law}) into a doubly covered geodesic 2-sphere. Third and last, we have gluing methods, illustrated by Kapouleas' constructions of compact minimal surfaces in $\mathbb{S}^3$ by connecting two parallel Clifford tori~\cite{Kapouleas2010} or equatorial 2-spheres~\cite{Kapouleas2017} by means of catenoidal bridges.

Back to the conjugate techniques, they have been extended recently to the case of simply connected homogeneous $3$-manifolds with isometry group of dimension $4$, which are the most symmetric spaces after the space forms. They consist in a $2$-parameter family $\E(\kappa,\tau)$, $\kappa \neq 4\tau^2$, containing the product spaces $\mathbb{M}^2(\kappa)\times\R=\E(\kappa,0)$ as well as the Lie groups $\SL$, $\mathrm{SU}(2)$ and $\Nil$ with some left-invariant metrics (see Table~\ref{tab:Ekt}). Their geometry will be discussed in~\S\ref{sec:spaces}. The cornerstone of this extension is the work of Daniel~\cite{Dan}, who found a Lawson-type correspondence within this family and connects, for a phase angle of $\frac{\pi}{2}$, minimal surfaces in $\E(4H^2+\kappa,H)$ with $H$-surfaces in $\mathbb{M}^2(\kappa)\times\R$ for all $\kappa,H\in\R$ (see Table~\ref{tab:conjugation-cases}).  Hauswirth, Sa Earp and Toubiana~\cite{HST} also found this correspondence as a particular case of the associate family for $H=0$ and arbitrary $\kappa\in\R$. We also remark that Daniel's correspondence contains other cases with phase angle $\frac\pi2$, but mirror symmetries only exist in the case of product spaces, which makes this case the most tractable one (see Lemmas~\ref{lem:horizontal-geodesics} and~\ref{lem:vertical-geodesics}). Observe that, in the case $\kappa = 0$, we have that $\E(4H^2, H) = \mathbb{M}^3(H^2)$ and Daniel's conjugation reduces to the classical Lawson's conjugation between minimal surfaces in $\mathbb{S}^3(H^2)$ and $H$-surfaces in $\mathbb{R}^3$.

The starting tools in a conjugate construction, i.e.\ the Plateau problem (and its improper version known as the Jenkins--Serrin problem) in $\E(\kappa,\tau)$ have been solved under quite general conditions~\cite{MY82b,NR,CR,MRR11,Melo,Younes} (see~\S\ref{sec:plateau} and~\S\ref{sec:JS}). They provide us with plenty of surfaces with boundary a geodesic polygon (with possibly some components at infinity in the Jenkins--Serrin case) that are at our disposal to act as initial minimal surfaces in the conjugate construction. Also, the extension by axial and mirror symmetries, as well as the absence of singularities rely on general results that also apply, see Proposition~\ref{prop:conjugation-completion-by-simmetries}. 

It is necessary to point out that the family $\mathbb{E}(\kappa,\tau)$ has fewer isometries than space forms, which forces us to consider only initial polygons consisting of horizontal and vertical geodesics (with respect to the Killing submersion $\pi: \E(\kappa, \tau) \to \mathbb{M}^2(\kappa)$, see \S\ref{sec:spaces}). This reduces both the number of possible configurations and the directions in which we are able to control the involved surfaces. 

In~\S\ref{sec:conjugation}, we will collect and present different features of Daniel's correspondence that are essential in the conjugate constructions, specially a detailed study of conjugate curves, some of them in more generality than those in the literature. We will pay special attention in~\S\ref{subsec:surfaces-preserved-by-sister-correspondence} to some classes of surfaces in which the correspondence is well understood, such as equivariant surfaces, ideal Scherk graphs or ruled minimal surfaces. In this respect, it is worth mentioning that Daniel's correspondence has been a formidable tool to analyze surfaces satisfying preserved geometric conditions and has played a key role in their classification, e.g., $H$-surfaces with zero Abresch--Rosenberg quadratic differential~\cite{AR,ER,DomMan}, $H$-surfaces with certain bounds on the intrinsic or extrinsic area growth~\cite{MN} or $H$-surfaces with constant Gauss curvature~\cite{DDV}. 

We will now give a brief overview of constructions of $H$-surfaces in $\E(\kappa,\tau)$-spaces that use conjugation, and we will begin with the minimal case. Morabito and Rodríguez~\cite{MR} used a conjugate Jenkins--Serrin construction to obtain minimal $k$-noids in $\mathbb H^2\times\R$ with genus $0$ and $k$-ends asymptotic to vertical planes, as well as minimal saddle towers in $\mathbb H^2\times\R$ similar to those in $\R^3$ obtained by Karcher~\cite{K88}. Pyo~\cite{P} also found the minimal $k$-noids independently assuming additionally that the vertical planes are disposed symmetrically. Rodríguez~\cite{Rod} extended this construction to give minimal examples in $\mathbb{H}^2\times\R$ with infinitely many ends and an arbitrary (finite or countable) number of limit ends. Martín and Rodríguez~\cite{MarR} have also used a conjugate Jenkins--Serrin construction to produce minimal embeddings of any non-simply connected planar domain in $\mathbb{H}^2\times\mathbb{R}$. Mazet, Rodríguez and Rosenberg~\cite{MRR14} used a conjugate Jenkins--Serrin construction to produce examples for their classification of doubly-periodic embedded minimal surfaces in $\mathbb{H}^2\times\mathbb{R}$. The first two authors~\cite{CM} have obtained minimal $k$-noids in $\mathbb H^2\times\R$ with genus $1$ and $k\geq 3$ ends by another conjugate Jenkins-Serrin construction, inspired by a construction of Plehnert~\cite{Ple12} of similar surfaces for $H=\frac{1}{2}$. Plehnert and the second and third authors~\cite{MPT} have also obtained embedded minimal surfaces of type Schwarz P in $\mathbb{S}^2\times\mathbb{R}$ by a conjugate Plateau construction.

In the non-minimal case, the study of conjugate constructions was initiated independently by Plehnert~\cite{Ple14} (to obtain $k$-noids with genus $0$ in $\mathbb{H}^2\times\R$ and $0<H\leq\frac{1}{2}$) and by the second and third authors~\cite{MT14} (to obtain horizontal unduloid-type $H$-surfaces in $\mathbb{H}^2\times\mathbb{R}$ and $\mathbb{S}^2\times\mathbb{R}$). The latter was subsequently improved in~\cite{MT20} to obtain horizontal nodoid-type $H$-surfaces in $\mathbb{H}^2\times\mathbb{R}$ and $\mathbb{S}^2\times\mathbb{R}$ and to determine which of these examples are embedded. The second and third authors have also provided compact orientable embedded $H$-surfaces in $\mathbb{S}^2\times\mathbb{R}$ with arbitrary genus by means of a different conjugate Plateau construction. In the case of $\mathbb{H}^2\times\R$ and $0<H\leq\frac{1}{2}$, Rodríguez and the first and second authors~\cite{CMR} have produced $k$-noids and saddle towers that extend Plehnert's, plus another family of $H$-surfaces, called $k$-nodoids, with genus $0$ and $k\geq 2$ ends that approach the asymptotic vertical planes from the convex side (unlike the $k$-noids, which lie in their concave side).

We will illustrate how the technique works by sketching five of the above constructions (see Table~\ref{tab:constructions}). A deeper motivation for each of them will be given in the corresponding sections, but we will say here that we have chosen them because we would like to cover all ranges of the mean curvature in both $\mathbb{S}^2\times\mathbb{R}$ and $\mathbb{H}^2\times\mathbb{R}$ in order to visualize the dissimilarities between the critical, supercritical and subcritical cases (see~\S\ref{subsec:multigraphs}). These constructions are also part of some ongoing open research lines to which the authors have contributed.
\begin{itemize}
  \item On the one hand, we are interested in the classification of compact embedded $H$-surfaces in $\mathbb{S}^2\times\mathbb{R}$ attending to their genus. The only non-equivariant known examples are the families of $H$-tori given by Theorem~\ref{thm:horizontal-Delaunay-embeddedness} with $H>\frac{1}{2}$ and the arbitrary genus $H$-surfaces given by Theorem~\ref{thm:arbitrary-genus-H-surface-embeddedness} with $H<\frac{1}{2}$. We expect that the former are the only embedded $H$-tori. Since they are not equivariant, their characterization should be more involved than in the case of $H$-tori in $\mathbb{S}^3$ (see Andrews and Li~\cite{AL2015}).

  \item On the other hand, we are also interested in producing examples of $H$-surfaces with finite total curvature in $\mathbb{H}^2\times\mathbb{R}$ displaying different topologies and asymptotic behaviors. In~\S\ref{sec:genusone} we discuss and collect some properties of this class of surfaces in the minimal case, which is probably the most studied class of minimal surfaces in $\mathbb{H}^2\times\mathbb{R}$. It is widely believed that $H$-surfaces in $\mathbb{H}^2\times\mathbb{R}$ with $0<H<\frac{1}{2}$ should behave similarly, but this is still an unexplored area of research. In particular, we expect that the $(H,k)$-noids and $(H,k)$-nodoids given by Theorem~\ref{th:knoids-genus-0} have finite total curvature when the mean curvature is not critical. 
\end{itemize}

\begin{table}[htbp]
  \newlength{\twocolumns}
  \setlength{\twocolumns}{0.40\linewidth}
  \addtolength{\twocolumns}{\tabcolsep}
  \newlength{\threecolumns}
  \setlength{\threecolumns}{0.60\linewidth}
  \addtolength{\threecolumns}{2\tabcolsep}
  \centering
  \small

  \begin{tabular}{cp{0.20\linewidth}p{0.20\linewidth}p{0.20\linewidth}p{0.20\linewidth}}    \toprule
  & \centering $H= 0$ &\centering  $0 < H < \tfrac{1}{2}$ & \centering $H= \tfrac{1}{2}$ &\centering  $H > \tfrac{1}{2}$     \tabularnewline \cmidrule(r){2-5}
  \multirow{5}{*}{\rotatebox[origin=c]{90}{$\mathbb{S}^2\times \mathbb{R}$}}
  & \textbf{Theorem~\ref{thm:orientable-minimal-arbitrary-genus-examples-S2xS1}}:\newline Schwarz\newline P-surfaces.
      & \textbf{Theorem~\ref{thm:arbitrary-genus-H-surface-embeddedness}}:\newline Compact embedded surfaces of arbitrary genus. 
      & &
      \textbf{Theorem~\ref{thm:horizontal-Delaunay-embeddedness}}:\newline Embedded tori.          
      \tabularnewline 
      &  
      & \multicolumn{3}{c}{
        \textbf{Theorem~\ref{thm:horizontal-Delaunay}}: Horizontal Delaunay surfaces.
      }
      \tabularnewline \cmidrule(r){2-5}
      \multirow{3}{*}{\rotatebox[origin=c]{90}{$\mathbb{H}^2\times \mathbb{R}$}}
      & \textbf{Theorem~\ref{th:knoids-genus-1}}:\newline Genus-$1$ $k$-noids.
      &\multicolumn{2}{p{\twocolumns}}{  \textbf{Theorem~\ref{th:knoids-genus-0}}: Saddle towers, $(H,k)$-noids and $(H,k)$-nodoids.}
      &\textbf{Theorem~\ref{thm:horizontal-Delaunay}}:\newline Horizontal Delaunay surfaces.           \tabularnewline 
  \bottomrule
  \end{tabular}

  \caption{Constructions sketched in the document. Note that the special value $H=\frac{1}{2}$ is the critical mean curvature in $\mathbb{H}^2\times\R$ but it is also relevant for the embeddedness of compact $H$-surfaces in $\mathbb{S}^2\times\R$, see~\S\ref{sec:delaunay} and~\S\ref{sec:genus}.}
  \label{tab:constructions}
\end{table}

In conjugate constructions, one easily reaches the central but tough question of embeddedness, which probably has not been well understood yet. We will treat it carefully throughout the constructions in this survey by emphasizing the different approaches to answer this question. In the case of horizontal Delaunay $H$-surfaces, embeddedness follows from spotting a geometric function in the common stability operator of the conjugate immersions and finding $1$-parameter groups of isometries that induce this function (see \S\ref{subsubsec:horizontal-Delaunay-embeddedness}). In the case of arbitrary genus $H$-surfaces, we rely on the estimates on the curvature of the boundary of an $H$-bigraph given by the second author in~\cite{Man13}. In the case of minimal Schwarz P-surfaces, convexity of some boundary curves come in handy along with some isoperimetric inequalities. In the case of $(H,k)$-noids and $(H,k)$-nodoids, we are able to find some embedded limits and then we can ensure that some of the examples are embedded (and some of them are not) by continuity (see Proposition~\ref{prop:continuity}). Finally, in the case of the minimal $k$-noids of genus $1$, we use the Krust type property given by Hauswirth, Sa Earp and Toubiana~\cite{HST} (see Proposition~\ref{prop:Krust}) to also show that the surfaces are embedded provided that the parameters are controlled. The Krust property is an essential tool that facilitates the conjugation of minimal surfaces in $\mathbb{H}^2\times\mathbb{R}$ but it is not true in general for other values of $H$ (see~\S\ref{subsubsec:knoids-embeddedness}).

Numerical methods are an invaluable tool in the visualization of $H$-surfaces and prove useful to forecast theoretical results. We will highlight three different approaches in this respect. First, Weierstra\ss\ representation along with numerical solutions of the period problems have been used to represent many families of minimal surfaces in $\mathbb{R}^3$ (see the Mathematica notebooks given by Weber in~\cite{Weber} for details). Second, the use of the DPW method along with a numerical adjustment of parameters has been developed to get high genus $H$-surfaces in $\mathbb{S}^3$ by Bobenko, Heller and Schmitt~\cite{HS2015, BHS2021}, where the figures in the first paper were done with the software Xlab developed by Schmitt). This approach presents the drawback that particular Weierstra\ss\ data might be difficult to derive in more complicated examples. 
A last and more \emph{variational} approach was developed by Brakke in his software Surface Evolver~\cite{Brakke1992} by minimizing energies (e.g.\ area functional) of triangulated surfaces under certain constraints (e.g.\ volume and boundary constrains, a.k.a.\ free boundary problem) to get $H$-surfaces, although the method may display some issues when approximating the solution since fundamental domains might not be stable minima.  In \S\ref{sec:numerical-examples}, we present some numerical experiments using Surface Evolver in order to visualize the minimal examples in $\mathbb{S}^2(\kappa) \times \mathbb{R}$ constructed in \S\ref{sec:periodic}.
We also remark that, to overcome some of the issues in the above methods, Pinkall and Polthier~\cite{PP1993} used discrete differential techniques to implement the conjugation directly. The philosophy of this procedure is that the Plateau solution is usually stable and easier to obtain by minimization than the free boundary solution which is more likely unstable. Examples of surfaces produced with this technique can be found in~\cite{PP1993, GP1997}, where the software \textsc{grape}~\cite{grape} is used. We would like to say that, unfortunately and as far as we know, neither Xlab nor \textsc{grape} are publicly available.

As a final comment, in the present work we have made an effort to keep the notation homogeneous by writing: a tilde for the elements of the initial minimal surface in $\E(4H^2+\kappa,H)$, no tilde for the corresponding elements of the conjugate target $H$-surface in $\mathbb{M}^2(\kappa)\times\R$, and an asterisk $^*$ to indicate the completion of the surface after successive reflections about its boundary components. Also, we have indicated as subscripts the parameters that are part of the construction (e.g., $\Sigma_{a,b}$), and in functional notation those which are auxiliary and will likely disappear after a continuity argument (e.g., $\Sigma(a,b)$). Regarding the figures, we have colored the boundary curves in red and blue for the horizontal and vertical components of the initial polygon, respectively. We hope this will help the reader to easily follow some of the geometric discussions in the document. Finally, we have deliberately not normalized the spaces by homotheties, so that we will do our constructions in $\mathbb{H}^2(\kappa)\times\R$ and $\mathbb{S}^2(\kappa)\times\R$. This is because the limit case $\kappa=0$ always gives some insight by comparing with the Euclidean counterparts.

\noindent\textbf{Acknowledgement.} The authors are supported by project \textsc{pid2019.111531ga.i00}, the first author is also partially supported by project \textsc{pid2020.117868gb.i00}, both funded by \textsc{mcin/\-aei/\-10.13039/\-501100011033}. The second author is also supported by the Ramón y Cajal programme of \textsc{mcin/aei} and by a \textsc{feder-uja} project (ref.\ 1380860); the third author is also supported by the Programa Operativo \textsc{feder} Andalucía 2014-2020, grant no.\ \textsc{e-fqm-309-ugr18}.


\section{The geometry of $\E(\kappa,\tau)$-spaces}\label{sec:spaces}

Simply-connected oriented homogeneous Riemannian 3-manifolds with 4-dim\-ensional isometry group can be arranged in a 2-parameter family $\mathbb{E}(\kappa,\tau)$, where $\kappa,\tau\in\R$ and $\kappa-4\tau^2\neq 0$. These parameters are geometrically characterized by the existence of a Riemannian submersion $\pi:\E(\kappa,\tau)\to\mathbb{M}^2(\kappa)$ whose fibers are the integral curves of a unit Killing vector field $\xi$ (also called \emph{Killing submersion}), and such that the bundle curvature is constant and equal to $\tau$, that is
\begin{equation}\label{eqn:connection-xi}
  \overline\nabla_X\xi=\tau X\times\xi
\end{equation}
holds true for all vector fields $X\in\X(\E(\kappa,\tau))$, see~\cite{Dan,Man14}. The cross product $\times$ reflects the orientation of the ambient space, being $\{u,v,u\times v\}$ a positively oriented basis for all linearly independent tangent vectors $u$ and $v$. As a matter of fact, $\E(\kappa,\tau)$ and $\E(\kappa,-\tau)$ are the same space for all $\kappa$ and $\tau$ but opposite orientations have been chosen. More generally, $\E(a^2\kappa,a\tau)$ is homothetic to $\E(\kappa,\tau)$ with a conformal factor $a^2$ for any constant $a\neq 0$.

In the case $\kappa=4\tau^2$, the above description is also valid, though the space $\E(\kappa,\tau)$ has constant sectional curvature (it is isometric to the space form $\mathbb{M}^3(\tau^2)$), whence its isometry group is 6-dimensional. We remark that hyperbolic spaces $\mathbb{H}^3(c)$ are not $\mathbb{E}(\kappa,\tau)$-spaces for any sectional curvature $c<0$ because they do not admit Killing fields of constant length.

All $\E(\kappa,\tau)$-spaces are isometric to Lie groups endowed with left-invariant metrics, except for $\E(\kappa,0)$ with $\kappa>0$, see~\cite[Thm.~2.4]{MeeksPerez}. Indeed, the condition $\tau=0$ indicates that the distribution orthogonal to $\xi$ is integrable, so $\mathbb{E}(\kappa,0)$ is better thought of as the Riemannian product space $\mathbb{M}^2(\kappa)\times\R$ (observe that there is no Lie group with underlying manifold $\mathbb{S}^2\times\mathbb{R}$). As shown in Table~\ref{tab:Ekt}, in the case $\tau\neq 0$, we encounter the universal cover of the special linear group $\SL$, the Heisenberg group $\Nil$, and the special unitary group $\mathrm{SU}(2)$, endowed with left-invariant metrics in which $\xi$ is a biinvariant vector field, and hence Killing. In the case of $\mathrm{SU}(2)$, these spaces are known as Berger spheres and will be denoted by $\Sb(\kappa,\tau)$ in the sequel. Observe that the Lie groups $\mathrm{SU}(2)$ and $\SL$ also admit left-invariant metrics with 3-dimensional isometry group that will not be considered here.

\begin{table}[ht]
\begin{tabular}{c|ccc}
    \toprule
            & $\kappa>0$    &   $\kappa=0$  &   $\kappa<0$ \\\cmidrule{2-4}
$\tau=0$    & $\mathbb{S}^2\times \mathbb{R}$   &$\mathbb{R}^3$   & $\mathbb{H}^2\times\mathbb{R}$ \\
$\tau \neq 0$   &   $\mathrm{SU}(2)$  &   $\Nil$    &    $\SL$ \\
\bottomrule
\end{tabular}
\caption{Different geometries in $\mathbb{E}(\kappa,\tau)$-spaces.}\label{tab:Ekt}\end{table}

As Killing submersions, the $\E(\kappa,\tau)$-spaces admit natural notions of vertical and horizontal directions, defined as those tangent and orthogonal to the unit Killing vector field $\xi$, respectively. Also, the isometries spanned by $\xi$, called \emph{vertical translations}, form a $1$-parameter group of isometries $\{\Phi_t\}_{t\in\R}$ that plays a fundamental role in the geometry of these spaces.

\subsection{Geodesics}
There are two distinguished types of geodesics in $\E(\kappa,\tau)$, namely the vertical ones (fibers of the submersion over $\mathbb{M}^2(\kappa)$), and horizontal ones (horizontal lifts of geodesics of $\M^2(\kappa)$). More generally, if $\tau\neq 0$, then any non-vertical geodesic $\gamma$ of $\E(\kappa,\tau)$ with unit speed projects onto a curve $\pi\circ\gamma$ of $\M^2(\kappa)$ of constant curvature, say $c$, and meets the Killing direction with constant angle $\langle\gamma',\xi\rangle=\frac{c}{\sqrt{4\tau^2+c^2}}$, see~\cite[Prop.\ 3.6]{Man14} and Figure~\ref{fig:geodesics}. In the case $\tau=0$, any non-vertical geodesic of $\E(\kappa,\tau)=\M^2(\kappa)\times\R$ factors as the product of geodesics of each factor, so it always projects onto a geodesic of $\M^2(\kappa)$. 

\begin{figure}[ht]
{\centering\includegraphics[width=0.9\textwidth]{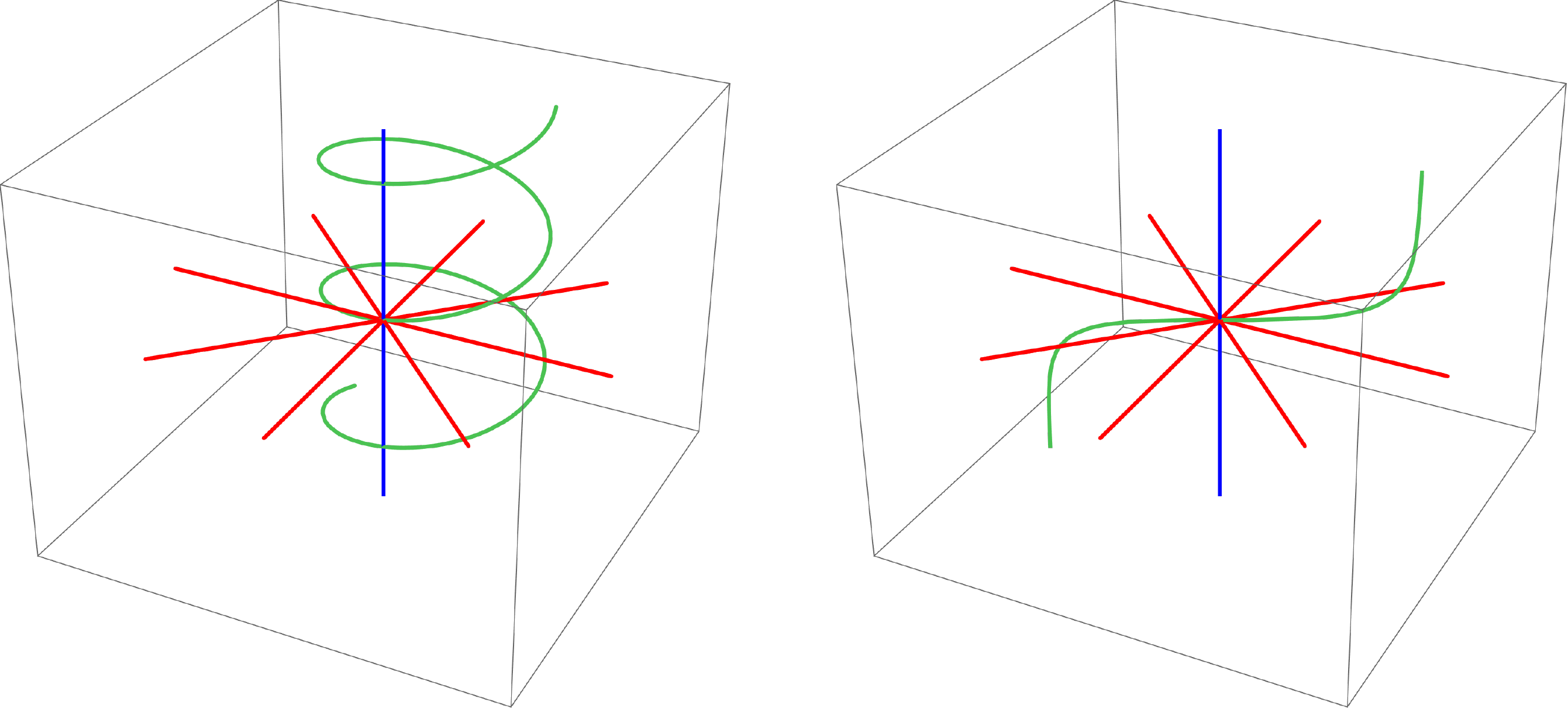}}
\caption{A vertical geodesic in blue, four horizontal geodesics in red, and a oblique geodesic in green. They are represented in $\Nil$ (left) and in $\mathbb{H}^2\times\R$ (right) in the Cartan model, see~\S\ref{subsec:working-coordinates}.}\label{fig:geodesics}
\end{figure}

Horizontal and vertical geodesics have infinite length if $\kappa\leq 0$. However, if $\kappa>0$ and $\tau\neq 0$, the length of all vertical geodesics is $\frac{8\tau\pi}{\kappa}$, whereas the length of all horizontal geodesics is $\frac{4\pi}{\sqrt{\kappa}}$. Note that this is twice the length of a great circle of $\s^2(\kappa)$ because horizontal geodesics in Berger spheres project two-to-one onto great circles. This contrasts the case $\kappa>0$ and $\tau=0$ because horizontal geodesics of $\s^2(\kappa)\times\R$ project one-to-one onto great circles of $\s^2(\kappa)$. 

Explicit parametrizations of all geodesics of $\E(\kappa,\tau)$, $\kappa\leq 0$, as well as some discussion of their minimization properties can be found in~\cite[\S2]{MN}. The case $\kappa > 0$ and $\tau \neq 0$ was studied by Rakotoniaina in~\cite{Rakotoniaina1985}. A discussion of Jacobi fields along any geodesic of any $\E(\kappa,\tau)$ can be found in~\cite[\S4]{DomMan}.

\subsection{Isometries}
An isometry $f\in\Iso(\E(\kappa,\tau))$ induces another isometry $h\in\Iso(\M^2(\kappa))$ such that $\pi\circ f=h\circ\pi$. Conversely, given $h\in\Iso(\mathbb{M}^2(\kappa))$, there is an orientation-preserving isometry $f_+\in\Iso(\E(\kappa,\tau))$ such that $\pi\circ f_+=h\circ\pi$. If $\tau=0$, then there is also a orientation-reversing isometry $f_-\in\Iso(\E(\kappa,\tau))$ such that $\pi\circ f_-=h\circ\pi$. Both $f_+$ and $f_-$ are unique up to composition with vertical translations. Moreover, all the isometries of $\E(\kappa,\tau)$ are of these types provided that $\kappa-4\tau^2\neq 0$; in particular, there are no orientation-reversing isometries in $\E(\kappa,\tau)$ if $\kappa-4\tau^2\neq 0$ and $\tau\neq 0$. We refer to~\cite[Thm.\ 2.8]{Man14} for a classification of the isometries that preserve the Killing direction in a general Killing submersion.

In $\E(\kappa,\tau)$, there are rotations of any angle about any vertical geodesic, which are recovered as orientation-preserving lifts of rotations in $\mathbb{M}^2(\kappa)$ with center the point onto which the vertical geodesic projects. In the case of horizontal geodesics, there are axial symmetries (i.e., rotations of angle $\pi$) about them, recovered as orientation-preserving lifts of axial symmetries with respect to their projections, but there are no rotations of arbitrary angle about a horizontal geodesic if $\kappa-4\tau^2\neq 0$. These are the only orientation-preserving isometries that keep all points of a certain geodesic \emph{fixed}. 

\begin{lemma}\label{lem:translations}
Given a non-vertical geodesic $\gamma:\R\to\E(\kappa,\tau)$, there is a unique $1$-parameter group $\{T_t\}_{t\in\R}$ of isometries of $\E(\kappa,\tau)$ such that $T_t(\gamma(s))=T_{t+s}(\gamma(0))$ for all $t,s\in\R$.
\end{lemma}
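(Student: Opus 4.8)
The plan is to realize $\{T_t\}$ as the flow of a single Killing field that restricts to the velocity field of $\gamma$, built from the projected data downstairs and then corrected in the vertical direction. Since $\gamma$ is non-vertical, I would first project: its image $\beta:=\pi\circ\gamma$ is a curve of constant geodesic curvature in $\M^2(\kappa)$, traversed at constant speed because $\langle\gamma',\xi\rangle$ is constant along any non-vertical geodesic. Every constant-curvature curve in a space form $\M^2(\kappa)$ — geodesic, metric circle, horocycle or equidistant curve — is the orbit of a $1$-parameter subgroup $\{h_t\}\subset\Iso(\M^2(\kappa))$, and after rescaling its infinitesimal generator $V$ I may assume $h_t(\beta(0))=\pi(\gamma(t))$ for all $t$.

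Next I would lift. The assignment $f\mapsto h$ with $\pi\circ f=h\circ\pi$ is a Lie group homomorphism $\Pi\colon\Iso(\E(\kappa,\tau))\to\Iso(\M^2(\kappa))$ which, by the existence of orientation-preserving lifts $f_+$ recalled above, is surjective with kernel the vertical translations; hence $\Pi_\ast$ is surjective with kernel $\R\xi$. Choosing $\hat V$ with $\Pi_\ast\hat V=V$, the fields $K_\lambda:=\hat V+\lambda\xi$ are Killing for every $\lambda\in\R$, and their flows $T^\lambda_t:=\exp(tK_\lambda)$ satisfy $\pi\circ T^\lambda_t=h_t\circ\pi$, so the orbit $\sigma_\lambda(t):=T^\lambda_t(\gamma(0))$ projects onto $\beta$ exactly as $\gamma$ does. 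The crux is to pin down the vertical component by a suitable $\lambda$. Because $T^\lambda_t$ lies in the identity component it preserves $\xi$, and it preserves its own generator, hence also $\hat V=K_\lambda-\lambda\xi$; consequently $\langle\hat V,\xi\rangle$ is constant along each orbit $\sigma_\lambda$. This makes the vertical speed $\langle\sigma_\lambda',\xi\rangle=\langle\hat V,\xi\rangle+\lambda$ equal to a constant $g_0+\lambda$, and the choice $\lambda^\ast:=\nu-g_0$ with $\nu:=\langle\gamma',\xi\rangle$ matches the constant vertical speed of $\gamma$. Since $\sigma_{\lambda^\ast}$ and $\gamma$ then share the same initial point, the same projection as parametrized curves, and the same vertical speed, their vertical discrepancy $w(t)$ solves $w'\equiv 0$ with $w(0)=0$, so $\sigma_{\lambda^\ast}=\gamma$. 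Setting $T_t:=T^{\lambda^\ast}_t$ gives a $1$-parameter group with $T_t(\gamma(0))=\gamma(t)$, and the group law yields $T_t(\gamma(s))=T_{t+s}(\gamma(0))$.

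For uniqueness I would argue at the level of generators. Evaluating the required identity at $t=0$ and using $T_0=\id$ gives $\gamma(s)=T_s(\gamma(0))$, so the infinitesimal generator $X$ of any admissible group satisfies $X_{\gamma(s)}=\gamma'(s)$ along $\gamma$. If $\{T_t\}$ and $\{S_t\}$ both work, with generators $X$ and $Y$, then $X-Y$ is a Killing field vanishing along $\gamma$. But a Killing field vanishing at a point $p$ lies in the one-dimensional isotropy algebra there, i.e.\ it is an infinitesimal rotation about the vertical fiber through $p$, and such a field vanishes only on that single fiber. As $\gamma$ is non-vertical it is contained in no fiber, forcing $X-Y=0$ and hence $T_t=S_t$.

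The only genuinely delicate ingredient is the vertical normalization in the existence part: everything hinges on the invariance of $\langle\hat V,\xi\rangle$ along the orbits, which is what allows the single free parameter $\lambda$ to absorb the constant climbing rate of the geodesic. The projection and lifting steps are routine given the description of $\Iso(\E(\kappa,\tau))$ recalled above, and the realization of constant-curvature curves in $\M^2(\kappa)$ as orbits of one-parameter subgroups is classical.
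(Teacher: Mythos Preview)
Your argument is correct, but it takes a different route from the paper's. The paper works entirely at the group level: it lifts each $h_t\in\Iso(\mathbb{M}^2(\kappa))$ directly to the unique orientation-preserving isometry $T_t$ of $\E(\kappa,\tau)$ covering $h_t$ and sending $\gamma(0)$ to $\gamma(t)$ (this pins down the vertical translation freedom in one stroke), and then invokes uniqueness of lifts together with uniqueness of geodesics with given initial data to conclude that the family $\{T_t\}$ is a group. By contrast, you work infinitesimally: you build a Killing field $K_{\lambda^\ast}=\hat V+\lambda^\ast\xi$ whose restriction to $\gamma$ equals $\gamma'$, so that its flow is a group from the outset, and the only work is the vertical normalization, which you handle via the invariance of $\langle\hat V,\xi\rangle$ along orbits. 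Your approach makes the mechanism (a single scalar correction in the $\xi$-direction) more transparent, and your uniqueness argument---two Killing generators agreeing along a non-vertical curve must coincide because the isotropy algebra at each point is one-dimensional and supported on a single fiber---is more explicit than the paper's, which leaves uniqueness essentially implicit in the phrase ``uniqueness of the lift.'' The paper's proof is shorter because the normalization $T_t(\gamma(0))=\gamma(t)$ absorbs both the choice of lift and the vertical adjustment at once, at the cost of having to check the group property afterwards.
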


\begin{proof}
Since $\pi\circ\gamma$ is a curve of constant curvature parametrized with constant speed, we can consider a $1$-parameter group $\{h_t\}_{t\in\R}$ of isometries of $\mathbb{M}^2(\kappa)$ such that $h_t(\pi(\gamma(0))=\pi(\gamma(t))$. Therefore, we can find a unique orientation-preserving lift $T_t\in\Iso(\E(\kappa,\tau))$ of $h_t$ such that $T_t(\gamma(0))=\gamma(t)$. By the uniqueness of the lift and the uniqueness of a geodesic given initial conditions, it easily follows that $\{T_t\}_{t\in\R}$ is a group satisfying the desired condition.
\end{proof}

The elements of $\{T_t\}_{t\in\R}$ will be called translations along $\gamma$; if $\gamma$ is vertical, then such a group is not unique but translations along $\gamma$ are defined as the vertical translations. In general, a 1-parameter group $\{f_t\}_{t\in\R}$ of isometries is not necessarily the group of translations along a geodesic, but translations will allow us to interpret the rest of them as their screw-motions (i.e., a composition with a group of vertical translations).
\begin{itemize}
    \item If $\tau\neq 0$ and the induced group $\{h_t\}_{t\in\R}$ of isometries of $\M^2(\kappa)$ has a fixed point $p$, then $\{f_t\}_{t\in\R}$ is a group of rotations about the vertical geodesic $\pi^{-1}(p)$ or their screw motions. If $\{h_t\}_{t\in\R}$ has no fixed points, then $\kappa\leq 0$ and we have two possibilities:
    \begin{itemize}
        \item If $\{h_t\}_{t\in\R}$ consists of translations along a geodesic $\alpha$ of $\R^2$ or $\h^2(\kappa)$, then $\{f_t\}_{t\in\R}$ is a group of (hyperbolic) translations along a horizontal geodesic of $\E(\kappa,\tau)$ projecting to $\alpha$ or their screw motions.
        \item The second possibility only occurs if $\kappa<0$ and $\{h_t\}_{t\in\R}$ are parabolic translations. In this case, we can choose a oblique geodesic $\gamma$ projecting to an orbit of $\{h_t\}_{t\in\R}$, which is a horocycle of $\h^2(\kappa)$. Then, $\{f_t\}_{t\in\R}$ is a group of (parabolic) translations along $\gamma$ or their screw motions.
    \end{itemize}

    \item If $\tau=0$, then 1-parameter groups of isometries of $\E(\kappa,\tau)$ factor as the product of 1-parameter groups of isometries of $\mathbb{M}^2(\kappa)$ and $\mathbb{R}$. This makes it easier to classify such a 1-parameter group as rotations, translations, parabolic translations or hyperbolic translations (when they leave the factor $\R$ fixed) and their screw motions (otherwise).
\end{itemize}

If $\tau=0$, then there are also mirror symmetries with respect to vertical planes (i.e., preimages of geodesics of $\M^2(\kappa)$ under $\pi$) or with respect to horizontal slices (i.e., surfaces of the form $\mathbb{M}^2(\kappa)\times\{t_0\}$ for some $t_0\in\R$). These symmetries are orientation-reversing lifts of axial symmetries and of the identity in $\M^2(\kappa)$, respectively. In particular, vertical planes and horizontal slices are totally geodesic surfaces in $\E(\kappa,0)$, yet there are no totally umbilical surfaces in $\E(\kappa,\tau)$ if $\tau\neq 0$, as shown by Souam and Toubiana~\cite{ST}.

\subsection{Working in coordinates}\label{subsec:working-coordinates}
The following common framework for all $\E(\kappa,\tau)$-spaces was originally introduced by Cartan~\cite[\S296]{Cartan}. We consider
\[\Omega_\kappa=\{(x,y)\in\R^2:\lambda_\kappa(x,y)>0\}, \text{ where } \lambda_\kappa(x,y)=\frac{1}{1+\tfrac{\kappa}{4}(x^2+y^2)}. \]
Therefore, $\Omega_\kappa$ is a disk of radius $\tfrac{2}{\sqrt{-\kappa}}$ if $\kappa<0$ or the whole $\R^2$ if $\kappa\geq 0$. Then, we define $M(\kappa,\tau)$ as $\Omega_\kappa\times\mathbb{R}\subseteq\R^3$ endowed with the Riemannian metric
\[\mathrm{d} s^2=\lambda_\kappa^2(\mathrm{d} x^2+\mathrm{d} y^2)+(\mathrm{d} z+\lambda_\kappa\tau(y\mathrm{d} x-x\mathrm{d} y))^2.\]
Moreover, the orientation is chosen such that
\begin{equation}\label{eqn:standard-frame}
\begin{aligned}
 E_1&=\tfrac{1}{\lambda_\kappa}\partial_x-\tau y\,\partial_z,& 
 E_2&=\tfrac{1}{\lambda_\kappa}\partial_y+\tau x\,\partial_z,&
 E_3&=\partial_z
\end{aligned}\end{equation}
is a global positively oriented orthonormal frame. It easily follows that $\pi(x,y,z)=(x,y)$ is a Killing submersion from $M(\kappa,\tau)$ to $(\Omega_{\kappa},\df s_\kappa^2)$ with constant bundle curvature $\tau$ and unit Killing vector field $\xi=E_3$, where the Riemannian metric $\df s_\kappa^2=\lambda_\kappa^2(\df x^2+\df y^2)$ has constant curvature $\kappa$. Therefore, $M(\kappa,\tau)$ is a global model of $\E(\kappa,\tau)$ if $\kappa\leq 0$, but fails to be complete otherwise. More precisely, if $\kappa>0$, then $M(\kappa,\tau)$ is isometric to the universal cover of $\mathbb{E}(\kappa,\tau)$ minus a vertical fiber, as we will discuss shortly.

In the frame~\eqref{eqn:standard-frame}, the Levi-Civita connection $\overline\nabla$ of $\E(\kappa,\tau)$ reads
\begin{equation}\label{eqn:levi-civita}
\begin{aligned}
\overline\nabla_{E_1}E_1&=\tfrac{\kappa y}{2}E_2,&\overline\nabla_{E_1}E_2&=-\tfrac{\kappa y}{2}E_1+\tau E_3,&\overline\nabla_{E_1}E_3&=-\tau E_2,\\
\overline\nabla_{E_2}E_1&=-\tfrac{\kappa x}{2}E_2-\tau E_3,&\overline\nabla_{E_2}E_2&=\tfrac{\kappa x}{2}E_1,&\overline\nabla_{E_2}E_3&=\tau E_1,\\
\overline\nabla_{E_3}E_1&=-\tau E_2,&\overline\nabla_{E_3}E_2&=\tau E_1,&\overline\nabla_{E_3}E_3&=0.
\end{aligned}
\end{equation}
By evaluating at the frame, it is not difficult to check that the Riemann curvature tensor $\overline R$ of $\E(\kappa,\tau)$ is given by
\begin{equation}\label{eqn:curvature-tensor}
\begin{aligned}
\overline R(X,Y,Z,W)&=\langle\overline\nabla_X\overline\nabla_YZ-\overline\nabla_X\overline\nabla_YZ-\overline\nabla_{[X,Y]}Z,W\rangle\\
&=-\tau^2\langle X\times Y,Z\times W\rangle-(\kappa-4\tau^2)\langle X\times Y,\xi\rangle\langle Z\times W,\xi\rangle.
\end{aligned}\end{equation}
Note that our sign convention for $\overline R$ is the opposite to Daniel's in~\cite[Prop.~2.1]{Dan}. Equation~\eqref{eqn:curvature-tensor} implies that the sectional curvature in $\E(\kappa,\tau)$ equals $\tau^2$ for vertical planes and $\kappa-3\tau^2$ for horizontal planes. It follows that the sectional curvature is constant if and only if $\kappa-4\tau^2=0$ as already discussed. On the other hand, it also follows that $\E(\kappa,\tau)$ has constant scalar curvature $2(\kappa-\tau^2)$.

\begin{remark}
Using the model $M(\kappa,\tau)$ we can understand why the geometry of $\E(\kappa,\tau)$ twists in the presence of bundle curvature, which leads to a very different behaviour with respect to product spaces ($\tau=0$). Assume that $\alpha:[0,\ell]\to\Omega_\kappa$ is a piecewise-$\mathcal{C}^1$ Jordan curve enclosing a relatively compact region $U$ with the interior of $U$ to the left when traveling on $\alpha$. Let $\overline\alpha:[0,\ell]\to M(\kappa,\tau)$ be a horizontal lift of $\alpha$ (i.e., $\pi\circ\overline\alpha=\alpha$ and $\overline\alpha$ is everywhere orthogonal to $\xi$), which is unique up to vertical translations. The signed vertical distance from $\overline\alpha(0)$ to $\overline\alpha(\ell)$ is given by $2\tau\mathrm{Area}(U)$, see~\cite[Prop.~1.6.2]{KIAS} and~\cite[Prop.~3.3]{Man14}.
\end{remark}

\subsubsection{A global model for Berger spheres}
If $\kappa>0$ and $\tau \neq 0$, then $\E(\kappa, \tau)$ or $\Sb(\kappa, \tau)$ is a Berger sphere modelled in complex coordinates as the usual $3$-sphere $\s^3 = \{(z,w)\in \C^2:\, |z|^2 +|w|^2 = 1\}$ equipped with the Riemannian metric
\[
  \df s^2(X, Y) = \tfrac{4}{\kappa}\left[\prodesc{X}{Y} + \tfrac{16\tau^2}{\kappa^2}\bigl(\tfrac{4\tau^2}{\kappa} - 1\bigl)\prodesc{X}{\xi}\prodesc{Y}{\xi} \right],
\] 
being $\prodesc{\cdot}{\cdot}$ the usual inner product in $\mathbb{C}^2\equiv\mathbb{R}^4$. The vector field $\xi$ is defined by $\xi_{(z, w)} = \frac{\kappa}{4\tau}(iz, iw)$ and the Killing submersion is the Hopf fibration 
\[\pi: \Sb(\kappa, \tau) \rightarrow \s^2(\kappa) \subset \mathbb{C} \times  \mathbb{R} \equiv\mathbb{R}^3,\qquad \pi(z, w) = \tfrac{2}{\sqrt{\kappa}}\bigl(z\bar{w}, \tfrac{1}{2}(|z|^2 - |w|^2) \bigr),\]
see~\cite[\S2]{Tor12}. A local isometry between $\Sb(\kappa,\tau)$ and $M(\kappa,\tau)$ is given by the Riemannian covering map $\Theta:M(\kappa,\tau)\to\Sb(\kappa,\tau)-\{(e^{i \theta},0)\colon  \theta \in \mathbb{R}\}$, where
\begin{equation}\label{eq:local-isometry-Daniel-Berger}
\begin{aligned}
\Theta(x, y, z) &= \frac{1}{\sqrt{1 + \tfrac{\kappa}{4}(x^2 + y^2))}} \left(\tfrac{\sqrt{\kappa}}{2}(y + ix) \exp(i \tfrac{\kappa}{4\tau}z), \exp(i \tfrac{\kappa}{4\tau} z)\right).
\end{aligned}
\end{equation}

\subsubsection{The half-space model}\label{subsubsec:halfspace-model}
In the case $\kappa<0$, another model of $\E(\kappa,\tau)$ is the half-space model given by $\{(x,y,z)\in\R^2:y>0\}$ endowed with the metric
\begin{equation}\label{eqn:halfspace-metric}\frac{\df x^2+\df y^2}{-\kappa y^2}+\left(\df z+\frac{2\tau}{\kappa y}\df x\right)^2.
\end{equation}
Notice that the conformal factor $\frac{1}{y\sqrt{-\kappa}}$ defines a metric of constant curvature $\kappa$ in the upper half-plane, and the orientation is chosen such that
\begin{equation}\label{eqn:halfspace-frame}
\begin{aligned}
  E_1&=y\sqrt{-\kappa}\,\partial_x+\tfrac{2\tau}{\sqrt{-\kappa}}\partial_z,&
  E_2&=y\sqrt{-\kappa}\,\partial_y,&
  E_3&=\partial_z,
\end{aligned}\end{equation}
is a positively oriented orthonormal frame. The Killing submersion is again $\pi(x,y,z)=(x,z)$ with unit Killing vector field $\xi=E_3$. A global isometry from $M(\kappa,\tau)$ to the half-space model is given by
\begin{align*}
\Theta(x,y,z)=\left(\tfrac{\frac{4}{\sqrt{-\kappa }} y}{\bigl(\frac{2}{\sqrt{-\kappa }}+x\bigr)^2+y^2},\tfrac{-\frac{4}{\kappa
   }-x^2-y^2}{\bigl(\frac{2}{\sqrt{-\kappa }}+x\bigr)^2+y^2},z+\tfrac{4 \tau}{\kappa}\arccos\!\left(\tfrac{y}{\sqrt{\bigr(\frac{2}{\sqrt{-\kappa
   }}+x\bigl)^2+y^2}}\right)\!\!\right).
\end{align*}
The following expression for the Levi-Civita connection in the global frame~\eqref{eqn:halfspace-frame} can be deduced directly from~\cite[Eq.~(5--1)]{Man14} for $\lambda=\frac{1}{y\sqrt{-\kappa}}$:
\begin{equation}\label{eqn:levi-civita-halfspace}
\begin{aligned}
\overline\nabla_{E_1}E_1&=\sqrt{-\kappa}E_2,&\overline\nabla_{E_1}E_2&=-\sqrt{-\kappa}E_1+\tau E_3,&\overline\nabla_{E_1}E_3&=-\tau E_2,\\
\overline\nabla_{E_2}E_1&=-\tau E_3,&\overline\nabla_{E_2}E_2&=0,&\overline\nabla_{E_2}E_3&=\tau E_1,\\
\overline\nabla_{E_3}E_1&=-\tau E_2,&\overline\nabla_{E_3}E_2&=\tau E_1,&\overline\nabla_{E_3}E_3&=0.
\end{aligned}
\end{equation}

\subsection{Fundamental data}\label{subsec:fundamental-data}

Let $\phi:\Sigma\to\E(\kappa,\tau)$ be an isometric immersion of an orientable Riemannian surface $\Sigma$ with global smooth unit normal $N$. The shape operator of $\phi$ with respect to $N$ can be seen as a symmetric $(1,1)$-tensor $A$ identified with the smooth field of self-adjoint linear operators
\[A_p:T_p\Sigma\to T_p\Sigma,\qquad A_p(v) = -\overline\nabla_{\df\phi_p(v)}N_p,\quad \text{for all } v\in T_p\Sigma.\]
We can also consider the angle function $\nu\in\mathcal C^\infty(\Sigma)$ and the tangent part of the Killing field $T\in\X(\Sigma)$ defined by $\nu(p)=\langle\xi_{\phi(p)},N_p\rangle$ and $\df\phi_p(T_p)=\xi_{\phi(p)}-\nu(p)N_p$, respectively, for all $p\in\Sigma$. The orientation in $\E(\kappa,\tau)$ and the choice of $N$ induce an orientation in $\Sigma$ expressed in terms of a $\frac{\pi}{2}$-rotation $J$ in the tangent bundle (that is, $J$ is a smooth field of linear operators such that $J^2=-\mathrm{id}$) defined by assuming that $\{\df\phi_p(u),\df\phi_p(Ju),N_p\}$ is positively oriented, or equivalently
\begin{equation}\label{eqn:orientation-J}
\df\phi_p(Ju)=N_p\times\df\phi_p(u),
\end{equation}
for all non-zero $u\in T_p\Sigma$ and $p\in\Sigma$.

Daniel~\cite{Dan} showed that the quadruplet $(A,T,J,\nu)$, also called the \emph{fundamental data} of the immersion, satisfies the following equations for all $X,Y\in\X(\Sigma)$:
\begin{enumerate}[label=(\roman*)]
  \item $K=\det(A)+\tau^2+(\kappa-4\tau^2)\nu^2$,
  \item $\nabla_XAY-\nabla_YAX-A[X,Y]=(\kappa-4\tau^2)(\langle Y,T\rangle X-\langle X,T\rangle Y)\nu$,
  \item $\nabla_X T=(AX-\tau JX)\nu$
  \item $\nabla\nu=-AT-\tau JT$,
  \item $\|T\|^2+\nu^2=1$,
\end{enumerate}
where $K$ is the Gau\ss\ curvature of $\Sigma$ and $\nabla$ is its Levi-Civita connection. The identities (i) and (ii) are nothing but the Gau\ss\ and Codazzi equations. Observe that we have considered $J$ as a fundamental datum to state explicitly that the orientation plays a key role; equivalently, Daniel assumes that $\Sigma$ is oriented. 

Conversely, any simply connected Riemannian surface $\Sigma$ carrying a quadruplet $(A,T,J,\nu)$ that satisfies the above conditions (i)--(v) can be isometrically immersed in $\E(\kappa,\tau)$ with shape operator $A$, tangent part of the Killing $T$, and angle function $\nu$, being $A$ and $\nu$ defined with respect to the normal $N$ compatible with $J$, in the sense that~\eqref{eqn:orientation-J} holds true. Such an immersion is determined up orientation-preserving isometries that also preserve the orientation of vertical fibers, see~\cite[Thm.~4.3]{Dan}. A similar discussion of the fundamental equations in terms of a conformal parameter on $\Sigma$ is given in~\cite{FM07}.

\begin{remark}\label{rmk:data-transformations}
To understand the uniqueness, it is important to mention how other geometric transformations affect the fundamental data:
\begin{itemize}
    \item A change of the sign of $N$ results in $(A,T,J,\nu)\mapsto (-A,T,-J,-\nu)$.
    \item A composition with an orientation-preserving isometry that reverses the orientation of the fibers gives $(A,T,J,\nu)\mapsto (A,-T,J,-\nu)$.
    \item If $\tau=0$, a composition with an orientation-reversing isometry that preserves the orientation of the fibers gives $(A,T,J,\nu)\mapsto (A,T,-J,\nu)$.
\end{itemize}
These transformations are also discussed in~\cite[Rmk.~4.12]{Dan} and~\cite[Rmk.~3.4]{GMM}.\end{remark}

\subsection{Cylinders and multigraphs}\label{subsec:multigraphs}
A vertical cylinder in $\E(\kappa,\tau)$ is a surface $\Sigma$ invariant under vertical translations, so it is foliated by vertical geodesics and its angle function vanishes identically. This amounts to say that $\Sigma$ is the preimage $\pi^{-1}(\beta)$ of a curve $\beta\subset\M^2(\kappa)$, from where it easily follows that the mean curvature of $\Sigma$ is half of the curvature of $\beta$ as a curve of $\M^2(\kappa)$. In particular, we define the $H$-cylinders as the preimages under $\pi$ of complete curves of constant curvature $2H$ in $\M^2(\kappa)$. This implies a different behavior if they project onto circles ($4H^2+\kappa>0$), straight lines ($H=\kappa=0$), horocycles ($4H^2+\kappa=0$ and $\kappa<0$) or curves of $\h^2(\kappa)$ equidistant to a geodesic ($4H^2+\kappa<0$).

On the opposite side of vertical cylinders, another distinguished class of surfaces in $\E(\kappa,\tau)$ are the so-called \emph{vertical multigraphs}, which are surfaces everywhere transverse to $\xi$. The angle function of a multigraph $\Sigma$ never vanishes, whence the projection $\pi_{|\Sigma}$ is a local diffeomorphism. If $\pi_{|\Sigma}$ is also one-to-one, the surface is called a \emph{vertical graph} over the domain $\pi(\Sigma)\subseteq\M^2(\kappa)$. Such a graph is said to be \emph{entire} if $\pi_{|\Sigma}$ is a global diffeomorphism onto $\M^2(\kappa)$. 

Smooth graphs over $\Omega\subseteq\M^2(\kappa)$ can be parametrized by smooth functions $u\in\mathcal C^\infty(\Omega)$ as $F_u:\Omega\to\E(\kappa,\tau)$ given by $F_u(q)=\Phi_{u(q)}{F_0(q)}$ for all $q\in\Omega$, where $F_0:\Omega\to\E(\kappa,\tau)$ is a zero section and $\{\Phi_t\}_{t\in\R}$ is the group of vertical translations. If $\tau=0$, then $\M^2(\kappa)\times\{0\}$ is the natural zero section, but it is important to remark that there is no natural zero section whenever $\tau\neq0$. Working in the model $M(\kappa,\tau)$, it is common to consider $F_0(x,y)=(x,y,0)$ as zero section and then parametrize any graph over $\Omega\subseteq\Omega_\kappa$ as $F_u(x,y)=(x,y,u(x,y))$. The mean curvature of this parametrization is given in divergence form as
\begin{equation}\label{eqn:H-graphs}
H=\frac12\Div\left(\frac{Gu}{\sqrt{1+\|Gu\|^2}}\right),
\end{equation}
where the divergence and norm are computed in the geometry of $\M^2(\kappa)$. The vector field $Gu\in\X(\Omega)$, given by $Gu=(u_x+\tau\lambda_\kappa y)\partial_x+(u_y-\tau\lambda_\kappa x)\partial_y$, is also known as the \emph{generalized gradient} of $u$. It does not depend on the choice of the zero section, and $\sqrt{1+\|Gu\|^2}$ is the area element of the surface, see~\cite[\S3]{LerMan}.

Any complete $H$-multigraph in $\E(\kappa,\tau)$ is either a horizontal slice in $\s^2(\kappa)\times\R$ (with $H=0$) or a graph (with $4H^2+\kappa\leq 0$) over a simply connected domain of $\mathbb{M}^2(\kappa)$ bounded by curves of constant curvature $\pm 2H$ where the function defining the graph tends to $\pm\infty$, see~\cite{ManRod}. Such a complete $H$-graph must be entire if $\kappa+4H^2=0$, see~\cite[Cor.~4.6.3]{KIAS} and the references therein. Note that, by the Fernández and Mira's solution to the Bernstein problem~\cite{FM} (see also~\cite{Man19}), there are plenty of entire graphs with critical mean curvature, generically a two-parameter family of them for each choice of a holomorphic quadratic differential on the complex plane $\C$ (not identically zero) or on the unit disk $\mathbb{D}\subseteq\C$.

If $\kappa+4H^2\leq 0$, by a standard application of the maximum principle for $H$-surfaces, the existence of entire $H$-graphs prevents the existence of compact immersed $H$-surfaces. However, if $\kappa+4H^2>0$, then there do exist compact $H$-surfaces in $\E(\kappa,\tau)$, e.g., the rotationally invariant $H$-spheres. As a matter of fact, Abresch and Rosenberg~\cite{AR} solved the Hopf problem in $\E(\kappa,\tau)$ by showing that these are the only immersed $H$-spheres in $\E(\kappa,\tau)$. Note that some of these $H$-spheres are non-embedded~\cite[Thm.~1]{Tor10}. We will show in \S\ref{sec:compact} that there are plenty of compact embedded $H$-surfaces in $\mathbb S^2(\kappa)\times\R$ which are not equivariant. It is important to remark that the classical Alexandrov problem concerning the classification of compact embedded $H$-surfaces has not been hitherto settled in $\E(\kappa,\tau)$-spaces other than $\mathbb{H}^2(\kappa)\times\R$ (see~\cite{HH1989}) and $\R^3$, where Alexandrov's moving-plane technique applies. In $\mathbb{S}^2(\kappa)\times \mathbb{R}$ and $\mathbb{S}^3_B(\kappa,\tau)$ there do exist non-spherical $H$-surfaces, e.g., the rotational embedded $H$-tori, see~\cite{PR, Tor10}, or the non-equivariant examples constructed in~\S\ref{sec:delaunay} and~\S\ref{sec:genus}.

All the above discussions reveal that the value of $H$, if any, such that $4H^2+\kappa=0$ is geometrically relevant, and it is called the \emph{critical mean curvature} in $\E(\kappa,\tau)$. Accordingly, $H$-surfaces with $4H^2+\kappa>0$ and $4H^2+\kappa<0$ will be said to have \emph{supercritical} and \emph{subcritical} mean curvature, respectively.


\section{The conjugate construction}\label{sec:conjugation}

Let $\kappa,\tau,H,\widetilde\kappa,\widetilde\tau,\widetilde H\in\mathbb{R}$ be constants such that $\kappa-4\tau^2=\widetilde\kappa-4\widetilde\tau^2$ and $\tau+iH=e^{i\theta}(\widetilde\tau+i\widetilde H)$ for some $\theta\in[0,2\pi)$. Given a simply-connected Riemannian surface $\Sigma$, there is an isometric correspondence between $\widetilde H$-immersions $\widetilde\phi:\Sigma\to\mathbb{E}(\widetilde\kappa,\widetilde\tau)$ and $H$-immersions $\phi:\Sigma\to\mathbb{E}(\kappa,\tau)$ by means of the following transformation of fundamental data:
\begin{equation}\label{eqn:fundamental-rotation}
(A, T, J,\nu)=\bigl(\Rot_\theta\circ(\widetilde A-\widetilde H\,\id)+H\,\id,\Rot_\theta(\widetilde T),\widetilde J,\widetilde\nu\bigr),
\end{equation}
where $\Rot_\theta=\cos(\theta)\id+\sin(\theta)J$ is a rotation of angle $\theta$ in the tangent bundle of $\Sigma$. The immersions $\widetilde\phi$ and $\phi$ are called \emph{sister immersions}, and determine each other up to orientation-preserving isometries that also preserve the orientation of the fibers, see~\cite[Prop.~4.1]{Dan}. We would like to remark that the simple connectedness of $\Sigma$ is not an essential assumption here, for the correspondence can be applied after considering the Riemannian universal cover of $\Sigma$.

\begin{remark}[Lawson correspondence]
If $\kappa-4\tau^2=0$, then Daniel correspondence reduces to a correspondence between $\widetilde H$-immersions in $\M^3(\widetilde\tau^2)$ and $H$-immersions in $\M^3(\tau^2)$ such that $\widetilde H^2+\widetilde\tau^2=H^2+\tau^2$ in which the shape operator is rotated by an angle $\theta$. This is a particular case of the 2-parameter correspondence given by Lawson~\cite[Thm.~8]{Law}, see also~\cite{K89a,G} for a more geometric description. If $\widetilde\kappa=\widetilde\tau=\widetilde H=0$, then $\kappa=\tau=H=0$ and Daniel correspondence reduces to the classical Bonnet associate family of minimal surfaces in $\R^3$.
\end{remark}

Using the uniqueness, we can prove a result similar to~\cite[Prop.~2.12]{Law}:

\begin{proposition}\label{prop:equivariant}
Let $\widetilde\phi:\Sigma\to\mathbb{E}(\widetilde\kappa,\widetilde\tau)$ and $\phi:\Sigma\to\mathbb{E}(\kappa,\tau)$ be sister immersions. Then $\phi$ is equivariant if and only if $\widetilde\phi$ is equivariant.
\end{proposition}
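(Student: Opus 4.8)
The plan is to recast equivariance as an invariance of the fundamental data under reparametrizations of $\Sigma$, and then to exploit that the transformation~\eqref{eqn:fundamental-rotation} is a bijection of fundamental data which commutes with such reparametrizations. Recall that $\phi$ is \emph{equivariant} when there exist a nontrivial one-parameter group $\{\psi_t\}_{t\in\R}$ of intrinsic isometries of $\Sigma$ and a one-parameter group $\{f_t\}_{t\in\R}$ of isometries of $\E(\kappa,\tau)$ with $\phi\circ\psi_t=f_t\circ\phi$ for all $t$. Being connected to the identity, each $f_t$ preserves the orientation of the ambient space and of its fibers, and each $\psi_t$ preserves the orientation of $\Sigma$. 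Using the uniqueness in~\cite[Thm.~4.3]{Dan}, I would record that an identity $\phi\circ\psi_t=f_t\circ\phi$ with an $f_t$ of this type holds for some $f_t$ \emph{if and only if} $\psi_t$ leaves the fundamental data invariant, i.e.\ $\psi_t^*(A,T,J,\nu)=(A,T,J,\nu)$; and likewise for $\widetilde\phi$ and $(\widetilde A,\widetilde T,\widetilde J,\widetilde\nu)$.

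The key step is to check that the sister transformation~\eqref{eqn:fundamental-rotation}, regarded as a map $S$ on fundamental data, commutes with the pullback $\psi^*$ by any orientation-preserving intrinsic isometry $\psi$ of $\Sigma$. Such a $\psi$ preserves the metric and the orientation of $\Sigma$, so it commutes with the $\frac\pi2$-rotation $J$ and therefore
\[
\psi^*\Rot_\theta=\Rot_\theta,\qquad \psi^*\widetilde J=\widetilde J,\qquad \psi^*\widetilde\nu=\widetilde\nu\circ\psi .
\]
Since $H,\widetilde H$ and $\theta$ are constants, applying $\psi^*$ to each slot of~\eqref{eqn:fundamental-rotation} yields $\psi^*\circ S=S\circ\psi^*$. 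Moreover $S$ is a bijection of fundamental data (its inverse is obtained by replacing $\theta,H,\widetilde H$ with $-\theta,\widetilde H,H$), so $\psi_t^*$ fixes the data of $\phi=S\bigl((\widetilde A,\widetilde T,\widetilde J,\widetilde\nu)\bigr)$ if and only if it fixes the data of $\widetilde\phi$.

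Combining the two observations, I would argue as follows. If $\widetilde\phi$ is equivariant with respect to $\{\psi_t\}_{t\in\R}$, then $\psi_t^*$ fixes the data of $\widetilde\phi$, hence by the commutation above it fixes the data of $\phi$, and~\cite[Thm.~4.3]{Dan} then furnishes, for each $t$, a unique orientation- and fiber-preserving isometry $f_t$ of $\E(\kappa,\tau)$ with $\phi\circ\psi_t=f_t\circ\phi$. Uniqueness of $f_t$ turns $\phi\circ\psi_{s+t}=\phi\circ\psi_s\circ\psi_t$ into $f_{s+t}=f_s\circ f_t$, so $\{f_t\}_{t\in\R}$ is a one-parameter group and $\phi$ is equivariant. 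The reverse implication follows verbatim by exchanging the roles of $\phi$ and $\widetilde\phi$ through $S^{-1}$.

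The step I expect to require the most care is the orientation bookkeeping underlying the first paragraph: one must ensure that a one-parameter group of ambient isometries induces only the trivial transformation of the fundamental data (and none of the sign changes listed in Remark~\ref{rmk:data-transformations}), and that each $\psi_t$ acts on $\Sigma$ preserving orientation, so that $\psi_t^*J=J$ and the commutation $\psi^*\circ S=S\circ\psi^*$ holds without spurious signs. Once this is settled, the remainder is the formal manipulation above.
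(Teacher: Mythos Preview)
Your approach is essentially the paper's: both translate equivariance into invariance of the fundamental data under the induced intrinsic isometries $\psi_t$, invoke Daniel's uniqueness theorem to produce ambient isometries on the sister side, and verify the group law from uniqueness. The one point you gloss over, and which the paper handles explicitly, is that the isometry $f_t$ furnished by~\cite[Thm.~4.3]{Dan} need not be \emph{unique} when $\phi$ admits extra ambient symmetries---precisely when $\phi(\Sigma)$ lies in a vertical cylinder ($\nu\equiv 0$) or a horizontal slice ($\nu\equiv\pm 1$); in those degenerate cases the group law $f_{s+t}=f_s\circ f_t$ cannot be deduced from uniqueness alone, but since $\nu$ is preserved by the correspondence the sister surface is of the same type and equivariance is immediate.
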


\begin{proof}
Assume that $\{S_t\}_{t\in\R}$ is a 1-parameter group of isometries of $\E(\kappa,\tau)$ under which $\phi$ is invariant. Since this is a continuous group, it must preserve both the orientation and the orientation of the fibers, and induces a 1-parameter group $\{R_t\}_{t\in\R}$ of isometries of $\Sigma$ such that $S_t\circ\phi=\phi\circ R_t$. The fundamental data of the immersions $\phi\circ R_t:\Sigma\to\E(\kappa,\tau)$ do not depend on $t$, whence their sister immersions have the same fundamental data as $\widetilde\phi\circ R_t:\Sigma\to\E(\widetilde\kappa,\widetilde\tau)$ (which does not depend on $t$ either) in view of~\eqref{eqn:fundamental-rotation}. By the uniqueness of the correspondence, we can find isometries $\widetilde S_t$ of $\E(\widetilde\kappa,\widetilde\tau)$ preserving the orientation and the orientation of the fibers such that $\widetilde S_t\circ\widetilde\phi=\widetilde S_t\circ\widetilde\phi\circ R_0=\widetilde\phi\circ R_t$ for all $t\in\R$. It follows that
\[\widetilde S_{s+t}\circ\widetilde\phi=\widetilde\phi\circ R_{s+t}=\widetilde\phi\circ R_{s}\circ R_t=\widetilde S_s\circ\widetilde\phi\circ R_t=\widetilde S_s\circ\widetilde S_t\circ\widetilde\phi.\]
This implies that $\widetilde S_{s+t}=\widetilde S_s\circ\widetilde S_t$ unless $\widetilde\phi$ is part of vertical cylinder or a horizontal slice. However, were it the case, then $\nu\equiv 0$ or $\nu\equiv\pm1$ and it follows that $\phi$ is also part of a vertical cylinder or a horizontal slice, so the statement holds true.
\end{proof}

The continuity of the sister correspondence also follows from its uniqueness. The proof is a direct generalization of the particular case in~\cite[Prop.~2.3]{CMR}.

\begin{proposition}[continuity]\label{prop:continuity}
Let $\Sigma$ be a smooth surface and let $\widetilde\phi_n:(\Sigma,\mathrm{d} s^2_n)\to\mathbb{E}(\widetilde\kappa,\widetilde\tau)$ be a sequence of isometric $\widetilde H$-immersions that converge on the $\mathcal{C}^m$-topology on compact subsets (for all $m\geq 0$) to an isometric $\widetilde H$-immersion $\widetilde\phi_\infty:(\Sigma,\mathrm{d} s_\infty^2)\to\mathbb{E}(\widetilde\kappa,\widetilde\tau)$. Given $\theta\in\R$ not depending on $n$, the sister $H$-immersions $\phi_n$ (up to suitable isometries of $\mathbb{E}(\kappa,\tau)$) converge in the same mode to the sister $H$-immersion $\phi_\infty$.
\end{proposition}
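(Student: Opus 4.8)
The plan is to reduce the statement to the fundamental theorem of surfaces in $\E(\kappa,\tau)$ and its dependence on data, exploiting the uniqueness already used throughout this section. First I would translate the hypothesis into convergence of fundamental data. Since $\widetilde\phi_n\to\widetilde\phi_\infty$ in the $\mathcal{C}^m$-topology on compact subsets for every $m$, the unit normals $\widetilde N_n$, the shape operators $\widetilde A_n$, the angle functions $\widetilde\nu_n=\langle\xi,\widetilde N_n\rangle$, the tangent parts $\widetilde T_n$, and the rotations $\widetilde J_n$ (which depend only on $\df s^2_n$ and the orientation) all converge in $\mathcal{C}^m$ on compacts, for every $m$. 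This is because each of these objects is obtained from $\widetilde\phi_n$ and its derivatives by smooth operations together with the fixed ambient metric of $\E(\widetilde\kappa,\widetilde\tau)$.

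Next, I would apply the transformation~\eqref{eqn:fundamental-rotation} with $\theta$ fixed and independent of $n$. The sister data $(A_n,T_n,J_n,\nu_n)$ are obtained from $(\widetilde A_n,\widetilde T_n,\widetilde J_n,\widetilde\nu_n)$ by the algebraic expression involving $\Rot_\theta=\cos(\theta)\,\id+\sin(\theta)\,J_n$ and the constants $H,\widetilde H$, which is smooth in the data. Hence $(A_n,T_n,J_n,\nu_n)\to(A_\infty,T_\infty,J_\infty,\nu_\infty)$ in $\mathcal{C}^m$ on compacts for all $m$, and this limit is precisely the fundamental data of $\phi_\infty$.

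The core step is the reconstruction of the immersions from their data in a way that is continuous in the data. After passing to the universal cover if necessary, Daniel's existence--uniqueness result~\cite[Thm.~4.3]{Dan} integrates each data set to an immersion $\phi_n$, unique up to an orientation- and fiber-orientation-preserving isometry of $\E(\kappa,\tau)$. I would fix a base point $p_0\in\Sigma$ and, after composing each $\phi_n$ with a suitable ambient isometry $\Psi_n$ (this is the ``up to suitable isometries'' freedom in the statement), normalize so that all $\phi_n$ send $p_0$ to a common point and a fixed $\df s^2_\infty$-orthonormal frame of $T_{p_0}\Sigma$ to a common frame of $\E(\kappa,\tau)$. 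The immersion together with its moving frame then solves a first-order linear PDE system whose coefficients are exactly the fundamental data and the Christoffel symbols of $\df s^2_n$, and whose integrability is guaranteed by the compatibility equations (i)--(v). Solutions of such a system depend continuously, with all derivatives, on the coefficients and on the initial frame; this parametric version of the fundamental theorem yields the desired convergence. To obtain convergence of the \emph{whole} sequence without extracting subsequences, I would run the usual compactness--uniqueness scheme: the convergence of data gives uniform $\mathcal{C}^m$-bounds on compacts, so Arzel\`a--Ascoli produces subsequential limits in $\mathcal{C}^m$ on compacts; any such limit $\psi$ has fundamental data equal to that of $\phi_\infty$, hence $\psi=\phi_\infty$ by uniqueness after the normalization, and since the limit is independent of the subsequence the full sequence converges.

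The \textbf{main obstacle} I anticipate is the interplay between the ambient normalization and the continuous dependence, compounded by the fact that the domain metrics $\df s^2_n$ themselves vary with $n$. One must align base points and frames coherently so that the residual isometric indeterminacy is genuinely killed, and then invoke continuous dependence of the solution of the integrable first-order system on its coefficients over a \emph{varying} geometric background. Managing this bookkeeping cleanly---for instance by fixing a background metric and expressing every tensor relative to it, or by integrating the structure equations along $\df s^2_\infty$-geodesics emanating from $p_0$---is the delicate point of the argument; once it is set up, the conclusion follows from the standard smooth dependence of ODE solutions on parameters and initial conditions.
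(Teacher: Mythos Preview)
Your approach is correct and matches the paper's own treatment: the paper does not give a detailed proof but simply remarks that continuity ``follows from its uniqueness'' and refers to~\cite[Prop.~2.3]{CMR} for the particular case, which is precisely the compactness--uniqueness scheme you outline. Your identification of the main ingredient---that the fundamental data depend smoothly on the immersion, transform algebraically under~\eqref{eqn:fundamental-rotation}, and then the uniqueness in Daniel's theorem forces any subsequential limit to coincide with $\phi_\infty$ after normalization---is exactly the intended argument.
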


Observe that $\E(\kappa,\tau)$ has bounded geometry in view of~\eqref{eqn:curvature-tensor}. If a sequence of $H$-surfaces in $\E(\kappa,\tau)$ has uniformly bounded second fundamental form in a neighborhood (of uniform intrinsic radius) around an accumulation point, then there is a subsequence that converges in the topology $\mathcal C^m$ for all $m$ to some $H$-surface. The condition on the accumulation point amounts to translate the surfaces appropriately using the homogeneity, whereas the uniform bound of the second fundamental form usually follows from stability, as shown by Rosenberg, Souam and Toubiana~\cite{RST}. If the surfaces in the sequence are complete and stable, then the limit $H$-surface is also complete.

\begin{remark}[rescaling]\label{rmk:rescaling}
Consider homotheties $\widetilde \rho_\mu:\E(\widetilde\kappa,\widetilde\tau)\to \E(\mu^2\widetilde\kappa,\mu\widetilde\tau)$ and $\rho_\mu:\E(\kappa,\tau)\to \E(\mu^2\kappa,\mu\tau)$ that multiply lengths by a constant factor $\mu^{-1}$. In the Cartan model, these transformations are nothing but $(x,y,z)\mapsto(\frac{x}{\mu},\frac{y}{\mu},\frac{z}{\mu^2})$.

Given an isometric $\widetilde H$-immersion $\widetilde\phi:(\Sigma,\df s_\Sigma^2)\to \E(\widetilde\kappa,\widetilde\tau)$ with fundamental data $(\widetilde A,\widetilde T,J,\nu)$, we have that $\rho_\mu\circ\widetilde\phi:(\Sigma,\mu^{-2}\df s_\Sigma^2)\to \E(\mu^2\widetilde\kappa,\mu\widetilde\tau)$ is an isometric $(\mu\widetilde H)$-immersion with fundamental data $(\mu \widetilde A,\mu \widetilde T,J,\nu)$. Given a sister $H$-immersion $\phi:\Sigma\to\E(\kappa,\tau)$ for some phase angle $\theta\in[0,2\pi)$ and reasoning likewise, the fundamental data of $\rho_\mu\circ\phi$ is $(\mu A,\mu T,J,\nu)$. By looking at~\eqref{eqn:fundamental-rotation}, we deduce that $\widetilde\rho_\mu\circ\widetilde\phi$ is the sister of $\rho_\mu\circ\phi$ for the same value of $\theta$, that is, the sister correspondence commutes with rescaling the metrics.
\end{remark}

Other geometric objects in the theory of $H$-surfaces in $\E(\kappa,\tau)$ also behave nicely with respect to the correspondence. For instance, at the conformal level, the Abresch--Rosenberg holomorphic quadratic differentials~\cite{AR} of sister immersions are related by $\widetilde Q=e^{-2i\theta}Q$. Also, the harmonic Gau\ss\ maps into hyperbolic space spanning these differentials in the case of multigraphs of critical mean curvature are associate for sister immersions, see~\cite[Prop.~5.6]{DFM}. We also point out that sister immersions define the same stability operator given by
\begin{equation}\label{eqn:stability-op}
L=\Delta-2K+4H^2+\kappa+(\kappa-4\tau^2)\nu^2,
\end{equation}
where $\Delta$ stands for the Laplacian on $\Sigma$, see~\cite[Prop.~5.11]{Dan}.

As for the construction of $H$-surfaces, we will focus on the case $\theta=\frac{\pi}{2}$ and $\widetilde H=0$, which gives $\widetilde\kappa=4H^2+\kappa$ and $\widetilde\tau=H$, i.e., we can associate an $H$-immersion $\phi:\Sigma\to\M^2(\kappa)\times\R$ to any minimal immersion $\widetilde\phi:\Sigma\to\E(4H^2+\kappa,H)$. Under these assumptions, we will call $\phi$ and $\widetilde\phi$ \emph{conjugate} immersions in the sequel. They fulfill some additional properties that will allow us to control the geometry of $\phi$ in terms of the geometry of $\widetilde\phi$, and will be discussed in \S\ref{sec:conjugate-curves}. Recall that we will use the tilde notation for the minimal surface (the minimal surface we will begin with), because $\phi$ (without tilde) will stand for our target surface in a product space. Conjugation is characterized in terms of the fundamental data $(A,T,J,\nu)=(J\circ\widetilde A+H\,\id,J\widetilde T,J,\widetilde\nu)$, plus the orientations satisfy the compatibility relations $\df\phi_p(Ju)=N\times\df\phi_p(u)=N^*\times\df\phi^*_p(u)$ for all $u\in T_p\Sigma$.

The different possible configurations are shown in Table~\ref{tab:conjugation-cases}. That table explains why $H$-surfaces with critical, supercritical and subcritical mean curvature in $\M^2(\kappa)\times\R$ display a qualitatively different behaviour, as discussed in \S\ref{subsec:multigraphs}. Their minimal isometric conjugate surfaces lie in Berger spheres (supercritical case), in the Heisenberg group (critical case), in $\SL$ (subcritical case), or in $\h^2\times\R$ and $\s^2\times\R$ (minimal case). In this last case, there is a notion of associate family of minimal immersions, discovered by Hauswirth, Sa Earp and Toubiana~\cite[Cor.~10]{HST}. They found that conjugate surfaces come from certain conjugate harmonic maps as in the aforesaid case of critical mean curvature. This justifies the use of the term \emph{conjugation} in the general context.

\begin{remark}
 We could also have chosen $\theta=-\frac{\pi}{2}$ but it would lead to an isometric surface in $\E(\kappa,-\tau)$. Note that composition with an isometry from $\E(\kappa,\tau)$ to $\E(\kappa,-\tau)$ that preserves the orientation of the fibers produces the change of fundamental data $(A, T, J,\nu)\mapsto(-A,-T, J,\nu)$, and also the change of parameters $(\kappa,\tau,H)\mapsto(\kappa,-\tau,-H)$. This is equivalent to adding $\pm\pi$ to $\theta$ in~\eqref{eqn:fundamental-rotation}. 
\end{remark} 

\begin{table}[hbtp]
\begin{center}
\begin{tabular}{c|ccc}
\toprule
 \multirow{2}{*}{A minimal surface in}   &   \multicolumn{3}{c}{produces an $H$-surface in} \\\cmidrule{2-4}
&  $\mathbb{S}^2(\kappa)\times \mathbb{R}$ &   $\mathbb{H}^2(\kappa)\times\mathbb{R}$ & $\mathbb{R}^3$ \\\midrule
$\Sb(4H^2+\kappa, H)$ &   $H > 0$    &   $H > \sqrt{-\kappa}/2$ &   $H>0$\\
$\text{Nil}_3$    &   --- &   $H = \sqrt{-\kappa}/2$ &   ---\\
$\SL(4H^2+\kappa, H)$  &   --- &   $0 < H < \sqrt{-\kappa}/2$ &   ---\\
$\mathbb{H}^2(\kappa)\times\mathbb{R}$ &  --- &   $H=0$ &   ---\\
$\mathbb{S}^2(\kappa)\times \mathbb{R}$ & $H=0$ &   --- &   ---\\
$\mathbb{R}^3$ &   --- &   --- &  $H=0$\\
\bottomrule
\end{tabular}\end{center}
\caption{Possible configurations for conjugate surfaces in product spaces.}\label{tab:conjugation-cases}
\end{table}

\subsection{Conjugate curves}\label{sec:conjugate-curves}

Let $\widetilde\phi:\Sigma\to\E(4H^2+\kappa,H)$ be a minimal immersion and let $\phi:\Sigma\to\M^2(\kappa)\times\R$ be its conjugate $H$-immersion. We recall that the shape operators $A$ and $\widetilde{A}$ of $\phi$ and $\widetilde{\phi}$ are related by $A = J\widetilde{A} + H\,\mathrm{id}$ (see \eqref{eqn:fundamental-rotation}).

Given $\alpha:[0,\ell]\to\Sigma$ a unit-speed regular curve, we shall investigate the relation between conjugate curves $\widetilde\gamma=\widetilde\phi\circ\alpha$ and $\gamma=\phi\circ\alpha$.

\subsubsection{Vertical and horizontal geodesics}
We will begin by collecting some results to understand the geometry of $\gamma$ whenever $\widetilde\gamma$ is a vertical or a horizontal geodesic. The proof of the following two lemmas, sometimes in particular cases, is scattered across the references~\cite{Tor12,Ple14,MT14,MPT,CM,CMR}.

\begin{lemma}[Horizontal geodesics]\label{lem:horizontal-geodesics} 
  If $\widetilde\gamma$ is a horizontal geodesic segment, then $\gamma$ is contained in a totally geodesic vertical plane $P$ that the immersion meets orthogonally .
\begin{enumerate}[label=\emph{(\alph*)}]
     \item If $\gamma=(\beta,z)\in\M^2(\kappa)\times\mathbb{R}$ is decomposed component-wise, then $|z'|=\sqrt{1-\nu^2}$ and $\|\beta'\|=|\nu|$. In particular, $z$ and $\beta$ might fail to be locally one-to-one only around points where $\nu=\pm1$ and $\nu=0$, respectively (see Figure~\ref{fig:orientation-hor}).
     \item Write $\widetilde N_{\gamma(t)}=\cos(\theta(t)) X(t)+\sin(\theta(t))\widetilde\xi_{\widetilde\gamma(t)}$ for some $\theta\in\mathcal C^\infty([0,\ell])$, where $\{\widetilde\gamma',\widetilde\xi,X\}$ is a positively oriented orthonormal frame along $\widetilde\gamma$. The curve $\gamma$ has geodesic curvature $\kappa_g^P=\theta'$ as a curve of $P$ with respect to $N$ as conormal.
 \end{enumerate}
\end{lemma}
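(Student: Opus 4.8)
The plan is to exploit that the ambient geodesic condition on $\widetilde\gamma$ decouples, via the Gauss formula, into an intrinsic condition on $\alpha$ and a condition on the shape operator, and then to transport everything through the conjugation relations $(A,T,J,\nu)=(J\widetilde A+H\,\id,J\widetilde T,J,\widetilde\nu)$. First I would note that, since $\widetilde\gamma=\widetilde\phi\circ\alpha$ is a geodesic of $\E(4H^2+\kappa,H)$, the Gauss formula $\overline\nabla_{\widetilde\gamma'}\widetilde\gamma'=\df\widetilde\phi(\nabla_{\alpha'}\alpha')+\langle\widetilde A\alpha',\alpha'\rangle\widetilde N=0$ splits into $\nabla_{\alpha'}\alpha'=0$ and $\langle\widetilde A\alpha',\alpha'\rangle=0$. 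Thus $\alpha$ is a geodesic of $\Sigma$ for the common metric (so the same holds for $\gamma$), and $\widetilde A\alpha'=c\,J\alpha'$ with $c=\langle\widetilde A\alpha',J\alpha'\rangle$. Next, horizontality $\langle\widetilde\gamma',\widetilde\xi\rangle=0$ together with $\widetilde\xi=\df\widetilde\phi(\widetilde T)+\widetilde\nu\widetilde N$ gives $\langle\alpha',\widetilde T\rangle=0$; feeding this into the conjugation relations yields $T=-\langle\widetilde T,J\alpha'\rangle\alpha'$, so $T$ is proportional to $\alpha'$ along $\gamma$.

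From $T\parallel\alpha'$ I would obtain part (a) directly. Writing $\xi=\partial_z=\df\phi(T)+\nu N$ in $\M^2(\kappa)\times\R$ and pairing with $\gamma'=\df\phi(\alpha')$ gives $z'=\langle\gamma',\partial_z\rangle=\langle\alpha',T\rangle$, whence $|z'|=\|T\|=\sqrt{1-\nu^2}$ by $\|T\|^2+\nu^2=1$, and then $\|\beta'\|^2=1-(z')^2=\nu^2$ by unit speed; the failure of local injectivity of $z$ and $\beta$ exactly at $\nu=\pm1$ and $\nu=0$ is immediate. To locate $P$, I would use $\nabla_{\alpha'}\alpha'=0$ in the Gauss formula for $\phi$ to get $\overline\nabla_{\gamma'}\gamma'=\langle A\alpha',\alpha'\rangle N$; solving $\nu N=\partial_z-z'\gamma'$ for the horizontal part $N^h=-\tfrac{z'}{\nu}\beta'$ and projecting the acceleration to $\M^2(\kappa)$ shows $\nabla^{\M}_{\beta'}\beta'$ is proportional to $\beta'$. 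Hence $\beta$ is a reparametrized geodesic lying in some geodesic $c$ of $\M^2(\kappa)$, so $\gamma\subset P:=\pi^{-1}(c)$, a totally geodesic vertical plane (here $\tau=0$); and since $N=(N^h,\nu)$ with $N^h\parallel\beta'$ tangent to $c$, the normal $N$ is tangent to $P$, i.e. the immersion meets $P$ orthogonally.

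For part (b), since $P$ is totally geodesic and $\{\gamma',N\}$ is an orthonormal frame of $TP$ along $\gamma$, the geodesic curvature with conormal $N$ is $\kappa_g^P=\langle\overline\nabla_{\gamma'}\gamma',N\rangle=\langle A\alpha',\alpha'\rangle$. Using $A=J\widetilde A+H\,\id$ and $\widetilde A\alpha'=c\,J\alpha'$ gives $\langle A\alpha',\alpha'\rangle=H-c$, so it remains to prove $c=H-\theta'$. I would get this by differentiating $\widetilde N=\cos\theta\,X+\sin\theta\,\widetilde\xi$ along $\widetilde\gamma$ and comparing with $\overline\nabla_{\widetilde\gamma'}\widetilde N=-\df\widetilde\phi(\widetilde A\alpha')=-c\,(\widetilde N\times\widetilde\gamma')$. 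Computing $\overline\nabla_{\widetilde\gamma'}\widetilde\xi=H\,\widetilde\gamma'\times\widetilde\xi=HX$ from \eqref{eqn:connection-xi} and $\overline\nabla_{\widetilde\gamma'}X=\widetilde\gamma'\times\overline\nabla_{\widetilde\gamma'}\widetilde\xi=-H\widetilde\xi$ (since $\widetilde\gamma$ is a geodesic and $X=\widetilde\gamma'\times\widetilde\xi$), both the $X$- and $\widetilde\xi$-components of the comparison collapse to the single relation $c=H-\theta'$, and therefore $\kappa_g^P=\theta'$.

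The main obstacle I anticipate is the orientation/sign bookkeeping in part (b): the result hinges on using the given ordering $\{\widetilde\gamma',\widetilde\xi,X\}$ as a positively oriented frame when evaluating $\widetilde\gamma'\times\widetilde\xi$, $\widetilde N\times\widetilde\gamma'$ and $X\times\widetilde\gamma'$, and on tracking the sign in \eqref{eqn:orientation-J}; with a wrong convention the $X$- and $\widetilde\xi$-components give the inconsistent values $c=\theta'\pm H$ instead of the common value $H-\theta'$. A secondary technical point is justifying that $\gamma$ remains in $P$ across the isolated points where $\nu=0$ (where $\beta'=0$ and the pregeodesic argument degenerates), which I would settle by a continuity and connectedness argument.
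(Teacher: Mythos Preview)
Your proof is correct and, for item~(b), follows essentially the same route as the paper: compute $\overline\nabla_{\widetilde\gamma'}\widetilde N$ in the frame $\{\widetilde\gamma',\widetilde\xi,X\}$ using $\overline\nabla_{\widetilde\gamma'}\widetilde\xi=HX$ and $\overline\nabla_{\widetilde\gamma'}X=-H\widetilde\xi$, compare with the shape-operator expression $-\df\widetilde\phi(\widetilde A\alpha')$, and then translate $\langle\widetilde A\alpha',J\alpha'\rangle$ into $\langle A\alpha',\alpha'\rangle=\kappa_g^P$ via $A=J\widetilde A+H\,\id$. The paper only records the proof of~(b) and defers the opening orthogonality claim and item~(a) to the literature, so your arguments for those parts (using $T\parallel\alpha'$ and the pregeodesic computation for $\beta$) are a welcome addition rather than a deviation; your noted caveat about the isolated zeros of $\nu$ is minor and is indeed handled by continuity.
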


\begin{proof}
We will provide the proof of item (b), which is not in the literature. Since $\overline\nabla_{\widetilde\gamma'}\widetilde\xi =H\widetilde\gamma'\times\widetilde\xi= HX$ and $\overline\nabla_{\widetilde\gamma'}X=\overline\nabla_{\widetilde\gamma'}(\widetilde\gamma'\times\widetilde\xi)=-H\widetilde\xi$ (see \eqref{eqn:connection-xi}), we easily compute $\overline\nabla_{\widetilde\gamma'}\widetilde N=(\theta'-H)\widetilde N\times\widetilde\gamma'$. This implies that
\begin{align*}
\theta'-H&=\langle\overline\nabla_{\widetilde\gamma'}\widetilde N,\widetilde N\times\widetilde\gamma'\rangle=-\langle\widetilde A\alpha',J\alpha'\rangle=\langle JA\alpha'-HJ\alpha',J\alpha'\rangle\\
&=\langle A\alpha',\alpha'\rangle-H=-\langle\overline\nabla_{\gamma'}N,\gamma'\rangle-H=\langle\overline\nabla_{\gamma'}\gamma',N\rangle-H=\kappa_g^P-H.\qedhere
\end{align*}
\end{proof}

\begin{figure}[htb]
\begin{center}
\includegraphics[width=0.85\textwidth]{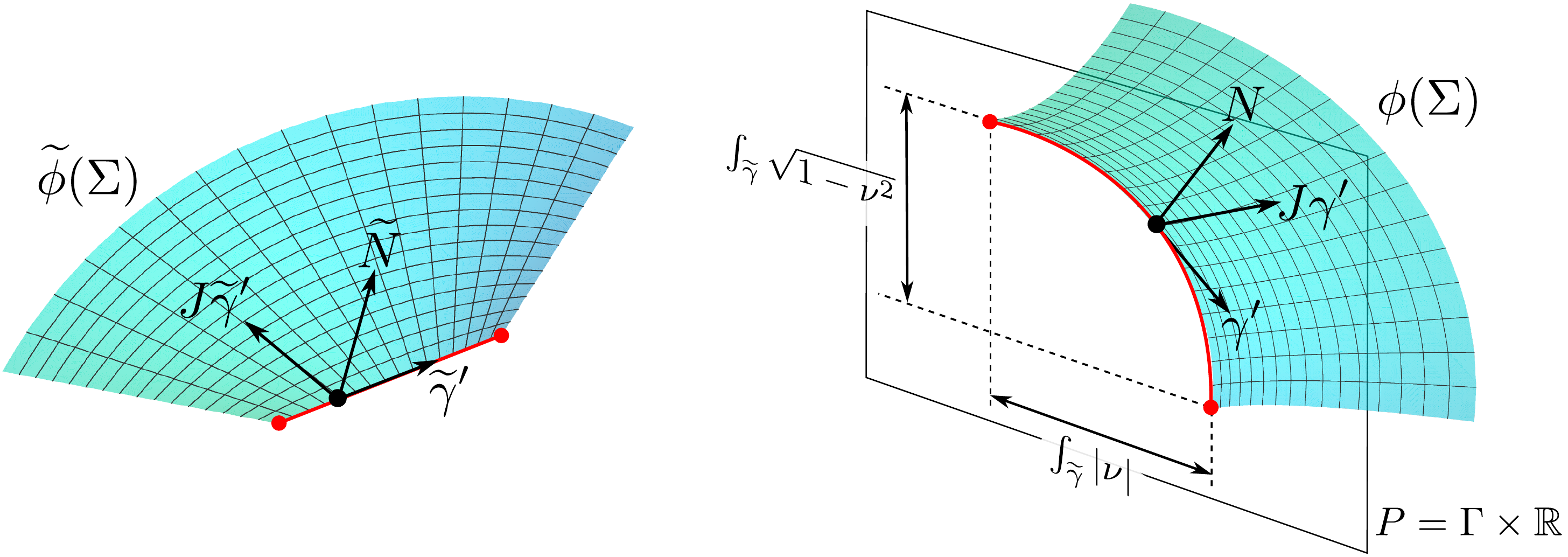}
\caption{Conjugate surfaces $\widetilde\phi(\Sigma)$ and $\phi(\Sigma)$ in a neighborhood of a horizontal geodesic. The quantities $\int_{\widetilde\gamma}|\nu|$ and $\int_{\widetilde\gamma}\sqrt{1-\nu^2}$ indicate the lengths of the projections of $\gamma$ to the factors $\M^2(\kappa)$ and $\R$, respectively (with multiplicity).}\label{fig:orientation-hor} 
\end{center}\end{figure}

\begin{lemma}[Vertical geodesics]\label{lem:vertical-geodesics}
If $\widetilde\gamma$ is a vertical segment, then $\gamma$ is contained in a totally geodesic horizontal slice $P=\M^2(\kappa)\times\{t_0\}$ that the immersion meets orthogonally. 

Assume that $\widetilde\gamma'=\widetilde\xi$ and write $\widetilde N_{\gamma(t)}=\cos(\theta(t)) X_1(t)+\sin(\theta(t))X_2(t)$ for some $\theta\in\mathcal C^\infty([0,\ell])$, where $\{X_1,X_2,\widetilde\xi\}$ is a positively oriented orthonormal frame along $\widetilde\gamma$ such that $X_1$ and $X_2$ project to constant tangent vectors of $\M^2(\kappa)$.
\begin{enumerate}[label=\emph{(\alph*)}]
    \item The curve $\gamma$ has geodesic curvature $\kappa_g^P=2H-\theta'$ as a curve of $P$ with respect to $N$ as conormal.

    \item Assume that $\alpha\subset\partial R$ for some region $R\subseteq\Sigma$ on which $\nu>0$, and let $\widetilde\Omega\subset\M^2(4H^2+\kappa)$ and $\Omega\subset\M^2(\kappa)$ be the (possibly non-embedded) domains over which $\widetilde\phi(R)$ and $\phi(R)$ are multigraphs, respectively.
    \begin{itemize}
        \item If $\theta'>0$, then $J\widetilde\gamma'$ (resp.\ $J\gamma'=\xi$) is a unit outer conormal to $\partial\widetilde\Omega$ (resp.\ $\partial\Omega$) along $\widetilde\gamma$ (resp.\ $\gamma$), $N$ points to the interior of $\Omega$ along $\gamma$, and $\phi(R)$ lies in $\M^2(\kappa)\times(-\infty,t_0]$ locally around $\gamma$ (see Figure~\ref{fig:orientation}, top). 
        
        \item If $\theta'<0$, then $J\widetilde\gamma'$ (resp.\ $J\gamma'=\xi$) is a unit inner conormal to $\partial\widetilde\Omega$ (resp.\ $\Omega$) along $\widetilde\gamma$ (resp.\ $\gamma$), $N$ points to the exterior of $\Omega$ along $\gamma$, and $\phi(R)$ lies in $\M^2(\kappa)\times[t_0,+\infty)$ locally around $\gamma$ (see Figure~\ref{fig:orientation}, bottom).
    \end{itemize}
\end{enumerate}
\end{lemma}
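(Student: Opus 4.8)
The plan is to mirror the strategy of Lemma~\ref{lem:horizontal-geodesics}, reading off the geometry of $\gamma$ from the conjugation data $(A,T,J,\nu)=(J\widetilde A+H\,\id,J\widetilde T,J,\widetilde\nu)$ together with the orientation rule~\eqref{eqn:orientation-J}. First I would record the consequences of $\widetilde\gamma$ being vertical. Since $\widetilde\gamma'=\widetilde\xi$ is tangent to $\widetilde\phi(\Sigma)$, we have $\widetilde\nu=\langle\widetilde\xi,\widetilde N\rangle=0$ along $\widetilde\gamma$, hence $\nu=\widetilde\nu=0$ there, and $\|\widetilde T\|^2+\widetilde\nu^2=1$ forces $\widetilde T=\alpha'$. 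Consequently $T=J\widetilde T=J\alpha'$ and, because $\nu=0$, $\df\phi(J\alpha')=\df\phi(T)=\xi$. Applying~\eqref{eqn:orientation-J} to $u=J\alpha'$ then gives $\gamma'=\df\phi(\alpha')=\xi\times N$, so $\langle\gamma',\xi\rangle=0$ and $\gamma$ lies in a slice $P=\M^2(\kappa)\times\{t_0\}$; moreover $\nu=0$ means $N\perp\xi$, i.e.\ $N$ is tangent to $P$, so $\phi$ meets $P$ orthogonally. As $P$ is totally geodesic, the geodesic curvature of $\gamma$ in $P$ with conormal $N$ is $\kappa_g^P=\langle\overline\nabla_{\gamma'}\gamma',N\rangle=\langle A\alpha',\alpha'\rangle$, exactly as in the proof of Lemma~\ref{lem:horizontal-geodesics}.

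For item~(a) I would compute $\widetilde A\alpha'$ from $\theta$. Along the fiber $\widetilde\gamma$, a horizontal frame $X_1,X_2$ projecting to constant base vectors is a fixed rotation of $E_1,E_2$, so~\eqref{eqn:levi-civita} (with $\widetilde\tau=H$) gives $\overline\nabla_{\widetilde\xi}X_1=-HX_2$ and $\overline\nabla_{\widetilde\xi}X_2=HX_1$. Differentiating $\widetilde N=\cos\theta\,X_1+\sin\theta\,X_2$ then produces $\overline\nabla_{\widetilde\gamma'}\widetilde N=(\theta'-H)(-\sin\theta\,X_1+\cos\theta\,X_2)$. Since $\df\widetilde\phi(J\alpha')=\widetilde N\times\widetilde\xi=\sin\theta\,X_1-\cos\theta\,X_2$, the Weingarten relation $\overline\nabla_{\widetilde\gamma'}\widetilde N=-\df\widetilde\phi(\widetilde A\alpha')$ yields $\widetilde A\alpha'=(\theta'-H)J\alpha'$. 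Feeding this into $A=J\widetilde A+H\,\id$ gives $A\alpha'=(2H-\theta')\alpha'$, whence $\kappa_g^P=\langle A\alpha',\alpha'\rangle=2H-\theta'$.

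For item~(b) the engine is the angle function. From $A\alpha'=(2H-\theta')\alpha'$ and $\operatorname{tr}A=2H$ I get $A(J\alpha')=\theta'\,J\alpha'$, so the structure equation $\nabla\nu=-AT-\tau JT$ with $\tau=0$ and $T=J\alpha'$ becomes $\nabla\nu=-\theta'\,J\alpha'$ along $\gamma$. Because $\nu>0$ on $R$ and $\nu=0$ on $\gamma\subset\partial R$, the vector $\nabla\nu$ points into $R$; comparing signs, $J\alpha'$ is the \emph{outer} conormal of $\partial R$ when $\theta'>0$ and the \emph{inner} one when $\theta'<0$. Since $\nu=\widetilde\nu>0$, the submersion restricts to an orientation-preserving local diffeomorphism on each multigraph, so these conormals are carried to the corresponding conormals of $\partial\widetilde\Omega$ and $\partial\Omega$; concretely $\df\widetilde\phi(J\alpha')=\widetilde N\times\widetilde\xi$ is horizontal and projects to the conormal of $\partial\widetilde\Omega$, while $\df\phi(J\alpha')=\xi$ is the (vertical) conormal of the $\phi$-multigraph. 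Finally, the height function $h=z\circ\phi$ satisfies $\nabla h=T=J\alpha'$, so $h$ decreases into $R$ exactly when $\theta'>0$, giving $\phi(R)\subset\M^2(\kappa)\times(-\infty,t_0]$ (and the reverse inclusion when $\theta'<0$); writing $\phi(R)$ as a graph $z=u$ over $\Omega$ near $\gamma$, the horizontal part of the upward normal $N$ is proportional to $-\nabla u$, which points into $\Omega$ precisely in the case $\theta'>0$, yielding the stated direction of $N$.

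The main obstacle I anticipate is the orientation bookkeeping in item~(b) rather than any hard estimate. The delicate point is that along $\gamma$ the $\phi$-surface is \emph{vertical} ($\nu=0$), so its surface conormal is $\xi$ and projects trivially to the base; thus one cannot read off the side of $\Omega$ by naively pushing the conormal of $\partial R$ through $\pi$, and must instead argue through $\nabla\nu$, through the height function $h$, and through the graph gradient $\nabla u$ to pin down simultaneously the inner/outer character of the conormals, the position of $\phi(R)$ relative to $P$, and the direction of $N$. Keeping the sign of $\theta'$ consistent across all three conclusions is where the care is needed.
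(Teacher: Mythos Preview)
Your proposal is correct. The paper does not actually supply a proof of this lemma (it refers to scattered references and to Figure~\ref{fig:orientation}); it only proves item~(b) of the companion Lemma~\ref{lem:horizontal-geodesics}, and your computation for~(a) is exactly the vertical-geodesic analogue of that argument, yielding $\widetilde A\alpha'=(\theta'-H)J\alpha'$ and hence $A\alpha'=(2H-\theta')\alpha'$.

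Your treatment of~(b) via $\nabla\nu=-\theta'\,J\alpha'$ and $\nabla h=T=J\alpha'$ is the natural intrinsic route and pins down simultaneously the inner/outer character of the conormal and the position of $\phi(R)$ relative to $P$. The one place I would tighten is the last step (``the horizontal part of the upward normal is proportional to $-\nabla u$''): since $\nu=0$ along $\gamma$, the graph description degenerates there, and invoking $\nabla u$ is a bit indirect. A cleaner alternative is to take a curve $c$ in $R$ with $c'(0)=-J\alpha'$ (inward, since you have already shown $J\alpha'$ is outward when $\theta'>0$) and expand $\pi\circ\phi\circ c$ to second order: the first-order term is $\df\pi(-\xi)=0$, while the normal part of the acceleration is $\langle A(J\alpha'),J\alpha'\rangle\,N=\theta'\,N$, so $\pi(\phi(c(s)))$ moves into $\Omega$ in the direction of $\theta'\,\df\pi(N)$. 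This shows directly that $N$ points into $\Omega$ when $\theta'>0$ (and out when $\theta'<0$), without passing through the graph function.
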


\begin{figure}[hbp]
\begin{center}
\includegraphics[width=0.85\textwidth]{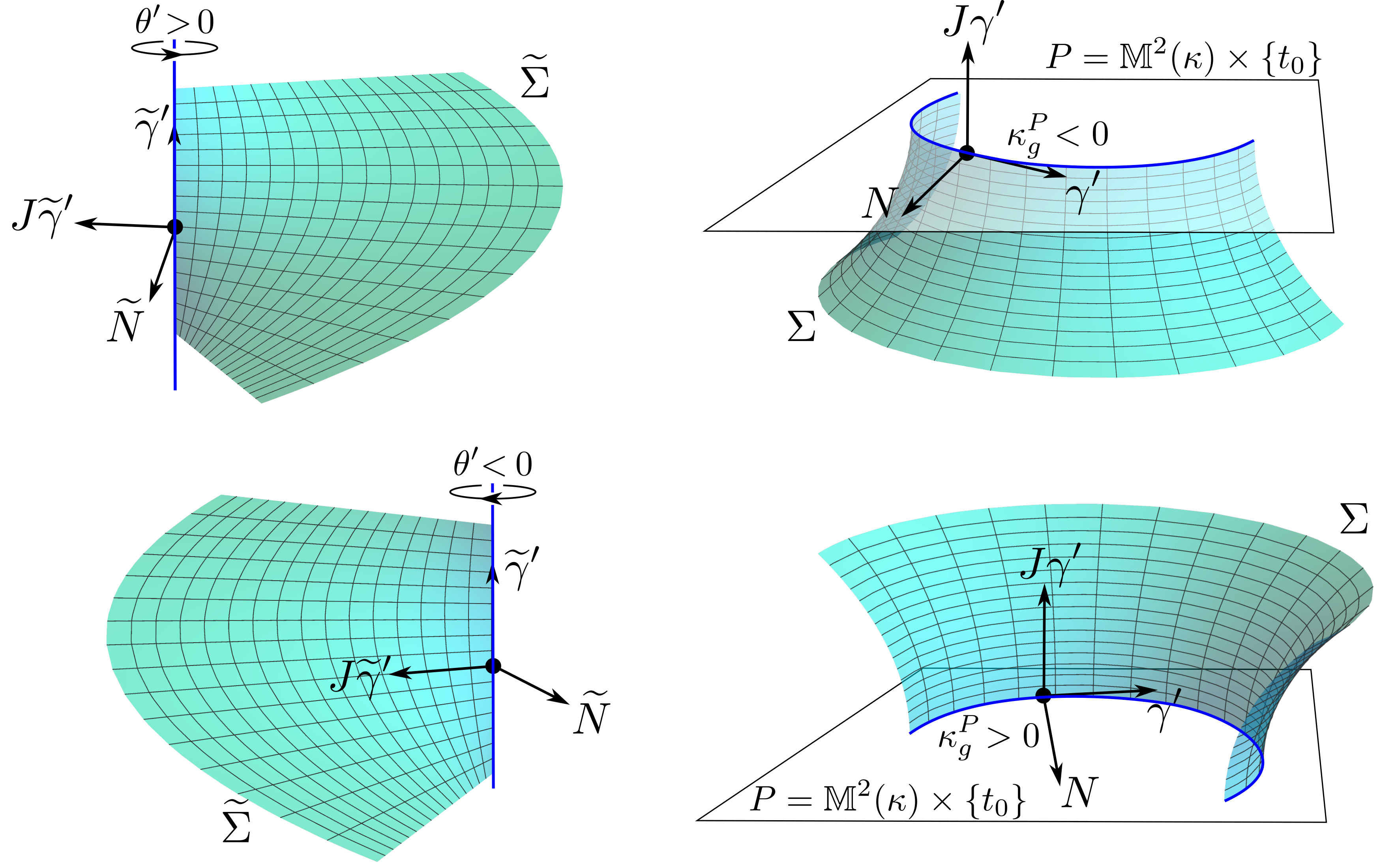}
\caption{Orientation of conjugate surfaces $\widetilde\phi(\Sigma)$ and $\phi(\Sigma)$ according to the direction of rotation of $\widetilde N$ along a vertical geodesic $\widetilde\gamma$ such that $\widetilde\gamma'=\widetilde\xi$. Recall that this is true on the boundary of a region where $\nu>0$.}\label{fig:orientation}
\end{center}\end{figure}

The function $\theta\in\mathcal{C}^{\infty}([0,\ell])$ in Lemmas~\ref{lem:horizontal-geodesics} and~\ref{lem:vertical-geodesics} will be called the \emph{angle of rotation} of $\widetilde N$ along $\widetilde\gamma$. In general, it is impossible to obtain $\theta$ explicitly, but $\theta(\ell)-\theta(0)=\int_0^\ell\theta'(t)\df t$ gives useful information because it is related to $\int_\gamma\kappa_g^P$, the total geodesic curvature of $\gamma$ by means of the formulas in the above statements. It is also important to remark that if $P$ is a vertical plane (Lemma~\ref{lem:horizontal-geodesics}), then $P$ is flat and $\int_\gamma\kappa_g^P$ is the total rotation of the normal of $\gamma$ as a curve of $P$. However, if $P$ is a horizontal slice (Lemma~\ref{lem:vertical-geodesics}), then $P$ has constant curvature $\kappa$, and there is no clear geometric interpretation of $\int_\gamma\kappa_g^P$ if $\kappa\neq 0$. Gau\ss--Bonnet theorem offers some information, as we will discuss in the constructions.

\begin{remark}\label{rmk:horocycle-rotation}
If $\widetilde\gamma$ is vertical, the curvature $\kappa_g^P$ can be related to the rotation angle of $\gamma$ with respect to foliations of $\mathbb{M}^2(\kappa)$ by curves. This idea was devised by Plehnert~\cite[Lem.~4.7]{Ple12} for a foliation of $\mathbb{H}^2$ by horocycles, but alike formulas show up for other foliations of $\mathbb{M}^2(\kappa)$ by curves of constant curvature.

In the halfspace model of $\mathbb{H}^2(\kappa)\times\mathbb{R}$, we can use the frame $\{E_1,E_2,E_3\}$ given by~\eqref{eqn:halfspace-frame} to express
\begin{equation}\label{eqn:rotation-horocycles}
 \gamma'(t)=\cos(\psi(t))E_1+\sin(\psi(t))E_2,
\end{equation}
where we call $\psi\in\mathcal C^\infty([0,\ell])$ the \emph{angle of rotation} of $\gamma$ with respect to a foliation by horocycles (these horocycles are the integral curves of $E_1$). Using~\eqref{eqn:levi-civita-halfspace} and taking derivatives in~\eqref{eqn:rotation-horocycles}, we get $\overline\nabla_{\gamma'}\gamma'=(\psi'+\sqrt{-\kappa}\cos(\psi))(-\sin(\psi)E_1+\cos(\psi)E_2)$. On the other hand, as the surface meets $P$ orthogonally, we infer that $N=\gamma'\times J\gamma'=\gamma'\times E_3=\sin(\psi)E_1-\cos(\psi)E_2$, so we reach the desired formula 
\[\kappa_g^P=\langle\overline\nabla_{\gamma'}\gamma', N\rangle=-\psi'-\sqrt{-\kappa}\cos(\psi).\]
This also makes sense in the limit case $\kappa=0$, in which $\psi$ becomes the angle rotation of the normal of $\gamma$ as a plane curve, whose derivative is well known to agree with the curvature of $\gamma$.
\end{remark}

\subsubsection{The control of the angle function}\label{subsubsec:angle-control}
Lemma~\ref{lem:horizontal-geodesics} exposes the convenience of knowing at which points the angle function takes values $0$ or $\pm 1$ in order to obtain geometric information about the conjugate curves. We collect here some ideas that prove useful when dealing with this type of analysis in different constructions. They are complementary to the maximum principles (in the interior or in the boundary) for minimal surfaces. To simplify the notation, we will denote by $\widetilde\Sigma$ and $\Sigma$ the (immersed) conjugate surfaces.
\begin{itemize}
  \item \textbf{Strategy 1} (points where $\nu=0$). If $p\in\widetilde\Sigma$ is an interior point with $\nu(p)=0$, then compare with the vertical cylinder $T_p$ tangent to $\Sigma$ at $p$.

  The intersection of minimal surfaces consists of a set of regular curves meeting transversally at some isolated points, with at least four rays emanating from tangency points. More generally, there are $2n+2$ rays emanating from a point with order of contact $n$, i.e., where all derivatives of both surfaces (as graphs over their tangent planes) coincide up to order $n$, see~\cite[Lem.~2]{MY82b}. Therefore, this strategy consists in reaching a contradiction if there are too many rays emanating from $p$ and ending in $\partial\widetilde\Sigma\cap T_p$, which generically consists of very few points. However, we must ensure that they cannot enclose regions either of $\widetilde\Sigma$ or in $T_p$ by some maximum principle at the interior or at infinity. Let us also comment about two variants of this strategy:
  \begin{itemize}
    \item Comparing with $T_p$ also gives information if $p\in\partial\widetilde\Sigma$ by applying the boundary maximum principle or by considering $p$ as an interior point after extending the surface across that boundary.
    \item In the case there are two or more curves where $\nu=0$ meeting at $p$, then $\nabla\nu(p)=0$ and there are at least six rays in $\widetilde\Sigma\cap T_p$ emanating from $p$, see~\cite[Lem.~3.5]{MT20}. Note that $\nu$ lies in the kernel of the stability operator given by~\eqref{eqn:stability-op}, which is a second order linear elliptic operator, and hence its zeros also form set of regular curves intersecting transversally at some isolated points, see~\cite{Bers} and the references therein.
  \end{itemize}
  \item \textbf{Strategy 2} (points where $\nu=\pm 1$). The is pretty similar to Strategy 1, but we compare with the umbrella $\mathcal U_p$ centered at an interior or boundary point $p\in\widetilde\Sigma$ where $\nu(p)=\pm 1$, see Example~\ref{ex:umbrellas}. Since $\mathcal U_p$ is tangent to $\widetilde\Sigma$, at least four rays in $\mathcal U_p\cap\widetilde\Sigma$ arise from $p$ and we have to figure out where they end.

  If $p$ belongs to a horizontal boundary component $\widetilde h\subset\partial\widetilde\Sigma$, then it is often useful to compare with the invariant surface $\mathcal I_{\widetilde h}$ defined in Example~\ref{ex:invariant-I}, whose angle is constant $1$ along $\widetilde h$. In particular, some interior arc in $\mathcal I_{\widetilde h}\cap\widetilde\Sigma$ emanates from each point of $\widetilde h$ where $\nu=\pm 1$.

  \item\textbf{Strategy 3} (comparison along the boundary). Sometimes it is possible to find a minimal surface $B$ (also called a \emph{barrier}) that contains a horizontal component $\widetilde h\subset\partial\widetilde\Sigma$ and stays (locally around $\widetilde h$) at one side of $\widetilde\Sigma$. The boundary maximum principle imposes a restriction on the normal $\widetilde N$ of $\widetilde\Sigma$ along $\widetilde h$. If we are able to control the points where $\nu=0$ or $\nu=\pm 1$, the existence of such a barrier often translates into estimates for the angle function $\nu$.

  This might become a bit subtle. For instance, if we know that $\widetilde\Sigma$ and $B$ are multigraphs with angle functions $\nu>0$ and $\nu_B>0$, respectively, and $\Sigma$ stays above $B$ along $\widetilde h$, then this does not necessarily imply that $\nu_B<\nu$ or $\nu<\nu_B$. However, if we know additionally that the invariant surface $\mathcal I_{\widetilde h}$ stays below $B$ (resp. above $\widetilde\Sigma$), then it follows that $0<\nu<\nu_B$ (resp. $\nu_B<\nu<1$).

  Note that this strategy also applies to vertical geodesics, in which case we obtain an estimate for the angle of rotation of the normal (instead of the angle function), which is bounded by the angle of rotation of the barrier.
\end{itemize}

\subsubsection{Completion and embeddedness}\label{sec:completion-embeddedness}

A minimal surface in $\E(\kappa,\tau)$ containing a horizontal or a vertical geodesic segment in the interior must be axially symmetric with respect to that segment, which follows from the work of Leung~\cite{Leung}. Sa Earp and Toubiana~\cite{SaT} have also proved that a minimal surface which has a vertical or horizontal geodesic segment in the boundary and is of class $\mathcal C^1$ up to that boundary can be analytically extended by axial symmetry. By comparing the fundamental data of axially symmetric surfaces and mirror symmetric surfaces (see Remark~\ref{rmk:data-transformations}), it is not difficult to show that a minimal surface in $\E(4H^2+\kappa,H)$ axially symmetric with respect to a geodesic segment $\gamma$ has a conjugate $H$-surface in $\mathbb{M}^2(\kappa)\times\R$ mirror symmetric with respect to the totally geodesic surface $P$ containing the conjugate curve $\gamma$. Also, if two boundary components of $\partial\Sigma$ meet in a vertex with angle $\frac\pi k$ for some $k\geq 2$, then we can apply successive reflections, and the possible singularity at the vertex can be removed by means of a result of Choi and Schoen~\cite[Prop.~1]{CS}. All in all, we can state the following result.

\begin{proposition}\label{prop:conjugation-completion-by-simmetries}
Let $\widetilde\phi:\Sigma\to\E(4H^2+\kappa,H)$ be a minimal immersion with boundary a piecewise regular geodesic polygon consisting of vertical and horizontal geodesic arcs (of finite or infinite length). Assume that $\widetilde\phi$ is of class $\mathcal C^1$ up to the interior of each boundary component and the interior angle at each vertex is an integer divisor of $\pi$. 
\begin{enumerate}[label=\emph{(\alph*)}]
     \item The immersion $\widetilde\phi$ can be extended to a complete minimal immersion $\widetilde\phi^*$ by successive axial symmetries about the boundary components.
     \item The conjugate immersion $\phi:\Sigma\to\M^2(\kappa)\times\R$ can be extended to a complete $H$-immersion $\phi^*$ by successive mirror symmetries about the boundary components.
 \end{enumerate}
\end{proposition}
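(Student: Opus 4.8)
The plan is to prove (a) by a Schwarz-type reflection principle and then transport the conclusion to (b) through the conjugation. First I would treat part (a) one boundary arc at a time. Each boundary component is a horizontal or vertical geodesic segment and $\widetilde\phi$ is of class $\mathcal C^1$ up to its interior, so the extension result of Sa Earp and Toubiana applies directly: the surface extends analytically across that arc by the axial symmetry (the rotation of angle $\pi$) about the geodesic containing it. Such an axial symmetry is a genuine ambient isometry of $\E(4H^2+\kappa,H)$ --- one of the orientation-preserving $\pi$-rotations about a horizontal or vertical geodesic described in the subsection on isometries --- so the reflected copy is again a minimal immersion agreeing with $\widetilde\phi$ to first order along the shared arc, and minimality, an analytic condition, propagates across. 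Iterating over every boundary arc, and over all arcs produced along the way, yields an immersion defined on the surface $\Sigma^*$ obtained by gluing copies of $\Sigma$ along matched boundary arcs.

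The only points at which this gluing could fail to be smooth are the vertices. At a vertex $p$ where two arcs meet with interior angle $\pi/k$, the two axial symmetries about the incident geodesics fix $p$ and generate a dihedral group of order $2k$; since $2k\cdot\tfrac{\pi}{k}=2\pi$, the $2k$ images of the wedge exactly tile a punctured neighborhood of the image of $p$, so the extended surface is a punctured minimal disk there with a continuous extension across $p$. The possible singularity is then removed by the result of Choi and Schoen cited above, giving a smooth extension. Having removed all boundary in this way, $\widetilde\phi^*$ is a complete minimal immersion, which proves (a); for arcs of infinite length the matching occurs at infinity and produces ends rather than interior vertices, so no removable-singularity issue arises there.

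For part (b) I would transport the entire picture through the conjugation. By Lemmas~\ref{lem:horizontal-geodesics} and~\ref{lem:vertical-geodesics}, the conjugate curve of each boundary geodesic lies in a totally geodesic surface $P$ of $\M^2(\kappa)\times\R$ --- a vertical plane for a horizontal geodesic and a horizontal slice for a vertical one --- which $\phi$ meets orthogonally. The fundamental-data comparison recorded before the statement (via Remark~\ref{rmk:data-transformations}) shows that the axial symmetry of $\widetilde\phi$ about a boundary geodesic corresponds under conjugation exactly to the mirror symmetry of $\phi$ about $P$, and these mirror symmetries exist precisely because the target is a product space ($\tau=0$). Hence each analytic axial extension of $\widetilde\phi$ dualizes to an analytic mirror extension of $\phi$; the two reflection groups satisfy the same relations, so the gluings are combinatorially identical and $\phi^*$ is well-defined on the same $\Sigma^*$. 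Since sister immersions induce the same metric, $\phi^*$ is complete because $\widetilde\phi^*$ is, proving (b).

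The step I expect to be the main obstacle is the global consistency of the successive reflections together with the bookkeeping that intertwines the two reflection groups: one must check that developing copies of $\Sigma$ along different chains of boundary arcs agree on overlaps, so that $\widetilde\phi^*$ (and hence $\phi^*$) is a single-valued immersion, and simultaneously confirm through Remark~\ref{rmk:data-transformations} that the orientation-preserving axial symmetries on the $\E(4H^2+\kappa,H)$ side genuinely match the orientation-reversing mirror symmetries on the product side for the phase angle $\theta=\tfrac{\pi}{2}$. By contrast, the vertex analysis and the appeal to Choi--Schoen are local and routine once the angle condition $\pi/k$ is available.
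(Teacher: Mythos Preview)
Your proposal is correct and follows essentially the same approach as the paper: the paragraph preceding the proposition is the paper's entire argument, invoking Sa Earp--Toubiana for the analytic extension by axial symmetry, the fundamental-data comparison of Remark~\ref{rmk:data-transformations} to pass from axial to mirror symmetry under conjugation, and Choi--Schoen to remove the vertex singularities. Your write-up is in fact more detailed than the paper's, which leaves the global consistency of the reflections implicit.
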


Once a complete surface is obtained by successive mirror symmetries, it is natural to investigate whether it is embedded or not. This is one of the toughest problems in conjugate constructions, because it must be solved just in terms of the prescribed geodesic polygon, since none of the immersions $\widetilde\phi$ and $\phi$ are explicitly known. It is often convenient to relax this assumption and consider \emph{Alexandrov-embeddedness} instead, which means that the immersed surface can be recovered as the boundary of a $3$-manifold immersed (but possibly non-embedded) in $\E(\kappa,\tau)$. To simplify future discussions, we will distinguish the two possible situations where non-embeddedness occurs in conjugate constructions:
\begin{itemize}
    \item \textbf{Type I self-intersections:} the immersion $\phi$ is not an embedding, i.e., the fundamental piece has self-intersections.
    \item \textbf{Type II self-intersections:} the immersion $\phi$ is an embedding but the completion $\phi^*$ has self-intersections (see Figures~\ref{fig:horizontal-Delaunay-embeddedness-north-pole} and~\ref{fig:non-emb}).
\end{itemize}

On the one hand, self-intersections of type I can be prevented if the boundary of $\phi$ projects one-to-one to $\mathbb{M}^2(\kappa)$ by a standard application of the maximum principle (see also Proposition~\ref{prop:uniqueness}). Convexity has also something to say in this respect; e.g., the following result for multigraphs follows from the maximum principle and from Gau\ss--Bonnet theorem and assumes that the angle at one vertex is convex to prove that the conjugate vertical geodesic is embedded.

\begin{proposition}[{\cite[Lem.~2.1]{CMR}}]\label{prop:rotation-curvature}
Let $\widetilde\phi:\Sigma\to\mathbb{E}(4H^2+\kappa,H)$ and $\phi:\Sigma\to\mathbb{H}^2(\kappa)\times\mathbb{R}$ be conjugate multigraphs, where $\kappa<0$ and $4H^2+\kappa\leq 0$. Assume that $\nu>0$ and $\widetilde\gamma$ is a vertical geodesic such that $\widetilde\gamma'=\xi$. If $\theta'>0$ and $\int_{\widetilde\gamma}\theta'\leq\pi$, then $\gamma$ is embedded.
\end{proposition}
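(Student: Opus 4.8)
The plan is to reduce the statement to a planar question in the totally geodesic slice and then to establish that $\gamma$ is a graph over a suitable foliation of $\mathbb{H}^2(\kappa)$.

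First I would apply Lemma~\ref{lem:vertical-geodesics}: since $\widetilde\gamma$ is vertical with $\widetilde\gamma'=\xi$, the conjugate curve $\gamma$ lies in a horizontal slice $P=\mathbb{H}^2(\kappa)\times\{t_0\}$, which is intrinsically a copy of $\mathbb{H}^2(\kappa)$, and its geodesic curvature there is $\kappa_g^P=2H-\theta'$ with respect to the conormal $N$. Moreover, since $\nu>0$ and $\theta'>0$, the same lemma tells us that $\xi=J\gamma'$ is the outer conormal to $\partial\Omega$ along $\gamma$ and that $N$ points into the multigraph domain $\Omega$; this fixes the orientation of $\gamma$ as a boundary arc of $\Omega$ and will determine the band in which the angle below must lie. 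The hypothesis $4H^2+\kappa\le 0$ translates into $2H\le\sqrt{-\kappa}$, whence the pointwise bound $\kappa_g^P=2H-\theta'<\sqrt{-\kappa}$ because $\theta'>0$: the curve $\gamma$ is everywhere less curved than the horocycles of $\mathbb{H}^2(\kappa)$.

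Next I would foliate $P\cong\mathbb{H}^2(\kappa)$ by horocycles based at a fixed ideal point and introduce the rotation angle $\psi$ of $\gamma$ with respect to this foliation as in Remark~\ref{rmk:horocycle-rotation}. Combining $\kappa_g^P=-\psi'-\sqrt{-\kappa}\cos\psi$ with $\kappa_g^P=2H-\theta'$ yields the first-order equation
\[\psi'=\theta'-2H-\sqrt{-\kappa}\cos\psi.\]
The goal is to prove that the Busemann coordinate is monotone along $\gamma$, i.e. that $\langle\gamma',E_2\rangle=\sin\psi$ never changes sign; this is exactly the condition that each horocycle meets $\gamma$ at most once, whence $\gamma$ is a graph over the foliation and therefore embedded. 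Equivalently, I must confine $\psi$ to one of the open bands $(m\pi,(m+1)\pi)$.

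The heart of the matter is this confinement, and it is where both hypotheses enter. At the wall $\psi\equiv\pi$ one computes $\psi'=\theta'-2H+\sqrt{-\kappa}\ge\theta'>0$ \emph{precisely because} $2H\le\sqrt{-\kappa}$; thus the bound $4H^2+\kappa\le 0$ makes this wall of the band an unconditional barrier that $\gamma$ can never cross. At the opposite wall $\psi\equiv 0\ (\mathrm{mod}\ 2\pi)$ one gets $\psi'=\theta'-2H-\sqrt{-\kappa}$, which repels inward whenever $\theta'<2H+\sqrt{-\kappa}$ but need not do so if $\theta'$ spikes; here the budget $\int_{\widetilde\gamma}\theta'\le\pi$ controls the total rotation of $\gamma$ relative to the foliation and, through a rotation–index (Gauß–Bonnet) comparison on the region that a returning arc would enclose, prevents $\psi$ from escaping across this remaining wall. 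The orientation information from Lemma~\ref{lem:vertical-geodesics}(b) pins down which band contains $\psi(0)$, so that the unconditional barrier sits on the correct side. Once $\psi$ is trapped in a single band, $\sin\psi$ keeps its sign and embeddedness follows.

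The main obstacle I anticipate is exactly this global confinement of $\psi$: the pointwise curvature bound only secures one wall of the band, so one must use the finite turning budget $\int_{\widetilde\gamma}\theta'\le\pi$ sharply—together with the maximum principle against the horocycle foliation to exclude interior tangencies—in order to forbid $\gamma$ from turning back on itself through the other wall. The careful orientation bookkeeping inherited from Lemma~\ref{lem:vertical-geodesics} is equally delicate, since it is what guarantees that the unconditional barrier lies on the side toward which $\gamma$ would have to bend in order to self-intersect.
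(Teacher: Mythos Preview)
Your setup via Lemma~\ref{lem:vertical-geodesics} and the ODE $\psi'=\theta'-2H-\sqrt{-\kappa}\cos\psi$ from Remark~\ref{rmk:horocycle-rotation} is correct, and you rightly observe that $4H^2+\kappa\le0$ makes $\psi\equiv\pi$ a one-sided barrier. But the plan to conclude embeddedness by confining $\psi$ to a single band—i.e.\ by proving that $\gamma$ is a graph over the horocycle foliation—is stronger than what is needed and is not established: the opposite wall is not a barrier when $\theta'$ spikes, and crossing that wall does not by itself create a self-intersection, so ODE confinement alone cannot close the argument. Your appeal to a ``maximum principle against the horocycle foliation'' is also misplaced: horocycles are curves, not $H$-surfaces, so there is no such comparison available; the maximum principle referred to in the paper is the one for the $H$-multigraph $\phi(\Sigma)$.

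The argument the paper points to (and which is carried out in~\cite{CMR}) is more direct and does not pass through $\psi$. One supposes $\gamma$ self-intersects and takes the first simple loop $L\subset\gamma$ bounding a disk $D\subset\mathbb H^2(\kappa)$. The maximum principle for the multigraph $\phi(\Sigma)$, together with the orientation information in Lemma~\ref{lem:vertical-geodesics}(b) (namely that $N$ points into $\Omega$ and the surface lies on one side of the slice), forces $-N$ to be the inner conormal to $D$ along $L$. Gauß–Bonnet on $D$ then reads
\[\int_L(\theta'-2H)+\beta+\kappa\,\mathrm{Area}(D)=2\pi,\]
with $0<\beta<\pi$ the exterior angle at the corner; since $\kappa<0$ and $H\ge 0$ this yields $\int_L\theta'>\pi$, contradicting $\int_{\widetilde\gamma}\theta'\le\pi$. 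You do gesture at this endgame (``Gauß–Bonnet comparison on the region a returning arc would enclose''), so the genuinely missing piece in your outline is the orientation step, and it is the maximum principle for the $H$-surface—not a horocycle barrier—that supplies it.
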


In the case $H=0$, Hauswirth, Sa Earp and Toubiana~\cite[Thm.~14]{HST} extended a theorem of Krust in $\R^3$ (unpublished, see \cite[Theorem~2.4.1]{K89b}) exploiting the convexity of the domain. Notice that a similar property does not hold true in general if $H>0$, as we shall discuss in \S\ref{sec:knoids}.

\begin{proposition}[Krust property]\label{prop:Krust}
Let $\widetilde\phi:\Sigma\to\mathbb{M}^2(\kappa)\times\mathbb{R}$ and $\phi:\Sigma\to\mathbb{M}^2(\kappa)\times\mathbb{R}$ be sister minimal immersions. If $\kappa\leq 0$ and $\widetilde\phi$ is a graph over some convex domain $\Omega\subset\mathbb{M}^2(\kappa)$, then $\phi$ is a graph (and hence embedded).
\end{proposition}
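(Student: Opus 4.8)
The plan is to reduce the statement to the global injectivity of the horizontal projection $F=\pi\circ\phi:\Sigma\to\mathbb{M}^2(\kappa)$ and then to run a monotonicity argument along pulled-back geodesics, in the spirit of Krust's original theorem. First I would observe that local injectivity is automatic: since $\widetilde\phi$ is a graph it is everywhere transverse to $\xi$, so $\widetilde\nu$ never vanishes, and conjugation forces $\nu=\widetilde\nu$ (set $H=\widetilde H=0$ and $\theta=\frac\pi2$ in~\eqref{eqn:fundamental-rotation}), whence $\nu$ never vanishes either. Thus $\phi$ is a multigraph and $F$ is a local diffeomorphism, so only global injectivity remains. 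Note also that $\Sigma\cong\Omega$ is simply connected (a convex domain of a Hadamard surface is contractible), so $\Sigma$ admits a global conformal parameter $z$ on a planar domain $D$, and both $\widetilde F=\pi\circ\widetilde\phi$ and $F$ are harmonic maps into $\mathbb{M}^2(\kappa)$ by minimality.

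Second, I would carry out the flat model case $\kappa=0$ in full, since it already contains the whole mechanism. View $\widetilde F$ and $F$ as $\C$-valued maps on $D$; minimality makes them harmonic, and the conjugation relation $\partial_z F=i\,\partial_z\widetilde F$ forces the auxiliary map $g:=F+i\widetilde F$ to be holomorphic with $g'=2i\,\partial_z\widetilde F$. Suppose $F(p)=F(q)$ with $p\neq q$. Because $\widetilde F$ is injective, $w:=\widetilde F(p)-\widetilde F(q)\neq 0$, and convexity of $\Omega$ lets me join $\widetilde F(q)$ to $\widetilde F(p)$ by the straight segment inside $\Omega$ and pull it back by $\widetilde F^{-1}$ to a path $\gamma$ from $q$ to $p$ along which $\widetilde F\circ\gamma$ has constant velocity $w$. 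After rotating coordinates so that $w>0$ is real, set $u=\partial_z\widetilde F\cdot\dot\gamma$ and $v=\partial_{\bar z}\widetilde F\cdot\dot{\bar\gamma}$; then $u+v=w$ pointwise, and $|u|>|v|$ because $\widetilde F$ is an orientation-preserving local diffeomorphism, so $\operatorname{Re}(u-v)>0$. A direct computation then gives $\tfrac{d}{dt}(F\circ\gamma)=2i\,\partial_z\widetilde F\cdot\dot\gamma-i\,w=i(u-v)$, so $\operatorname{Im}(F\circ\gamma)$ is strictly increasing, contradicting $F(p)=F(q)$.

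Finally, I would confront the genuinely new difficulty, the curved case $\kappa<0$, which is where the main obstacle lies. Here $\mathbb{M}^2(\kappa)=\h^2(\kappa)$ is a Hadamard surface, so convexity still yields a unique geodesic segment between any two points of $\Omega$ and the pull-back path $\gamma$ is well defined; what breaks down is the global holomorphic auxiliary function $g$, since the relation $\partial_z F=i\,\partial_z\widetilde F$ is literally valid only when the target is flat and must be reinterpreted through the pullback connection once $\kappa<0$. The correct replacement is the associate family of harmonic maps from~\cite{HST}: conjugate minimal immersions induce conjugate harmonic maps into $\h^2(\kappa)$ whose $(2,0)$-Hopf differentials are opposite (the factor $e^{-2i\theta}=-1$), and the first-order rotation by $i$ survives at the level of $\partial$-derivatives modulo parallel transport. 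I would therefore transport the monotone quantity $\operatorname{Im}(F\circ\gamma)$ to the intrinsic setting by differentiating the signed geodesic distance from $F\circ\gamma$ to the geodesic carrying $\widetilde F\circ\gamma$, and show that the conjugation relation together with $|u|>|v|$ keeps this derivative of one sign. The hard part is precisely controlling that derivative in the presence of curvature: differentiating distance-to-geodesic brings in the Hessian of the distance function, governed by Jacobi fields, and it is exactly the convexity of this function on a nonpositively curved surface that is needed to absorb the curvature terms. This is the step where the hypothesis $\kappa\le 0$ is indispensable and cannot be dropped, as the failure of the analogous property for $H>0$ discussed in~\S\ref{sec:knoids} already indicates.
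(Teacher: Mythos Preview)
The paper does not provide its own proof of this proposition; it quotes the result from Hauswirth, Sa Earp and Toubiana~\cite[Thm.~14]{HST} and uses it as a black box. So there is no ``paper's proof'' to compare against, and your proposal must be judged on its own merits.

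Your treatment of the flat case $\kappa=0$ is correct and is essentially the classical Krust argument: the identity $\partial_zF=i\,\partial_z\widetilde F$, $\partial_{\bar z}F=-i\,\partial_{\bar z}\widetilde F$ gives $\tfrac{d}{dt}(F\circ\gamma)=i(u-v)$, and the orientation condition $|u|>|v|$ combined with $u+v=w\in\mathbb R_{>0}$ indeed forces $\mathrm{Re}(u-v)>0$, hence strict monotonicity of $\mathrm{Im}(F\circ\gamma)$. No objection there.

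The gap is in the curved case $\kappa<0$. You correctly identify that the global holomorphic auxiliary function is lost and that one must work with the associate family of harmonic maps from~\cite{HST}, but from that point on you only \emph{describe} a strategy (``transport the monotone quantity\dots\ and show that\dots\ keeps this derivative of one sign'') and then explicitly flag the hard step without carrying it out. The assertion that convexity of the distance-to-geodesic function on a Hadamard surface is what ``absorbs the curvature terms'' is plausible heuristics, not a proof: you have not written down the derivative of your proposed monotone quantity, you have not identified where the conjugation relation (rotation of the $(2,0)$-part of the differential by $e^{i\theta}$) enters that expression, and you have not shown that the Jacobi-field contributions from the Hessian of the distance function have the right sign to be dominated by the $|u|>|v|$ term. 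In short, the $\kappa<0$ case remains a plan, not an argument. The actual proof in~\cite{HST} proceeds by an explicit computation in a model of $\mathbb H^2(\kappa)$ using the formulas relating the associate harmonic maps, rather than via distance-to-geodesic; if you want to pursue your geometric route you will need to make the monotone functional and its derivative completely explicit and verify the sign condition directly.
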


On the other hand, self-intersections of type II are usually treated by an \emph{a priori} subdivision of the target space $\mathbb{M}^2(\kappa)\times\R$ in disjoint congruent regions bounded by horizontal and vertical planes, so that the fundamental piece fits in one of those regions with its boundary lying in the boundary planes. If the desired symmetries and the tessellation can be chosen properly, then there will be one copy of the fundamental piece on each region, so that no self-intersections will be produced by reflection. The constructions in~\S\ref{sec:compact} and~\S\ref{sec:non-compact} follow this philosophy.

\subsection{Some classes of surfaces preserved by the sister correspondence}\label{subsec:surfaces-preserved-by-sister-correspondence}
Daniel correspondence behaves really well with respect to many geometric conditions, which enables the translation of some problems for $H$-surfaces in some $\E(\kappa,\tau)$ to the same problem in another space. The equivariance (see Prop.~\ref{prop:equivariant}) and the stability (see~\eqref{eqn:stability-op}) are good examples of this. Note that any condition that is purely intrinsic to the $H$-surface is also preserved because the correspondence is isometric, e.g., the total curvature or the area growth of intrinsic balls.

\subsubsection{Cylinders and multigraphs}
Vertical $H$-cylinders and vertical $H$-multigraphs are characterized by the conditions $\nu\equiv 0$ and $\nu\neq 0$, respectively. Since $\nu$ is preserved by the sister correspondence, then so are these families of $H$-surfaces. 

Complete multigraphs in $\E(\kappa,\tau)$ are indeed graphs~\cite[Thm.~1]{ManRod}, so in particular complete graphs also form a preserved class. However, it is not true in general that a sister surface of a graph is a graph, as we will discuss later in \S\ref{sec:knoids}. It is worth highlighting that any sister surface of an entire graph with critical mean curvature is again an entire graph~\cite[Cor.\ 4.6.4]{KIAS}, though this is not true in general in the subcritical case, as the following example shows.

\begin{example}
Let $0\leq H<\frac{1}{2}$ and consider the half-space model for $\h^2\times\R$. The surface $P_0$ given by $x^2+y^2=1$ is a vertical plane that can be parametrized isometrically as $(r,h)\mapsto(\tanh(r),\sech(r),h)$, where $r$ is the hyperbolic distance to $(1,0)$ in $\mathbb{H}^2$ and $h$ is the projection onto the factor $\R$. Consider the surface $\Sigma$ parametrized by $(t,r)\mapsto\left(e^t\tanh(r),e^t\sech(r),h(r)\right)$, where
\[h(r)=\frac{1}{\sqrt{1-4H^2}}\arcsinh\left(\frac{\sqrt{1-4H^2}-2H\sinh(r)}{2H+\sqrt{1-4H^2}\sinh(r)}\right)+\frac{2Hr}{\sqrt{1-4H^2}}\]
for $t\in\R$ and $r>-2\arctanh(2H)$. This surface is invariant by the hyperbolic translations $(x,y,z)\mapsto(e^tx,e^ty,z)$ and has constant mean curvature $H$. Since $h(r)$ tends to $+\infty$ and $0$ as $r\to-2\arctanh(2H)$ and $h\to+\infty$, respectively, we deduce that $\Sigma$ is a complete graph over the subset of $\h^2$ of points on the concave side of a complete curve $\alpha$ of constant curvature $2H$ (if $H=0$, then $\Sigma$ is the graph over a half-plane discovered by Sa Earp~\cite[Eqn.~(32)]{Sa08}). Also, $\Sigma$ is mirror symmetric about the vertical planes $P_t$ of equation $x^2+y^2=e^{2t}$ for all $t\in\R$, whence $\Sigma$ is foliated by the congruent curves $\gamma_t=P_t\cap\Sigma$. 

The conjugate minimal surface $\widetilde\Sigma$ in $\E(4H^2-1,H)$ is therefore foliated by the horizontal geodesics $\widetilde\gamma_t$. Note that two curves $\gamma_t$ and $\gamma_s$ lie at bounded distance from each other, and this distance is realized asymptotically at their endpoints in $\alpha\times\{+\infty\}$. We deduce that the $1$-parameter group of hyperbolic translations that leave $\Sigma$ invariant corresponds to a $1$-parameter group of parabolic screw motions that leave $\widetilde\Sigma$ invariant. In particular, we deduce that $\widetilde\Sigma$ is the entire minimal graph associated with the function $u(x,y)=lx$ for some $l\in\R$, also called a \emph{tilted plane}, see~\cite[\S3]{Castro}.

We remark that, if $H>0$, then changing $H$ to $-H$ also gives a non-congruent $H$-graph on the convex side of the equidistant curve $\alpha$. Up to ambient isometries, there are only three tilted planes according to $l\in\{-1,0,1\}$. The complete $H$-graphs that project onto the concave and convex sides of $\alpha$ should correspond to the cases $l=\pm 1$, because the tilted plane with $l=0$ is the parabolic helicoid $P_{0,4H^2+\kappa,H}$ so its conjugate $H$-surface in $\h^2\times\R$ is the entire graph $P_{H,-1,0}$. These last examples will be discussed in~\S\ref{sec:examples-zeroAR}. 
\end{example}

Given an immersion $\phi:\Sigma\to\E(\kappa,\tau)$, the Jacobian determinant of the projection $\pi|_{\Sigma}:\Sigma\to\mathbb{M}^2(\kappa)$ satisfies $|\mathrm{Jac}(\pi|_\Sigma)|=|\nu|$. By a simple change of variable, if $\nu\in L^1(\Sigma)$, this implies that $\int_\Sigma|\nu|$ is the area of $\pi(\Sigma)$ taking into account its possible multiplicity. In particular, this area is preserved by the correspondence. 

\begin{example}[{\cite[\S4.3]{MN}}]
Assume that $\kappa+4H^2<0$. An ideal Scherk graph in $\mathbb{E}(\kappa,\tau)$ is an $H$-graph defined over an ideal polygon $\Omega\subset\mathbb{H}^2(\kappa)$ whose boundary consists of $2n$ curves (with common ideal endpoints) and constant curvature alternatively equal to $2H$ and $-2H$ with respect to the inward-pointing normal to $\partial\Omega$. Some additional Jenkins--Serrin conditions on $\Omega$ are needed to ensure that such a graph exists, but this is not our point here. Ideal Scherk graphs can be characterized as the only complete $H$-multigraphs whose projection has finite area~\cite[Prop.~3]{MN}, in which case we have $\Area(\Omega)=\frac{2(n-1)\pi}{-\kappa-4H^2}$. Therefore, the class of ideal Scherk graphs is preserved by the sister correspondence, and so is the number $2n$ of boundary components.
\end{example}

\subsubsection{Surfaces with zero Abresch--Rosenberg differential}\label{sec:examples-zeroAR}
If $\Sigma$ is topologically a sphere, then it is simply-connected and the correspondence applies globally~\cite[Ex.~5.16]{Dan}. This means that $H$-spheres in $\E(\kappa,\tau)$, all of which are equivariant~\cite{AR}, form a preserved class of surfaces. More generally, a surface $\Sigma$ in $\E(\kappa,\tau)$ has zero Abresch--Rosenberg differential if and only if the following function $q\in\mathcal C^\infty(\Sigma)$ identically vanishes, see~\cite[Lem.~2.2]{ER}:
\begin{equation}\label{eqn:q}
\frac{q}{\kappa-4\tau^2}=\frac{1}{4}\left(\frac{4H^2+\kappa}{\kappa-4\tau^2}-\nu^2\right)\left(4H^2+\kappa+3(\kappa-4\tau^2)\nu^2-4K\right)-\|\nabla\nu\|^2,
\end{equation}
whence $q$ is trivially preserved by the correspondence. In~\cite{AR,Leite,ER} it is shown that surfaces with $q\equiv 0$ are equivariant, and in~\cite[Prop.~2.4]{DomMan} the sister correspondence was employed to prove that, except for vertical cylinders with critical mean curvature, they belong to one of the following three families:
\begin{itemize}
   \item The rotationally invariant surfaces $S_{H,\kappa,\tau}$, locally given in $M(\kappa,\tau)$ by 
   \begin{equation}\label{eqn:param-S}
   X(u,v)=\left(v\cos(u),v\sin(u),\int_0^v\frac{-4Hs\sqrt{1+\tau^2s^2}\,\df s}{(4+\kappa s^2)\sqrt{1-H^2s^2}}\right).
   \end{equation}
   This is half of an $H$-sphere if $4H^2+\kappa>0$ or an entire $H$-graph otherwise. 

   \item The screw-motion invariant surfaces $C_{H,\kappa,\tau}$ in the case $4H^2+\kappa<0$. They are complete $H$-surfaces globally parametrized in the model $M(\kappa,\tau)$ by
   \begin{equation}\label{eqn:param-C}
   \qquad X(u,v)\!=\!\left(v\cos(u),v\sin(u),\frac{4\tau}{\kappa}u\!\pm\!\!\int_{\frac{4H}{|\kappa|}}^v\!\frac{16H\sqrt{16\tau^2+\kappa^2s^2}\,\df s}{\kappa s(4+\kappa s^2)\sqrt{\kappa^2s^2-16H^2}}\right)\!,
   \end{equation}
   The family $C_{H,\kappa,\tau}$ contains helicoids ($H=0$) and rotational catenoids ($\tau=0$).

   \item The parabolic helicoids $P_{H,\kappa,\tau}$ in the case $4H^2+\kappa<0$. They are the entire $H$-graphs given in the halfspace model of $\mathbb{E}(\kappa,\tau)$ by the global parametrization
   \begin{equation}\label{eqn:param-P}
   X(u,v)=\left(u,v,a\log(v)\right),\qquad\text{with } a=\frac{2H\sqrt{-\kappa+4\tau^2}}{-\kappa\sqrt{-4H^2-\kappa}}.
   \end{equation}
\end{itemize}
Each of the families $S_{H,\kappa,\tau}$, $C_{H,\kappa,\tau}$, and $P_{H,\kappa,\tau}$ is preserved by the correspondence, see~\cite[Rmk.~2.5]{DomMan}. On the other hand, the parabolic helicoids $P_{H,\kappa,\tau}$ along with vertical planes and horizontal slices are the only elements of the following classes:
\begin{itemize}
  \item isoparametric $H$-surfaces,
  \item $H$-surfaces with constant principal curvatures,
  \item $H$-surfaces which are homogeneous by ambient isometries, and
  \item $H$-surfaces with constant angle function (see~\cite[Thm.2.2]{ER}).
\end{itemize}
Therefore, each of these geometric conditions is preserved by the correspondence. We remark that the family of $H$-surfaces with constant Gau\ss\ curvature is also preserved (the Gau\ss\ curvature is intrinsic), but it contains a few inhomogeneous examples, see~\cite[Thm.~3.3 and~4.6]{DDV}.

\subsubsection{Ruled minimal surfaces}\label{sec:spherical-helicoids}

Given a minimal immersion $\widetilde\phi:\Sigma\to\E(4H^2+\kappa,H)$ ruled by horizontal geodesics, Lemma~\ref{lem:horizontal-geodesics} implies that the conjugate $H$-immersion $\phi:\Sigma\to\mathbb{M}^2(\kappa)\times\mathbb{R}$ is foliated by lines of mirror symmetry lying in vertical planes. It easily follows that $\phi$ and $\widetilde{\phi}$ are equivariant, which establishes an isometric conjugation between minimal surfaces ruled by horizontal geodesics and $H$-surfaces in product spaces invariant by $1$-parameter groups of isometries acting in the horizontal direction (i.e., such that they fix the factor $\R$). Here we will focus on a few examples that will play the role of barriers in our constructions and that illustrate how conjugation works. Ruled minimal surfaces in any $\mathbb{E}(\kappa,\tau)$-space have been classified by Kim et al.\ in~\cite{KKLSY}.

\begin{example}[The umbrella $\mathcal U_p$]\label{ex:umbrellas}
Given $p\in\mathbb{E}(\widetilde\kappa,\widetilde\tau)$, the umbrella $\mathcal U_p$ is the union of all horizontal geodesics through $p$. It follows that $\mathcal U_p$ coincides with the rotationally invariant surface $S_{0,\widetilde\kappa,\widetilde\tau}$ defined in \S\ref{sec:examples-zeroAR}, whence it is an entire minimal graph if $\widetilde\kappa\leq 0$ and a minimal sphere if $\widetilde\kappa>0$. Note that $\mathcal U_p$ is everywhere horizontal if $\widetilde\tau=0$, it is horizontal only horizontal at its center $p$ if $\widetilde{\kappa} \leq 0$ and it is horizontal at $p$ and its \emph{antipodal} point if $\widetilde{\kappa} > 0$. In the model $M(\widetilde\kappa,\widetilde\tau)$, the umbrella centered at $p=(0,0,0)$ contains the graph of the function $u(x,y)=0$, though this is not the complete umbrella if $\widetilde\kappa>0$.

From~\S\ref{sec:examples-zeroAR}, we deduce that the conjugate $H$-surface of an umbrella in $\E(4H^2+\kappa,H)$ is the rotationally invariant surface $S_{H,\kappa,0}$ in $\mathbb{M}^2(\kappa)\times\R$, which is an $H$-sphere if $4H^2+\kappa>0$ or an entire $H$-graph otherwise.
\end{example}

\begin{example}[The invariant surface $\mathcal I_{\widetilde\gamma}$]\label{ex:invariant-I}
Given a horizontal geodesic $\widetilde\gamma$ in $\E(\widetilde\kappa,\widetilde\tau)$, let $\mathcal I_{\widetilde\gamma}$ be the surface formed by all horizontal geodesics orthogonal to $\widetilde\gamma$. It follows that $\mathcal I_{\widetilde\gamma}$ is an entire minimal graph if $\widetilde\kappa\leq 0$ or $\widetilde\tau=0$; otherwise, it is a minimal embedded torus that will be described as a spherical helicoid in Example~\ref{ex:spherical-helicoids}. The surface $\mathcal I_{\widetilde\gamma}$ is invariant by translations along $\widetilde\gamma$, as in Lemma~\ref{lem:translations}. If $\widetilde\gamma$ is the $x$-axis in the model $M(\widetilde\kappa,\widetilde\tau)$ and $\widetilde\kappa\leq 0$, then $\mathcal I_{\widetilde\gamma}$ is the entire graph of the function
 \[u(x,y)=\begin{cases}
  \widetilde\tau xy&\text{\ if }\widetilde\kappa=0,\\
  \frac{2\widetilde\tau}{\widetilde{\kappa}}\arctan\frac{2xy}{\frac{4}{\widetilde\kappa}+x^2-y^2}&\text{\
    if }\widetilde\kappa<0.
  \end{cases}\]
If we consider the surface $\mathcal{I}_{\widetilde\gamma}$ in $\E(4H^2+\kappa,H)$, then the horizontal geodesic $\widetilde\gamma$ becomes a curve $\gamma$ in a vertical plane of symmetry of the conjugate $H$-surface in $\mathbb{M}^2(\kappa)\times\R$. Since the angle function is $1$ along $\widetilde\gamma$, it follows that $\gamma$ is also a horizontal geodesic, whence the $H$-surface is invariant by translations along $\gamma$. It is an $H$-cylinder or $H$-torus if $4H^2+\kappa>0$ (explicit parametrizations can be found in~\cite[\S2]{Man13}), or an entire $H$-graph otherwise.
\end{example}

\begin{example}[Spherical helicoids~{\cite[\S4]{MT14}}]\label{ex:spherical-helicoids}
Lawson~\cite[Prop.~7.2]{Law} defined the spherical helicoid of pitch $c\in\R$ in the round $3$-sphere $\s^3$ as the immersion
\[
\begin{split}
\widetilde\phi_c: \R^2 &\rightarrow \s^3\subset\C^2 \\
(u, v) &\mapsto \bigl( \cos(u) e^{icv}, \sin(u) e^{iv} \bigr).
\end{split}
\]
These immersions are characterized by being minimal if we substitute $\s^3$ with $\Sb(\widetilde\kappa,\widetilde\tau)$ for any $\widetilde\kappa>0$ and $\widetilde\tau\neq 0$, see~\cite{Tor12}. They are also equivariant by screw motions and ruled by horizontal geodesics in all Berger metrics. The surface parametrized by $\widetilde\phi_c$ is topologically a torus or a Klein bottle if $c\in\mathbb{Q}-\{0\}$ (these are Lawson's examples $\tau_{m,n}$, see~\cite[\S7]{Law}), a sphere if $c=0$, and a dense cylinder otherwise; however, it is embedded if and only if $c\in\{-1,0,1\}$. Note that $\widetilde\phi_1$ and $\widetilde\phi_{-1}$ are congruent Clifford tori in the round metric $\mathbb{S}^3$ but they are not quite alike in the Berger case: the torus $\widetilde\phi_1$ is flat and everywhere vertical (i.e., the preimage by $\pi$ of a great circle of $\mathbb{S}^2(\widetilde\kappa)$), yet $\widetilde\phi_{-1}$ is not flat and it is vertical just along a couple of horizontal geodesics. As a matter of fact, $\widetilde\phi_{-1}$ is congruent to the invariant surface $\mathcal I_{\widetilde\gamma}$ we discussed in Example~\ref{ex:invariant-I} in the case of Berger spheres, where $\widetilde\gamma(t)=\frac{1}{\sqrt{2}}(e^{-it}, e^{it})$. We also remark that $\widetilde\phi_c$ and $\widetilde\phi_{1/c}$ are congruent surfaces for all $c\neq 0$, so we can assume that $c\in[-1,1]$ not losing any generality.

The spherical helicoids $\widetilde\phi_c$ in $\Sb(4H^2+\kappa,H)$, where $4H^2+\kappa>0$, have the rotational Delaunay $H$-surfaces as conjugate $H$-immersions $\phi_c$ in $\M^2(\kappa)\times\R$. This was proved in~\cite[Thm.~2.1]{G} for $\kappa=0$ (Lawson correspondence) and in~\cite[\S4]{MT14} in the general case. By comparing the rotation of the normal along the rulings of $\widetilde\phi_c$ with the curvature of the curve that generates the Delaunay surfaces by means of Lemma~\ref{lem:horizontal-geodesics}, we obtain the following list of conjugate surfaces:
\begin{enumerate}[label=(\alph*)]
  \item $\phi_1$ is the vertical $H$-cylinder over a curve of $\M^2(\kappa)$ of curvature $2H$,
  \item $\phi_c$ is a rotationally invariant $H$-unduloid if $0<c<1$,
  \item $\phi_0$ is the rotationally invariant $H$-sphere,
  \item $\phi_c$ is a rotationally invariant $H$-nodoid if $-1<c<0$,
  \item $\phi_{-1}$ is an $H$-torus ($\kappa>0$) or an $H$-cylinder ($\kappa\leq 0$) invariant by translations along a horizontal geodesic.
\end{enumerate}
We refer to~\cite[Lem~1.3]{PR} and~\cite[\S2]{Man13} for a more specific description of Delaunay $H$-surfaces. It is worth emphasizing that different directions of rotation of the normal along the axis of $\widetilde\phi_c$ output very different surfaces, as already discussed in Lemma~\ref{lem:vertical-geodesics}. This will be further explored in \S\ref{sec:knoids}. 
\end{example}


\section{Dirichlet problems for $H$-surfaces in $\E(\kappa,\tau)$}

The initial minimal surfaces in our conjugate constructions will be solutions of a Plateau or a Jenkins--Serrin problem in $\E(4H^2+\kappa,H)$, both of which will be thought of as Dirichlet problems for the minimal surface equation. Needless to say that not all geodesic polygons span graphical surfaces; if that is not the case, additional work will be required to find the desired minimal surface.

\subsection{The Plateau problem}\label{sec:plateau}

We will solve a Plateau problem in the more general scenario of arbitrary Killing submersions in the sense of~\cite{LerMan}. This is motivated by the fact that some of our minimal surfaces will be graphs in $\E(4H^2+\kappa,H)$ with respect to other (non-unitary) Killing directions. In the sequel, we will assume that $\pi:\E\to M$ is a Riemannian submersion whose fibers have infinite length and coincide with the integral curves of a nowhere vanishing Killing field $\xi$.

\begin{definition}\label{def:nitsche}
A Nitsche contour in $\E$ is a pair $(\Omega,\Gamma)$, where $\Omega\subset M$ is a precompact open domain and $\Gamma\subset\E$ is a Jordan curve admitting a piecewise-regular parametrization $\gamma:[a,b]\to\Gamma$ satisfying the following conditions:
\begin{enumerate}[label=(\alph*)]
 \item There is a partition $a=t_1<s_1\leq t_2<\ldots\leq t_r<s_r\leq t_{r+1}=b$ such that $\gamma(a)=\gamma(b)$ and, for any $j\in\{1,\ldots,r\}$, the component $\gamma|_{[t_j,s_j]}$ is a nowhere vertical curve and $\gamma|_{[s_j,t_{j+1}]}$ a vertical segment;
 \item The projection $\pi\circ\gamma$ parametrizes $\partial\Omega$ injectively except at vertical segments.
\end{enumerate}
This means that $\partial\Omega$ is a regular curve except possibly at the points $\pi(t_i)$, that will be called the vertexes of $\Omega$.
\end{definition}

Graphical surfaces with prescribed mean curvature and boundary a Nitsche contour $(\Omega,\Gamma)$ satisfy a maximum principle which has been proved in the case of unitary Killing submersions~\cite[Prop.~3.8]{Man12}, but also holds in the general case. This relies on the fact that the mean curvature of Killing graphs can be written in divergence form similar to~\eqref{eqn:H-graphs}, see~\cite[\S3]{LerMan}. We remark that in this case, the Killing graph of a function $u\in\mathcal{C}^\infty(\Omega)$ is defined in the usual way with respect to a global section $F_0:M\to\E$ as $F_u(p)=\phi_{u(p)}(F_0(p))$. Note that a global section exists by the assumption on the infinite length of the fibers~\cite[Thm.~12.2]{Ste}.

\begin{proposition}\label{prop:uniqueness}
Let $(\Omega,\Gamma)$ and $(\Omega,\Gamma')$ be Nitsche contours in $\E$ over the same domain $\Omega$ with the same set of vertexes $V\subset\partial\Omega$. Assume that $u,v\in\mathcal{C}^\infty(\Omega)$ verify:
\begin{itemize}
 \item[a)] the graphs $F_u$ and $F_v$ have the same (possibly non-constant) mean curvature over $\Omega$,
 \item[b)] $u$ and $v$ extend continuously to $\overline\Omega-V$ giving rise to surfaces with boundaries $\Gamma$ and $\Gamma'$, respectively.
\end{itemize}
If $u\leq v$ in $\partial\Omega-V$, then $u\leq v$ in $\Omega$.
\end{proposition}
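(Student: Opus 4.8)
The plan is to prove this comparison principle via the standard tangency argument for the divergence-form mean curvature equation, reducing the statement to a maximum principle for a uniformly elliptic operator on the (possibly unbounded in the vertical direction) difference $w=u-v$. The key point is that, although $\Gamma$ and $\Gamma'$ may contain vertical segments (where $u$ or $v$ blows up), these occur only over the vertex set $V$, so on $\partial\Omega-V$ both functions extend continuously and the boundary inequality $u\leq v$ is genuinely an inequality between finite values.

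First I would recall from~\cite[\S3]{LerMan} that the Killing graph $F_u$ over $\Omega$ has mean curvature expressible in divergence form as $2H=\Div\bigl(a(\nabla u)\bigr)+b$, where $a$ is (up to lower-order terms depending on the section $F_0$ and the bundle data) the normalized generalized gradient $Gu/\sqrt{1+\|Gu\|^2}$ appearing in~\eqref{eqn:H-graphs}, and where the structural vector field $b$ does not depend on $u$. Subtracting the two equations for $u$ and $v$, which have the same mean curvature by hypothesis (a), the inhomogeneous terms cancel and one obtains
\begin{equation*}
\Div\bigl(a(\nabla u)-a(\nabla v)\bigr)=0 \quad\text{in }\Omega.
\end{equation*}
Writing $a(\nabla u)-a(\nabla v)=\bigl(\int_0^1 Da(\nabla v+s\nabla w)\,\df s\bigr)\nabla w$, the bracketed matrix is symmetric and positive definite (this is precisely the ellipticity of the minimal/$H$-graph operator, uniform on compact subsets of the gradient space), so $w=u-v$ satisfies a homogeneous linear second-order elliptic equation $\Div(B\nabla w)=0$ in $\Omega$ with no zeroth-order term.

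Next I would apply the maximum principle: suppose for contradiction that $w$ becomes negative somewhere in $\Omega$, i.e.\ $\sup_\Omega w>0$ is attained or approached. Since $\Div(B\nabla w)=0$ admits no interior positive maximum unless $w$ is constant, the supremum must be realized on the boundary. By hypothesis (b) both $u$ and $v$ extend continuously to $\overline\Omega-V$, and on $\partial\Omega-V$ we have $w=u-v\leq 0$; the finitely many vertices in $V$ are removable for a bounded harmonic-type function by a standard capacity/removable-singularity argument (a point set has zero capacity for this operator), so the boundary estimate $w\leq 0$ propagates to all of $\Omega$, giving $u\leq v$ as claimed.

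The main obstacle I anticipate is the boundary behaviour at the vertices, where one of the Nitsche contours has a vertical segment and the defining function is unbounded; one must argue that these isolated boundary points cannot support a positive supremum of $w$. The cleanest route is to note that $w$ is bounded near each vertex (since the blow-up of $u$ and $v$ is along the \emph{same} vertical fibers over $V$, their difference stays controlled there) and then invoke removability of isolated boundary singularities for the uniformly elliptic equation $\Div(B\nabla w)=0$; alternatively, one approximates $\Omega$ from inside by domains omitting small neighbourhoods of $V$ and passes to the limit. This is exactly the delicate step that the divergence-form formulation and the hypothesis "same set of vertexes $V$" are designed to handle, and it is where the proof in~\cite[Prop.~3.8]{Man12} for the unitary case would be adapted.
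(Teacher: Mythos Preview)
The paper does not actually prove this proposition; the paragraph preceding it explains that it is the general-Killing-submersion version of~\cite[Prop.~3.8]{Man12}, made possible by the divergence form in~\cite[\S3]{LerMan}. Your overall strategy---subtract the two divergence-form equations and run a comparison argument on $w=u-v$---is the intended one.

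Two corrections are in order. First, a misconception: a Nitsche contour is a compact Jordan curve, so the vertical segments over $V$ have \emph{finite} length and $u,v$ are bounded on $\Omega$; at a vertex there is a jump discontinuity, not a blow-up to $\pm\infty$. Boundedness of $w$ is therefore immediate and does not rely on any cancellation of singularities ``along the same fibers''.

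Second, and more substantively, your removability step is not justified as written. The linearized matrix $B=\int_0^1 Da(\nabla v+s\nabla w)\,\df s$ is \emph{not} uniformly elliptic near a vertex, since $|\nabla u|$ and $|\nabla v|$ are typically unbounded there and the eigenvalues of $Da(\xi)$ degenerate like $(1+|\xi|^2)^{-3/2}$; the ``zero capacity'' argument for $\Div(B\nabla w)=0$ then needs uniform ellipticity that you do not have. The robust route---used in~\cite{Man12} and throughout the Jenkins--Serrin literature---avoids linearization: integrate $\Div\bigl(a(\nabla u)-a(\nabla v)\bigr)=0$ directly over $\{w>\epsilon\}$ minus disks of radius $\delta$ about the vertices, use the monotonicity $\langle a(\xi)-a(\eta),\xi-\eta\rangle\geq 0$ to sign the interior boundary contribution, and send $\delta\to 0$ using only the uniform bound $|a|<1$, which holds irrespective of the size of the gradients. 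This is essentially your ``alternative'' approximation idea, but applied to the nonlinear flux rather than to the linearized operator. (A minor slip: ``$w$ becomes negative, i.e.\ $\sup w>0$'' should read ``positive''.)
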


In particular, given a Nitsche contour $(\Omega,\Gamma)$, there is at most one minimal graph over $\Omega$ with boundary $\Gamma$. Although uniqueness holds in general, we will consider some additional conditions that ensure that $\pi^{-1}(\overline\Omega)\subset\E$ is a $3$-dimensional mean-convex body in the sense of Meeks and Yau~\cite{MY82}, so there exists at least one minimal surface with boundary $\Gamma$. It follows from a standard application of the maximum principle and from uniqueness that the solution is a graph, using the same argument as in~\cite[Thm.~3.11]{Man12} (see also~\cite[Prop.~2]{MT14}). All in all, we have the following result:

\begin{proposition}\label{prop:plateau-existencia}
Let $(\Omega,\Gamma)$ be a Nitsche contour such that $\Omega$ is simply connected, $\pi^{-1}(\pi(\alpha))$ is a minimal surface for each non-vertical component $\alpha\subset\Gamma$, and the interior angle at each vertex of $\Omega$ is at most $\pi$. There is a unique minimal surface $\Sigma\subset\pi^{-1}(\overline\Omega)$ with boundary $\Gamma$ and the interior of $\Sigma$ is a Killing graph over $\Omega$.
\end{proposition}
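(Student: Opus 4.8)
The plan is to recognize $W:=\pi^{-1}(\overline\Omega)$ as a mean-convex body, solve the Plateau problem inside it by the method of Meeks and Yau to produce a minimal surface spanning $\Gamma$, and then promote this surface to a Killing graph by a Rado-type argument using the vertical translations $\{\Phi_t\}$ and the maximum principle. Uniqueness will finally be reduced to Proposition~\ref{prop:uniqueness} once every competitor is shown to be a graph.

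\emph{Mean-convexity and existence.} First I would verify that $W$ is mean-convex. Its boundary $\partial W=\pi^{-1}(\partial\Omega)$ is the union of the vertical cylinders $\pi^{-1}(\pi(\alpha))$ over the non-vertical components $\alpha\subset\Gamma$, which are minimal by hypothesis, meeting along the vertical fibers lying over the vertexes of $\Omega$. Since each interior angle of $\Omega$ is at most $\pi$, the dihedral angle of $W$ along each such edge is at most $\pi$, so $W$ is (weakly) convex there. Hence $\partial W$ has vanishing mean curvature on its smooth part and convex edges, which is exactly the good-barrier condition of~\cite{MY82}. As $\Gamma$ is compact and the fibers have infinite length, the vertical cylinders confine any spanning surface to lie over $\overline\Omega$, and standard barriers bound its height, so after truncating $W$ by a large slab one reduces to a compact mean-convex body; the simple connectedness of $\Omega$ makes $W$ simply connected, so that $\Gamma$ is null-homotopic in $W$. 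With these ingredients the Meeks--Yau solution of the Plateau problem~\cite{MY82} yields an embedded least-area minimal disk $\Sigma\subset W$ with $\partial\Sigma=\Gamma$.

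\emph{Graph property and uniqueness.} The bulk of the work is to show that $\Sigma$, and indeed any minimal surface in $W$ bounded by $\Gamma$, is a Killing graph. I would argue à la Rado: fixing the global section $F_0$ provided by~\cite{Ste}, consider the vertical translates $\Sigma_t=\Phi_t(\Sigma)$, which are again minimal because $\Phi_t$ is an ambient isometry. Since $\Gamma$ is a graph over $\partial\Omega$ (it projects injectively away from the vertical segments), the translated boundaries $\Gamma_t$ are disjoint from $\Gamma$ off the vertical segments for $t\neq 0$, while along the vertical segments, which lie in the convex edges $\pi^{-1}(\text{vertex})$ of $\partial W$, the boundary maximum principle prevents $\Sigma_t$ and $\Sigma$ from crossing. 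Sliding $t$ from large values down towards $0$ and applying the interior and boundary tangency principles, one concludes $\Sigma_t\cap\Sigma=\emptyset$ for all $t\neq 0$; equivalently, the fibers meet $\Sigma$ transversally and at most once, so $\pi|_{\operatorname{int}\Sigma}$ is a local diffeomorphism, and a degree argument over the simply connected $\Omega$ upgrades it to a diffeomorphism onto $\Omega$. Thus $\operatorname{int}\Sigma$ is a Killing graph. This is precisely the argument of~\cite[Thm.~3.11]{Man12} (see also~\cite[Prop.~2]{MT14}), and the delicate point I expect to be the main obstacle is exactly the behaviour of the sliding translates along the vertical segments and corners of $\Gamma$, where the boundaries overlap and one must invoke the convexity of the edges together with the boundary maximum principle. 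Finally, since every minimal surface in $W$ with boundary $\Gamma$ is such a graph, writing two of them as $F_u$ and $F_v$ and applying Proposition~\ref{prop:uniqueness} twice gives $u\le v$ and $v\le u$, whence $u=v$; this establishes uniqueness and completes the proof.
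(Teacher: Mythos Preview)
Your proposal is correct and follows essentially the same route as the paper: mean-convexity of $\pi^{-1}(\overline\Omega)$ via the minimality of the lateral cylinders and the angle condition, existence by Meeks--Yau~\cite{MY82}, graphicality by the sliding/maximum-principle argument of~\cite[Thm.~3.11]{Man12} and~\cite[Prop.~2]{MT14}, and uniqueness via Proposition~\ref{prop:uniqueness}. You have also correctly anticipated that the delicate step is the boundary behaviour of the translates along the vertical segments, which is exactly where the convexity hypothesis at the vertexes is used.
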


Observe that the solution $\Sigma$ must be invariant by any ambient isometry that preserves both the Killing submersion and the Nitsche contour $(\Omega,\Gamma)$ because of the uniqueness. We also observe that Meeks and Yau's solution to the Plateau problem is of class $\mathcal C^1$ up to the boundary, see~\cite[Rmk.~3.4]{SaT}, which enables analytic continuation of the surface across vertical and horizontal geodesics in $\Gamma$ by axial symmetry in the sense of Proposition~\ref{prop:conjugation-completion-by-simmetries}.

\subsection{The Jenkins--Serrin problem}\label{sec:JS}
The non-parametric Plateau problem we have discussed in Proposition~\ref{prop:plateau-existencia} amounts to obtaining a function $u$ whose graph is minimal over $\Omega$ and extends continuously to $\overline\Omega-V$, being $V$ the set of vertexes of $\Omega$ with the prescribed values on $\partial\Omega-V$ induced by $\Gamma$. The Jenkins--Serrin problem also allows $\pm\infty$ values along the components of $\partial\Omega$, and has been studied in $\E(\kappa,\tau)$-spaces with $\kappa<0$ respect to the usual Killing direction by also allowing the domain $\Omega\subset\h^2(\kappa)$ to be unbounded. 

We will first deal with the case $\kappa<0$ and $\tau=0$, in which the solution has been proved under very general assumptions by Mazet, Rodríguez and Rosenberg~\cite{MRR11}. We will suppose that $\partial\Omega$ is piecewise regular and contains finitely-many components of three distinct types: $A_1,\ldots,A_{n_1}\subset\h^2(\kappa)$, where the value $+\infty$ is prescribed, $B_1,\ldots,B_{n_2}\subset\h^2(\kappa)$, where the value $-\infty$ is prescribed, and $C_1,\ldots,C_{n_3}\subset\h^2(\kappa)$, where continuous finite boundary values are prescribed. We will also assume that $\partial_\infty\Omega$ consists of finitely many ideal segments $D_1,\ldots,D_{n_4}\subset\partial_\infty\h^2(\kappa)$ where continuous finite boundary values are also prescribed. The endpoints of the segments $A_i$, $B_i$, $C_i$ and $D_i$ will be called the vertexes of $\Omega$. There are two necessary conditions for the existence of solution: on the one hand, each component $A_i$ or $B_i$ must be a geodesic segment; on the other hand, there cannot be two consecutive components of type $A_i$ meeting at an interior angle less than $\pi$, and likewise for those of type $B_i$. Additionally, it is assumed that the arcs $C_i$ are convex with respect to the inner conormal to $\Omega$ along $C_i$. Under these conditions, we will say that $\Omega$ is a \emph{general Jenkins--Serrin domain}.

A polygonal domain $\mathcal P$ in $\mathbb{H}^2(\kappa)$ is a domain whose boundary $\partial\mathcal{P}$ consists of finitely-many geodesic segments. We will say that it is inscribed in a general Jenkins--Serrin domain $\Omega$ if $\mathcal P\subset\overline\Omega$ and all vertexes of $\mathcal P$ are vertexes of $\Omega$. Assume that the ideal vertexes of $\Omega$ are $p_1,\ldots,p_m\in\partial_\infty\h^2(\kappa)$ and, for each $i\in\{1,\ldots,m\}$, let $H_i\subset\h^2(\kappa)$ be a domain with boundary a horocycle asymptotic to $p_i$, small enough so that it only intersects the components of $\partial\Omega$ with endpoint at $p_i$ and $H_i\cap H_j=\emptyset$ for all $j\neq i$. Under these assumptions, the following finite lengths characterize the existence of solution:
\begin{align*}
\alpha(\mathcal P)&=\sum_{i=1}^{n_1}\length(\cup_{i=1}^{n_1}(A_i\cap\partial\mathcal{P})-\cup_{j=1}^mH_j),\\
\beta(\mathcal P)&=\sum_{i=1}^{n_2}\length(\cup_{i=1}^{n_2}(B_i\cap\partial\mathcal{P})-\cup_{j=1}^mH_j),\\
\gamma(\mathcal P)&=\length(\partial\mathcal P-\cup_{j=1}^mH_j).
\end{align*}

\begin{theorem}[{\cite[Thm.~4.9 and~4.12]{MRR11}}]\label{thm:general-JS}
Let $\Omega\subset\h^2(\kappa)$ be a general Jenkins--Serrin domain on which we consider the above Dirichlet problem.
\begin{enumerate}[label=(\alph*)]
    \item If $n_3=n_4=0$, then the problem has a solution if and only if $\alpha(\Omega)=\beta(\Omega)$ and $\max\{\alpha(\mathcal P),\beta(\mathcal P)\}<\frac{1}{2}\gamma(\mathcal P)$ for all polygonal domains $\mathcal P\neq\overline\Omega$ inscribed in $\Omega$.
    \item Otherwise, the problem has a solution if and only if $\max\{\alpha(\mathcal P),\beta(\mathcal P)\}<\frac{1}{2}\gamma(\mathcal P)$ for all polygonal domains $\mathcal P$ inscribed in $\Omega$.
\end{enumerate}
\end{theorem}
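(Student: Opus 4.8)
The plan is to follow the classical Jenkins--Serrin scheme adapted to $\h^2(\kappa)$, which splits into proving the necessity of the flux inequalities and then their sufficiency. The unifying tool is the divergence-free \emph{flux field} of a minimal graph: since we are in the case $\tau=0$, $H=0$, if $u\in\mathcal C^\infty(\Omega)$ has minimal graph then by~\eqref{eqn:H-graphs} the vector field
\[X_u=\frac{\nabla u}{\sqrt{1+\|\nabla u\|^2}}\]
satisfies $\Div(X_u)=0$ and $\|X_u\|<1$ pointwise, with divergence and norm taken in the metric of $\h^2(\kappa)$. Its boundary behaviour encodes the prescribed data: as $u\to+\infty$ (resp. $-\infty$) along a geodesic edge with outer unit conormal $\eta$, the flux $X_u\cdot\eta$ tends to $+1$ (resp. $-1$), whereas along a finite-value or interior edge $\|X_u\|<1$ keeps $|X_u\cdot\eta|$ strictly below $1$.

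For necessity, I would assume a solution $u$ exists and apply the divergence theorem to $X_u$ on an arbitrary inscribed polygonal domain $\mathcal P$, after removing the horocyclic caps $H_j$ so that all lengths are finite. This gives
\[0=\int_{\partial\mathcal P}X_u\cdot\eta=\alpha(\mathcal P)-\beta(\mathcal P)+\int_{I}X_u\cdot\eta,\]
where $I=\partial\mathcal P\setminus(\cup A_i\cup\cup B_i)$ collects the interior, finite and ideal edges, of total length $\gamma(\mathcal P)-\alpha(\mathcal P)-\beta(\mathcal P)$. Since $\|X_u\|<1$ \emph{strictly} along $I$, the last integral is bounded in absolute value by $\gamma(\mathcal P)-\alpha(\mathcal P)-\beta(\mathcal P)$, and strictness forces $|\alpha(\mathcal P)-\beta(\mathcal P)|<\gamma(\mathcal P)-\alpha(\mathcal P)-\beta(\mathcal P)$, i.e.\ $\max\{\alpha(\mathcal P),\beta(\mathcal P)\}<\tfrac12\gamma(\mathcal P)$ for $\mathcal P\neq\overline\Omega$. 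In case (a) the choice $\mathcal P=\overline\Omega$ has no interior edges, so $I=\emptyset$ and the balance collapses to $\alpha(\Omega)=\beta(\Omega)$.

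For sufficiency, I would approximate the infinite data by bounded data, solving the Dirichlet problems with value $k$ on the arcs $A_i$, value $-k$ on the $B_i$, and the prescribed finite/ideal data on the $C_i$ and $D_i$; the bounded existence theory (in the spirit of Proposition~\ref{prop:plateau-existencia}) produces graphs $u_k$, which comparison arranges into a monotone family. The strict inequalities $\max\{\alpha(\mathcal P),\beta(\mathcal P)\}<\tfrac12\gamma(\mathcal P)$ are precisely what forbids the limit from developing an interior \emph{divergence line}: running the same flux balance on the region where the $u_k$ would escape to $+\infty$ shows that such an interior geodesic would saturate the flux at $\pm1$ and violate the inequality. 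Interior gradient estimates for the minimal surface equation then yield local $\mathcal C^\infty$ convergence on compact subsets of $\Omega$ to a minimal graph $u$, while barriers along each edge force $u$ to attain the prescribed $\pm\infty$, finite, or ideal values. In case (a) the extra hypothesis $\alpha(\Omega)=\beta(\Omega)$ is exactly what makes the monotone family converge rather than run off to infinity everywhere.

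The main obstacles are geometric rather than formal. First, the ideal boundary: near each ideal vertex $p_j$ one must work inside the horocyclic neighbourhood $H_j$ to keep all lengths finite and to control both the flux and the convergence of the $u_k$ there; this hyperbolic feature is the origin of the truncated quantities $\alpha,\beta,\gamma$ and is what genuinely departs from the Euclidean setting. Second, and most delicate, is ruling out interior divergence lines under the strict inequality: this needs the monotonicity of the $u_k$, a compactness argument for their level sets, and the identification of any limiting divergence locus as a geodesic along which the flux would be $\pm1$. Finally, treating the convex finite arcs $C_i$ and the ideal arcs $D_i$ simultaneously is what distinguishes case (b) from case (a) and requires the convexity hypothesis on the $C_i$ to build the edge barriers.
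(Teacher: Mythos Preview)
The paper does not give its own proof of this theorem: the statement is quoted verbatim from Mazet--Rodr\'iguez--Rosenberg~\cite[Thm.~4.9 and~4.12]{MRR11} and used as a black box, with the surrounding text only recording the history (Nelli--Rosenberg for bounded convex domains, Collin--Rosenberg for unbounded ones). So there is no in-paper argument to compare your proposal against.

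That said, your outline is the standard Jenkins--Serrin scheme and is, in broad strokes, the one carried out in~\cite{MRR11}: necessity via the divergence theorem applied to the flux field $X_u=\nabla u/\sqrt{1+\|\nabla u\|^2}$ on truncated polygonal domains, and sufficiency via monotone approximation by bounded-data solutions together with a divergence-line analysis. One point to be more careful about is the contribution of the horocyclic arcs $\partial H_j\cap\mathcal P$ in your flux identity: they are part of the boundary of the truncated $\mathcal P$ and carry nonzero flux, so the displayed equality $0=\alpha(\mathcal P)-\beta(\mathcal P)+\int_I X_u\cdot\eta$ only holds after you argue that the horocyclic contributions are negligible (or vanish in a limit of shrinking $H_j$), and the definitions of $\alpha,\beta,\gamma$ as given are only coherent up to bounded errors depending on the choice of horocycles. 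The actual argument in~\cite{MRR11} handles this carefully; your sketch glosses over it.
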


Nelli and Rosenberg~\cite{NR} proved this result for convex relatively compact domains, and Collin and Rosenberg~\cite{CR} extended it for unbounded convex domains all of whose vertexes are ideal and with $n_4=0$. It is worth mentioning that convexity at the vertexes is a condition that ensures that the surface is of class $\mathcal C^1$ up to the boundary and enables the completion of the surface given by Proposition~\ref{prop:conjugation-completion-by-simmetries}, as proved by Sa Earp and Toubiana~\cite{SaT2}. This also implies that solutions to Jenkins--Serrin problems over convex domains have finite radial limits on the corners (see~\cite[Lem.~4]{CM}). In the non-convex case, the solution given by Theorem~\ref{thm:general-JS} is a smooth graph over the interior of $\Omega$, but might fail to be of class $\mathcal C^1$ at a vertical geodesic projecting to a non-convex vertex. This was noticed by Finn~\cite[Thm.~3]{Finn} and Eclat and Lancaster~\cite{ElcLan} for minimal surfaces in $\mathbb{R}^3$.

A very general maximum principle for solutions of the  Jenkins--Serrin problem in $\mathbb{H}^2(\kappa)\times\R$ can be found in~\cite[Thm.~4.16]{MRR11}. However, we will consider the following earlier version of Collin and Rosenberg, which is enough when the prescribed boundary data comes from a geodesic polygon with vertical and horizontal (possibly ideal) components. It clearly implies that the solution given by Theorem~\ref{thm:general-JS} in that case is unique if there are non-ideal horizontal geodesics (which are components of type $C_i$). On the other hand, the solution can be shown to be unique up to vertical translations if there are no such horizontal geodesics~\cite[Thm.~4.12]{MRR11}. Note that there are no components of type $D_i$ if the boundary is a geodesic polygon.

\begin{proposition}[{\cite{CR}}]\label{prop:uniqueness-JS}
Let $\Omega\subset\h^2(\kappa)$ be a general Jenkins--Serrin domain with $n_3\neq 0$ and $n_4=0$, and such that any two components of $\partial\Omega$ meeting at a common ideal vertex are asymptotic to each other at that vertex (in hyperbolic distance). 

Assume that $u,v\in\mathcal C^\infty(\Omega)$ span minimal graphs over $\Omega$ and extend continuously to $\overline\Omega-V$ (where $V$ is the set of vertexes of $\Omega$) with possible infinite values along some of the $A_i$ or $B_i$. If $u\leq v$ on $\partial\Omega-V$, then $u\leq v$ on $\Omega$.
\end{proposition}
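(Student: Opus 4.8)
The plan is to run a comparison argument based on the divergence form of the minimal surface equation, adapted to the non-compactness of $\Omega$ and to the infinite boundary values. Since here $\tau=0$ and $H=0$, a graph $u$ over $\Omega$ is minimal precisely when $\Div(X_u)=0$, where $X_u=\nabla u/\sqrt{1+\|\nabla u\|^2}$ and the divergence, gradient and norm are taken in the metric of $\h^2(\kappa)$; see~\eqref{eqn:H-graphs}. Two features are essential. First, $\|X_u\|<1$ pointwise, so the flux of $X_u$ across any curve is bounded by its hyperbolic length. Second, because $p\mapsto\sqrt{1+\|p\|^2}$ is strictly convex and $X_u$ is its gradient, one has the pointwise monotonicity inequality $\langle X_u-X_v,\nabla u-\nabla v\rangle\geq 0$, with equality if and only if $\nabla u=\nabla v$.

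Arguing by contradiction, I would suppose that $u>v$ somewhere and fix $\varepsilon>0$ so that the superlevel set $E_\varepsilon=\{u>v+\varepsilon\}$ is a nonempty open subset of $\Omega$. Since $u\leq v$ on the arcs $C_i$ carrying finite data, continuity forces $\overline{E_\varepsilon}$ to stay away from every $C_i$; hence the part of $\partial E_\varepsilon$ interior to $\Omega$ is the regular level curve $\{u=v+\varepsilon\}$, while the remaining boundary can only lie along the infinite-value arcs $A_i$, $B_i$ and approach the ideal vertices. The idea is to integrate the identity $\Div\big((u-v-\varepsilon)(X_u-X_v)\big)=\langle\nabla(u-v),X_u-X_v\rangle$ over $E_\varepsilon$, both fields being divergence free, and to apply the divergence theorem. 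On the interior level curve the factor $u-v-\varepsilon$ vanishes, so that contribution is zero, and the left-hand side integral is $\geq 0$ by monotonicity. If I can show that all the remaining boundary contributions tend to $0$, I will conclude that $\int_{E_\varepsilon}\langle\nabla(u-v),X_u-X_v\rangle=0$; by the equality case this forces $\nabla u=\nabla v$ on $E_\varepsilon$, so $u-v$ is constant there, necessarily equal to its boundary value $\varepsilon$, contradicting $u>v+\varepsilon$ in the interior of $E_\varepsilon$.

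The main obstacle is the rigorous justification of the divergence theorem on the non-compact region $E_\varepsilon$, where $\Omega$ reaches the ideal boundary and $u-v-\varepsilon$ could a priori blow up along the infinite-value arcs. I would handle this through a compact exhaustion: delete the small horocyclic neighborhoods $H_j$ of the ideal vertices used to define $\alpha,\beta,\gamma$, and truncate by the level sets $\{|v|<n\}$, obtaining compact pieces on which Stokes applies unambiguously. The crucial analytic inputs concern the behaviour near the geodesics $A_i$ (and symmetrically $B_i$) where $u,v\to+\infty$: there the graphs become vertical, so both $X_u$ and $X_v$ converge to the inner unit conormal of $A_i$ and hence $X_u-X_v\to0$, while at the same time the difference $u-v$ stays bounded (two solutions sharing the same infinite data are asymptotic along $A_i$), so the factor $u-v-\varepsilon$ does not escape. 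Using the flux bound $\|X_u-X_v\|<2$ together with the asymptotic hypothesis to keep the truncation and horocycle arcs of controlled length, one checks that every boundary term introduced by the exhaustion tends to $0$ as $n\to\infty$ and the $H_j$ shrink. The role of the hypothesis $n_3\neq0$ is to provide genuine finite boundary data anchoring the comparison and keeping $\overline{E_\varepsilon}$ off the arcs $C_i$; without it $u$ and $v$ would only be determined up to a vertical translation. This limiting flux analysis, rather than the formal integration by parts, is where the real work lies.
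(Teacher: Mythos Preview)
The paper does not prove this proposition; it is quoted from Collin--Rosenberg~\cite{CR}. Your sketch is precisely their divergence/flux argument, and the overall structure is correct.

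One step deserves caution, however. You assert that along an arc $A_i$ where both $u$ and $v$ tend to $+\infty$ the difference $u-v$ stays bounded, so that the boundary contribution $(u-v-\varepsilon)\langle X_u-X_v,\nu\rangle$ vanishes in the limit. That boundedness is not obvious \emph{a priori}: it is essentially a weak form of the very comparison you are trying to establish, and the phrase ``two solutions sharing the same infinite data are asymptotic along $A_i$'' hides exactly the estimate that needs proof. The standard remedy is to replace the test function $(u-v-\varepsilon)$ by its truncation $\min(u-v-\varepsilon,M)$; the factor is then bounded by $M$, the product with $X_u-X_v\to 0$ genuinely vanishes on the truncation arcs, and one sends $M\to\infty$ afterwards by monotone convergence. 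Alternatively, and this is closer to what Collin--Rosenberg actually do, one integrates only $X_u-X_v$ over $E_\varepsilon$ for each regular value $\varepsilon$: the flux across $\{u-v=\varepsilon\}$ is non-positive by monotonicity, the remaining boundary fluxes tend to zero (here only $\|X_u-X_v\|<2$ and the length control from the asymptotic hypothesis at ideal vertices are needed, no bound on $u-v$), so the level-set flux is zero and hence $\nabla u=\nabla v$ on $\{u-v=\varepsilon\}$. Sard's theorem then gives $\nabla u=\nabla v$ a.e.\ on $\{u>v\}$, and the hypothesis $n_3\neq 0$ finishes the argument exactly as you say. With either modification your sketch is complete.
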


Theorem~\ref{thm:general-JS} and Proposition~\ref{prop:uniqueness-JS} are believed to hold true also in the case $\tau\neq 0$ (observe that both statements translate literally to this more general case and all the needed tools seem to be available in $\SL$). However, they have not hitherto been proved in this generality. The two results in this direction so far have been given by Younes~\cite{Younes}, who has proved existence and uniqueness in the case of a bounded convex domain, and by Melo~\cite{Melo}, who proves only existence over convex unbounded domains having only ideal vertexes and no arcs of type $D_i$. Note also that Younes and Melo's surfaces can be used as barriers to give \emph{ad hoc} solutions to more general Jenkins--Serrin problems as in~\cite[Lem.~3.2]{CMR}.

As a final remark, it is worth noticing that the solution $\Sigma$ to any of the aforesaid Jenkins--Serrin problems can be obtained as the limit of a double sequence of minimal surfaces $\Sigma_n^k$. The surface $\Sigma_n^k$ can be taken as the solution of a Plateau problem over a Nitsche contour $(\Omega_n,\Gamma_n^k)$, where the domains $\Omega_n\subset\h^2(\kappa)$ are bounded and converge to $\Omega$ and the curves $\Gamma_n^k$ replace the target boundary components with prescribed values $\pm\infty$ with a sequence of horizontal geodesics that diverges in the desired direction. The surface $\Sigma_n^k$ is a solution to a Plateau problem in the sense of Proposition~\ref{prop:plateau-existencia} and converges to a minimal graph $\Sigma^k$ over $\Omega$ as $n\to\infty$. The surfaces $\Sigma^k$ in turn converge to the solution $\Sigma$ as $k\to\infty$. Since the convergence is of class $C^m$ on compact subsets for all $m$, we can use the description given by Lemmas~\ref{lem:horizontal-geodesics} and~\ref{lem:vertical-geodesics} plus the continuity of the conjugation in Proposition~\ref{prop:continuity} to analyze the ideal geodesics as limits of non-ideal geodesics. In particular, we obtain the following result:

\begin{proposition}[{\cite[Cor.~2.4]{CMR}}]\label{prop:conjugation-JS}
Assume that $4H^2+\kappa<0$ and let $\widetilde\Sigma\subset\E(4H^2+\kappa,H)$ be the solution of a Jenkins--Serrin problem with boundary a polygon consisting of vertical and horizontal (possibly ideal) geodesics. Let $\Sigma\subset\h^2(\kappa)\times\R$ be the conjugate (possibly non-embedded) $H$-multigraph.
\begin{enumerate}[label=\emph{(\alph*)}]
\item Ideal vertical geodesics in $\partial_\infty\widetilde\Sigma$, if any, become ideal horizontal curves in $\partial_\infty\Sigma$ of constant curvature $\pm 2H$ at height $\pm\infty$.
\item Ideal horizontal geodesics in $\partial_\infty\widetilde\Sigma$ become ideal vertical geodesics of $\partial_\infty\Sigma$.
\end{enumerate}
\end{proposition}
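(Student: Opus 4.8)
The plan is to read off the ideal conjugate curves as limits of non-ideal ones, exploiting the approximation of $\widetilde\Sigma$ by Plateau solutions described above. Concretely, I would fix the double sequence $\widetilde\Sigma_n^k$ of minimal Plateau solutions over the Nitsche contours $(\Omega_n,\Gamma_n^k)$, where $\Omega_n$ exhausts $\Omega$ (so that the ideal vertexes and ideal arcs of $\h^2(4H^2+\kappa)$ are pushed to $\partial_\infty$ as $n\to\infty$) and where each boundary component carrying the value $\pm\infty$ is replaced by a horizontal geodesic at height $\pm k$ (which recedes to the cap at height $\pm\infty$ as $k\to\infty$). Since the boundary of every $\widetilde\Sigma_n^k$ is an honest finite geodesic polygon, Lemmas~\ref{lem:horizontal-geodesics} and~\ref{lem:vertical-geodesics} describe each conjugate edge, and Proposition~\ref{prop:continuity} guarantees that the conjugate surfaces $\Sigma_n^k$ converge, in $\mathcal C^m$ on compact sets, to $\Sigma$. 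The whole argument then reduces to controlling the relevant edges as the truncation parameters tend to infinity.

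For part (b), an ideal horizontal geodesic of $\partial_\infty\widetilde\Sigma$ appears as the limit of the capping horizontal geodesics $\widetilde\gamma_k$ over a component where the value blows up. By Lemma~\ref{lem:horizontal-geodesics}(a), its conjugate decomposes as $\gamma_k=(\beta_k,z_k)$ with $\|\beta_k'\|=|\nu|$ and $|z_k'|=\sqrt{1-\nu^2}$. The crux is that along $\widetilde\gamma_k$ the surface is asymptotic to the vertical cylinder over the blow-up geodesic, so the angle function satisfies $\nu\to 0$; hence $\|\beta_k'\|\to 0$ and $|z_k'|\to 1$, the $\h^2(\kappa)$-projection $\beta_k$ collapses to a point, and $\gamma_k$ converges to a unit-speed vertical geodesic. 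As the whole configuration escapes to $\partial_\infty$, this limit is an ideal vertical geodesic of $\partial_\infty\Sigma$.

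For part (a), an ideal vertical geodesic of $\partial_\infty\widetilde\Sigma$ is the limit of the vertical segments $\widetilde v_k$ of the Nitsche contour that climb to the caps at height $\pm k$. Lemma~\ref{lem:vertical-geodesics}(a) places its conjugate in a horizontal slice $\h^2(\kappa)\times\{t_0(k)\}$ with geodesic curvature $\kappa_g^P=2H-\theta'$, where $\theta$ measures the rotation of $\widetilde N$ against the frame $\{X_1,X_2\}$ that projects to constant vectors of $\M^2(\kappa)$. The point is that near the $\pm\infty$-wall the surface is again asymptotic to a vertical cylinder, whose unit normal is constant precisely in this frame, so $\theta'\to 0$ and the conjugate curve acquires constant curvature $2H$ (with sign dictated by whether the wall sits at $+\infty$ or $-\infty$). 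Finally, integrating the conjugate height function, whose gradient is $\nabla z=J\widetilde T$, along and across $\widetilde v_k$ shows that $t_0(k)\to\pm\infty$, so the limit lies in the ideal cap $\h^2(\kappa)\times\{\pm\infty\}\subset\partial_\infty\Sigma$, as claimed.

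The main obstacle is the asymptotic control near the ideal boundary: one must justify rigorously that $\widetilde\Sigma$ is genuinely asymptotic to the vertical cylinders over the blow-up geodesics (giving $\nu\to 0$ and $\theta'\to 0$) and that the slice heights $t_0(k)$ diverge. This is where barriers enter — comparing $\widetilde\Sigma$ with vertical planes and cylinders and invoking the boundary maximum principle to trap the limiting normal — and where one has to be careful interchanging the two limits $n,k\to\infty$, so that the convergence on compact sets provided by Proposition~\ref{prop:continuity} can be promoted to genuine statements about $\partial_\infty\Sigma$.
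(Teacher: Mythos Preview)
Your proposal is correct and follows the paper's own approach: the proposition is cited from~\cite{CMR} and the paper only sketches the argument (in the paragraph immediately preceding the statement) as approximation by the double sequence $\widetilde\Sigma_n^k$ of Plateau solutions, application of Lemmas~\ref{lem:horizontal-geodesics} and~\ref{lem:vertical-geodesics} to the finite edges, and passage to the limit via Proposition~\ref{prop:continuity}. One small clarification worth making: in part~(a) the ideal vertical geodesics sit over ideal \emph{vertexes} of the base domain, so it is primarily the limit $n\to\infty$ (the vertex sliding out to $\partial_\infty\mathbb{H}^2(4H^2+\kappa)$, forcing the interior angle there to vanish and hence the total rotation of $\widetilde N$ along the segment to vanish) that yields $\theta'\to 0$, rather than the height truncation $k\to\infty$ you emphasize; your phrase ``near the $\pm\infty$-wall'' slightly conflates the two mechanisms, though the conclusion is the same.
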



\section{Compact $H$-surfaces in $\mathbb{S}^2(\kappa)\times\mathbb{R}$}\label{sec:compact}

Now we will focus on constructions where the ideas we have discussed in the previous sections apply. The first group of examples we will deal with concern the construction of compact or periodic $H$-surfaces such that the fundamental piece is compact and comes from the solution of a Plateau problem. As pointed out in the introduction, we are interested in providing embedded examples with different topologies in $\mathbb{S}^2(\kappa)\times\R$ or in a quotient by a vertical translation.

\subsection{Horizontal Delaunay $H$-surfaces}\label{sec:delaunay}
This section is devoted to the construction of $H$-surfaces in $\mathbb{S}^2(\kappa)\times \mathbb{R}$ and $\mathbb{H}^2(\kappa)\times \mathbb{R}$ provided that $4H^2+\kappa > 0$ and $H>0$ by conjugating minimal surfaces in $M(4H^2+\kappa, H)$. The name \emph{horizontal Delaunay} is motivated by the fact that these $H$-surfaces resemble classical Delaunay surfaces in Euclidean space. Although they are not equivariant if $\kappa\neq 0$ (in contrast with the vertical case discussed in Example~\ref{ex:spherical-helicoids}), they are invariant under a discrete group of translations along a horizontal geodesic called the axis of the surface. Horizontal Delaunay surfaces comprise a deformation of the $H$-spheres into the $H$-cylinders by means of unduloid-type surfaces, and they also contain non-embedded  surfaces of nodoid-type. The next result describes the moduli space of horizontal Delaunay $H$-surfaces:

\begin{theorem}[\cite{MT14,MT20}]\label{thm:horizontal-Delaunay}
  Fix $\kappa\in\R$ and a horizontal geodesic $\Lambda\subset\mathbb{M}^2(\kappa)\times\{0\}$. There exists a family $\Sigma_{\lambda,H}^*$, parametrized by $\lambda\geq 0$ and $H>0$ such that $4H^2+\kappa>0$, of complete $H$-surfaces in $\mathbb{M}^2(\kappa)\times \R$, invariant under a discrete group of translations along $\Lambda$ with respect to which they are cylindrically bounded. They are also symmetric about the totally geodesic surfaces $\mathbb{M}^2(\kappa)\times\{0\}$ and $\Lambda\times\mathbb R$. Moreover:
\begin{enumerate}[label=(\roman*)]
  \item $\Sigma_{0,H}^*$ is the $H$-cylinder ($H$-torus if $\kappa>0$) invariant under the continuous $1$-parameter group of translations along $\Lambda$;
  \item $\Sigma_{\lambda,H}^*$ is a unduloid-type surface if $0<\lambda<\frac{\pi}{2}$;
  \item $\Sigma_{\frac{\pi}{2},H}^*$ is a stack of tangent rotationally invariant $H$-spheres centered on $\Lambda$;
  \item $\Sigma_{\lambda,H}^*$ is a nodoid-type surface if $\lambda>\frac{\pi}{2}$. 
\end{enumerate}
\end{theorem}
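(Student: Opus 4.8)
The plan is to realize each $\Sigma^*_{\lambda,H}$ as the completion of a conjugate Plateau piece. First I would fix the ambient minimal space $\Sb(4H^2+\kappa,H)$, which is a Berger sphere since $4H^2+\kappa>0$, and prescribe a geodesic contour $\widetilde\Gamma_{\lambda,H}$ given by a right-angled geodesic quadrilateral with three horizontal geodesic sides $\widetilde h_1,\widetilde h_2,\widetilde h_3$ and one vertical geodesic side $\widetilde v$, where the parameter $\lambda\geq 0$ governs the length of $\widetilde v$ (normalized so that the sphere case below occurs at $\lambda=\tfrac\pi2$). The projection $\pi(\widetilde\Gamma_{\lambda,H})$ bounds a spherical triangle $\Omega\subset\s^2(4H^2+\kappa)$, over one vertex of which $\widetilde v$ sits as a vertical segment. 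Since $\pi^{-1}(\pi(\widetilde h_i))$ is a minimal vertical cylinder over a great circle and all interior angles equal $\tfrac\pi2\le\pi$, Proposition~\ref{prop:plateau-existencia} yields a unique minimal surface $\widetilde\Sigma_{\lambda,H}$ spanning $\widetilde\Gamma_{\lambda,H}$, which is a stable embedded Killing graph of class $\mathcal C^1$ up to the boundary.

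Next I would pass to the conjugate $H$-immersion $\Sigma_{\lambda,H}\subset\M^2(\kappa)\times\R$ and read off its boundary from Lemmas~\ref{lem:horizontal-geodesics} and~\ref{lem:vertical-geodesics}: the three horizontal sides become planar curves contained in totally geodesic vertical planes met orthogonally, while the vertical side $\widetilde v$ becomes a planar curve in a horizontal slice $\M^2(\kappa)\times\{0\}$ met orthogonally. After placing the axis $\Lambda$ and normalizing heights, these four totally geodesic surfaces are $\M^2(\kappa)\times\{0\}$, the vertical plane $\Lambda\times\R$, and two vertical planes $V_0,V_d$ perpendicular to $\Lambda$. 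Proposition~\ref{prop:conjugation-completion-by-simmetries} then extends $\Sigma_{\lambda,H}$ to a complete $H$-surface $\Sigma^*_{\lambda,H}$ by successive mirror symmetries about these planes. The symmetries about $\M^2(\kappa)\times\{0\}$ and $\Lambda\times\R$ are immediate; the composition of the reflections about $V_0$ and $V_d$ is a translation along $\Lambda$, which generates the discrete group of translations and, since the fundamental piece is a graph over a bounded triangle, forces $\Sigma^*_{\lambda,H}$ to be cylindrically bounded around $\Lambda$.

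It remains to classify the surfaces according to $\lambda$, which I would do by analyzing the two degenerations together with the rotation of $\widetilde N$ along $\widetilde v$. When $\lambda\to 0$ the side $\widetilde v$ collapses and $\widetilde\Gamma_{\lambda,H}$ degenerates to a contour ruled by horizontal geodesics, so $\widetilde\Sigma_{\lambda,H}$ converges to a piece of the invariant surface $\mathcal I_{\widetilde\gamma}$ of Example~\ref{ex:invariant-I}; by the continuity of the conjugation (Proposition~\ref{prop:continuity}) its conjugate is the $H$-cylinder ($H$-torus if $\kappa>0$) invariant under the continuous group of translations along $\Lambda$, giving $\Sigma^*_{0,H}$. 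When $\lambda=\tfrac\pi2$ one horizontal side collapses and $\widetilde\Sigma_{\lambda,H}$ becomes a fundamental piece of an umbrella $\mathcal U_p$ (Example~\ref{ex:umbrellas}), whose conjugate is the rotationally invariant $H$-sphere $S_{H,\kappa,0}$; the reflections then assemble a stack of tangent $H$-spheres centered on $\Lambda$. For the intermediate and large values I would use Lemma~\ref{lem:vertical-geodesics}(b): the sign of the angle of rotation $\theta'$ of $\widetilde N$ along $\widetilde v$ determines whether $N$ points into or out of the domain bounded by the waist curve, exactly as the sign of the pitch $c$ distinguishes unduloids from nodoids in Example~\ref{ex:spherical-helicoids}; a monotonicity argument in $\lambda$ then yields unduloid-type surfaces for $0<\lambda<\tfrac\pi2$ and nodoid-type surfaces for $\lambda>\tfrac\pi2$.

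The main obstacle is the global control of the angle function $\nu$ and of the rotation angle $\theta$ along the boundary geodesics, which is what certifies both that the conjugate boundary closes up into the prescribed planar configuration and that the surface is genuinely of unduloid or nodoid type. Concretely, one must rule out interior zeros of $\nu$ and show that $\theta$ is monotone along $\widetilde v$, for which I would compare $\widetilde\Sigma_{\lambda,H}$ with the barriers $\mathcal U_p$ and $\mathcal I_{\widetilde\gamma}$ using the maximum-principle strategies collected in \S\ref{subsubsec:angle-control}. A secondary difficulty is making the two degenerations rigorous: one needs uniform curvature and area bounds on $\widetilde\Sigma_{\lambda,H}$ as $\lambda\to 0$ and $\lambda\to\tfrac\pi2$ in order to apply Proposition~\ref{prop:continuity} and to identify the limits with $\mathcal I_{\widetilde\gamma}$ and $\mathcal U_p$ rather than with a degenerate or multiply covered object.
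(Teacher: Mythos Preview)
Your outline follows the same conjugate Plateau strategy as the paper, but there is a genuine gap in the nodoid range $\lambda>\tfrac\pi2$, and a few misidentifications that would prevent the argument from going through as written.

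First, in the paper's polygon $\widetilde\Gamma_\lambda$ the parameter $\lambda$ does \emph{not} govern the length of the vertical segment $\widetilde v$: it controls the signed lengths of two of the horizontal sides ($\widetilde h_1$ and $\widetilde h_2$ have lengths proportional to $\lambda-\tfrac\pi2$ and $\lambda+\tfrac\pi2$), while $\widetilde h_0$ is a fixed quarter geodesic. Consequently, at $\lambda=0$ the vertical side does not collapse; rather, $\widetilde h_1$ and $\widetilde h_2$ become symmetric and $\widetilde\Sigma_0$ is explicitly a piece of the spherical helicoid $\widetilde\phi_{-1}\cong\mathcal I_{\widetilde h_0}$ with axis $\widetilde v$. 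At $\lambda=\tfrac\pi2$ it is $\widetilde h_1$ that collapses, as you say. So your description of the degenerations has the right targets but the wrong mechanisms.

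Second, and this is the essential point, for $\lambda>\tfrac\pi2$ the pair $(\pi(\widetilde\Gamma_\lambda),\widetilde\Gamma_\lambda)$ is \emph{not} a Nitsche contour for the standard vertical Killing submersion, and the mean-convex body used in~\cite{MT14} for $0\le\lambda\le\tfrac\pi2$ is no longer available. Proposition~\ref{prop:plateau-existencia} cannot be applied as you propose. The paper resolves this by passing to a \emph{different} Killing submersion $\pi_0$ generated by the screw-motion field $\widetilde X$ tangent to the helicoid $S=\mathcal I_{\widetilde h_0}$; with respect to $\pi_0$, the curves $\widetilde h_1,\widetilde h_2$ become vertical and $\widetilde\Gamma_\lambda$ is a Nitsche contour for all $\lambda\ge 0$, with mean-convex solid the region $W$ bounded by $S$ and a vertical Clifford torus $T$. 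Without this device (or an equivalent one) you have no existence or uniqueness for the minimal disk when $\lambda>\tfrac\pi2$, and hence no nodoids. Relatedly, working globally in $\Sb(4H^2+\kappa,H)$ rather than in the local Cartan model $M(4H^2+\kappa,H)$ fails for large $\lambda$, since $\Theta(\widetilde\Gamma_\lambda)$ self-intersects when $\lambda\ge\tfrac{7\pi}{2}$ (Remark~\ref{rmk:polygon-case-Michigan}).

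Finally, the unduloid/nodoid dichotomy in the paper is not read off from the sign of $\theta'$ along $\widetilde v$; it comes from the angle function analysis of Proposition~\ref{prop:horizontal-Delaunay-angle}: the value $\nu_\lambda(\widetilde 2)$ flips from $-1$ to $+1$ at $\lambda=\tfrac\pi2$, and for $\lambda>\tfrac\pi2$ an interior nodal curve $\widetilde\delta$ of $\nu_\lambda$ appears, which forces the conjugate curve $h_0$ to fold back and produces the non-Alexandrov-embedded nodoid profile.
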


\begin{figure}[htbp]
  \centering
  \includegraphics[width=\linewidth]{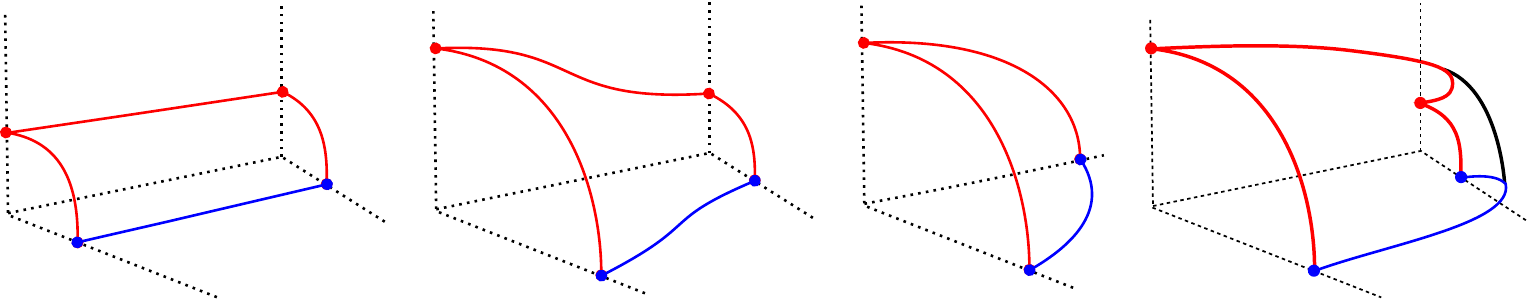}
  \caption{Schematic representation of the fundamental piece of horizontal Delaunay $H$-surfaces in $\mathbb{M}^{2}(\kappa) \times \mathbb{R}$ (see Theorem~\ref{thm:horizontal-Delaunay}). From left to right: cylinder (torus if $\kappa > 0$), unduloid, sphere, and nodoid}
  \label{fig:}
\end{figure}

In the case $\kappa>0$, we find (among the horizontal unduloids) many families of embedded $H$-tori which continuously deform stacks of tangent $H$-spheres evenly distributed along a horizontal geodesic $\Lambda$ into an equivariant $H$-torus. The embeddedness is shown by spotting a function in the kernel of the common stability operator of the conjugate surfaces that is induced simultaneously by two $1$-parameter groups of isometric deformations: one in the initial space $M(4H^2+\kappa, H)$ and the other one in the target space $\mathbb{M}^2(\kappa)\times \mathbb{R}$. We will see later that this function carries insightful geometric information that also implies that horizontal unduloids are properly embedded if $\kappa\leq 0$, see~\cite[Prop.~4.4]{MT20}.

\begin{theorem}[{\cite[Thm.~1.2]{MT20}}]\label{thm:horizontal-Delaunay-embeddedness}
  Fix $\kappa>0$. For each integer $m\geq 2$, there is a family $\mathcal T_m$ of embedded $H$-tori in $\mathbb{S}^2(\kappa)\times\mathbb R$ (see Figure~\ref{fig:horizontal-Delaunay-compact-embedded-moduli-space}) parametrized as
\[\mathcal T_m=\left\{\Sigma_{\lambda_m(H),H}^*:\cot(\tfrac{\pi}{2m})<\tfrac{2H}{\sqrt{\kappa}}\leq\sqrt{m^2-1}\right\},\]
where $H\mapsto\lambda_m(H)$ is a continuous strictly decreasing function ranging from $\frac{\pi}{2}$ to $0$.
\begin{enumerate}
  \item The family $\mathcal T_m$ is a continuous deformation (in which $H$ varies) from a stack of $m$ tangent spheres evenly distributed along $\Lambda$ to an equivariant torus.
  \item The surfaces $\Sigma_{\lambda_m(H),H}^*$, along with $H$-spheres $\Sigma_{\pi/2,H}^*$ and $H$-cylinders $\Sigma_{0,H}^*$ for all $H>0$, are the only compact embedded $H$-surfaces among all $\Sigma_{\lambda,H}^*$ (for all $\kappa\in\R$).
\end{enumerate}
\end{theorem}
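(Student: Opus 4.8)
The plan is to study the complete surfaces $\Sigma_{\lambda,H}^*$ of Theorem~\ref{thm:horizontal-Delaunay} through their fundamental pieces, separating the two genuinely independent issues: \emph{closing up} into a compact torus (a period problem) and \emph{embeddedness}. Recall that each $\Sigma_{\lambda,H}^*$ is the completion, by the mirror symmetries of Proposition~\ref{prop:conjugation-completion-by-simmetries}, of a conjugate fundamental piece $\Sigma_{\lambda,H}$, itself the conjugate of a minimal Plateau solution in the Berger sphere $\Sb(4H^2+\kappa,H)$ bounded by horizontal and vertical geodesics. Since $\Sigma_{\lambda,H}^*$ is invariant under a discrete group of translations along the horizontal geodesic $\Lambda$, and in $\s^2(\kappa)\times\R$ (where $\tau=0$) such translations are rotations of the sphere factor about a fixed axis, the $1$-parameter group of translations along $\Lambda$ is periodic of period $\tfrac{2\pi}{\sqrt\kappa}$, the length of the closed geodesic $\Lambda$. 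First I would isolate the generating translation length $\ell(\lambda,H)$ of the discrete group, so that $\Sigma_{\lambda,H}^*$ is a compact embedded torus precisely when $\ell(\lambda,H)=\tfrac{2\pi}{m\sqrt\kappa}$ for some integer $m\ge 2$; rational ratios with higher winding number close up but self-intersect, and irrational ratios give non-compact surfaces.

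Second, I would solve this closing equation to produce the curves $\lambda_m$. Using Lemma~\ref{lem:horizontal-geodesics} to read off $\ell(\lambda,H)$ from the rotation of the normal $\widetilde N$ along the horizontal rulings of the fundamental piece, one reduces the period to an explicit integral in the generating data. The task is then to show that, for each fixed $m$, the equation $\ell(\lambda,H)=\tfrac{2\pi}{m\sqrt\kappa}$ defines $\lambda=\lambda_m(H)$ as a continuous, strictly decreasing function, via monotonicity of $\ell$ in $\lambda$. The endpoints are read off from the degenerate limits of Theorem~\ref{thm:horizontal-Delaunay}: as $\lambda\to\tfrac\pi2$ the piece degenerates to a stack of $m$ tangent $H$-spheres evenly spaced along $\Lambda$, and fitting $m$ of them around the great circle yields the (open) left endpoint $\tfrac{2H}{\sqrt\kappa}=\cot(\tfrac{\pi}{2m})$; as $\lambda\to 0$ the unduloid branch bifurcates from the equivariant $H$-torus $\Sigma_{0,H}^*$, and the $m$-th bifurcation value of the Jacobi operator~\eqref{eqn:stability-op} along that torus yields the (closed) right endpoint $\tfrac{2H}{\sqrt\kappa}=\sqrt{m^2-1}$. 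This would simultaneously establish statement~(1).

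The crux, and the hardest part, is the embeddedness of $\Sigma_{\lambda_m(H),H}^*$. Type~I self-intersections of the fundamental piece are excluded by checking that its boundary projects injectively to $\s^2(\kappa)$ and invoking the maximum principle as in Proposition~\ref{prop:uniqueness}. For Type~II self-intersections, the idea is to exhibit a Jacobi field $u\in\ker L$ on the common domain $\Sigma$ that is the normal component $u=\langle Y,N\rangle$ of an ambient Killing field $Y$ in $\s^2(\kappa)\times\R$ and \emph{simultaneously} the normal component $u=\langle\widetilde Y,\widetilde N\rangle$ of an ambient Killing field $\widetilde Y$ in the Berger sphere; this is possible because conjugate immersions share the operator $L$ of~\eqref{eqn:stability-op}. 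The zero set of $u$ is where the surface is tangent to the corresponding Killing direction, and controlling its nodal structure shows that the fundamental piece is a graph for a suitable foliation of $\s^2(\kappa)\times\R$ and that consecutive mirror copies lie on opposite sides of the planes of symmetry, precluding Type~II intersections. The left endpoint $\tfrac{2H}{\sqrt\kappa}=\cot(\tfrac{\pi}{2m})$ is excluded because there the torus degenerates into tangent spheres, while the right endpoint is attained by the embedded equivariant torus $\Sigma_{0,H}^*$.

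Finally, for statement~(2) I would classify the compact embedded surfaces among all $\Sigma_{\lambda,H}^*$ and all $\kappa\in\R$. The nodoid-type surfaces ($\lambda>\tfrac\pi2$) are ruled out because their generating curve self-intersects, producing Type~I self-intersections as for classical nodoids. For $\kappa\le 0$ the horizontal geodesic $\Lambda$ has infinite length, so no unduloid or nodoid can close up and the only compact examples are the $H$-spheres $\Sigma_{\pi/2,H}^*$. For $\kappa>0$, a unduloid-type surface ($0<\lambda<\tfrac\pi2$) is compact only when the closing condition holds, that is, only along the curves $\lambda_m$, and embedded only on the stated range by the nodal argument; this leaves exactly the families $\mathcal T_m$, together with the $H$-spheres $\Sigma_{\pi/2,H}^*$ and the equivariant $H$-cylinders ($H$-tori) $\Sigma_{0,H}^*$. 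The main obstacle throughout is the Type~II embeddedness: turning the shared-kernel Jacobi field into a clean, sign-definite nodal statement valid on the whole closed-up surface, rather than merely on a single fundamental domain, is the delicate point.
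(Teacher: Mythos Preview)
Your outline captures the architecture of the paper's argument quite well: separate the period problem from embeddedness, use the monotonicity of the period in $\lambda$ to get the curves $\lambda_m$, and exploit the common stability operator~\eqref{eqn:stability-op} to produce a Jacobi field arising from Killing fields on both sides of the correspondence. There are, however, two points where your plan diverges from what the paper actually does, and one of them is a real gap.

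First, the role of the Jacobi field is not what you describe. In the paper the function $u=\langle X,N\rangle$ (with $X$ the Killing field of translations along $\Lambda$) is compared to $\widetilde u=\langle\widetilde X,\widetilde N\rangle$ (with $\widetilde X$ the helicoidal Killing field used to solve the Plateau problem) on the \emph{fundamental annulus} $A_\lambda$ made of four copies of the piece. One first checks that $\widetilde u>0$ in the interior of $A_\lambda$ and vanishes on $\partial A_\lambda$, so $A_\lambda$ is a maximal stable domain with $\lambda_1(A_\lambda)=0$; since $u$ also lies in $\ker L$ and vanishes on $\partial A_\lambda$, it is a constant multiple of $\widetilde u$ and hence does not vanish in the interior. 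The conclusion is that $A_\lambda$ is transversal to $X$ and is an $X$-graph, hence embedded. This is the paper's substitute for your ``Type I via boundary projects injectively'' argument, not its Type~II argument. Your Type~I proposal (boundary projecting injectively to $\s^2(\kappa)$) does not straightforwardly apply here because the curve $v$ lies in a slice and its projection is not controlled a priori; the $X$-graph property is what does the work. Moreover, the condition $H>\tfrac{\sqrt\kappa}{2}$ appears precisely at this step: it guarantees that the piece does not run over one of the poles of the great circle $\Lambda$, which would obstruct the $X$-graph conclusion.

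Second, and this is the gap, Type~II embeddedness is \emph{not} handled by the Jacobi field in the paper. Once $A_\lambda$ is known to be embedded, one must still show that it sits inside the wedge bounded by the vertical planes $P_{12}$ and $P_{34}$, so that the mirror copies tile without overlap. The paper does this via the estimate $\kappa_g^P\le\tfrac{4H^2-\kappa}{4H}$ for the curvature of the slice curve $v$, taken from~\cite[Thm.~3.3]{Man13}: this bound, combined with the known length of $v$ and the fact that $v$ meets $P_{12}$ and $P_{34}$ orthogonally, forces $v$ to remain in the wedge. Your nodal-set argument does not obviously yield this containment, and you should either supply a reason why transversality to $X$ alone confines the annulus to the wedge, or invoke the curvature bound.

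A minor remark: your derivation of the right endpoint $\tfrac{2H}{\sqrt\kappa}=\sqrt{m^2-1}$ via bifurcation from the equivariant torus is plausible but is not the paper's route. Since $\widetilde\Sigma_0$ is an explicit spherical helicoid, the paper computes $\ell_0(0)=\tfrac{\pi}{\sqrt{4H^2+\kappa}}$ directly; equating this to $\tfrac{\pi}{m\sqrt\kappa}$ gives the endpoint with no spectral analysis needed. Likewise $\ell_0(\tfrac\pi2)=\tfrac{2}{\sqrt\kappa}\arctan\tfrac{\sqrt\kappa}{2H}$ is read off from the $H$-sphere.
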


\begin{figure}[htb]
\includegraphics{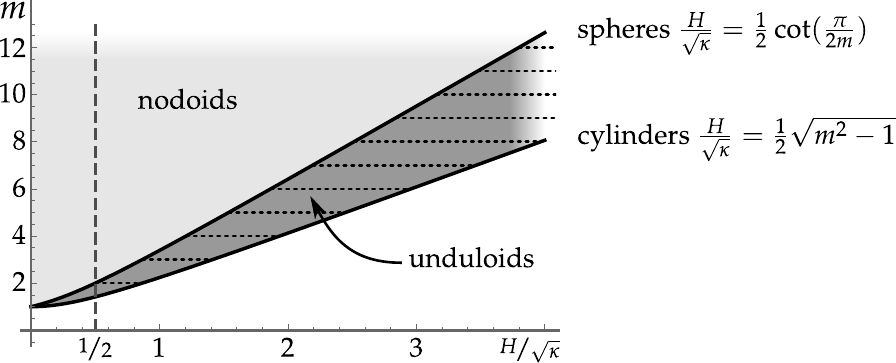}
\caption{The darker region is the moduli space of $\Sigma_{\lambda,H}^*\subset\mathbb{S}^2(\kappa)\times\mathbb{R}$, $\lambda\in[0,\frac\pi2]$, in terms of $\frac{H}{\sqrt\kappa}$ and $m$. Dotted horizontal segments represent the families $\mathcal{T}_m$ of embedded tori. The vertical dashed line indicates that no such tori exist if $H>\frac{\sqrt\kappa}{2}$.}\label{fig:horizontal-Delaunay-compact-embedded-moduli-space} 
\end{figure}

\subsubsection{Construction of the minimal surface in $M(4H^2 + \kappa, H)$}\label{subsubsec:horizontal-Delaunay-construction-minimal-piece}
Assume that $H,\kappa\in\mathbb{R}$ are fixed such that $H>0$ and $4H^2+\kappa>0$ (we will omit the dependence on $H$ in the sequel unless otherwise stated). To obtain an $H$-surface in $\mathbb{M}^2(\kappa)\times\mathbb{R}$ by means of the correspondence, we will begin by constructing a compact minimal surface $\widetilde\Sigma_\lambda$, $\lambda\in\R$, with boundary a polygon $\widetilde\Gamma_\lambda\subset M(4H^2 + \kappa, H)$ consisting of three horizontal geodesics $\widetilde{h}_0$, $\widetilde{h}_1$ and $\widetilde{h}_2$, and one vertical geodesic $\widetilde v$, whose vertexes will be labeled as $\widetilde 1$, $\widetilde 2$, $\widetilde 3$ and $\widetilde 4$, as shown in Figure~\ref{fig:horizontal-Delaunay-polygon-Berger}. More precisely,
\begin{itemize}
  \item $\widetilde{h}_0$ is a quarter of a horizontal geodesic, which can be parametrized, up to an ambient isometry, by $\widetilde{h}_0(s)=\tfrac{2}{\sqrt{4H^2+\kappa}}\left(0,  \tfrac{\cos(2s)}{1 + \sin(2s)}, 0\right)$, for  $s \in \bigl[0, \tfrac\pi2\bigr]$.
  
  \item $\widetilde{h}_1$ and $\widetilde{h}_2$ are horizontal geodesics starting orthogonally at the endpoints of $\widetilde{h}_0$ with signed lengths $\tfrac{1}{2\sqrt{4H^2+\kappa}}(\lambda - \frac{\pi}{2})$ and $\tfrac{1}{2\sqrt{4H^2+\kappa}}(\lambda + \frac{\pi}{2})$ in the directions $\widetilde{h}_0'(0)\times\xi$ and $-\widetilde{h}_0'(\frac{\pi}{2})\times\xi$, respectively. In particular, $\widetilde h_1$ goes in opposite directions according to the sign of this length.

  \item $\widetilde{v}$ is the vertical geodesic joining the endpoints of $\widetilde{h}_1$ and $\widetilde{h}_2$.
\end{itemize}
Observe that $\widetilde{\Gamma}_\lambda$ and $\widetilde{\Gamma}_{-\lambda}$ are congruent for all $\lambda>0$ by the isometry $(x, y, z) \mapsto (-x,y,-z)$, so we can assume that $\lambda \geq 0$ without loss of generality.

\begin{figure}[htbp]
\centering
\includegraphics{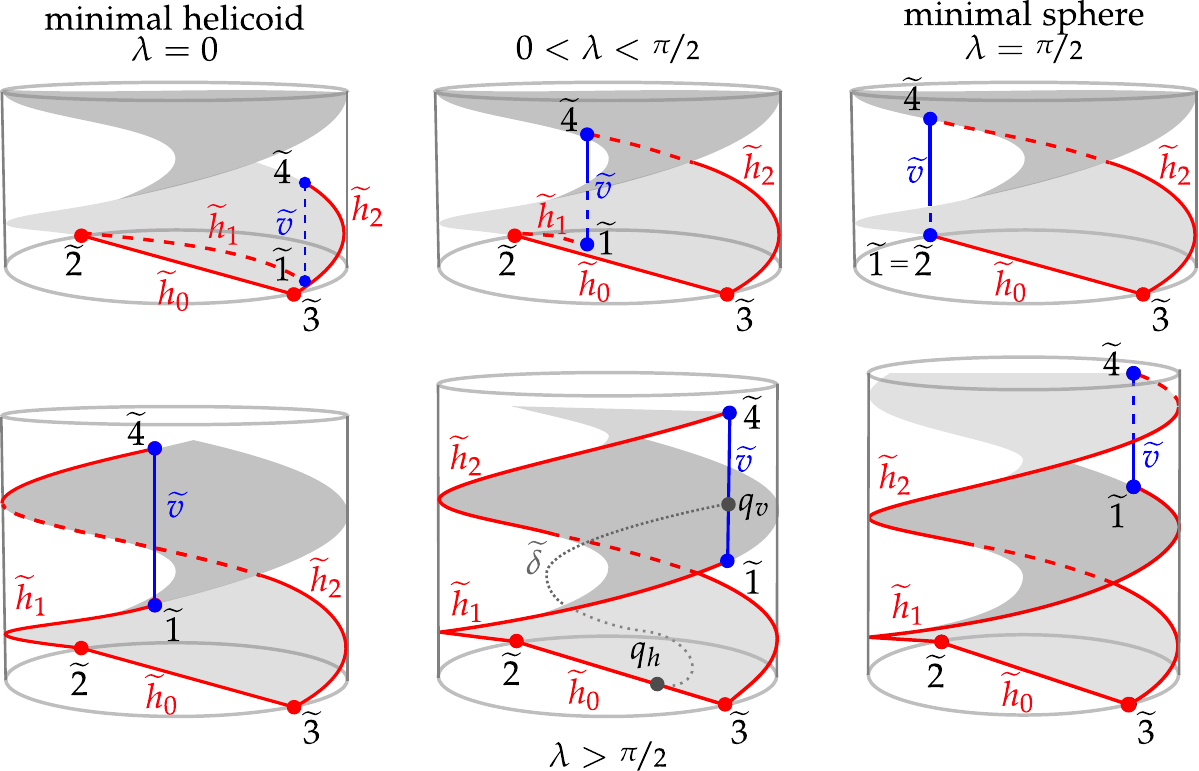}
\caption{A faithful representation of the polygon $\widetilde{\Gamma}_\lambda$ for different values of $\lambda$. The barriers $T$ (vertical cylinder) and $S$ (helicoid, in gray) demarcate the mean convex solid $\Omega$. Top row: polygons $\widetilde{\Gamma}_\lambda$ for $\lambda \leq \frac{\pi}{2}$ whose conjugate surfaces are unduloids and limit cases $\lambda = 0$ (left), which corresponds to the $H$-cylinder, and $\lambda = \frac{\pi}{2}$ (right), which corresponds to the $H$-sphere. Bottom row: polygon $\widetilde{\Gamma}_\lambda$ for $\lambda > \frac{\pi}{2}$ whose conjugate $H$-surfaces are nodoids. The dotted line in the central figure represents the curve $\widetilde{\delta}$ of zeros of the angle function, see Proposition~\ref{prop:horizontal-Delaunay-angle}.}
\label{fig:horizontal-Delaunay-polygon-Berger}
\end{figure}

\begin{remark}\label{rmk:polygon-case-Michigan}
The polygon $\Theta(\widetilde{\Gamma}_\lambda)$, where $\Theta:M(4H^2 +\kappa, H)\to\Sb(4H^2 + \kappa, H)$ is the local isometry given by~\eqref{eq:local-isometry-Daniel-Berger}, has self-intersections if $\lambda\geq\frac{7\pi}{2}$, so the resulting Plateau problem is ill-posed in $\Sb(4H^2+\kappa,H)$. This is the reason why the locally isometric Cartan model $M(4H^2+\kappa,H)$ is used throughout this section.
\end{remark}

The polygon $\widetilde{\Gamma}_\lambda$ was first consider in~\cite{MT14} for $0 \leq \lambda \leq \frac{\pi}{2}$ and then extended for $\lambda \geq \frac{\pi}{2}$ in~\cite{MT20}. We remark that the mean convex body used to solve the Plateau problem in~\cite{MT14} is no longer valid for $\lambda>\frac{\pi}{2}$ and a different approach was developed to show the existence of $\widetilde{\Sigma}_\lambda$, which we present here.

Consider the following two minimal surfaces in $M(4H^2 + \kappa, H)$: the vertical cylinder $T$ of equation $x^2 + y^2 = \tfrac{4}{4H^2 + \kappa}$ and the surface $S = \mathcal{I}_{\widetilde{h}_0}$ (see Example~\ref{ex:invariant-I}) of equation $x \cos(\tfrac{4H^2+\kappa}{2H} z) + y \sin(\tfrac{4H^2+\kappa}{2H} z)=0$. Observe that $\Theta(T)$ is congruent to the Clifford torus $\widetilde\phi_1$ in Example~\ref{ex:spherical-helicoids} and $\Theta(S)$ is congruent to $\widetilde\phi_{-1}$ in Example~\ref{ex:spherical-helicoids} (see the shaded surface in Figure~\ref{fig:horizontal-Delaunay-compact-embedded-moduli-space}), where $\Theta$ is the local isometry defined in~\eqref{eq:local-isometry-Daniel-Berger}. The surface $S$ divides the interior domain of the cylinder $T$ in two connected components.  The connected component $W$ that contains $\widetilde{v}$ is a mean-convex solid in the sense of~\cite{MY82}, so there exists an embedded minimal disk $\widetilde{\Sigma}_\lambda \subset W$ solution to the Plateau problem with boundary $\partial \widetilde{\Sigma}_\lambda = \widetilde{\Gamma}_\lambda \subset W$.

If $0 \leq \lambda \leq \frac{\pi}{2}$, then $\widetilde{\Gamma}_\lambda$ is a Nitsche contour with respect to the usual Killing submersion, but this is not true in general. To overcome this issue, consider the Killing vector field $\widetilde X$ associated with the group of screw-motions that leave the surface $S$ invariant. Then $\widetilde{X}$ has no zeroes and gives rise to a Killing submersion $\pi_0:M(\kappa,\tau)\to(\R^2,\df s^2)$ in the sense of~\cite{LerMan} (note that the metric $\df s^2$ has not constant curvature, $\widetilde X$ has not constant length, and the bundle curvature is not constant). The curves $\widetilde h_0$ and $\widetilde v$ are transversal to the fibers of $\pi_0$ whereas $\widetilde h_1$ and $\widetilde h_2$ become vertical for $\pi_0$ (they are integral curves of $\widetilde{X}$). Hence, $\widetilde\Gamma_\lambda$ is a Nitsche contour with respect to $\pi_0$ in the sense of Definition~\ref{def:nitsche} for all $\lambda\geq 0$. Proposition~\ref{prop:plateau-existencia} implies that $\widetilde{\Sigma}_\lambda$ is unique surface in $W$ with boundary $\widetilde\Gamma_\lambda$, and it is everywhere transversal to $\widetilde X$. Uniqueness in turn implies that $\widetilde\Sigma_\lambda$ depends continuously on $\lambda\geq 0$ (and so does its sister surface $\Sigma_\lambda$ thanks to Proposition~\ref{prop:continuity}).

We highlight the following special cases, depicted in Figure~\ref{fig:horizontal-Delaunay-polygon-Berger} (see left and right drawings in the top row):
\begin{itemize}
  \item If $\lambda = 0$, then $\widetilde{\Sigma}_0$ is part of a spherical helicoid with axis $\widetilde{v}$, that is, $\Theta(\widetilde{\Sigma}_{0})$ is congruent to part of the spherical helicoid $\widetilde\phi_{-1}$ of equation $\pIm(z^2 + w^2) = 0$ (see Figure~\ref{fig:horizontal-Delaunay-polygon-Berger} top left).
  \item If $\lambda = \frac{\pi}{2}$, then $\widetilde{\Sigma}_{\frac{\pi}{2}}$ is an open piece of a minimal sphere. More precisely, $\Theta(\widetilde{\Sigma}_{\frac{\pi}{2}})$ is part of the minimal sphere $\pIm(z - w) = 0$ (see Figure~\ref{fig:horizontal-Delaunay-polygon-Berger} top right).
\end{itemize}

\begin{remark}\label{rmk:round-sphere1}
  If $\kappa=0$, then $M(4H^2+\kappa,H)$ is locally isometric to the three-sphere $\mathbb{S}^3(H^2)$ and the completion $\widetilde{\Sigma}_\lambda^*$ is a spherical helicoid  (see Example~\ref{ex:spherical-helicoids}) for all $\lambda \geq 0$ (observe that $\widetilde{v}$ and $\widetilde{h}_0$ have the same length and $\widetilde{\Sigma}_\lambda$ is invariant under the composition of translations and suitable rotations about $\widetilde{h}_2$). This is a consequence of the isotropy of the three-sphere $\mathbb{S}^3(H^2)$ that allows rotations around any geodesic. This argument obviously fails if $\kappa \neq 0$ (the segments $\widetilde{v}$ and $\widetilde{h}_0$ no longer have the same length and there is no screw motion group with axis $\widetilde{h}_2$).
\end{remark}

\subsubsection{Analysis of the angle function}\label{subsubsec:analysis-angle-function}

As pointed out in \S\ref{sec:conjugation}, the angle function $\nu_\lambda:\widetilde{\Sigma}_\lambda\to[-1,1]$ gives precious information about the shape of the boundary curves of the conjugate contour $\Gamma_\lambda$. We are mainly interested in the points where $\nu_\lambda$ has values $\pm 1$ or $0$, if any. We will also establish the monotonicity properties of $\nu_\lambda$ as a function of $\lambda$ along the horizontal geodesic arcs of $\widetilde{\Gamma}_\lambda$ in the common boundary of intersection $\widetilde{\Gamma}_{\lambda_1}\cap \widetilde{\Gamma}_{\lambda_2}$, $\lambda_1 \neq \lambda_2$. In the sequel, we will choose the normal $\widetilde N$ to $\widetilde\Sigma_\lambda$ such that $\nu_\lambda(\widetilde{3}) =-1$.

\begin{proposition}[{\cite[Lem.~3]{MT14} and \cite[Prop.~3.3]{MT20}}]\label{prop:horizontal-Delaunay-angle}
Let $\nu_\lambda$ be the angle function of the compact minimal disk $\widetilde{\Sigma}_\lambda$ spanning $\widetilde{\Gamma}_\lambda$ such that $\nu_\lambda(\widetilde 3)=-1$.
\begin{enumerate}[label=(\alph*)]
  \item The only points in which $\nu_\lambda$ takes the values $\pm1$ are $\widetilde 2$ and $\widetilde 3$. More precisely, if $0<\lambda<\frac{\pi}{2}$, then $\nu_\lambda(\widetilde{2}) = -1$; if $\lambda > \frac{\pi}{2}$, then $\nu_\lambda(\widetilde{2}) = 1$. 

  \item \label{prop:angle:item:zeroes-angle} The set of points in which $\nu_\lambda$ vanishes consists of $\widetilde v$ and, in the case $\lambda > \frac{\pi}{2}$, also of a certain interior regular curve $\widetilde{\delta}\subset\widetilde\Sigma_\lambda$ with endpoints in $\widetilde v$ and $\widetilde h_0$ (see Figure~\ref{fig:horizontal-Delaunay-polygon-Berger}).

  \item Given $p$ in the horizontal boundary of $\widetilde{\Gamma}_\lambda$, the function $\lambda\mapsto\nu_\lambda(p)$ is continuous in the interval where it is defined.
  \begin{itemize}
    \item It is strictly increasing (possibly changing sign) if $p \in\widetilde h_0$ for all $\lambda>0$.
    \item It is positive and strictly increasing if $p \in\widetilde h_1$ and $\lambda>\frac{\pi}{2}$.
    It is negative and strictly increasing if $p \in\widetilde h_1$ and $0<\lambda<\frac{\pi}{2}$.
    \item It is negative and strictly increasing if $p \in\widetilde h_2$ for all $\lambda>0$. 
  \end{itemize}
\end{enumerate}
\end{proposition}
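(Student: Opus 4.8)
The plan is to determine $\nu_\lambda$ first on the boundary polygon $\widetilde\Gamma_\lambda$ and then to propagate this information into the interior using that $\nu_\lambda$ lies in the kernel of the stability operator \eqref{eqn:stability-op}, so (by Bers) its zero set is a union of regular curves meeting transversally at isolated points. I would begin with the boundary. Along the vertical arc $\widetilde v$, Lemma~\ref{lem:vertical-geodesics} forces $\widetilde N$ to be horizontal, so $\nu_\lambda\equiv 0$ on $\widetilde v$, and at its endpoints $\widetilde 1,\widetilde 4$ (horizontal--vertical corners) the tangent plane contains $\widetilde\xi$, giving $\nu_\lambda=0$ there as well. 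At the horizontal--horizontal corners $\widetilde 2,\widetilde 3$ the tangent plane is horizontal, so $\nu_\lambda=\pm1$; the normalization fixes $\nu_\lambda(\widetilde 3)=-1$, while the sign at $\widetilde 2$ is the orientation of the frame $\{\widetilde h_0',\widetilde h_1',\widetilde\xi\}$, which flips exactly when $\widetilde h_1$ reverses direction, i.e.\ as $\lambda$ crosses $\tfrac\pi2$ (where $\widetilde h_1$ degenerates to a point). This yields the quantitative part of~(a). On the three horizontal arcs, Lemma~\ref{lem:horizontal-geodesics} gives $\nu_\lambda=\sin\theta$, so the boundary zeros there are governed by the turning of the rotation angle $\theta$.

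Next I would establish the monotonicity~(c). For $\lambda_1<\lambda_2$ the contours share $\widetilde h_0$ together with the overlapping subsegments of $\widetilde h_1,\widetilde h_2$, and the mean-convex solids $W$ are nested, so by the uniqueness in Proposition~\ref{prop:plateau-existencia} the corresponding surfaces are ordered. Writing both as Killing graphs for the screw-motion submersion $\pi_0$ of \S\ref{subsubsec:horizontal-Delaunay-construction-minimal-piece}, the difference of the two graph functions solves a linear elliptic equation, has a strict sign in the interior by the maximum principle, and vanishes on the shared arcs. The Hopf boundary point lemma then gives a definite-sign normal derivative along each shared arc; translating this tilt of $\widetilde N$ into $\theta$, and hence into $\nu_\lambda=\sin\theta$ via Lemma~\ref{lem:horizontal-geodesics}, produces strict monotonicity of $\lambda\mapsto\nu_\lambda(p)$. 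The stated signs on $\widetilde h_1,\widetilde h_2$ then follow by combining this with the corner values of the previous paragraph and the value $0$ attained at the $\widetilde v$-endpoints.

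It remains to rule out the other $\pm1$ values in~(a) and to pin down the nodal set in~(b). For~(a) I would use Strategy~2 of \S\ref{subsubsec:angle-control}: at an interior or extra boundary point $p$ with $\nu_\lambda(p)=\pm1$ the surface is tangent to the umbrella $\mathcal U_p$ (Example~\ref{ex:umbrellas}), which is a minimal sphere since $4H^2+\kappa>0$; the tangency produces at least four rays of $\mathcal U_p\cap\widetilde\Sigma_\lambda$ emanating from $p$, and one checks that $\widetilde\Gamma_\lambda$ meets $\mathcal U_p$ in too few points for all of them to terminate on $\partial\widetilde\Sigma_\lambda$ without enclosing a disk, which the interior maximum principle forbids. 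For~(b), the boundary analysis shows that $\{\nu_\lambda=0\}\cap\widetilde\Gamma_\lambda$ is exactly $\widetilde v$ when $0<\lambda<\tfrac\pi2$ (with $\nu_\lambda<0$ on all three horizontal arcs) and $\widetilde v$ together with a single interior point of $\widetilde h_0$ when $\lambda>\tfrac\pi2$ (since then $\nu_\lambda(\widetilde 2)$ and $\nu_\lambda(\widetilde 3)$ have opposite signs). Because $\widetilde\Sigma_\lambda$ is a stable minimal graph (the positive Jacobi field $\langle\widetilde X,\widetilde N\rangle$ coming from transversality to $\widetilde X$ gives stability), no nodal curve of $\nu_\lambda$ can bound a subdomain on whose \emph{entire} boundary $\nu_\lambda$ vanishes, since such a subdomain would be a nodal domain with first eigenvalue $0$, contradicting positivity of $\lambda_1$ on proper subdomains. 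This excludes closed nodal loops and arcs returning to $\widetilde v$, leaving precisely the single arc $\widetilde\delta$ from $\widetilde h_0$ to $\widetilde v$ in the supercritical range.

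The main obstacle is the tangency comparison in~(a): it requires a precise description of how the horizontal geodesics comprising $\widetilde\Gamma_\lambda$ intersect the umbrella $\mathcal U_p$ and a verification that no intersection ray bounds an interior region, which is exactly where the specific geometry of the barriers $T$ and $S$ and of the Cartan/Berger model must be exploited. A secondary difficulty is excluding a spurious sign change of $\nu_\lambda$ along the interior of $\widetilde h_0$ when $\lambda<\tfrac\pi2$; I would control this through the turning of $\theta$ together with a comparison with $\mathcal I_{\widetilde h_0}=S$ (Example~\ref{ex:invariant-I}), whose angle function equals $1$ along $\widetilde h_0$.
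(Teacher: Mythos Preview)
Your treatment of (a) and (c) follows essentially the paper's strategy: the corner values and Strategy~2 (umbrella comparison) for (a), and the boundary maximum principle between two Plateau solutions for (c). One small correction: with the normalization $\nu_\lambda(\widetilde 3)=-1$, the surface $\mathcal I_{\widetilde h_0}=S=\widetilde\Sigma_0$ has angle $-1$ (not $1$) along $\widetilde h_0$; this is the barrier from below that the paper uses as one side of the sandwich yielding the sign in (c).

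For (b) you take a genuinely different route. The paper invokes Strategy~1: at an interior point $p$ with $\nu_\lambda(p)=0$ it intersects $\widetilde\Sigma_\lambda$ with the tangent vertical cylinder $T_p$, counts the (at least four) rays emanating from $p$, tracks where they can land on $\widetilde\Gamma_\lambda$ inside the mean-convex solid $W$, and reaches a contradiction with the maximum principle for vertical cylinders; the same mechanism, together with the refinement that $T_p\cap W$ is a vertical quadrilateral bounded by $S\cup T$, also excludes the degenerate situation $\nu_\lambda(p)=0$, $\nabla\nu_\lambda(p)=0$ when $\lambda>\tfrac\pi2$, and hence bifurcations of the nodal set. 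Your alternative---the positive Jacobi field $\widetilde u=\langle\widetilde X,\widetilde N\rangle$ forcing $\lambda_1(D)>0$ on any proper subdomain $D$ with $\overline D$ away from $\widetilde h_1\cup\widetilde h_2$---is a clean way to kill closed nodal loops and arcs from $\widetilde v$ back to $\widetilde v$, since any such $D$ would carry the Dirichlet eigenfunction $\nu_\lambda$ with eigenvalue $0$. What the stability argument does \emph{not} deliver on its own is the control of zeros on $\widetilde h_0$: a nodal arc from $\widetilde h_0$ to $\widetilde v$ (or from $\widetilde h_0$ to $\widetilde h_0$) never bounds a domain with $\nu_\lambda\equiv 0$ on its full boundary, so no contradiction arises. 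For $0<\lambda<\tfrac\pi2$ you can close this gap by combining the monotonicity in (c) with the explicit limit $\widetilde\Sigma_{\pi/2}$ (a piece of a minimal sphere, where $\nu_{\pi/2}\le 0$ on $\widetilde h_0$), which forces $\nu_\lambda<0$ on $\widetilde h_0$; for $\lambda>\tfrac\pi2$, however, pinning down \emph{exactly one} zero on $\widetilde h_0$ (and hence a single arc $\widetilde\delta$) still requires an extra argument---this is precisely the ``refinement'' the paper also leaves to the references, but its cylinder comparison at least rules out nodal bifurcations uniformly, which your nodal-domain count does not.
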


Although we will not provide a full proof of Proposition~\ref{prop:horizontal-Delaunay-angle}, it essentially relies on strategies 1, 2 and 3 discussed in \S\ref{subsubsec:angle-control}. We will illustrate this by sketching the analysis of the interior zeros of $\nu$. To this end, we intersect $\widetilde\Sigma_\lambda$ and the tangent vertical cylinder $T_p$ at some interior point $p$ with $\nu(p)=0$. 

If $0<\lambda<\frac\pi 2$, then the (at least) four rays in $T_p\cap\widetilde\Gamma_\lambda$ emanating from $p$ end up either in $\widetilde v$ or in $\widetilde h_0\cup\widetilde h_1\cup\widetilde h_2$. If two of the rays reach $\widetilde v$, they enclose a region of $\widetilde\Sigma_\lambda$ (along with a segment of $\widetilde v$); otherwise, there are two of the rays that reach the same point of $\widetilde h_0\cup\widetilde h_1\cup\widetilde h_2$ and they also enclose a region of $\widetilde\Sigma_\lambda$. Either way, such a region has boundary on the vertical cylinder $T_p$ and we easily find a contradiction with the maximum principle with respect to other vertical cylinders~\cite[Lem.~1.6]{MT20}. This means that there are no interior points with $\nu=0$ if $0<\lambda<\frac\pi 2$.

Assume now that $\lambda>\frac{\pi}{2}$ and apply the same reasoning. However, since $\Sigma_\lambda\subset W$ the rays emanating from $p$ lie in the connected component of $(T_p\cap W)-S$ containing $p$, which is a vertical quadrilateral with boundary in $S\cup T_p$: three of its sides lie in $S$ if $T_p'$ contains the $z$-axis (see Figure~\ref{fig:intersection-helicoid-torus} center), otherwise only two of the sides lie in $S$ (see Figure~\ref{fig:intersection-helicoid-torus} left and right). It is not difficult to realise that if more than four rays arise from $p$, then there will be enclosed regions in contradiction with the maximum principle as discussed above. This implies that there are no points with $\nu(p)=\nabla\nu(p)=0$, whence the curves of $\nu=0$ do not bifurcate. The argument can be further refined to prove that there is exactly one regular curve $\widetilde\delta$ where $\nu=0$, but we will not include the details here.

\begin{figure}[htbp]
  \centering
  \includegraphics{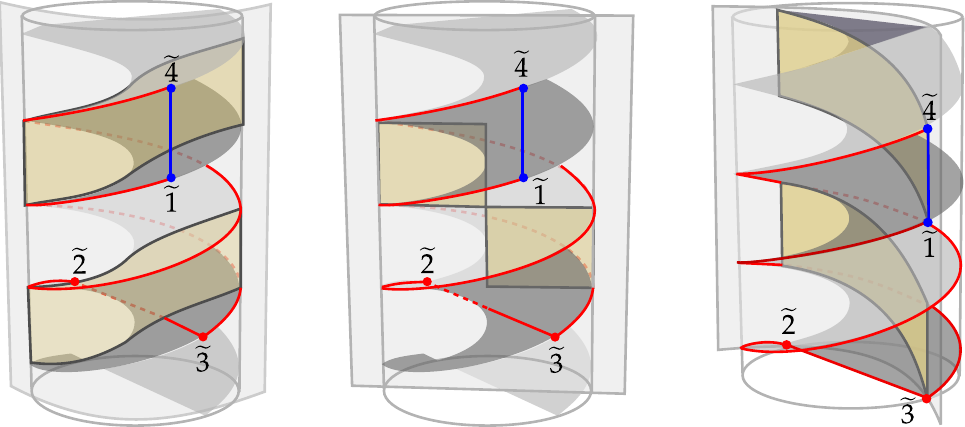}
  \caption{Each figure indicates two connected components of $(T_p \cap W)-S$ for a Clifford torus $T_p$ inside the mean convex solid $W$. From left to right: a general case, a case in which $T_p$ contains the $z$-axis, and a case in which $T_p$ contains $\widetilde{v}$.} 
  \label{fig:intersection-helicoid-torus}
\end{figure}

As for item (c), we will also say some words about the application of the maximum principle in the boundary. Assume first that $0<\lambda_1<\lambda_2<\frac{\pi}{2}$. This case is easier since $\widetilde\Sigma_{\lambda_1}$ is a barrier from above for $\widetilde\Sigma_{\lambda_2}$ along the common horizontal boundary, and this enables a direct comparison of the normals. It is important to mention that the surface $\mathcal I_{\widetilde h_0}$, whose angle function is equal to $-1$ along $\widetilde h_0$, see Example~\ref{ex:invariant-I}, is nothing but $\widetilde\Sigma_0$ and acts as a barrier to $\widetilde\Sigma_{\lambda_2}$ from below. This sandwich between $\widetilde\Sigma_0$ and $\widetilde\Sigma_{\lambda_1}$yields the sign and monotonicity stated in Proposition~\ref{prop:horizontal-Delaunay-angle}. In the case $\frac\pi2<\lambda_1<\lambda_2$, the discussion is similar but we have to observe that $\widetilde\Sigma_{\lambda_1}$ acts as a barrier from below for $\widetilde\Sigma_{\lambda_2}$ as graphs in the direction of the helicoidal Killing vector field $\widetilde X$ along $\widetilde h_1$ and $\widetilde h_2$.

\subsubsection{The conjugate $H$-immersion}\label{subsubsec:conjugate-horizontal-Delaunay}

Let $\Sigma_\lambda \subset \mathbb{M}^2(\kappa) \times \mathbb{R}$ be the conjugate $H$-surface of $\widetilde{\Sigma}_\lambda$ constructed in the previous section. By Lemmas~\ref{lem:horizontal-geodesics} and~\ref{lem:vertical-geodesics}, $\Sigma_\lambda$ is a compact $H$-surface whose boundary $\Gamma_\lambda$ consists of three curves $h_0$, $h_1$ and $h_2$, contained in vertical planes $P_{23}$, $P_{12}$ and $P_{34}$, respectively, and a curve $v$ lying in a slice $P_{14}$, which will be assumed to be $\mathbb{M}^2(\kappa)\times \{0\}$ after a vertical translation. The vertexes of $\Sigma_\lambda$ will be denoted by $1$-$4$ in correspondence with $\widetilde{1}$-$\widetilde{4}$ (see Figure~\ref{fig:horizontal-Delaunay-conjugate-polygon}). All interior angles of $\Gamma_\lambda$ are equal to $\frac\pi2$, so Proposition~\ref{prop:conjugation-completion-by-simmetries} gives a complete $H$-surface $\Sigma_\lambda^*$ after successive mirror symmetries about the boundary components. 

\begin{figure}[htbp]
  \centering
  \includegraphics{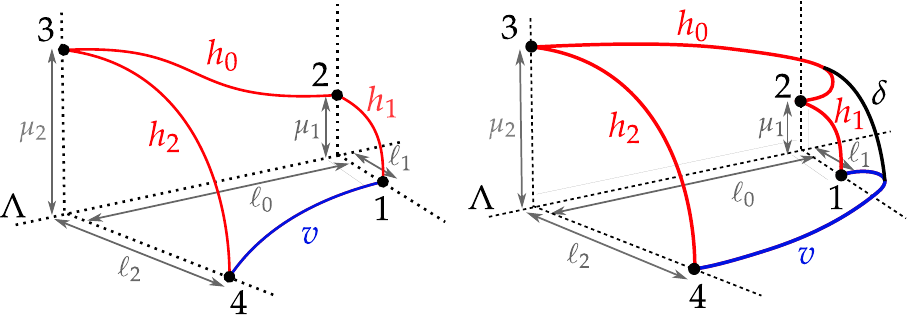}
  \caption{Conjugate contour $\Gamma_\lambda$ for $0<\lambda < \frac{\pi}{2}$ (left) and $\lambda > \frac{\pi}{2}$ (right). }
  \label{fig:horizontal-Delaunay-conjugate-polygon}
\end{figure}

Our analysis of the angle function $\nu_\lambda$ indeed implies that Figure~\ref{fig:horizontal-Delaunay-conjugate-polygon} is a faithful depiction of $\Gamma_\lambda$, at least concerning the horizontal components. By decomposing $h_i=(\beta_i,z_i)\in\mathbb{M}^2(\kappa)\times\mathbb R$ component-wise for $i\in\{0,1,2\}$, Lemma~\ref{lem:horizontal-geodesics} and Proposition~\ref{prop:horizontal-Delaunay-angle} reveal that $\beta_1$, $\beta_2$, $z_0$, $z_1$ and $z_2$ are one-to-one. If $0 \leq \lambda \leq \frac{\pi}{2}$, then $\beta_0$ is also one-to-one, whilst it consists of two one-to-one subcurves if $\lambda>\frac\pi2$ (by splitting at the point where the angle function $\nu_\lambda$ changes sign). 

To understand the dependence of $\Sigma_\lambda$ on the parameter $\lambda$, we shall also consider the following quantities that represent the (algebraic) lengths of $\beta_i$ and $z_i$, respectively, as shown in Figure~\ref{fig:horizontal-Delaunay-conjugate-polygon}:
\begin{align}\label{eqn:definition-ell-mu}
\ell_i(\lambda)&=-\int_{\widetilde h_i}\nu_\lambda,&
\mu_i(\lambda)&=\int_{\widetilde h_i}\sqrt{1-\nu_\lambda^2}
\end{align}
Proposition~\ref{prop:horizontal-Delaunay-angle} also reveals that the functions $\lambda\mapsto\ell_i(\lambda)$ satisfy the following monotonicity properties:
 \begin{enumerate}[label=(\alph*)]
   \item[(a)] $\lambda\mapsto\ell_0(\lambda)$ is strictly decreasing and positive on $[0,+\infty).$
   \item[(b)] $\lambda\mapsto\ell_1(\lambda)$ is strictly decreasing on $[0,+\infty)$ with $\ell_1(\frac{\pi}{2})=0$.
   \item[(c)] $\lambda\mapsto\ell_2(\lambda)$ is strictly increasing  and positive on $[0,+\infty)$.
 \end{enumerate}

\begin{remark}\label{rmk:round-sphere2}
If $\kappa=0$, then Remark~\ref{rmk:round-sphere1} ensures that $\widetilde\Sigma_\lambda$ is a spherical helicoid and hence $\Sigma_\lambda^*$ is a Delaunay surface (see Example~\ref{ex:spherical-helicoids}). Due to the above geometric description, $\Sigma_\lambda^*$ stays at bounded distance from the straight line $\Lambda=P_{23}\cap(\mathbb{R}^2\times\{0\})$ and it is symmetric with respect to $P_{23}$ and $\mathbb{R}^2\times\{0\}$, so in particular $\Sigma_\lambda^*$ is rotationally invariant about $\Lambda$. 
\end{remark}

\subsubsection{Compactness}\label{subsubsec:horizontal-Delaunay-compactness}
Since the vertical planes $P_{12}$ and $P_{34}$ are orthogonal to $P_{23}$, it follows that $\Sigma_\lambda^*$ is invariant under horizontal translations of length $2\ell_0(\lambda)$ along the horizontal geodesic $\Lambda = P_{23} \cap \mathbb{M}^2(\kappa) \times \{0\}$ (see Figures~\ref{fig:horizontal-Delaunay-conjugate-polygon} and~\ref{fig:horizontal-Delaunay-fundamental-annulus}).

Assuming that $\kappa>0$, the surface $\Sigma_\lambda^*$ is compact if and only if $\ell_0(\lambda)$ is a rational multiple of $\frac{2\pi}{\sqrt\kappa}$, the length of a great circle of $\mathbb{S}^2(\kappa)$. Since $\lambda\mapsto\ell_0(\lambda)$ is a positive continuous strictly decreasing function (see~\cite[Cor.~3.7]{MT20} for the details), it follows that there are many compact examples in the family $\Sigma_\lambda^*$ for $\lambda\geq0$. Observe that the monotonicity of $\lambda\mapsto\ell_0(\lambda)$ evidences that $P_{12}$ and $P_{34}$ never coincide. Hence, if $\kappa\leq 0$, $\Sigma_\lambda^*$ is a proper non-compact $H$-surface for all $\lambda\geq 0$.

\begin{remark}
The \emph{maximum height} over the slice $\mathbb{M}^2(\kappa)\times\{0\}$ of horizontal unduloids $\Sigma_\lambda^*$, $0 \leq \lambda \leq \frac{\pi}{2}$, is given by $\mu_2(\lambda)$ and varies continuously from the height of the $H$-tori $\Sigma_0^*$ to the height of the sphere $\Sigma_{\frac{\pi}{2}}^*$ (see \cite[Prop.~4.5]{MT20}). Notice that $\Sigma_\lambda^*$ is singly periodic in a horizontal direction, and the monotonicity properties of Proposition~\ref{prop:horizontal-Delaunay-angle} show that the maximum height occurs at the vertex $3$ (see Figure~\ref{fig:horizontal-Delaunay-conjugate-polygon}). Now, given $\lambda_1 < \lambda_2$, a comparison between $\Sigma_{\lambda_2}$ and a $(\lambda_2-\lambda_1)$-translated copy of $\Sigma_{\lambda_1}$ using the flow of the helicoidal Killing vector field $\widetilde{X}$ (see \S\ref{subsubsec:horizontal-Delaunay-construction-minimal-piece}) enables a comparison of the angle functions of both surfaces along their common boundary. Then, formula~\eqref{eqn:definition-ell-mu} ensures that $\lambda\mapsto\mu_2(\lambda)$ is strictly increasing.

Aledo, Espinar, and Gálvez~\cite{AEG08} proved that a $H$-graph, with $4H^2+\kappa>0$, over a compact open domain whose boundary lies in the slice $\mathbb{M}^2(\kappa)\times \{0\}$ can reach at most the height of the $H$-sphere and equality holds if and only if the surface is a rotationally invariant hemisphere. Horizontal unduloids provide the first $H$-graphs with height in between those of the cylinder and the sphere.
\end{remark}

\subsubsection{Embeddedness}\label{subsubsec:horizontal-Delaunay-embeddedness}

We will finish our sketch of the proof of Theorems~\ref{thm:horizontal-Delaunay} and~\ref{thm:horizontal-Delaunay-embeddedness} by saying which horizontal Delaunay surfaces are embedded. Observe first that our previous description of $\Gamma_\lambda$ implies that horizontal nodoids ($\lambda > \frac{\pi}{2}$) are not even Alexandrov-embedded, so we can focus on unduloids with $0<\lambda<\frac{\pi}{2}$. To this end, consider the \emph{fundamental annulus} $A_\lambda$ defined as the $H$-annulus in $\mathbb M^2(\kappa)\times\mathbb R$ that consists of four copies of $\Sigma_\lambda$ obtained by mirror symmetries about $P_{23}$ and $\mathbb{M}^2(\kappa)\times\{0\}$ (see Figure~\ref{fig:horizontal-Delaunay-fundamental-annulus}). 

To capture some global properties of $A_\lambda$, throughout this section we will leave the model $M(4H^2+\kappa,H)$ aside and assume that $\widetilde\Sigma_\lambda$ is immersed in $\Sb(4H^2+\kappa,H)\subset\C^2$ via the local isometry $\Theta$ given by~\eqref{eq:local-isometry-Daniel-Berger}. Under this assumption, the interior of $\widetilde\Sigma_\lambda\subset\Sb(4H^2+\kappa,H)$ is transversal to the Killing field $\widetilde X_{(z,w)}=\frac{i}{2}(-z,w)$ that defines a global Killing submersion $\pi_0$ we have used in the solution of the Plateau problem (in the Berger model, this submersion is topologically the Hopf fibration in another non-vertical direction). Therefore, the smooth function $\widetilde u=\langle\widetilde X,\widetilde N\rangle$ lies in the kernel of the stability operator $L$ of $\widetilde\Sigma_\lambda^*$.

The function $\widetilde u$ is positive in the interior of $\widetilde\Sigma_\lambda$ and vanishes along $\widetilde h_1$ and $\widetilde h_2$ (recall that $\widetilde\Sigma_\lambda$ is a Killing graph in the direction of $\widetilde X$ and both $\widetilde h_1$ and $\widetilde h_2$ are integral curves of $\widetilde{X}$). Furthermore, $\widetilde u$ is preserved by the axial symmetries about $\widetilde{v}$ and $\widetilde{h}_0$ and changes sign by those about $\widetilde{h}_1$ and $\widetilde{h}_2$. This follows from a careful analysis of the relation between the axial symmetries about the boundary curves of $\widetilde{\Gamma}_\lambda$ and the screw-motion group associated to $\widetilde{X}$ (see \cite[Prop.~1.6]{MT20}). Since these axial symmetries correspond to mirror symmetries of $A_\lambda$ in $\mathbb{M}^2(\kappa)\times \mathbb{R}$, it follows that $\widetilde u > 0$ in the interior of $A_\lambda$ and vanishes identically along $\partial A_\lambda$. Hence, by classical elliptic theory, the first eigenvalue of the stability operator is $\lambda_1(A_\lambda) = 0$ and $\lambda_1(D)< 0$ for any open domain $D \subset \Sigma^*_\lambda$ containing $A_\lambda$. In other words, the annulus $A_\lambda$ is a maximal stable domain of $\Sigma_\lambda^*$ (for all $\lambda > 0$).

\begin{figure}[htbp]
  \centering
  \includegraphics{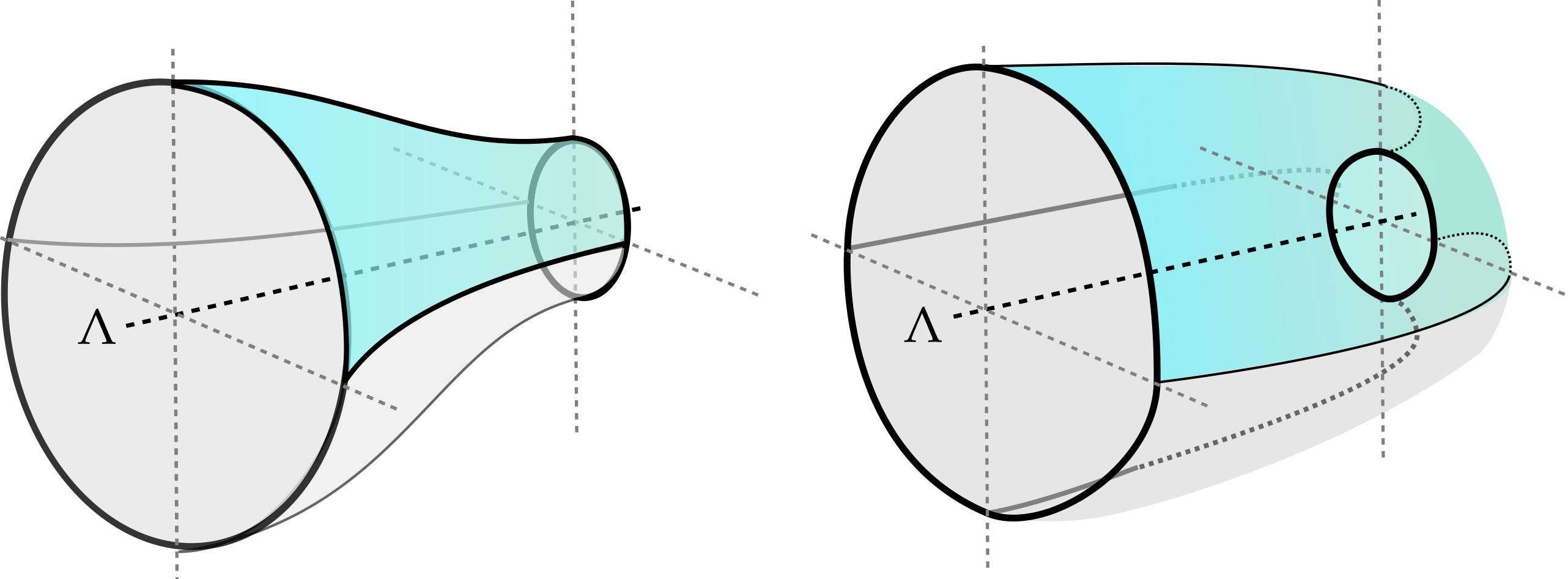}
  \caption{Fundamental annulus $A_\lambda$ constructed by extending the fundamental piece $\Sigma_\lambda$ by means of mirror symmetries across $P_{23}$ and $\mathbb{M}^2(\kappa)\times \{0\}$. The horizontal geodesic $\Lambda = P_{23} \cap(\mathbb{M}^2(\kappa)\times\{0\})$ is called the \emph{axis} of $\Sigma_\lambda^*$.}
  \label{fig:horizontal-Delaunay-fundamental-annulus} 
\end{figure}

On the other hand, we can consider the function $u = \langle X, N\rangle$, where $X$ is now the Killing field in $\mathbb{M}^2(\kappa)\times\R$ associated with the group of translations along the axis $\Lambda = P_{23}\cap(\mathbb M^2(\kappa)\times\{0\})$. Since $Lu=0$ and $u$ vanishes at $\partial A_\lambda$, we infer that $u$ belongs to the eigenspace associated to $\lambda_1(A_\lambda) = 0$. This eigenspace is $1$-dimensional, so there exists $a_\lambda \in \mathbb{R}$ such that $u = a_\lambda\widetilde u$. Notice that $u$ is only identically zero if $\lambda = 0$ because it is the only case where $\Sigma_\lambda^*$ is equivariant. Hence, for $\lambda > 0$ we have that $u$ does not vanish on the interior of $A_\lambda$, that is, the interior of $A_\lambda$ is transversal to $X$, in particular $h_0$ and $v$ are also transversal to $X$ since they lie in the interior of $A_\lambda$.

\begin{enumerate}
  \item The fundamental annulus $A_\lambda$ is an $H$-graph in the direction of $X$ (i.e., it intersects each integral curve of $X$ at most once), and hence embedded, provided that $\kappa\leq 0$ or $\kappa>0$ and $H\geq\frac{\sqrt{\kappa}}{2}$, see~\cite[\S4.3]{MT20}.

  Notice that embeddedness finds an essential obstruction whenever $\Sigma_\lambda$ runs over any of the poles defined by the great circle $\Lambda$ (see Figure~\ref{fig:horizontal-Delaunay-embeddedness-north-pole}). This situation is prevented by assuming that $H > \frac{\sqrt{\kappa}}{2}$ thanks to the monotonicity properties in \S\ref{subsubsec:horizontal-Delaunay-compactness} because $\Sigma_{\frac{\pi}{2}}$ is the $H$-sphere whose \emph{radius} $\ell_2(\frac{\pi}{2})$ is at most a quarter of the length of a great circle of $\mathbb{S}^2(\kappa)$ if $H\geq\frac{\sqrt{\kappa}}{2}$.

\begin{figure}[tbp]
  \centering
  \includegraphics{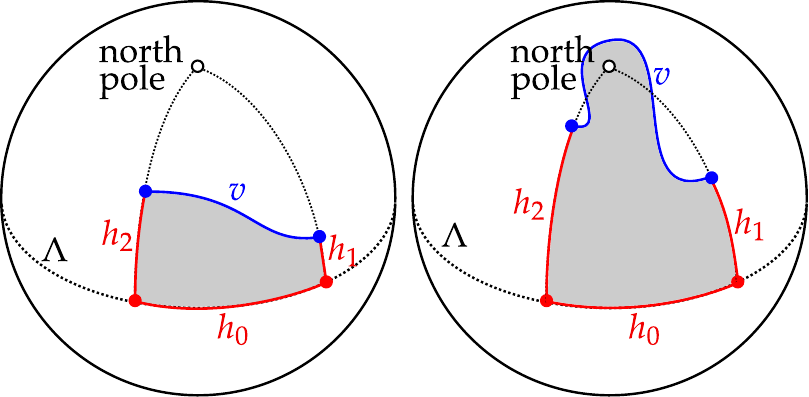}
  \caption{Different possible projections of $\Sigma_\lambda$ to $\mathbb{S}^2(\kappa)\times \{0\}$. In the first case the completion $\Sigma_\lambda^*$, $0 \leq \lambda \leq \frac{\pi}{2}$, is embedded. The second case is not possible if $H > \frac{\sqrt{\kappa}}{2}$.}
  \label{fig:horizontal-Delaunay-embeddedness-north-pole}
\end{figure}

  \item It remains to analyze the possible type II self-intersections (after completing the fundamental annulus by further reflections); observe that they actually happen for nodoids with $\lambda>\frac{\pi}{2}$. If $0 \le \lambda \leq \frac{\pi}{2}$, the geodesic curvature $\kappa_g^P$ of $v$ (computed as a curve of the slice $P=\mathbb{M}^2(\kappa)\times \{0\}$ with respect to its normal vector field $N$ pointing inside the domain of the multigraph) admits the upper bound $\kappa_g^P \leq (4H^2-\kappa)/4H$ (i.e., it is bounded by the geodesic curvature of the equator of the $H$-sphere, see~\cite[Thm.~3.3]{Man13}). This implies that $A_\lambda$ lies in the wedge bounded by $P_{12}$ and $P_{34}$: if $v$ escaped this region, then its length should be larger than it actually is to be able to meet $P_{12}$ and $P_{34}$ orthogonally from inside the wedge, see~\cite[p.~714]{MT14}.
\end{enumerate}
The argument sketched here yields the embeddedness of the unduloids if $\kappa\leq 0$, see~\cite[Prop.~4.4]{MT20}. However, if $\kappa>0$, then we need to guarantee that $\ell_0(\lambda)$ is not only a rational multiple of $\frac{2\pi}{\sqrt{\kappa}}$ (so $\Sigma^*_\lambda$ is compact, see \S\ref{subsubsec:horizontal-Delaunay-compactness}) but also that $\ell_0(\lambda) = \frac{\pi}{m\sqrt{\kappa}}$ for some $m\in\mathbb{N}$, i.e., $\Sigma^*_\lambda$ consists of $2m$ copies of $A_\lambda$ and closes its period in one turn around $\Lambda$. The monotonicity properties in \S\ref{subsubsec:horizontal-Delaunay-compactness} and the well-known behavior of the cases $\lambda = 0$ and $\lambda = \frac{\pi}{2}$ (see \S\ref{subsubsec:horizontal-Delaunay-construction-minimal-piece}) give the estimate
\begin{equation}\label{thm:embeddedness:eqn1}
\tfrac{2}{\sqrt{ \kappa}}\arctan\tfrac{\sqrt{\kappa}}{2H}=\ell_0(\tfrac{\pi}{2})<\ell_0(\lambda)< \ell_0(0)=\tfrac{\pi}{\sqrt{4H^2+\kappa}}.
\end{equation}
This easily implies that $H > \frac{\sqrt{\kappa}}{2}$ (see Figure~\ref{fig:horizontal-Delaunay-compact-embedded-moduli-space}). For a fixed integer $m\geq 2$, the inequality~\eqref{thm:embeddedness:eqn1} holds true if and only if $\frac{2H}{\sqrt{\kappa}}\in(\cot(\tfrac{\pi}{2m}),\sqrt{m^2-1})$, in which case there is a unique $\lambda=\lambda_m(H)$ such that $\ell_0(\lambda_m(H))=\frac{\pi}{m\sqrt{\kappa}}$ because $\lambda\mapsto\ell_0(\lambda)$ is continuous and strictly decreasing. This gives rise to the family $\mathcal T_m$ in the statement of Theorem~\ref{thm:horizontal-Delaunay-embeddedness}, with the following limit cases:
\begin{itemize}
  \item If $\frac{2H}{\sqrt{\kappa}}=\cot(\tfrac{\pi}{2m})$, then $m=\frac{\pi}{2\arctan(\frac{\sqrt{\kappa}}{2H})}$, and hence $\ell_0(\frac\pi2)=\ell_0(\lambda)$. This means that $\lambda=\frac\pi2$ and the surface reduces to a stack of $m$ tangent $H$-spheres.
  \item Likewise, if $\frac{2H}{\sqrt{\kappa}}=\sqrt{m^2-1}$, then $\lambda=0$, and the surface is an $H$-torus.
\end{itemize}

\subsection{Compact $H$-surfaces of arbitrary genus in $\mathbb{S}^2(\kappa)\times\mathbb{R}$}\label{sec:genus}

The second conjugate construction we present in this survey concerns compact embedded $H$-surfaces with arbitrary genus $g\geq 0$ in $\mathbb{S}^2(\kappa)\times \mathbb{R}$ and mean curvature $H < \frac{\sqrt{\kappa}}{2}$. The idea is to produce a compact fundamental piece $\Sigma_{H,g}$ whose projection fits in a fundamental triangle of a regular tessellation of $\mathbb{S}^2(\kappa)$ by regular polygons, see~\S\ref{subsubsec:tessellations}. The complete $H$-surface $\Sigma_{H,g}^*$ after reflection about its symmetry planes inherits all the symmetries of the tessellation of $\mathbb{S}^2(\kappa)$, whence it is compact. In the case of genus $0$ or $1$, we already have the rotationally invariant $H$-spheres and $H$-tori, so our result is relevant for $g\geq 2$. Observe that the horizontal Delaunay tori given by Theorem~\eqref{thm:horizontal-Delaunay-embeddedness} satisfy the opposite inequality $H>\frac{\sqrt{\kappa}}{2}$.

Therefore, it is worth saying something about the assumption $H<\frac{\sqrt{\kappa}}{2}$, which initially showed up in the continuity argument used to adjust $\Sigma_{H,g}$ to the shape of a fundamental triangle, but then proved to be a natural constraint. The value $H=\frac{\sqrt{\kappa}}{2}$ is geometrically relevant in $\mathbb{S}^2(\kappa)\times\R$ because it is the value of $H$ for which $H$-spheres are bigraphs over a hemisphere, and hence two of them are tangent along a whole equator. As a matter of fact, we can think of the map $H\mapsto\Sigma_{H,g}^*$, for a fixed $g\geq 2$, as a desingularization of two such tangent spheres as $H\to\frac{\sqrt{\kappa}}{2}$. This number is also natural in the proof of embeddedness, which uses the convexity of the boundary components in~\cite[Cor.~3.5]{Man13}. Recall that in the proof of embeddedness in Theorem~\eqref{thm:horizontal-Delaunay-embeddedness}, the opposite condition $H>\frac{\sqrt{\kappa}}{2}$ has also appeared as a natural constraint that prevents the surfaces to surpass the north pole and ensures that the $H$-tori close one of their periods.

The main result of this section is the following theorem.

\begin{theorem}\label{thm:arbitrary-genus-H-surface-embeddedness}
Let $0 < H < \frac{\sqrt{\kappa}}{2}$ and an integer $g\geq 0$. There exists a compact embedded $H$-surface $\Sigma_{H,g}^*$ of genus $g$ in $\mathbb{S}^2(\kappa)\times \mathbb{R}$ that is a bigraph over a slice and inherits all the symmetries of a $(2,g+1)$-tessellation (see \S\ref{subsubsec:tessellations}). Furthermore, if $g\geq 2$, 
\begin{itemize}
  \item the limit of $\Sigma_{H,g}^*$ as $H\to\frac{\sqrt{\kappa}}{2}$ is a pair of $\frac{\sqrt{\kappa}}{2}$-spheres tangent along an equator; 
  \item the limit of $\Sigma_{H,g}^*$ as $H\to 0$ is a double cover of $\mathbb{S}^2(\kappa)\times \{0\}$ with singularities at $g+1$ points evenly distributed along an equator. 
\end{itemize}
\end{theorem}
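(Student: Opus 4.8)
The plan is to obtain $\Sigma_{H,g}^*$ by a conjugate Plateau construction, so throughout $\widetilde\kappa=4H^2+\kappa>0$ and $\widetilde\tau=H$, and the initial minimal surface lives in the Berger sphere $\Sb(4H^2+\kappa,H)$. For $g=0,1$ the surfaces are the rotationally invariant $H$-sphere and $H$-torus, so the construction is only needed for $g\geq2$, where $g+1\geq3$. First I would fix a pole $N$ of $\mathbb{S}^2(\kappa)$ and prescribe the target symmetries of $\Sigma_{H,g}^*$: invariance under the lift of the dihedral group of the $(2,g+1)$-tessellation (whose fundamental triangle has a vertex of angle $\frac{\pi}{g+1}$ at $N$) together with the mirror symmetry across the slice $\mathbb{S}^2(\kappa)\times\{0\}$. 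Accordingly I would set up a geodesic polygon $\widetilde\Gamma(a)\subset\Sb(4H^2+\kappa,H)$ bounding a disk, built from two horizontal geodesic arcs $\widetilde h_1,\widetilde h_2$ and a vertical geodesic arc $\widetilde v$ of length $a$ (the free parameter), with interior angle $\frac{\pi}{g+1}$ at $\widetilde h_1\cap\widetilde h_2$ and right angles at the two vertices on $\widetilde v$, so that every angle is an integer divisor of $\pi$. To span $\widetilde\Gamma(a)$ by a minimal disk $\widetilde\Sigma(a)$ I would enclose it in a mean-convex body bounded by invariant barriers $\mathcal I_{\widetilde\gamma}$ (Example~\ref{ex:invariant-I}), umbrellas $\mathcal U_p$ (Example~\ref{ex:umbrellas}) and vertical cylinders, and invoke Meeks--Yau~\cite{MY82} together with Proposition~\ref{prop:plateau-existencia}; uniqueness there yields continuous dependence on $a$, hence on $H$, through Proposition~\ref{prop:continuity}.

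Let $\Sigma(a)\subset\mathbb{S}^2(\kappa)\times\mathbb{R}$ denote the conjugate $H$-disk. By Lemma~\ref{lem:horizontal-geodesics} the curves conjugate to $\widetilde h_1,\widetilde h_2$ lie in two vertical totally geodesic planes $P_1,P_2$ met orthogonally, and by Lemma~\ref{lem:vertical-geodesics} the curve conjugate to $\widetilde v$ lies in a slice, normalized to $\mathbb{S}^2(\kappa)\times\{0\}$. Since all conjugate angles are of the form $\frac{\pi}{k}$, Proposition~\ref{prop:conjugation-completion-by-simmetries} extends $\Sigma(a)$ to a complete $H$-surface $\Sigma^*(a)$ by successive mirror symmetries across $P_1$, $P_2$ and the slice. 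The delicate geometric input here is the angle function $\nu$: using Strategies~1--3 of \S\ref{subsubsec:angle-control} (comparison with vertical cylinders, umbrellas and the invariant barriers above) I would locate the zeros and the $\pm1$-values of $\nu$ along $\partial\Sigma(a)$, so as to identify one endpoint of the slice curve as a genuine neck (the surface crossing the slice transversally, the reflection opening a handle) and the other as a fold where $\nu=0$ (the bigraph closing up tangentially). The $2(g+1)$ copies generated around the axis by the reflections in $P_1,P_2$, doubled across the slice, then assemble a compact bigraph; since it is topologically two caps joined by $g+1$ necks, its Euler characteristic is $2-2g$, and the genus is constant along the family by Proposition~\ref{prop:continuity}, so it equals $g$.

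The crux, and the origin of the hypothesis $H<\frac{\sqrt\kappa}{2}$, is the period problem: for a generic value of $a$ the zero of $\nu$ along the slice curve does not occur exactly at its endpoint on $P_2$, and $\pi(\Sigma(a))$ fails to match the fundamental triangle. I would encode the discrepancy as a continuous period function $p(a)$ --- the signed position of the fold relative to the $P_2$-vertex --- and solve $p(a)=0$ by the intermediate value theorem. At the two ends of the admissible range of $a$ the minimal piece degenerates to controlled configurations (a piece of an invariant surface or spherical helicoid on one side and a hemispherical cap of the $H$-sphere on the other, cf.\ Example~\ref{ex:spherical-helicoids}), and it is exactly the inequality $H<\frac{\sqrt\kappa}{2}$ --- the threshold at which the $H$-sphere becomes a bigraph over a full hemisphere --- that makes these limits lie on opposite sides, forcing $p$ to change sign. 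I expect this sign analysis, driven by the monotonicity of $\nu$ in $a$ in the spirit of Proposition~\ref{prop:horizontal-Delaunay-angle}, to be the main obstacle.

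Finally I would establish embeddedness and the two limits. Following the dichotomy of \S\ref{sec:completion-embeddedness}, Type~I self-intersections are excluded by showing that $\Sigma_{H,g}$ is a bigraph with injectively projecting boundary: the geodesic-curvature estimate for the boundary of an $H$-bigraph in \cite[Cor.~3.5]{Man13} gives convexity of the projected boundary curves, whence injectivity of the projection through the maximum principle (Proposition~\ref{prop:uniqueness}). Type~II self-intersections are excluded because this same convexity confines $\pi(\Sigma_{H,g})$ to the interior of the geodesic fundamental triangle $\mathcal T$, so the $4(g+1)$ reflected copies lie in distinct chambers of the $(2,g+1)$-tessellation and cannot overlap; here again $H<\frac{\sqrt\kappa}{2}$ secures the curvature bound. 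For the limits I would apply Proposition~\ref{prop:continuity} to $H\mapsto\widetilde\Sigma(a(H))$: as $H\to\frac{\sqrt\kappa}{2}$ the neck collapses to a point and the fundamental pieces converge to a hemispherical cap of the $\frac{\sqrt\kappa}{2}$-sphere, whose completion is the pair of spheres tangent along an equator; as $H\to0$ the heights collapse and, in the limit space $\Sb(\kappa,0)=\mathbb{S}^2(\kappa)\times\mathbb{R}$, the surfaces flatten onto the slice, producing a double cover of $\mathbb{S}^2(\kappa)\times\{0\}$ with singularities exactly at the $g+1$ equidistributed neck points.
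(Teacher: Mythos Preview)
Your overall strategy (conjugate Plateau construction, period problem solved by intermediate value, embeddedness via confinement to a prism) matches the paper, but the geodesic polygon you propose is not the right one, and this error propagates through the rest of your argument.

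You take $\widetilde\Gamma(a)$ to consist of \emph{two} horizontal arcs $\widetilde h_1,\widetilde h_2$ meeting at angle $\tfrac{\pi}{g+1}$ and one vertical arc $\widetilde v$ of length $a$, with right angles at both endpoints of $\widetilde v$. For this triangle to close, $\widetilde h_1$ and $\widetilde h_2$ must begin at a common point and end on a common vertical fiber; their projections to $\mathbb{S}^2(4H^2+\kappa)$ are then two geodesics from a point meeting again at the antipode, so both horizontal lengths are forced to equal $\tfrac{\pi}{\sqrt{4H^2+\kappa}}$, and the holonomy formula $a=2H\cdot\operatorname{Area}$ fixes $a$ as well. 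There is no free parameter, and your period function $p(a)$ is undefined. The paper instead uses a \emph{quadrilateral}: three horizontal sides $\widetilde h_1,\widetilde h_2,\widetilde h_3$ projecting to the three sides of a spherical triangle $\widetilde\Delta(\rho)\subset\mathbb{S}^2(4H^2+\kappa)$ with angles $\tfrac{\pi}{g+1}$ and $\tfrac{\pi}{2}$, plus one vertical segment $\widetilde v$ closing the holonomy gap. The genuine free parameter is the length $\rho$ of $\widetilde h_2$.

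This also changes the conjugate picture and the period. With three horizontal sides you obtain \emph{three} vertical planes of symmetry bounding a triangular prism $\Delta(\rho)\times\mathbb{R}$, not two. The period is not the location of a fold but simply the length $\ell(\rho)=-\int_{\widetilde h_2}\nu_\rho$ of the side of $\Delta(\rho)$ opposite the right angle; one needs $\ell(\rho_0)=\tfrac{\pi}{2\sqrt{\kappa}}$ so that $\Delta(\rho_0)$ is the fundamental triangle of the $(2,g+1)$-tessellation. The intermediate value argument runs between $\rho\to0$ (where $\ell\to0$) and $\rho\to\tfrac{\pi}{\sqrt{4H^2+\kappa}}$ (where $\widetilde\Sigma(\rho)$ degenerates to an umbrella sector and $\ell\to\tfrac{2}{\sqrt{\kappa}}\arctan\tfrac{\sqrt{\kappa}}{2H}$); the hypothesis $H<\tfrac{\sqrt{\kappa}}{2}$ is exactly what makes the latter exceed the target $\tfrac{\pi}{2\sqrt{\kappa}}$. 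For embeddedness the paper does not invoke the curvature bound of \cite{Man13} directly but rather shows, by elementary length estimates on $\beta_1$ and $\beta_3$ and monotonicity of $\nu_\rho$ in $\rho$, that the endpoints $\pi(1),\pi(4)$ stay inside $\Delta(\rho_0)$, whence $\Sigma(\rho_0)$ is a graph over a subdomain of the triangle and the reflected copies tile without overlap. Your limit analysis as $H\to\tfrac{\sqrt{\kappa}}{2}$ and $H\to0$ is correct in spirit once the polygon is fixed.
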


\begin{figure}[htbp]
\centering
\includegraphics{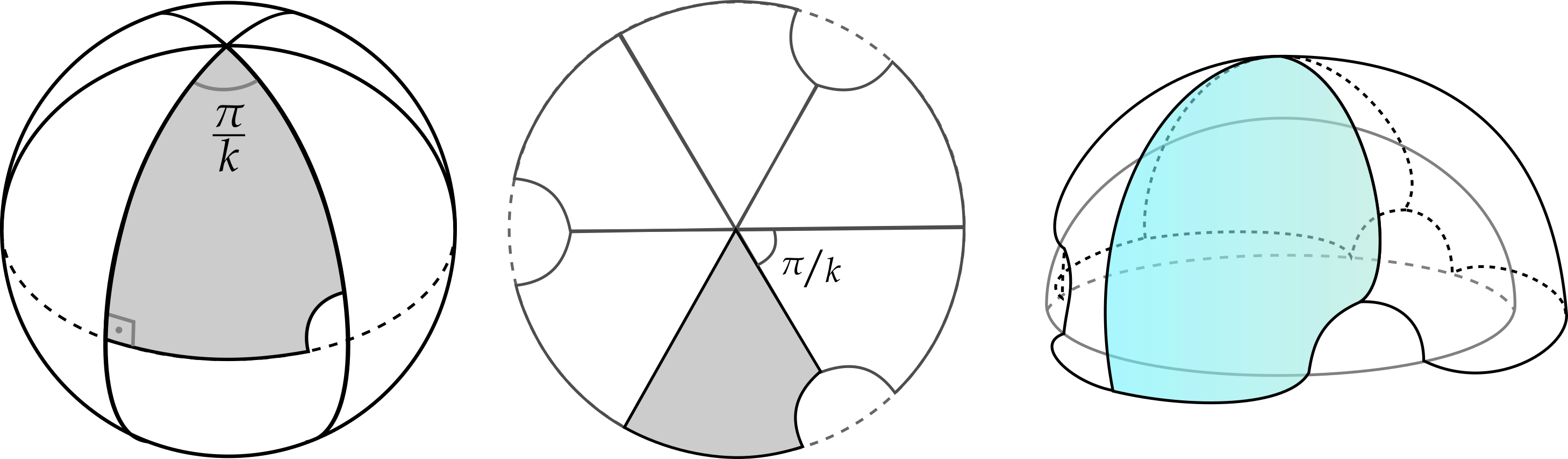}
\caption{From left to right for $k = 3$: $(2,k)$-tessellation of the sphere (see \S\ref{subsubsec:tessellations}), view of the $(2,k)$-tessellation from the north pole, and a sketch of $\Sigma_{H,g}$ ($g = 2$) in the conformal model $\mathbb{R}^3\setminus\{0\}$ of $\mathbb{S}^2(\kappa)\times\mathbb{R}$ (right). The shaded area in the first two figures is the projection of $\Sigma_{H,g}$ over its slice of symmetry $\mathbb{S}^2(\kappa) \times \{0\}$.}

\label{fig:arbitrary-genus-H-surface-fundamental-patch}
\end{figure}

\begin{remark}\label{rmk:arbitrary-genus-H-surface-tessellation-platonic-solids}
It is possible to obtain a similar result for the rest of tessellations of the sphere associated with regular polyhedra. However, just a few genera can be recovered in this way, plus different restrictions for the mean curvature appear depending on the tessellation (see Table~\ref{tb:tessellations-sphere} and \cite[Thm.~1.1]{MT20a}).

The same construction can be carried out for $H$-surfaces in $\mathbb{M}^2(\kappa)\times \mathbb{R}$ with $\kappa\leq 0$ and an arbitrary regular tessellation of $\mathbb{M}^2(\kappa)\times\mathbb{R}$ by regular polygons~\cite[Thm.~1.1]{MT20a}. In the case $\kappa=0$, our construction reduces to Lawson's doubly periodic $1$-surfaces in Euclidean space $\mathbb{R}^3$~\cite[Thm.~9]{Law} (see also~\cite[\S3]{G}). In the case $\kappa<0$, we obtain new properly immersed surfaces of subcritical, critical and supercritical constant mean curvature in a slab of $\mathbb{H}^2(\kappa)\times\mathbb{R}$, though we have not been able to analyze their embeddedness.
\end{remark}

\subsubsection{Regular tessellations}\label{subsubsec:tessellations}
Given integers $m,k\geq 2$, a regular $(m,k)$-tessellation is a tiling of $\mathbb{M}^2(\kappa)$ by regular $m$-gons such that $k$ of them meet at each vertex, see Figure~\ref{fig:arbitrary-genus-H-surface-fundamental-patch} for an example of a $(2,3)$-tessellation of the sphere. The centers and vertexes of the $m$-gons will be called the centers and vertexes of the tessellation. An straightforward application of the Gau\ss--Bonnet formula to one of the $m$-gons reveals that the sign of $\frac{1}{k} + \frac{1}{m} - \frac{1}{2}$ agrees with the sign of $\kappa$, in which case the $(m,k)$-tessellation of $\mathbb{M}^2(\kappa)$ actually exists. 

In the case of $\mathbb{S}^2(\kappa)$, the inequality $\frac{1}{k} + \frac{1}{m}>\frac{1}{2}$ is quite restrictive, for it only allows the tessellations associated to the Platonic solids and also two infinite families: the \emph{beach ball} tessellations (with $m = 2$ and arbitrary $k$) and the \emph{degenerated} case for $k = 2$ and arbitrary $m$). The possible configurations are shown in Table~\ref{tb:tessellations-sphere}. Notice that the $(m,k)$ and the $(k,m)$-tessellation are dual by swapping centers and vertexes and hence they have the same isometry group.

\begin{table}[htbp]
\begin{center}
\begin{tabular}{cccc} \toprule
Initial tessellation&$(m,k)$&$\alpha(m,k)$&Genus of $\Sigma$ \\\midrule
\emph{Beach ball}&$(2,g+1)$&$1$&$g$\\
\emph{Degenerated}&$(g+1,2)$&$\cot^2(\frac{\pi}{2+2g})$&$1$\\\midrule
Tetrahedron&$(3,3)$&$2+\sqrt{3}$&$3$\\
Hexahedron&$(4,3)$&$5+2\sqrt{6}$&$5$\\
Octahedron&$(3,4$)&$3+2\sqrt{2}$&$7$\\
Dodecahedron&$(5,3)$&$8+4 \sqrt{3}+3 \sqrt{5}+2 \sqrt{15}$&$11$\\
Icosahedron&$(3,5)$&$4+\sqrt{5}+2 \sqrt{5+2 \sqrt{5}}$&$19$\\\bottomrule
\end{tabular}
\end{center}
\caption{For each $(m,k)$-tessellation,~\cite[Thm.~1.1]{MT20a} gives compact orientable $H$-surfaces in $\mathbb{S}^2(\kappa)\times\mathbb{R}$ provided that $0<\frac{4H^2}{\kappa} < \alpha(m,k)$, see also Remark~\ref{rmk:arbitrary-genus-H-surface-tessellation-platonic-solids}. Theorem~\ref{thm:arbitrary-genus-H-surface-embeddedness} corresponds to the case of a \emph{beach ball} tessellation.}\label{tb:tessellations-sphere}
\end{table}

In a $(m,k)$-tessellation of $\mathbb{S}^2(\kappa)$, each $m$-gon can be decomposed into $2m$ congruent triangles by joining the center with the vertexes and the midpoints of the sides. Each of these triangles has angles $\frac{\pi}{k}$, $\frac{\pi}{m}$ and $\frac{\pi}{2}$ and will be called a \emph{fundamental triangle} because the whole tessellation can be recovered by symmetries about its sides. The idea is to construct an $H$-bigraph in $\mathbb{S}^2(\kappa)\times\mathbb{R}$ symmetric with respect to $\mathbb{S}^2(\kappa)\times\{0\}$ and with a curve at height zero around each of the centers, see Figure~\ref{fig:arbitrary-genus-H-surface-fundamental-patch}. This gives a surface of genus the number of polygons of the tessellation minus one, see the last column of Table~\ref{tb:tessellations-sphere}, so we will will focus on the $(2,g+1)$-tessellation of $\mathbb{S}^2(\kappa)$ for $g \geq 2$, which leads to the proof of Theorem~\ref{thm:arbitrary-genus-H-surface-embeddedness} that will be sketched throughout this section.

\subsubsection{Construction of the minimal surface in $\Sb(4H^2+\kappa,H)$}\label{subsubsec:arbitrary-genus-H-surface-conjugate}

We will fix positive real numbers $H$ and $\kappa$, and the target genus $g\geq 2$ in what follows so we will omit the dependence on these data. Instead, we will take a parameter $0<\rho \leq \frac{\pi}{\sqrt{4H^2 + \kappa}}$ that will provide one degree of freedom to be used later in a continuity argument. The dependence on $\rho$ will be written in functional notation to make it clear that it represents an auxiliary parameter.

Consider a convex spherical triangle $\widetilde\Delta(\rho)\subset\mathbb{S}^2(4H^2+\kappa)$ with two angles $\frac{\pi}{g+1}$ and $\frac{\pi}{2}$ adjacent to a side of length $\rho$. This triangle defines a geodesic quadrilateral $\widetilde\Gamma(\rho)\subset\Sb(4H^2+\kappa,H)$ with three horizontal sides $\widetilde h_1$, $\widetilde h_2$ and $\widetilde h_3$ projecting to the sides of $\widetilde\Delta(\rho)$ (being $\rho$ the length of $\widetilde{h}_2$) and a vertical segment $\widetilde v$ joining the ends of $\widetilde{h}_1$ and $\widetilde{h}_3$. The vertexes of $\widetilde\Gamma(\rho)$ will be denoted by $\widetilde 1$, $\widetilde 2$, $\widetilde 3$ and $\widetilde 4$, as shown in Figure~\ref{fig:arbitrary-genus-H-surface-polygon} (top left).

The pair $(\widetilde\Delta(\rho),\widetilde\Gamma(\rho))$ is a Nitsche graph (see Definition~\ref{def:nitsche}) such that $W(\rho)=\pi^{-1}(\widetilde\Delta(\rho))$ is a mean-convex set, so Proposition~\ref{prop:plateau-existencia} ensures the existence and uniqueness of a minimal disk $\widetilde{\Sigma}(\rho)\subset W(\rho)$ with boundary $\widetilde{\Gamma}(\rho)$ for any $0 < \rho \leq \frac{\pi}{\sqrt{4H^2 + \kappa}}$, which is also a graph over the interior of $\widetilde\Delta(\rho)$. We will assume without loss of generality that the angle function $\nu_\rho$ of $\widetilde\Delta(\rho)$ is negative over the interior of $\widetilde\Delta(\rho)$. We can analyze $\nu_\rho$ by means of the boundary maximum principle with respect to $\partial W$ and by Strategy 2, discussed in \S\ref{subsubsec:angle-control}.

\begin{proposition}[{\cite[\S3.1]{MT20a}}]\label{prop:arbitrary-genus-angle}
Let $\nu_{\rho}\leq 0$ be the angle function of the compact minimal disk $\widetilde{\Sigma}(\rho)$ spanning $\widetilde{\Gamma}(\rho)$.
\begin{enumerate}[label=(\alph*)]
  \item The only points at which $\nu_{\rho}$ vanishes are those in the curve $\widetilde v$. 

  \item  The only points at which $\nu_{\rho}$ takes the value $-1$ are $\widetilde 2$ and $\widetilde 3$.
\end{enumerate}
\end{proposition}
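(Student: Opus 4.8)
The plan is to treat the vanishing of $\nu_\rho$ (item (a)) and the locus $\nu_\rho=-1$ (item (b)) separately, combining the graphical character of $\widetilde\Sigma(\rho)$ granted by Proposition~\ref{prop:plateau-existencia} with the two boundary comparison arguments of \S\ref{subsubsec:angle-control}.

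First I would dispose of item (a). Since $\widetilde\Sigma(\rho)$ is a Killing graph over the interior of $\widetilde\Delta(\rho)$, it is everywhere transverse to $\xi$ there, so $\nu_\rho$ cannot vanish in the interior; with the standing normalization this gives $\nu_\rho<0$ in the interior. Along $\widetilde v$ the field $\xi$ is tangent to the surface, whence $\nu_\rho\equiv 0$ there, exactly as in Lemma~\ref{lem:vertical-geodesics}. It remains to exclude zeros on the open horizontal edges, and here I would invoke the boundary maximum principle against $\partial W(\rho)=\pi^{-1}(\partial\widetilde\Delta(\rho))$: each face $C_i$ of $\partial W(\rho)$ is a vertical cylinder over a geodesic of $\mathbb{S}^2(4H^2+\kappa)$, hence a minimal surface whose angle function is identically $0$, and $\widetilde h_i\subset C_i$. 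If $\nu_\rho(p)=0$ at an interior point $p$ of $\widetilde h_i$, then $\xi\in T_p\widetilde\Sigma(\rho)$, so $T_p\widetilde\Sigma(\rho)=\operatorname{span}(\xi,\widetilde h_i')=T_pC_i$; since $\widetilde\Sigma(\rho)$ lies on one side of $C_i$ (inside $W(\rho)$) and shares the boundary arc $\widetilde h_i$, the boundary maximum principle forces $\widetilde\Sigma(\rho)\subset C_i$ near $p$, which is absurd. Thus $\nu_\rho<0$ on the open horizontal edges, and combined with item (b) (which gives $\nu_\rho=-1\neq0$ at $\widetilde 2,\widetilde 3$) this identifies the zero set of $\nu_\rho$ with $\widetilde v$.

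For item (b) the two prescribed vertices are immediate: at $\widetilde 2$ and $\widetilde 3$ two horizontal geodesic arcs meet at the nonzero angle $\tfrac{\pi}{g+1}$ or $\tfrac{\pi}{2}$, so their two horizontal tangent directions both lie in $T\widetilde\Sigma(\rho)$ and span a horizontal plane; hence $N=\pm\xi$ and, by the normalization, $\nu_\rho=-1$. To rule out any other point with $\nu_\rho=-1$, I would argue by contradiction following Strategy 2 of \S\ref{subsubsec:angle-control}. Suppose $\nu_\rho(p)=-1$ with $p$ in the interior or on an open horizontal edge. Then $\|T\|^2=1-\nu_\rho^2=0$, the tangent plane at $p$ is horizontal, and the umbrella $\mathcal U_p$ of Example~\ref{ex:umbrellas} --- a minimal sphere in $\Sb(4H^2+\kappa,H)$ horizontal precisely at its center and antipode --- is tangent to $\widetilde\Sigma(\rho)$ at $p$ without coinciding with it. By the local intersection structure of minimal surfaces~\cite[Lem.~2]{MY82b}, at least four rays of $\mathcal U_p\cap\widetilde\Sigma(\rho)$ emanate from $p$; when $p$ lies on an open edge these are additional branches crossing $\widetilde h_i$ transversally, because $\mathcal U_p$ already contains the horizontal geodesic through $p$.

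The crux is then to show that four such rays are incompatible with the geometry. As in Strategy 2, one first rules out that any ray closes up or that two rays enclose a region of $\widetilde\Sigma(\rho)$ or of $\mathcal U_p$, by the maximum principle for the two minimal surfaces; consequently every ray must terminate at a point of $\widetilde\Gamma(\rho)\cap\mathcal U_p$. Controlling how the minimal sphere $\mathcal U_p$ meets the mean-convex solid $W(\rho)$ and its boundary quadrilateral, one checks that $\widetilde\Gamma(\rho)\cap\mathcal U_p$ cannot accommodate all the ray endpoints without two of the rays, together with a portion of $\widetilde\Gamma(\rho)$, bounding a region sandwiched between pieces of $\widetilde\Sigma(\rho)$ and $\mathcal U_p$ --- again contradicting the maximum principle. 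This enclosed-region bookkeeping --- tracking where the rays can exit along $\widetilde\Gamma(\rho)$ and excluding each possibility --- is the main obstacle, and it is precisely where the convexity of $\widetilde\Delta(\rho)$ and the placement of the vertical edge $\widetilde v$ (on which $\nu_\rho\equiv0$, by item (a)) are used; I would lean on the detailed analysis in~\cite[\S3.1]{MT20a} to close it.
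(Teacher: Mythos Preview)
Your proposal is correct and follows essentially the same approach that the paper indicates: the boundary maximum principle against the vertical faces of $\partial W(\rho)$ to locate the zeros of $\nu_\rho$ (item (a)), and the umbrella comparison of Strategy~2 in \S\ref{subsubsec:angle-control} to rule out extra points with $\nu_\rho=-1$ (item (b)). The paper does not spell out the ray-counting argument either but defers to~\cite[\S3.1]{MT20a}, so your honest reliance on that reference at the final bookkeeping step is in line with the exposition here; note also that for a hypothetical point with $\nu_\rho=-1$ on an open horizontal edge you could alternatively compare with the invariant surface $\mathcal I_{\widetilde h_i}$ of Example~\ref{ex:invariant-I}, as suggested in Strategy~2.
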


\subsubsection{The conjugate $H$-immersion}\label{subsubsec:arbitrary-genus-H-surface-compactness}

Thanks to Lemmas~\ref{lem:horizontal-geodesics} and~\ref{lem:vertical-geodesics}, the conjugate $H$-surface $\Sigma(\rho)$ in $\mathbb{S}^2(\kappa)\times\mathbb{R}$ is bounded by a contour $\Gamma(\rho)$ formed by the curves $h_1$, $h_2$, $h_3$ (corresponding to $\widetilde h_1$, $\widetilde h_2$, $\widetilde h_3$) contained in vertical planes, and $v$ (corresponding to $\widetilde v$) contained in a horizontal slice, which will be assumed to be $\mathbb{S}^2(\kappa)\times\{0\}$ after a vertical translation (see Figure~\ref{fig:arbitrary-genus-H-surface-polygon} top right). Their endpoints will be denoted by $1$--$4$, in correspondence with $\widetilde 1$--$\widetilde 4$.

Write $h_i=(\beta_i,z_i)\in\mathbb S^2(\kappa)\times\mathbb{R}$ for $i\in\{1,2,3\}$ as in Lemma~\ref{lem:horizontal-geodesics}. From the properties of $\nu_\rho$ in Proposition~\ref{prop:arbitrary-genus-angle}, it follows that the curves $\beta_i$ are one-to-one, and the height components $z_i$ are strictly monotonic. On the one hand, it is easy to show that $\beta_1$, $\beta_2$ and $\beta_3$ are part of the geodesics containing the sides of a spherical triangle $\Delta(\rho)\subset\mathbb{S}^2(\kappa)$ with two angles equal to $\frac{\pi}{g+1}$ and $\frac{\pi}{2}$, which coincide with the angles made by $h_1$ and $h_2$, and by $h_2$ and $h_3$, respectively, see Figure~\ref{fig:arbitrary-genus-H-surface-polygon} bottom right. On the other hand, $v$ is strictly convex as a curve of $\mathbb{S}^2(\kappa)\times\{0\}$ with respect to $-N$ as a conormal along $v$ by Lemma~\ref{lem:vertical-geodesics}. As a consequence, the curve $\pi\circ v$ is embedded and contained in $\Delta(\rho)$. Were it not the case, $\pi\circ v$ would intersect itself or other points of some $\beta_i$ producing a convex loop in $\pi(\Gamma(\rho))$. However, $\pi(\Sigma(\rho))$ must Alexandrov-embedded as a domain of $\mathbb{S}^2(\kappa)$ since $\pi$ restricted to $\Sigma(\rho)$ is an immersion (its Jacobian is the angle function $\nu_\rho$, which does not vanish in the interior). In \S\ref{subsubsec:arbitrary-genus-H-surface-embeddedness}, we will prove that $\Sigma(\rho)$ is a graph and confirm that Figure~\ref{fig:arbitrary-genus-H-surface-polygon} is a faithful picture of $\Gamma(\rho)$.

\begin{figure}[htbp]
\centering
\includegraphics{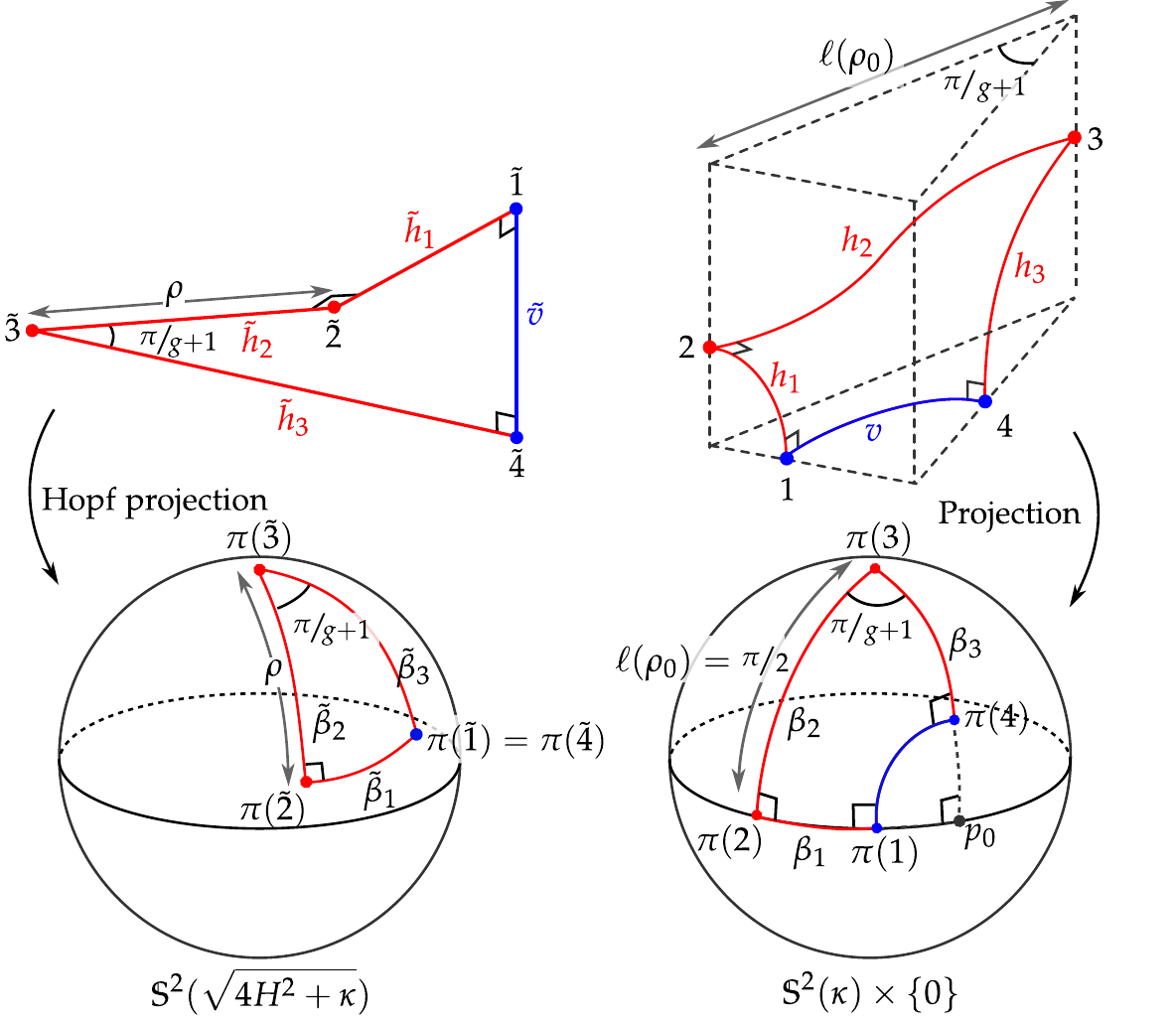}
\caption{Geodesic polygon $\widetilde\Gamma(\rho)$ (top left), its sister contour $\Gamma(\rho)$ (top right) in the target case $\ell(\rho_0)=\frac{\pi}{2\sqrt{\kappa}}$ (see \S\ref{subsubsec:arbitrary-genus-H-surface-compactness}) and their projections to $\mathbb{S}^2(4H^2+\kappa)$ via the Hopf fibration (bottom left) and to $\mathbb{S}^2(\kappa)$ (bottom right).}
\label{fig:arbitrary-genus-H-surface-polygon}
\end{figure}

Proposition~\ref{prop:conjugation-completion-by-simmetries} allows us to produce a complete $H$-surface $\Sigma^*(\rho)$ by successive mirror symmetries about the totally geodesic surfaces containing $h_1$, $h_2$, $h_3$ and $v$. The symmetry group of $\Sigma^*(\rho)$ depends on the shape of the triangle $\Delta(\rho)$, which is determined by the distance $\ell(\rho)$ from $\pi(2)$ to $\pi(3)$ given by
\begin{equation}\label{eq:arbitrary-genus-H-surface-ell}
  \ell(\rho) = - \int_{h_2} \nu_\rho = -\int_{\widetilde{h}_2} \nu_\rho,
\end{equation}
since the angle function $\nu_\rho$ has been chosen negative. Observe that $\widetilde{\Gamma}(\rho)$ depends continuously on $\rho$, so the uniqueness in the Plateau problem (solved by $\widetilde{\Sigma}(\rho)$) and Proposition~\ref{prop:continuity} yield that $\rho\mapsto\ell(\rho)$ is a continuous function. This enables a continuity argument to prove that there always exists a value of $\rho_0$ such that $\ell(\rho_0) = \frac{\pi}{2\sqrt{\kappa}}$ (see Figure~\ref{fig:arbitrary-genus-H-surface-polygon} bottom right), in which case $\Sigma^*(\rho_0)$ is compact since it has the symmetries of the $(2,g+1)$-tessellation. Let us analyze the limit cases:
\begin{itemize}
  \item As $\rho \to 0$, the length of $\widetilde{h}_2$ converges to zero, so $\ell(\rho)$ gets arbitrarily close to zero by Equation~\eqref{eq:arbitrary-genus-H-surface-ell}.
  
  \item As $\rho \to \frac{\pi}{\sqrt{4H^2 + \kappa}}$, the surface $\widetilde{\Sigma}(\rho)$ converges to $\frac{1}{4(g+1)}$ of the horizontal umbrella centered at $\widetilde{3}$, whence $\Sigma(\rho)$ is a sector of angle $\frac{\pi}{g+1}$ of the upper half of an $H$-sphere $S_{H,\kappa,0}$ in $\mathbb{S}^2(\kappa)\times \mathbb{R}$, see Example~\ref{ex:umbrellas}. This means that $\ell(\frac{\pi}{\sqrt{4H^2 + \kappa}})$ is the radius of the spherical circle of $\mathbb{S}^2(\kappa)$ over which $S_{H,\kappa,0}$ is a bigraph, that is, $\ell(\frac{\pi}{\sqrt{4H^2 + \kappa}})=\frac{2}{\sqrt{\kappa}} \arctan\frac{\sqrt{\kappa}}{2H}$, cp.\ Equation~\eqref{thm:embeddedness:eqn1}.
\end{itemize}
By the intermediate value theorem, $\ell(\rho)$ takes all values in $(0, \frac{2}{\sqrt{\kappa}}\arctan\tfrac{\sqrt{\kappa}}{2H})$, though it might take each value more than once. We finish the argument by realizing that this interval contains the target value $\frac{\pi}{2\sqrt{\kappa}}$ if and only if $H < \tfrac{\sqrt{\kappa}}{2}$. Observe that we can discuss the topology of the complete surface $\Sigma^*(\rho_0)$ analytically by means of Gau\ss--Bonnet formula if $\ell(\rho_0)=\frac{\pi}{2\sqrt{\kappa}}$: since $8(g+1)$ copies of $\Sigma(\rho_0)$ are needed to get a compact surface and the total curvature of each piece is $\int_{\Sigma(\rho_0)} K = \frac{\pi}{g+1} - \frac{\pi}{2}$, we easily deduce that the genus of $\Sigma^*(\rho_0)$ is $g$.

If $H \to \tfrac{\sqrt{\kappa}}{2}$, then $\frac{2}{\sqrt{\kappa}}\arctan\tfrac{\sqrt{\kappa}}{2H}\to\frac{\pi}{2\sqrt{\kappa}}$, which forces $\rho_0\to\frac{\pi}{\sqrt{4H^2+\kappa}}$ and the constructed surface $\Sigma(\rho_0)$ converges to a subset of an $\frac{\sqrt{\kappa}}{2}$-sphere, so $\Sigma^*(\rho_0)$ becomes a pair of tangent $\frac{\sqrt{\kappa}}{2}$-spheres. If $H \to 0$, then we will immerse $\widetilde\Sigma_{\rho_0}$ immersed in the local model $M(4H^2 + \kappa, H)$ via the isometry in~\eqref{eq:local-isometry-Daniel-Berger}. As the bundle curvature of $M(4H^2+\kappa,H)$ tends to zero, so does $\Length(\widetilde{v}) = 2H \mathrm{Area}(\widetilde{\Delta}(\rho))$ (see the discussion about the geometric meaning of the bundle curvature in \S\ref{subsec:working-coordinates}). The maximum principle and the stability of the piece $\widetilde{\Sigma}(\rho_0)$ imply that its angle function $\nu_{\rho_0}$ converges uniformly to $-1$, so the conjugate piece $\Sigma(\rho_0)$ becomes in the limit a slice (with singularities at the vertexes of the tessellation). We remark that we have used the model $M(4H^2 + \kappa, H)$ instead of the Berger sphere $\Sb(4H^2 + \kappa, H)$ to study the limit due to the fact that, as $H\to 0$, the Berger sphere collapses onto $\mathbb{S}^2(\kappa)$ whilst the Cartan model smoothly converges to $\mathbb{S}^2(\kappa) \times \mathbb{R}$.

\subsubsection{Embeddedness}\label{subsubsec:arbitrary-genus-H-surface-embeddedness}

Finally, we will show that the constructed fundamental piece $\Sigma(\rho_0)$ is actually a graph over some domain of $\mathbb{S}^2(\kappa)$ (so there are no self-intersect\-ions of type I, see \S\ref{sec:completion-embeddedness}) and lies in the prism $\Delta(\rho_0) \times \mathbb{R}$ (so there are no self-intersections of type II, see \S\ref{sec:completion-embeddedness}), where $\Delta(\rho_0)$ is the desired triangle with angles $\frac{\pi}{g+1}$, $\frac\pi2$ and $\frac{\pi}{2}$ . This clearly implies that the completion $\Sigma^*(\rho_0)$ is embedded, see Figure~\ref{fig:arbitrary-genus-H-surface-polygon} right. To this end, a standard application of the maximum principle reveals that it suffices to show that $\Gamma(\rho_0)$ is a graph and lies in $\Delta(\rho_0)\times \mathbb{R}$. After our discussion in~\S\ref{subsubsec:arbitrary-genus-H-surface-conjugate}, it will be enough to prove that $\pi(1)$ and $\pi(4)$ lie in $\Delta(\rho_0)$, i.e., the geodesics $\beta_1$ and $\beta_3$ do not reach the point $p_0$ shown in Figure~\ref{fig:arbitrary-genus-H-surface-polygon}. 

In the case of $\pi(1)$, this is a consequence of the fact that the distance from $\pi(2)$ to $p_0$ is $\frac{\pi}{(g+1)\sqrt{\kappa}}$, whereas the length of $\beta_1$ can be estimated as:
\begin{equation}\label{eq:arbitrary-genus-H-surface-estimate-pi1}
  \mathrm{Length}(\beta_1)=-\int_{\widetilde{h}_1}\nu\leq\mathrm{Length}(\widetilde h_1)\leq\frac{\pi}{(g+1)\sqrt{4H^2+\kappa}}<\frac{\pi}{(g+1)\sqrt{\kappa}}.
\end{equation}
The last inequality in~\eqref{eq:arbitrary-genus-H-surface-estimate-pi1} follows from the fact that $\widetilde h_1$ has maximum length when $\widetilde h_2$ is a quarter of a great circle of $\mathbb{S}^2(4H^2+\kappa)$. As for $\pi(4)$, since varying $\rho$ in the construction produces a foliation of a region of the Berger sphere $\Sb(4H^2+\kappa,4H)$, this implies that $\nu_\rho$ along $\beta_3$ depends monotonically on $\rho$. The maximum value of $\mathrm{Length}(\beta_3)=-\int_{\widetilde{h}_3}\nu$ is thus attained when $\rho=\frac{\pi}{\sqrt{4H^2+\kappa}}$, since for this value we integrate the largest function on the largest interval. In particular, $\Length(\beta_3)$ is less than $\frac{\pi}{2\sqrt{\kappa}}$, the radius of the disk over which the $\frac{\sqrt{\kappa}}{2}$-sphere is a bigraph. This means that $\pi(4)$ also lies in the boundary of $\Delta(\rho_0)$.

\subsection{Compact minimal surfaces in $\mathbb{S}^2(\kappa)\times\mathbb{S}^1(\eta)$}\label{sec:periodic}

The last construction of this section concerns periodic minimal surfaces in $\mathbb{S}^2(\kappa)\times\mathbb{R}$ that are compact in the quotient by a vertical translation of a certain length $2h$, i.e., in the homogeneous $3$-manifold $\mathbb{S}^2(\kappa) \times \mathbb{S}^1(\eta)$, where $\mathbb{S}^1(\eta)$ is a circle of curvature $\eta=\frac{\pi}{h}$. 

Hoffman, Traizet and White~\cite[Thm.~1]{HTW} obtained a class of properly embedded minimal surfaces in $\s^2(\kappa)\times\R$ called \emph{periodic genus $g$ helicoids} as well as infinitely-many non-congruent compact orientable embedded minimal surfaces in $\s^2(\kappa)\times\s^1(\eta)$ with arbitrary genus $g\geq 2$ and arbitrary $\eta>0$~\cite[Thm.~2]{HTW}. Note that, as we are dealing with minimal surfaces, non-orientable examples might also exist. Rosenberg~\cite[\S4]{Rosenberg} constructed non-orientable compact minimal surfaces in $\s^2(\kappa)\times\s^1(\eta)$, for all $\eta>0$, with even Euler characteristic, and in~\cite{MPT} it is proved that there cannot be examples with odd Euler characteristic.

We will obtain other compact minimal surfaces of arbitrary genus $g\geq 3$ that can be thought of as Schwarz P-surfaces in $\mathbb{S}^2(\kappa)\times \mathbb{S}^1(\eta)$, see Figure~\ref{fig:minimal-S2xS1-arbitrary-genus}. 
As in the constructions in~\S\ref{sec:delaunay} and~\S\ref{sec:genus}, we will produce a fundamental piece fitting a tile of a tessellation of $\mathbb{S}^2(\kappa)\times \mathbb{R}$ giving the desired genus (in the quotient) after extending it by symmetries about its boundary components. The conjugate technique starts with another minimal surface in $\mathbb{S}^2(\kappa)\times\R$ and involves a continuity argument that only allows us to get the result for $\eta$ large enough (see \S\ref{subsubsec:P-Schwarz-compactness}). Moreover, in principle this family might fail to be continuous as in~\S\ref{sec:delaunay}.

The main result is the following theorem (see also Figure~\ref{fig:minimal-S2xS1-arbitrary-genus}).

\begin{theorem}[{\cite[Prop.~3]{MPT}}]\label{thm:orientable-minimal-arbitrary-genus-examples-S2xS1}
For any integer $g\geq 3$ and $\eta > 2\sqrt{\kappa}$ big enough depending on $g$ and $\kappa$, there exists a compact embedded orientable minimal surface $\Sigma_{g,\eta}$ with genus $g$ in $\mathbb{S}^2(\kappa)\times\mathbb{S}^1(\eta)$ that is a bigraph over a slice and inherits all the symmetries of a $(2,g-1)$-tessellation of $\mathbb{S}^2(\kappa)$ (see~\S\ref{subsubsec:tessellations}).
\end{theorem}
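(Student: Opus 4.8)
The construction follows the conjugate Plateau scheme of \S\ref{sec:delaunay} and \S\ref{sec:genus}, now in the minimal case $H=0$, where the sister correspondence with phase $\theta=\frac{\pi}{2}$ relates two minimal immersions, both lying in $\mathbb{S}^2(\kappa)\times\mathbb{R}$ (see Table~\ref{tab:conjugation-cases}). First I would fix $g\geq 3$ and a fundamental triangle $\widetilde\Delta$ of the $(2,g-1)$-tessellation of $\mathbb{S}^2(\kappa)$, with angles $\frac{\pi}{2}$, $\frac{\pi}{2}$ and $\frac{\pi}{g-1}$, and prescribe a geodesic contour $\widetilde\Gamma(a)\subset\mathbb{S}^2(\kappa)\times\mathbb{R}$ consisting of horizontal geodesic arcs projecting onto the sides of $\widetilde\Delta$ together with a vertical geodesic segment, where $a>0$ is a free length parameter. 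Arranging barriers so that the relevant solid is mean-convex in the sense of Meeks--Yau, Proposition~\ref{prop:plateau-existencia} provides a unique minimal disk $\widetilde\Sigma(a)$ with $\partial\widetilde\Sigma(a)=\widetilde\Gamma(a)$ that is a Killing graph over the interior of $\widetilde\Delta$; uniqueness together with Proposition~\ref{prop:continuity} guarantees that $\widetilde\Sigma(a)$, and hence its sister $\Sigma(a)$, depend continuously on $a$.

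Next I would pass to the conjugate minimal surface $\Sigma(a)\subset\mathbb{S}^2(\kappa)\times\mathbb{R}$. By Lemmas~\ref{lem:horizontal-geodesics} and~\ref{lem:vertical-geodesics}, the horizontal geodesics of $\widetilde\Gamma(a)$ become curves $h_i$ lying in vertical totally geodesic planes that $\Sigma(a)$ meets orthogonally, while the vertical geodesic becomes a curve $v$ contained in a horizontal slice, normalized to be $\mathbb{S}^2(\kappa)\times\{0\}$. The crucial technical input is a careful control of the angle function $\nu_a$ on $\widetilde\Sigma(a)$, which I would carry out with the maximum-principle strategies of \S\ref{subsubsec:angle-control}: comparison with tangent vertical cylinders $T_p$ and umbrellas $\mathcal{U}_p$ at interior points where $\nu_a=0$ or $\nu_a=\pm1$, together with the invariant barriers $\mathcal{I}_{\widetilde h}$ of Example~\ref{ex:invariant-I}. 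This locates the zeros and the unit values of $\nu_a$, from which one reads off that the projections $\beta_i=\pi\circ h_i$ are embedded and monotone and, via Lemma~\ref{lem:vertical-geodesics}, that $v$ is convex in the slice with respect to the inner conormal, so that $\pi\circ v$ is embedded inside $\widetilde\Delta$.

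I would then run a continuity argument to fit $\Sigma(a)$ into a tile $\widetilde\Delta\times[0,h]$ of the ambient tessellation. The completion $\Sigma^*(a)$, obtained by successive mirror symmetries via Proposition~\ref{prop:conjugation-completion-by-simmetries}, is periodic in the vertical direction, and its half-period $h(a)$ can be written as an integral of $\sqrt{1-\nu_a^2}$ along the horizontal boundary, as in~\eqref{eqn:definition-ell-mu}. Since $a\mapsto h(a)$ is continuous, the intermediate value theorem produces a value $a_0$ with $h(a_0)=\frac{\pi}{\eta}$ as soon as $\frac{\pi}{\eta}$ lies in the interior of the range of $h$; identifying this range through the two limiting configurations of $\widetilde\Gamma(a)$ (where $\widetilde\Sigma(a)$ degenerates to a piece of an umbrella or flattens onto a slice) is exactly what forces the threshold $\eta>2\sqrt{\kappa}$ and the requirement that $\eta$ be large depending on $g$. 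For such $\eta$, the surface $\Sigma_{g,\eta}=\Sigma^*(a_0)$ is invariant under the vertical translation of length $2h(a_0)$, hence compact in $\mathbb{S}^2(\kappa)\times\mathbb{S}^1(\eta)$; its genus is $g$ by counting the congruent copies of $\Sigma(a_0)$ tiling the quotient (equivalently by Gau\ss--Bonnet), and the symmetry about $\mathbb{S}^2(\kappa)\times\{0\}$ makes it an orientable bigraph.

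It remains to prove embeddedness. Because $\kappa>0$, the Krust property of Proposition~\ref{prop:Krust} is unavailable, so I would argue, as announced in the introduction, by convexity together with an isoperimetric inequality: the convexity of $v$ obtained above forces $\pi\circ v$ to bound an embedded region contained in $\widetilde\Delta$, whence $\pi(\Gamma(a_0))$ is embedded, and a standard application of the maximum principle then shows that $\Sigma(a_0)$ is a graph, ruling out type~I self-intersections. The same monotonicity and convexity keep $\Sigma(a_0)$ inside the prism $\widetilde\Delta\times\mathbb{R}$, which rules out type~II self-intersections, so the completion $\Sigma_{g,\eta}$ is embedded. I expect the principal obstacle to be the continuity step: one must simultaneously respect the horizontal angles imposed by the $(2,g-1)$-tessellation and attain the prescribed vertical period, and the delicate point is to pin down the precise range of $h(a)$ --- in particular its limiting values --- while keeping the domain mean-convex and the boundary curve $v$ convex along the whole path, since it is this range that certifies exactly which $\eta$ are realized.
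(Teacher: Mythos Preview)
Your outline has a structural mismatch with the paper's construction that would prevent it from closing. You propose a quadrilateral $\widetilde\Gamma(a)$ over a \emph{fixed} fundamental triangle of the $(2,g-1)$-tessellation, with a single vertical segment and a single free length $a$. The paper instead takes a pentagon $\widetilde\Gamma(\widetilde a,\widetilde b,\rho)$ over a generic right triangle $\widetilde\Delta(\widetilde a,\widetilde b)$ (angles $\frac{\pi}{2},\widetilde\alpha,\widetilde\beta$ unknown), with \emph{two} vertical segments of length $\rho$ at the vertices $\widetilde\alpha,\widetilde\beta$. Two vertical segments are needed because the Schwarz~P fundamental piece must reach two different horizontal slices $P_{12}$ and $P_{34}$ of the conjugate, one with the surface above and one with it below (the normals rotate in opposite senses along $\widetilde{12}$ and $\widetilde{34}$, cf.\ Lemma~\ref{lem:vertical-geodesics}). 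A single vertical segment, as in your proposal, reproduces the scheme of~\S\ref{sec:genus} and yields a bigraph over a single slice rather than a surface spanning two slices at distance $h$.

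The second gap is that you fix the angles of the \emph{initial} triangle and then try to adjust only the vertical period $h(a)$ by the intermediate value theorem. Under conjugation the angles change: the paper shows via Gau\ss--Bonnet that $\alpha=\widetilde\alpha+\kappa\,\mathrm{area}(V_\alpha)$ and $\beta=\widetilde\beta+\kappa\,\mathrm{area}(V_\beta)$, so the conjugate triangle $\Delta$ is strictly larger than $\widetilde\Delta$. Hitting the target $(\alpha,\beta)=(\tfrac{\pi}{g-1},\tfrac{\pi}{2})$ is a genuinely two-dimensional problem in $(\widetilde a,\widetilde b)$ for each fixed $\rho$, and the paper solves it by a degree argument \`a la Karcher--Pinkall--Sterling: one exhibits a rectangle $R$ in the $(\widetilde a,\widetilde b)$-plane whose image under $f_\rho(\widetilde a,\widetilde b)=(\alpha,\beta)$ winds around $(\tfrac{\pi}{g-1},\tfrac{\pi}{2})$ for $\rho$ small, using $\alpha>\widetilde\alpha$, $\beta>\widetilde\beta$ and $\alpha\to\widetilde\alpha$, $\beta\to\widetilde\beta$ as $\rho\to 0$. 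The height $h$ is then whatever comes out for that $\rho$, and only $\rho$ small (hence $\eta$ large) is controlled; the threshold $\eta>2\sqrt\kappa$ comes from $h\leq\Length(\widetilde{23})\leq\frac{\pi}{2\sqrt\kappa}$. Your embeddedness sketch is close in spirit, but note that the paper needs the extra hypothesis $\rho\leq\frac{\pi}{2\sqrt\kappa}$ together with the spherical isoperimetric inequality to conclude that the convex curves $12$ and $34$ are embedded and that $\alpha,\beta<\pi$.
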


\begin{figure}[htbp]
  \centering
  \includegraphics{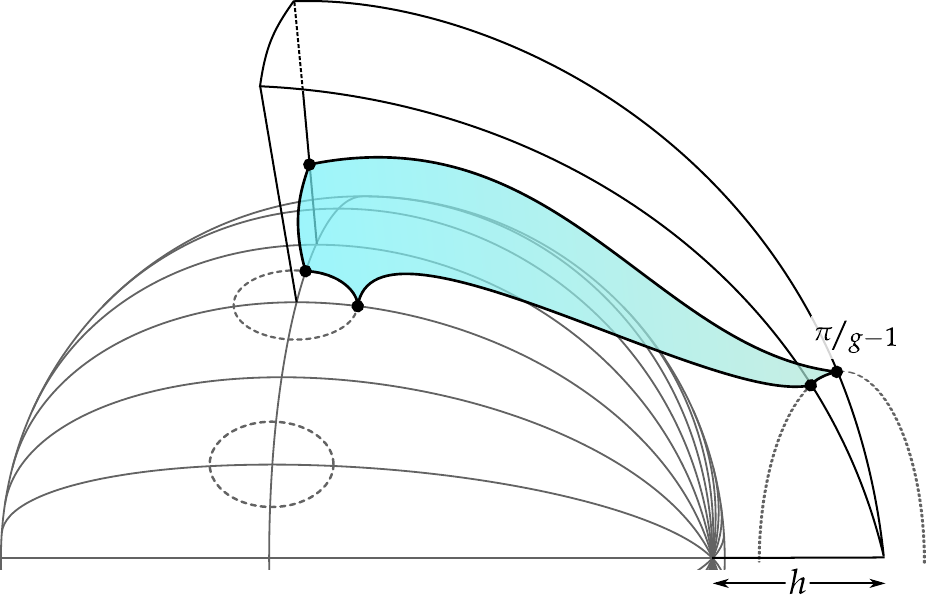}
  \caption{A sketch of the fundamental piece of the minimal surface $\Sigma_{g,\eta}$ of Theorem~\ref{thm:orientable-minimal-arbitrary-genus-examples-S2xS1} represented in the conformal model $\mathbb{R}^3-\{(0,0,0)\}$ of $\mathbb{S}^2(\kappa) \times \mathbb{R}$. See also the numerical approximations in Figure~\ref{fig:evolver-genus}.}
  \label{fig:minimal-S2xS1-arbitrary-genus}
\end{figure}

\begin{remark}
In the cases $g=0$ and $g=1$, there is no such a Schwarz P-surface, but in these cases we can easily produce a compact minimal surface of genus $g$ by considering the $\mathbb{S}^2(\kappa)\times \{0\}$ or a vertical torus. The case $g=2$ is more subtle, since the arguments in the proof of Theorem~\ref{thm:orientable-minimal-arbitrary-genus-examples-S2xS1} fail, and we believe that no such Schwarz P-surface with these prescribed symmetries will exist. Recall that Hofmann, Traizet and White have provided compact embedded examples of all genera. We also presented a similar but more symmetric construction in~\cite[\S3.2]{MPT} to produce compact minimal surfaces in $\mathbb{S}^2(\kappa)\times \mathbb{S}^1(\eta)$ with arbitrary odd genus $2k-1$ for $k \geq 2$ and $\eta$ large enough, which inherit the symmetries of a $(2,k)$-tessellation but they are not congruent those in Theorem~\ref{thm:orientable-minimal-arbitrary-genus-examples-S2xS1}. Besides, a genus $7$ compact minimal example in $\mathbb{S}^2(\kappa)\times \mathbb{S}^1(\eta)$ with the symmetries of the $(4,3)$-tessellation (that is, the hexahedron, see Table~\ref{tb:tessellations-sphere}) is also constructed.

On the other hand, the strange condition $\eta>2\sqrt{\kappa}$ shows up in the spherical geometry when one tries to produce the initial geodesic polygon, so there cannot be a Schwarz P-surface in $\mathbb{S}^2(\kappa)\times \mathbb{S}^1(\eta)$ with $\eta\leq2\sqrt{\kappa}$ enjoying the symmetries we have prescribed.
\end{remark}

\subsubsection{Construction of the minimal surface in $\mathbb{S}^2(\kappa)\times\R$}\label{subsubsec:P-Schwarz-conjugate-construction} We will fix $\eta>0$ and the genus $g\geq 3$ in the sequel, not writing the dependence on these variables. Consider three real parameters $0\leq\widetilde{a}, \widetilde{b} \leq \frac{\pi}{2\sqrt{\kappa}}$ and $\rho>0$ and define the geodesic triangle $\widetilde\Delta(\widetilde a,\widetilde b) \subset \mathbb{S}^2(\kappa) \times \{0\}$ with two sides of lengths $\widetilde{a}$ and $\widetilde{b}$ meeting at an angle of $\frac{\pi}{2}$. The opposite angles will be denoted by $\widetilde\alpha$ and $\widetilde\beta$. We can produce a geodesic polygon $\widetilde{\Gamma}(\widetilde a,\widetilde b,\rho)$ by adding two vertical geodesic segments of length $\rho$ at the vertexes $\widetilde\alpha$ and $\widetilde\beta$. Therefore, $\widetilde{\Gamma}(\widetilde a,\widetilde b,\rho)$ is a closed curve in $\mathbb{S}^2(\kappa)\times\mathbb{R}$ whose vertexes will be denoted by $\widetilde{1}$, $\widetilde{2}$, $\widetilde{3}$, $\widetilde{4}$ and $\widetilde{5}$ as shown in Figure~\ref{fig:P-Schwarz-initial} left. Notice that $\widetilde{\Gamma}(\widetilde a,\widetilde b,\rho)$ is still well defined for $0\leq \widetilde{a}, \widetilde{b} \leq \frac{\pi}{\sqrt{\kappa}}$ not both of them equal to $\frac{\pi}{\sqrt{\kappa}}$ (a half of the length of a great circle of $\mathbb{S}^2(\kappa)$), though imposing the restriction $0\leq\widetilde{a}, \widetilde{b} \leq \frac{\pi}{2\sqrt{\kappa}}$ is essential for some of the arguments in the construction (observe that this restriction on $\widetilde a$ and $\widetilde b$ implies that the angles $\widetilde{\alpha}$ and $\widetilde{\beta}$ are at most $\frac{\pi}{2}$).

\begin{remark} 
One can also consider the boundary curve $\widetilde{\Gamma}(\widetilde a,\widetilde b,\rho)$ constructed likewise using horizontal and vertical geodesics in $\R^3$. This leads, by conjugation, to the classical Schwarz P-surface, whose name is motivated by the fact that it is invariant under a primitive cubic lattice $C$, when choosing $\widetilde a=\widetilde b=\rho$ and $\widetilde\alpha=\widetilde\beta=\frac{\pi}{4}$ (e.g., see~\cite[\S1.6.2]{K89a}). The quotient of the conjugate surface under the lattice $C$ has genus $3$ and consists of $16$ copies of the fundamental piece with $\alpha=\beta=\frac{\pi}{4}$ and $a=b$ (cp.\ Figure~\ref{fig:P-Schwarz-initial} right). Here, the actual lenghts of $\widetilde a$ and $a$ are not relevant, since we can change them by homotheties of $\R^3$. Note also that this construction in $\R^3$ can be seen as a limit of our construction as $\kappa\to 0$.
\end{remark}

By construction, the triangle $\widetilde\Delta(\widetilde{a}, \widetilde{b})$ and the polygon $\widetilde{\Gamma}(\widetilde{a}, \widetilde{b},\rho)$ form a Nitsche contour, so Proposition~\ref{prop:plateau-existencia} ensures the existence and uniqueness of a minimal disk $\widetilde{\Sigma}(\widetilde{a}, \widetilde{b},\rho)\subset\widetilde\Delta(\widetilde{a}, \widetilde{b})\times\R$ with boundary $\widetilde{\Gamma}(\widetilde{a}, \widetilde{b},\rho)$, whose interior is a graph over $\widetilde{\Gamma}(\widetilde{a}, \widetilde{b},\rho)$. We will assume that its angle function $\nu_{\widetilde{a}, \widetilde{b},\rho}$ is positive in the interior without loss of generality. By the Strategy 2 in~\S\ref{subsubsec:angle-control} and the boundary maximum principle we deduce that the angle function only vanishes along the vertical segments $\widetilde{12}$ and $\widetilde{34}$, whereas it only takes the value $1$ at the vertex $\widetilde{5}$.

\subsubsection{The conjugate minimal surface}\label{subsubsec:P-Schwarz-embeddedness}
Let $\Sigma(\widetilde{a}, \widetilde{b},\rho)$ be the conjugate minimal surface in $\mathbb{S}^2(\kappa)\times\mathbb{R}$ whose boundary $\Gamma(\widetilde{a}, \widetilde{b},\rho)$ consists of curves contained in vertical or horizontal totally geodesic surfaces that $\Sigma$ meets orthogonally, see Lemmas~\ref{lem:horizontal-geodesics} and~\ref{lem:vertical-geodesics}. We will denote by $1$, $2$, $3$, $4$ and $5$ the corresponding vertexes of $\Gamma(\widetilde{a}, \widetilde{b},\rho)$ and by $P_{ij}$ the vertical plane or horizontal slice containing $i$ and $j$. Up to a vertical translation we will assume that $P_{34} = \mathbb{S}^2(\kappa)\times \{0\}$. 

We will sketch the proof that $\Sigma(\widetilde a,\widetilde b,\rho)$ is embedded and contained in a triangular prism of $\mathbb{S}^2(\kappa)\times \mathbb{R}$ (see~\cite[Lem.~3]{MPT} for further details). To prove this we will additionally assume that $\rho\leq\frac\pi{2\sqrt{\kappa}}$.

\begin{figure}[htbp]
  \centering
  \includegraphics[width=0.7\textwidth]{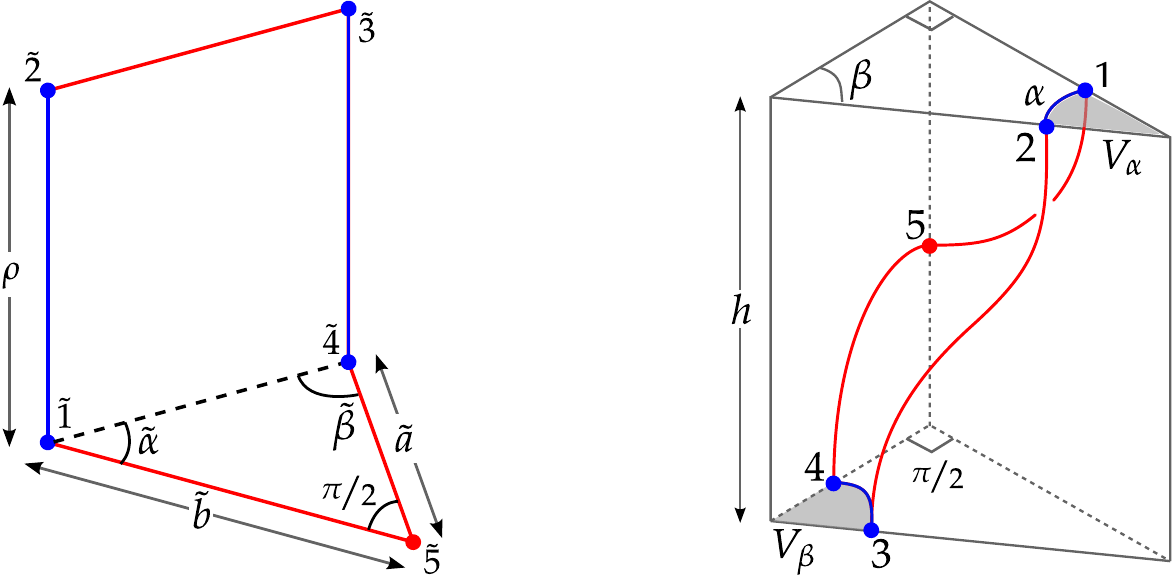}
\caption{The polygon $\widetilde{\Gamma}(\widetilde{a}, \widetilde{b}, \rho)$ (left) and its conjugate $\Gamma(\widetilde{a}, \widetilde{b},\rho)$ (right) that lies inside the prism demarcated by the symmetry planes. Note that by construction $h \leq \ell(23) = \ell(\widetilde{23}) \leq \tfrac{\pi}{2\sqrt{\kappa}}$ so we get the restriction $\eta = \frac{\pi}{h} \geq 2\sqrt{\kappa}$ in Theorem~\ref{thm:orientable-minimal-arbitrary-genus-examples-S2xS1}. The restriction $\rho \leq \frac{\pi}{2\sqrt{\kappa}}$ is unrelated to this and will be used to prove embeddedness. Moreover, $\alpha = \mathrm{area}(V_\alpha) + \widetilde{\alpha}$ and $\beta = \mathrm{area}(V_\beta) + \widetilde{\beta}$.}
  \label{fig:P-Schwarz-initial}
\end{figure}

Observe that $P_{45}$ and $P_{51}$ are different since they form an angle of $\frac{\pi}{2}$. Moreover, $P_{12}$ and $P_{34}$ do not coincide (so they are at a certain distance $h > 0$) and the height of the point $5$ is between $0$ and $h$; otherwise, we would find a contradiction with the maximum principle when comparing with slices $\mathbb{S}^2(\kappa)\times \{t\}$. Due to the behavior of the angle function, the curves ${23}$, ${45}$ and ${51}$ project one-to-one to $\mathbb{S}^2(\kappa)\times \{0\}$. The normal to $\widetilde\Sigma(\widetilde a,\widetilde b,\rho)$ has different directions of rotation along $\widetilde{12}$ and $\widetilde{34}$, so the curves ${12}$ and $34$ are convex arcs that $\Sigma(\widetilde a,\widetilde b,\widetilde h)$ meets from above and below, respectively, by Lemma~\ref{lem:vertical-geodesics}. This easily leads to the fact that $P_{45}$, $P_{23}$ and $P_{15}$ demarcate a spherical triangle $\Delta(\widetilde a,\widetilde b,\rho)$ in the projection to $\mathbb{S}^2(\kappa)$, such that $\Sigma(\widetilde a,\widetilde b,\rho)\subset\Delta(\widetilde a,\widetilde b,\rho)\times\R$. We will call $\alpha$ and $\beta$ the angles of this triangle, as indicated in Figure~\ref{fig:P-Schwarz-initial}. 

It remains to prove that ${12}$ and ${34}$ are embedded and $P_{23}$, $P_{15}$ and $P_{45}$ are pairwise distinct. We will explain the argument for ${12}$, being the case ${34}$ analogous. The total curvature of ${12}$ is $\int_{{12}} \kappa_g=-\widetilde{\alpha}$, i.e., minus the total rotation of the normal along $\widetilde{12}$ (note that $\kappa_g$ is computed with respect to the normal $N$ such that $\nu>0$ in the interior). If ${12}$ were not embedded then, by convexity, it would contain a loop enclosing a domain $D$, where Gau\ss--Bonnet formula yields $\kappa\Area(D) \geq \pi - \widetilde{\alpha} \geq \frac{\pi}{2}$. On the other hand, $\Length(\partial D)\leq\rho \leq \frac{\pi}{2\sqrt{\kappa}}$ by assumption, which contradicts the isoperimetric inequality in $\s^2(\kappa)$ that reads $4\pi\Area(D)-\kappa\Area(D)^2\leq\Length(\partial D)^2$. 

Once we know that both curves ${12}$ and ${34}$ are embedded, they must enclose some domains $V_\alpha$ and $V_\beta$ as in Figure~\ref{fig:P-Schwarz-initial}, demarcated by the horizontal geodesics orthogonal at their endpoints. Gau\ss--Bonnet formula applied to these domains $V_\alpha$ and $V_\beta$ gives the relations $\alpha = \mathrm{area}(V_\alpha) + \widetilde{\alpha}$ and $\beta = \mathrm{area}(V_\beta) + \widetilde{\beta}$.  The same argument as above shows that the angles $\alpha$ and $\beta$ do not exceed $\pi$ whenever $\rho \leq \frac{\pi}{2\sqrt{\kappa}}$, so the planes $P_{23}$, $P_{45}$ and $P_{51}$ are pairwise different.

\subsubsection{Compactness}\label{subsubsec:P-Schwarz-compactness}

As we have mentioned in the introduction, our goal to prove Theorem~\ref{thm:orientable-minimal-arbitrary-genus-examples-S2xS1} is to fit $\Sigma(\widetilde a,\widetilde b,\rho)$ in the prism with base half of a $2$-gon in the $(2,g-1)$-tessellation of the sphere $\mathbb{S}^2(\kappa)\times \{0\}$ (see Figure~\ref{fig:minimal-S2xS1-arbitrary-genus}). This amounts to saying that $\alpha = \frac{\pi}{g-1}$ and $\beta = \frac{\pi}{2}$ (see Figure~\ref{fig:P-Schwarz-initial} right). To prove that these conditions are satisfied, we will use a degree argument inspired by the work of Karcher, Pinkall and Sterling~\cite{KPS88}. 

We will assume that $0<\rho<\frac{\pi}{2\sqrt{\kappa}}$ is fixed, so the conjugate minimal surface $\Sigma(\widetilde a,\widetilde b,\rho)$ continuously depends on $\widetilde{a}$ and $\widetilde{b}$ (see Proposition~\ref{prop:continuity}), whence there exists a continuous function $f_{\rho}: (0, \frac{\pi}{2\sqrt{\kappa}}]\times (0, \frac{\pi}{2\sqrt{\kappa}}]\to \mathbb{R}^2$ such that $f_{\rho}(\widetilde{a}, \widetilde{b})=(\alpha, \beta)$. Let $c_1, c_2, c_3$ and $c_4$ be four straight segments parametrized by
\begin{align*}
  c_1(t) &=\tfrac{1}{\sqrt{\kappa}}(t,\tfrac{\pi}{2}),\quad t\in[\tfrac{1}{2(g-1)},\tfrac{\pi}{g-1}],
        &
  c_2(t) &= \tfrac{1}{\sqrt{\kappa}}(\tfrac{\pi}{g-1},t),\quad t\in[\tfrac12,\tfrac\pi2], \\
  c_3(t) &= \tfrac{1}{\sqrt{\kappa}}(t,\tfrac12),\quad t\in[\tfrac{1}{2(g-1)},\tfrac\pi{g-1}],
        &
  c_4(t) &= \tfrac{1}{\sqrt{\kappa}}(\tfrac1{2(g-1)},t),\quad t\in[\tfrac12,\tfrac\pi2],
\end{align*}
which form a closed rectangle $R\subset (0, \frac{\pi}{2\sqrt{\kappa}}]\times (0, \frac{\pi}{2\sqrt{\kappa}}]$. We claim that there exists $\rho>0$ such that the image of $f_{\rho}(R)$ is a closed curve around $(\frac{\pi}{g-1}, \frac{\pi}{2})$. 

By spherical trigonometry, it is easy to obtain that $\widetilde{\beta} = \frac{\pi}{2}$ along $c_1$, $\widetilde{\alpha} > \frac{\pi}{g-1}$ along $c_2$, $\widetilde{\beta} < \frac{\pi}{2}$ along $c_3$, and $\widetilde{\alpha} < \frac{\pi}{g-1}$ along $c_4$. Also, we know that $\alpha > \widetilde{\alpha}$ and $\beta > \widetilde{\beta}$ (see \S\ref{subsubsec:P-Schwarz-embeddedness}), so the image of $f_{\rho}\circ c_1$ is above the horizontal line at height $\frac{\pi}{2}$ and the image of $f_{\rho}\circ c_2$ is to the right of the vertical line at $\frac{\pi}{g-1}$. Finally, since $\alpha \to \widetilde{\alpha}$ and $\beta \to \widetilde{\beta}$ for $\rho\to 0$, we deduce that there exists $\rho_0$ small enough so that the image of $f_{\rho_0} \circ c_3$ is below the horizontal line at height $\frac{\pi}{2}$ and the image of $f_{\rho_0}\circ c_4$ is to the left of the vertical line at $\frac{\pi}{k}$ and the claim is verified. Observe that this argument holds true for any $\rho \leq \rho_0$.

By continuity, for any $\rho \leq \rho_0$, there exists $\widetilde{a}$ and $\widetilde{b}$ (that depend on $\rho$) such that $f_{\rho}(\widetilde{a}, \widetilde{b}) = (\frac{\pi}{g-1}, \frac{\pi}{2})$ so the polygon $\Gamma(\widetilde a,\widetilde b,\rho)$ fits in the $(2,g-1)$-tessellation. Then, the complete surface $\Sigma^*(\widetilde a,\widetilde b,\rho)$ obtained by successive reflections of $\Sigma$ inherits the desired symmetries, in particular it is compact in the quotient. Finally, a similar computation using the Gau\ss--Bonnet theorem as in \S\ref{subsubsec:arbitrary-genus-H-surface-compactness} ensures that quotient minimal surface in $\mathbb{S}^2(\kappa)\times \mathbb{S}^1(\frac{h}{\pi})$ has genus precisely $g$, $g \geq 3$. By the continuous dependence of $h$ on the parameters and taking into account that both $\widetilde\Sigma(\widetilde a,\widetilde b,\rho)$ and $\Sigma(\widetilde a,\widetilde b,\rho)$ converge to a horizontal slice as $\rho\to 0$, the desired compact surfaces exist for all $h$ small enough, i.e. for $\eta$ large enough.


\section{Complete $H$-surfaces in $\mathbb{H}^2(\kappa)\times\mathbb{R}$}\label{sec:non-compact}

This section is devoted to the construction of minimal and constant mean curvature $k$-noids and $k$-nodoids in $\mathbb{H}^2\times\mathbb{R}$ by conjugating the solution of a Jenkins--Serrin problem. The main difference with respect to the above constructions in \S\ref{sec:compact} is that additionally we have to deal with the asymptotic behaviour of the surface via the analysis of ideal horizontal and vertical geodesics in the initial minimal surface (see Proposition~\ref{prop:conjugation-JS}).

\subsection{Genus zero constant mean curvature $k$-noids and $k$-nodoids in $\mathbb{H}^2(\kappa)\times\mathbb{R}$}\label{sec:knoids}

The first construction concerns $H$-surfaces in $\mathbb{H}^2(\kappa)\times\R$ of surfaces of genus $0$ with an arbitrary number of ends $k\geq 3$ asymptotic to vertical $H$-cylinders and $4H^2+\kappa\leq 0$. Before stating the main theorem, it is worth saying that these surfaces, in the minimal case, reduce to the minimal $k$-noids and saddle towers constructed by Morabito and Rodríguez~\cite{MR} and also by Pyo~\cite{P} independently, whose names are inspired by their counterparts in $\R^3$ given by Jorge and Meeks~\cite{JM83} and Karcher~\cite{K88}. The $(H,k)$-noids in Theorem~\ref{th:knoids-genus-0} were constructed first by Plehnert in~\cite{Ple14} and by Daniel and Hauswirth~\cite{DH} for $k=2$. However, the main contribution of our theorem lies in the new family of $(H,k)$-nodoids, with a similar behaviour at infinity but approaching the asymptotic $H$-cylinders from the convex side. Some of them have self-intersections and give rise to counterexamples to the Krust property for subcritical $H$-surfaces.

The term catenodoid is inspired by the fact that the $H$-nodoids we have constructed in~\S\ref{sec:delaunay} seem to converge to a catenodoid of critical mean curvature as $H$ decreases to $\frac{\sqrt{-\kappa}}{2}$. In this limit we choose $\widetilde 2$ (see Figure~\ref{fig:horizontal-Delaunay-polygon-Berger}) as accumulation point and the parameter $\lambda>\frac\pi2$ should also be chosen appropriately along the sequence. Likewise, the limit of the $H$-unduloids as $H$ decreases to $\frac{\sqrt{-\kappa}}{2}$ seems to be a catenoid of critical mean curvature. The situation is similar if we look at rotationally invariant $H$-surfaces with subcritical and supercritical mean curvature (see~\cite[Fig.~A and~B]{OM}, noticing that Montaldo and Onnis choose $\kappa=-2$ and the mean curvature as the sum of the principal curvatures, not their average).

\begin{theorem}[{\cite[Thm.~1.2]{CMR}}]\label{th:knoids-genus-0}
Assume that $4H^2+\kappa\leq 0$. For each integer $k\geq 2$, there is a continuous $2$-parameter family of proper Alexandrov-embedded $H$-surfaces $\Sigma_{a,b}^*\subset\mathbb{H}^2(\kappa)\times\mathbb{R}$, $a,b\in(0,\infty]$ not both of them equal to $\infty$. These surfaces are invariant by mirror symmetries about a horizontal plane and about $k$ equiangular vertical planes.
\begin{enumerate}[label=\emph{(\alph*)}]
  \item If $a,b<\infty$, then $\Sigma_{a,b}^*$ are called \emph{saddle towers}. They are singly periodic, having genus $0$ and $2k$ ends in the quotient of $\h^2(\kappa)\times\R$ by a vertical translation, and each end is asymptotic in the quotient to a half of a $H$-cylinder. 
    
  \item If $a=\infty$ and $b<\infty$, then $\Sigma_{\infty,b}^*$ are called \emph{$(H,k)$-noids} (or $H$-catenoids if $k=2$). They have genus $0$ and $k$ ends. If $4H^2+\kappa<0$, then each end is embedded and contained in the \emph{concave} side of a $H$-cylinder to which it is asymptotic. If $4H^2+\kappa=0$, then each end is tangent to the asymptotic boundary $\partial_\infty\mathbb{H}^2(\kappa)\times\R$ along a vertical ideal geodesic. 
    
  \item If $a<\infty$ and $b=\infty$, then $\Sigma_{a,\infty}^*$ are called \emph{$(H,k)$-nodoids} (or $H$-catenodoids if $k=2$). They have genus $0$ and $k$ ends.  If $4H^2+\kappa<0$, then each end is embedded and contained in the \emph{convex} side of a $H$-cylinder to which it is asymptotic. If $4H^2+\kappa=0$, then such $H$-cylinders disappear at infinity. 
\end{enumerate}
\end{theorem}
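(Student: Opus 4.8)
The plan is to obtain every $\Sigma_{a,b}^*$ as the conjugate of a minimal Jenkins--Serrin graph in $\E(4H^2+\kappa,H)$, and then to extract its topology, asymptotics and embeddedness from the behaviour of the conjugation on the boundary polygon, in the spirit of \S\ref{sec:knoids}. First I would fix the initial data. In the base $\M^2(4H^2+\kappa)$ --- which is $\h^2(4H^2+\kappa)$ in the subcritical case $4H^2+\kappa<0$ and $\R^2$ in the critical case $4H^2+\kappa=0$ --- I take a geodesic polygon $\widetilde\Omega_{a,b}$ with a vertex of angle $\tfrac{\pi}{k}$ at a centre, designed so that the completed conjugate surface carries the dihedral symmetry of $k$ equiangular vertical planes; two distinguished edges carry length parameters $a$ and $b$, and the Jenkins--Serrin data prescribe the values $\pm\infty$ on the sides that will generate the ends. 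Existence and uniqueness of the minimal graph $\widetilde\Sigma_{a,b}$ follow from Theorem~\ref{thm:general-JS} in the minimal case $H=0$, and from its counterparts for non-zero bundle curvature together with \emph{ad hoc} barriers when $H>0$; verifying the relevant Jenkins--Serrin length conditions is where $k\geq2$ and the admissible range of $(a,b)$ enter.

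Second, I would pass to the conjugate $H$-immersion $\phi$ in $\h^2(\kappa)\times\R$ via Daniel's correspondence with phase $\tfrac{\pi}{2}$ (see~\eqref{eqn:fundamental-rotation}) and read off its boundary from Lemmas~\ref{lem:horizontal-geodesics} and~\ref{lem:vertical-geodesics}: horizontal geodesics of $\partial\widetilde\Sigma_{a,b}$ become curves lying in the totally geodesic vertical mirror planes, while vertical geodesics become curves in a horizontal slice. To guarantee that these boundary curves are faithfully described --- in particular that they project injectively and meet at the prescribed angles --- I would locate the zeros of the angle function $\nu$ and the points where $\nu=\pm1$ using the comparison Strategies of \S\ref{subsubsec:angle-control}, namely tangent vertical cylinders, umbrellas and boundary barriers. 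With this in hand, Proposition~\ref{prop:conjugation-completion-by-simmetries} extends $\phi$ to a complete $H$-surface $\Sigma_{a,b}^*$ invariant under the group generated by the $k$ vertical planes and the horizontal one, and a Gau\ss--Bonnet count on the embedded fundamental disk yields genus $0$ together with $k$ ends (or $2k$ ends in the vertical quotient, for saddle towers).

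Third comes the asymptotic analysis, governed by Proposition~\ref{prop:conjugation-JS}: each ideal vertical geodesic of $\partial_\infty\widetilde\Sigma_{a,b}$, produced by a side with $\pm\infty$ data, conjugates to an ideal horizontal curve of constant curvature $\pm2H$ at height $\pm\infty$, that is, to an end asymptotic to a vertical $H$-cylinder when $4H^2+\kappa<0$, and degenerating to a curve tangent to $\partial_\infty\h^2(\kappa)\times\R$ (noids) or vanishing at infinity (nodoids) when $4H^2+\kappa=0$. The decisive point separating case (b) from case (c) is the sign of the rotation $\theta'$ of $\widetilde N$ along the relevant vertical geodesic (Lemma~\ref{lem:vertical-geodesics}(b)): one sign forces $N$ to point into the projected domain, placing the end on the \emph{concave} side of its asymptotic cylinder --- the $(H,k)$-noids, with $a=\infty$; the opposite sign places it on the \emph{convex} side --- the $(H,k)$-nodoids, with $b=\infty$. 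Finite $a,b$ retain both families of necks and give the singly periodic saddle towers, and continuity of the whole family in $(a,b)$ is Proposition~\ref{prop:continuity}.

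For (Alexandrov-)embeddedness I would show that the fundamental piece is embedded and contained in the fundamental region cut out by its mirror planes, so that its reflected copies tile $\h^2(\kappa)\times\R$ without type~II overlaps; embeddedness of the horizontal-slice boundary curve reduces to a total-rotation bound in the manner of Proposition~\ref{prop:rotation-curvature}. The hard part will be the convex-side (nodoid) analysis: precisely because the Krust property (Proposition~\ref{prop:Krust}) fails for $H>0$, the conjugate of a graph need not be a graph, and on the convex side the horizontal-slice curve can wind enough to create genuine type~I self-intersections of the fundamental piece. Controlling $\nu$ and the rotation $\theta'$ sharply enough to decide exactly when this happens --- thereby securing only Alexandrov-embeddedness in general, while isolating the embedded subfamily --- is the delicate technical core, inseparable from the angle-function estimates of \S\ref{subsubsec:angle-control}.
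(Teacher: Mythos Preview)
Your proposal is correct and follows essentially the same route as the paper: a Jenkins--Serrin graph over a triangle with a $\tfrac{\pi}{k}$-angle at the centre (boundary values $0$ on the two sides through the centre and $+\infty$ on the opposite side), conjugated via Daniel's correspondence, with the noid/nodoid dichotomy read off from the sign of $\theta'$ along the vertical boundary geodesics exactly as in Lemma~\ref{lem:vertical-geodesics}. One small sharpening: the ends are not produced by sides carrying $\pm\infty$ data but by the vertical half-lines over the non-central vertices $\widetilde p_1,\widetilde p_2$ (which become ideal when $a=\infty$ or $b=\infty$); the single $+\infty$ side conjugates instead to the ideal vertical segment joining consecutive ends.
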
 

In the case $H=0$, the $(H,k)$-nodoids are congruent to the $(H,k)$-nodoids, but the subtle difference between these two families in the case $H>0$ will be transparent via conjugation. It relies on the key role of the orientation in $\E(4H^2+\kappa,H)$, whose bundle curvature is non-zero.

\subsubsection{The construction of the minimal surface in $\E(4H^2+\kappa,H)$}

We will fix $\kappa$ and $H$ such that $4H^2+\kappa\leq 0$, as well as the integer $k\geq 2$ in the sequel, so we will omit the dependence on these parameters as in previous constructions. Also, we will work on the global Cartan model $M(4H^2+\kappa,H)$ given in~\S\ref{subsec:working-coordinates}.

Given $a,b>0$, let $\widetilde T_{a,b}\subset\mathbb{M}^2(4H^2+\kappa)$ be a geodesic triangle of vertexes $\widetilde p_0=(0,0)$, $\widetilde p_1$ and $\widetilde p_2$ labeled counterclockwise such that the angle at $p_0$ is equal to $\frac{\pi}{k}$ and the geodesic segments $\widetilde{p}_0\widetilde{p}_1$ and $\widetilde{p}_0\widetilde{p}_2$ have lengths $a$ and $b$, respectively. After an orientation-preserving isometry, we will assume that $\widetilde p_1$ lies on the $x$-axis. We call $\ell$ the length of the side $\widetilde{p}_1\widetilde{p}_2$. We can extend this triangle to the case $a=\infty$ or $b=\infty$ by setting $\widetilde T_{\infty,b}=\mathrm{cl}(\cup_{a>0}\widetilde T_{a,b})$ and $\widetilde T_{a,\infty}=\mathrm{cl}(\cup_{b>0}\widetilde T_{a,b})$, respectively, where $\mathrm{cl}(G)$ denotes the topological closure of some subset $G\subset\M^2(4H^2+\kappa)$. Note that $\widetilde p_1$ or $\widetilde p_2$ become ideal if $4H^2+\kappa<0$ or just disappear if $4H^2+\kappa=0$ since $\Nil$ has not a notion of ideal boundary, and the triangle just becomes a truncated strip. Either way, we can lift the vertexes of $\widetilde T_{a,b}$ by means of the zero section as $\widetilde q_i=F_0(\widetilde p_i)=(\widetilde p_i,0)\in\mathbb{E}(4H^2+\kappa,H)$ for $i\in\{0,1,2\}$, and we denote by $\widetilde{q}_0\widetilde{q}_i$ the horizontal geodesic in $\mathbb E(4H^2+\kappa,H)$ joining $\widetilde q_0$ and $\widetilde q_i$.

\begin{figure}[htb]
\begin{center}
\includegraphics[width=\textwidth]{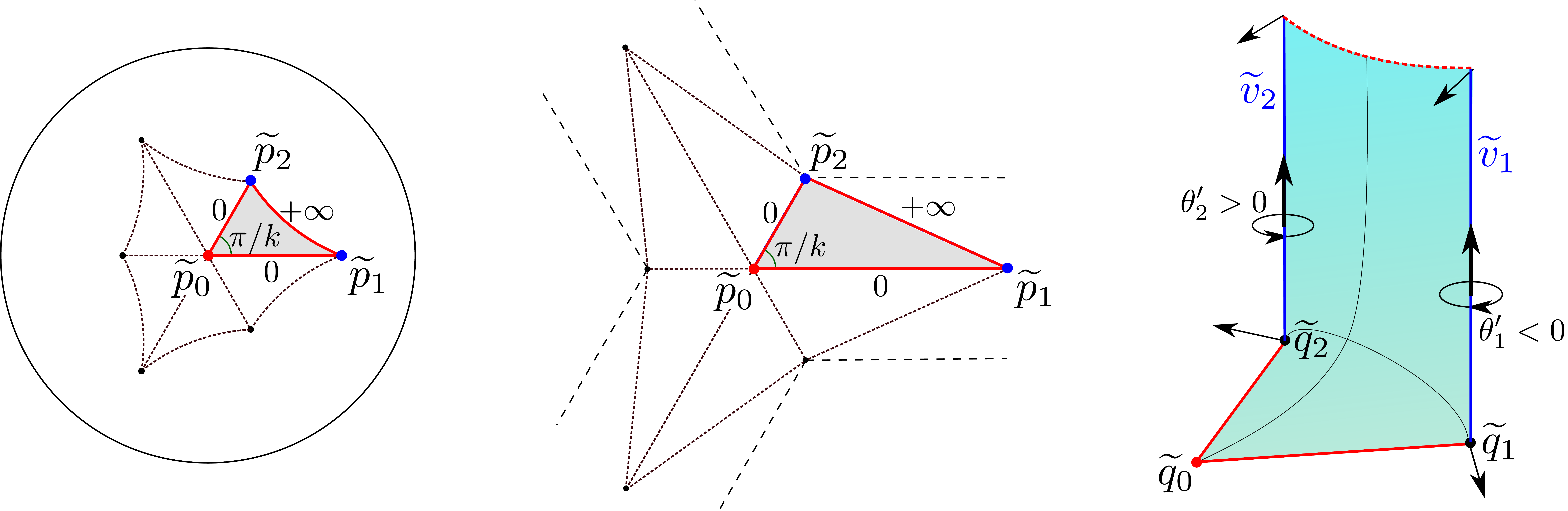}
\end{center}
\caption{The shaded triangle $\widetilde{T}_{a,b}$, $a,b<\infty$, and the Jenkins--Serrin boundary values in the subcritical case (left) and the critical case (center) for $k=3$. Sketch of the solution $\widetilde R_{a,b}$ and the rotation of the normal along $\widetilde v_1$ and $\widetilde v_2$ (right).}\label{fig:Hnoids-fundamental-domain}
\end{figure}

Our minimal fundamental piece is the vertical minimal graph $\widetilde R_{a,b}$ that solves the Jenkins--Serrin problem with prescribed boundary values $0$ over $\widetilde{p}_0\widetilde{p}_1$ and $\widetilde p_{0}\widetilde p_{2}$ and $+\infty$ over $\widetilde{p}_1\widetilde{p}_2$ (see Figure~\ref{fig:Hnoids-fundamental-domain}). We call $\widetilde\Sigma_{a,b}$ the vertical minimal graph obtained after successive reflections about the horizontal geodesics $\widetilde{q}_0\widetilde{q}_1$ and $\widetilde q_{0}\widetilde q_{2}$, which are contained in $\widetilde R_{a,b}$. We call $\widetilde \Omega_{a,b}$ the domain of $\mathbb M^2(4H^2+\kappa)$ onto which $\widetilde \Sigma_{a,b}$ projects, and we denote by $\widetilde p_i$, for $i\in\{1,\dots 2k\}$ the vertexes of $\widetilde\Omega_{a,b}$ labeled counterclockwise. Observe that the vertexes $\widetilde p_{2i-1}$ (resp.\ $\widetilde p_{2i}$) are ideal if $a=\infty$ (resp.\ $b=\infty$) when $4H^2+\kappa<0$ or disappear if $4H^2+\kappa=0$. The existence of the solution $\widetilde \Sigma_{a,b}$ is guaranteed by~\cite[Lem.~3.2 and~3.6]{CMR} (see also~\S\ref{sec:JS}).

\begin{remark}
  If $4H^2+\kappa=0$, the uniqueness of solution for the Jenkins--Serrin problem in the cases $a=\infty$ or $b=\infty$ is not clear. In that case, we take the graphs $\widetilde \Sigma_{\infty,b}$ and $\widetilde \Sigma_{a,\infty}$ as limit of the graphs $\widetilde \Sigma_{a,b}$ making the family continuous. The case $k=2$ is specially relevant, since the solutions $\widetilde\Sigma_{a,\infty}$ and $\widetilde\Sigma_{\infty,b}$ are explicit. They belong to a broader family $\mathcal H_\mu$ of minimal surfaces that are foliated by non-geodesic straight lines in $\Nil(\frac{\sqrt{-\kappa}}{2})$ orthogonal to a horizontal geodesic, having different directions of rotation if $\mu<\frac{-1}{2}$ or $\mu>\frac{1}{2}$, see Figure~\ref{fig:horizontal-helicoids}. These surfaces are described in~\cite[Lem.~3.3]{CMR} and resemble \emph{horizontal helicoids} with arbitrary distance between two consecutive vertical geodesics, so they can be used as barrier to solve the Jenkins--Serrin problem in a limit of a double sequence of minimal surfaces for $k>3$. The surfaces $\mathcal H_\mu$ with $\mu<-\frac{1}{2}$ solve the case $a=\infty$ and have been previously found by Daniel and Hauswirth~\cite[\S7]{DH} by other methods.
\end{remark}

\begin{figure}[htb]
  \includegraphics[width=0.9\textwidth]{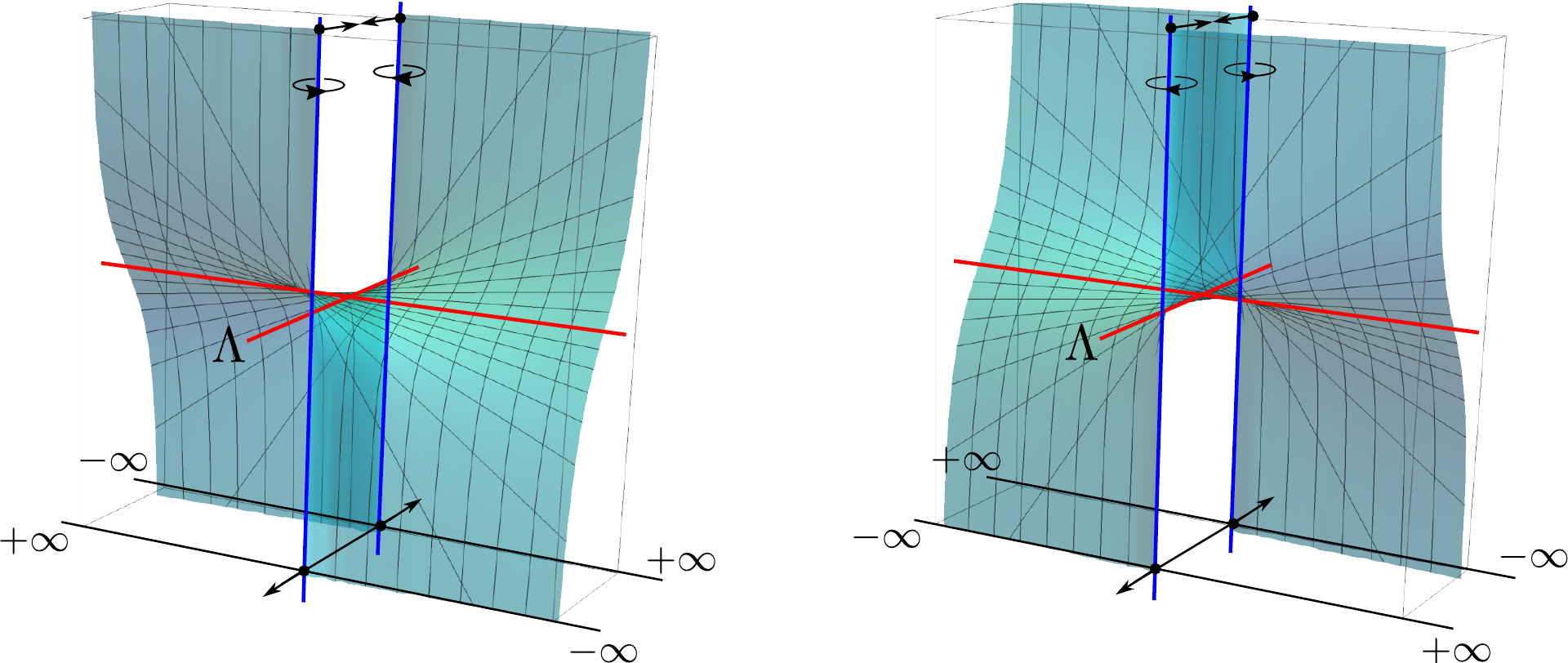}
  \caption{The surfaces $\widetilde\Sigma_{\infty,b}$ (left) and $\widetilde\Sigma_{a,\infty}$ (right) in the case $4H^2+\kappa=0$ and $k=2$ are horizontal helicoids in $\Nil(\frac{\sqrt{-\kappa}}{2})$ foliated by straight lines orthogonal to $\Lambda$.}\label{fig:horizontal-helicoids}
\end{figure} 

We will now analyze the angle function $\nu_{a,b}$ of $\widetilde\Sigma_{a,b}$. To this end, we can restrict to the fundamental piece $\widetilde R_{a,b}$, in whose interior $\nu_{a,b}$ will be assumed positive. We will only consider the case $a,b<\infty$ because, if $a=\infty$ or $b=\infty$, the surface $\widetilde\Sigma_{a,b}$ can be analyzed as a limit of the finite case. However, the following lemma is also expected to hold true in the cases $a=\infty$ or $b=\infty$. It is important to remark that the finite boundary of $\widetilde R_{a,b}$ consists of the vertical geodesics $\widetilde v_i=\pi^{-1}(\widetilde p_i)$ for $i\in\{1,\ldots,2k\}$, which are the only points where $\nu_{a,b}$ vanishes.

\begin{proposition}[{\cite[Lem.~4.4]{CMR}}]\label{prop:angle-H-knoids} 
  Let $a,b\in(0,\infty)$ and $H>0$, and define $\nu_{a,b}$ as the angle function of $\widetilde R_{a,b}\subset\mathbb{E}(4H^2+\kappa,H)$ that is positive in the interior of $\widetilde R_{a,b}$. 
  \begin{enumerate}[label=\emph{(\alph*)}]
    \item If $k=2$, then $\nu_{a,b}$ takes the value $1$ only at $\widetilde q_0$.
    
    \item If $k\geq 3$, then $\nu_{a,b}$ only takes the value $1$ at $\widetilde q_0$ and at other some point $\widehat q_1\in\widetilde q_0 \widetilde q_{1}$.
  \end{enumerate}
\end{proposition}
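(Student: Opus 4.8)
The plan is to locate the set $\{\nu_{a,b}=1\}$, which consists exactly of the points where $\widetilde R_{a,b}$ has horizontal tangent plane, equivalently where the immersion is tangent to an umbrella $\mathcal{U}_p$ (Example~\ref{ex:umbrellas}). First I would record that $\nu_{a,b}(\widetilde q_0)=1$: the two horizontal geodesics $\widetilde q_0\widetilde q_1$ and $\widetilde q_0\widetilde q_2$ lie in $\widetilde R_{a,b}$ and meet at $\widetilde q_0$ with angle $\tfrac{\pi}{k}\neq 0$, so their horizontal tangent directions span $T_{\widetilde q_0}\widetilde R_{a,b}$; hence the unit normal is $\pm\xi$ there and, since $\nu_{a,b}>0$ in the interior, $\nu_{a,b}(\widetilde q_0)=1$. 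Because $\widetilde R_{a,b}$ extends by axial symmetry about its horizontal geodesics to the smooth surface $\widetilde\Sigma_{a,b}$ (Proposition~\ref{prop:conjugation-completion-by-simmetries}), the arcs $\widetilde q_0\widetilde q_1$ and $\widetilde q_0\widetilde q_2$ become interior, so every horizontal point can be studied with the interior comparison tools of \S\ref{subsubsec:angle-control}. Writing $\nu_{a,b}=(1+\|Gu\|^2)^{-1/2}$ for the graph shows that $\{\nu_{a,b}=1\}=\{Gu=0\}$; as $\widetilde R_{a,b}$ is real-analytic and horizontal along no arc, this set is finite.

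Next I would rule out horizontal points lying off the two geodesics, by Strategy 2. If $p$ were such a point, compare $\widetilde R_{a,b}$ with the umbrella $\mathcal{U}_p$, which is an entire minimal graph because $4H^2+\kappa\leq 0$ (Example~\ref{ex:umbrellas}). The tangency at $p$ forces at least four rays of the coincidence set $\widetilde R_{a,b}\cap\mathcal{U}_p$ to emanate from $p$ (\cite[Lem.~2]{MY82b}). These rays cannot reach the ideal side over $\widetilde p_1\widetilde p_2$, where the graph tends to $+\infty$ while $\mathcal{U}_p$ stays at finite height, and they cannot close into loops, since two minimal graphs agreeing along a closed curve bounding a disk must coincide by the maximum principle. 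Hence all rays terminate on the two height-zero geodesics $\widetilde q_0\widetilde q_1$ and $\widetilde q_0\widetilde q_2$; a case analysis of their possible landing points, controlled by the boundary maximum principle along the vertical geodesics $\widetilde v_i$ (where $\nu_{a,b}=0$), is incompatible with four rays issuing from an interior point. This confines all horizontal points to the two geodesics.

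Finally I would count the horizontal points on the geodesics. Along $\widetilde h=\widetilde q_0\widetilde q_1$, write $\nu_{a,b}=\sin\theta$ as in Lemma~\ref{lem:horizontal-geodesics}, with $\theta(\widetilde q_0)=\tfrac{\pi}{2}$ and $\theta(\widetilde q_1)\in\{0,\pi\}$ (since $\nu_{a,b}(\widetilde q_1)=0$), so a horizontal point on $\widetilde h$ is an interior crossing $\theta=\tfrac{\pi}{2}$; at each crossing $\widetilde\Sigma_{a,b}$ is tangent to the invariant surface $\mathcal{I}_{\widetilde h}$, whose angle is identically $1$ along $\widetilde h$ (Example~\ref{ex:invariant-I}), so an interior arc of $\mathcal{I}_{\widetilde h}\cap\widetilde\Sigma_{a,b}$ emanates from it, and such arcs again avoid loops and the ideal boundary. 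At $\widetilde q_0$ the contact order of $\widetilde\Sigma_{a,b}$ with $\mathcal{U}_{\widetilde q_0}$ is $k-1$, since the $k$ horizontal geodesics through $\widetilde q_0$ produce $2k=2(k-1)+2$ rays there (\cite[Lem.~2]{MY82b}); for $k=2$ this is a simple tangency and $\theta$ varies monotonically from $\tfrac{\pi}{2}$ to its endpoint value, giving no further crossing, whereas for $k\geq 3$ the degeneracy forces $\theta$ to leave and re-enter $\tfrac{\pi}{2}$ exactly once, producing the single point $\widehat q_1$. The main obstacle is precisely this quantitative step: showing there is exactly one extra crossing (neither more nor none), that it sits on $\widetilde q_0\widetilde q_1$ rather than on $\widetilde q_0\widetilde q_2$ — which relies on the counterclockwise labelling and the asymmetry between the lengths $a$ and $b$ — and excluding every enclosed coincidence region. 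Equivalently, this is an argument-principle (Poincaré--Hopf) count of the isolated zeros of $T$, carrying index $k-1$ at $\widetilde q_0$ and index $+1$ at each copy of $\widehat q_1$, where the delicate point is the correct bookkeeping of the boundary contributions along the vertical geodesics $\widetilde v_i$.
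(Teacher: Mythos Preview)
Your overall strategy—Strategy~2 of \S\ref{subsubsec:angle-control}, comparing with umbrellas $\mathcal U_p$ and with the invariant surface $\mathcal I_{\widetilde h}$—is the intended one (the paper only cites this result from~\cite{CMR}). However, two steps are not actually carried out, and one of them rests on a misconception.

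The main gap is your explanation of why, for $k\geq 3$, the extra horizontal point lies on $\widetilde q_0\widetilde q_1$ rather than on $\widetilde q_0\widetilde q_2$. You attribute this to ``the asymmetry between the lengths $a$ and $b$'', but the statement holds for every $a,b\in(0,\infty)$, in particular for $a=b$, where the base triangle is isosceles. The genuine source of the asymmetry is the sign of the bundle curvature $\widetilde\tau=H>0$ together with the counterclockwise orientation of $\widetilde T_{a,b}$: the axial reflection of $\mathbb M^2(4H^2+\kappa)$ that swaps $\widetilde p_1\leftrightarrow\widetilde p_2$ lifts to an isometry of $\mathbb E(4H^2+\kappa,H)$ that reverses the orientation of the fibers (Remark~\ref{rmk:data-transformations}), so it does \emph{not} send $\widetilde R_{a,b}$ to itself even when $a=b$. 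Concretely, the umbrella $\mathcal U_{\widetilde q_0}$ is not the slice $z=0$ when $H>0$; along one of the two horizontal geodesics it lies above $\widetilde R_{a,b}$ and along the other below, and which is which is dictated by the sign of $H$ and the orientation, not by $a$ versus $b$. This is exactly the phenomenon highlighted in Remark~\ref{rmk:H-noids-selfintersections}: for $H=0$ the only horizontal point is $\widetilde q_0$ for all $k\geq 2$, so the appearance of $\widehat q_1$ is an effect of $H>0$.

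The second gap is the monotonicity claim for $k=2$. Knowing that the tangency with $\mathcal U_{\widetilde q_0}$ at $\widetilde q_0$ is simple only controls $\theta$ in a neighbourhood of $\widetilde q_0$; it does not prevent $\theta$ from crossing $\tfrac{\pi}{2}$ again further along $\widetilde q_0\widetilde q_1$. What is actually needed is a global barrier comparison (Strategy~3) between $\widetilde R_{a,b}$ and $\mathcal I_{\widetilde h}$ along the whole geodesic, which fixes the sign of $\nu_{a,b}-\nu_{\mathcal I_{\widetilde h}}$ and hence rules out further crossings. Your index-counting heuristic at the end cannot substitute for this, since without the orientation input above it is symmetric in the two geodesics and therefore cannot single out $\widetilde q_0\widetilde q_1$.
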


\subsubsection{The conjugate $H$-immersion}

Let $\Sigma_{a,b}$ be the conjugate $H$-surface in $\mathbb{H}^2(\kappa)\times\R$, which is a multigraph over a (possibly non-embedded, see Figure~\ref{fig:non-emb} right) domain $\Omega_{a,b}\subset\mathbb{H}^2(\kappa)$. Since $\widetilde \Sigma_{a,b}$ is invariant by axial symmetries about the geodesics $\widetilde{q}_i\widetilde{q}_{k+i}$ for $i\in\{1,\ldots,k\}$, Lemma~\ref{lem:horizontal-geodesics} says that $\Sigma_{a,b}$ has mirror symmetry with respect to $k$ vertical planes meeting at a common vertical line, say the $z$-axis, arranged symmetrically. Moreover, Lemma~\ref{lem:vertical-geodesics} and Proposition~\ref{prop:conjugation-JS} show that the boundary components of $\Sigma_{a,b}$ are the $2k$ complete (possibly ideal) horizontal curves $v_1,\dots,v_{2k}$ along with $2k$ ideal vertical geodesics joining the endpoints of $v_i$ and $v_{i+1}$ for $i\in\{1,\ldots,2k\}$. We will assume hereafter that $v_2\subset\h^2(\kappa)\times\{0\}$ (resp.\ $v_1\subset\h^2(\kappa)\times\{0\}$) if $a=\infty$ (resp.\ $b=\infty$), see Figures~\ref{fig:H-noids} and~\ref{fig:H-nodoids}.

\begin{enumerate}
  \item If $\widetilde p_i$ is not ideal, then let $\theta_i$ be the angle of rotation of $\widetilde N$ along the vertical geodesic $\widetilde v_i$. It satisfies $\theta_i'<0$ (resp.\ $\theta_i'>0$) if $i$ is odd (resp.\ even), see Figures~\ref{fig:Hnoids-fundamental-domain},~\ref{fig:H-noids} and~\ref{fig:H-nodoids}. Lemma~\ref{lem:vertical-geodesics} implies that $\kappa_g>2H$ (resp.\ $\kappa_g<2H$), being $\kappa_g$ the geodesic curvature of $v_i$ as a curve in a horizontal plane with respect to the unit normal $N$ of $\Sigma_{a,b}$. Lemma~\ref{lem:vertical-geodesics} also shows that the projection of $N$ points to the exterior (resp.\ interior) of $\Omega_{a,b}\subset\mathbb{H}^2(\kappa)$ if $i$ is odd (resp.\ even).
  
  \item If $\widetilde p_i$ is ideal (only possible in the case  $4H^2+\kappa<0$), then we can reason similarly for a sequence of graphs over finite triangles $\widetilde T_{a_n,b}$ or $\widetilde T_{a,b_n}$, with $a_n,b_n<\infty$, converging to $T_{a,b}$. Since $\theta_i'$ converges uniformly to zero as $n\to\infty$, Lemma~\ref{lem:vertical-geodesics} reveals that the geodesic curvature of $v_i$ is $2H$ with respect to $N$. As a limit of curves in the assumption of item (1), we infer that the projection of $N$ points to the exterior (resp.\ interior) of the domain $\Omega_{a,b}$ along $v_i$ if $i$ is odd (resp.\ even). Hence, the geodesic curvature of $v_i$ with respect to an inner conormal to $\Omega_{a,b}$ is $-2H$ (resp.\ $2H$) if $i$ is odd (resp.\ even). This confirms that $(H,k)$-noids (resp.\ $(H,k)$-nodoids) are asymptotic to the $H$-cylinders from the concave side (resp.\ convex side), see Figures~\ref{fig:H-noids} and~\ref{fig:H-nodoids}. 
\end{enumerate}

The complete surfaces $\Sigma_{a,b}^*$ are obtained after reflecting $\Sigma_{a,b}$ about the horizontal plane of symmetry. From the properties sketched so far, we deduce the description of the surfaces given by Theorem~\ref{th:knoids-genus-0}.

\begin{figure}[htb]
  \includegraphics[width=\textwidth]{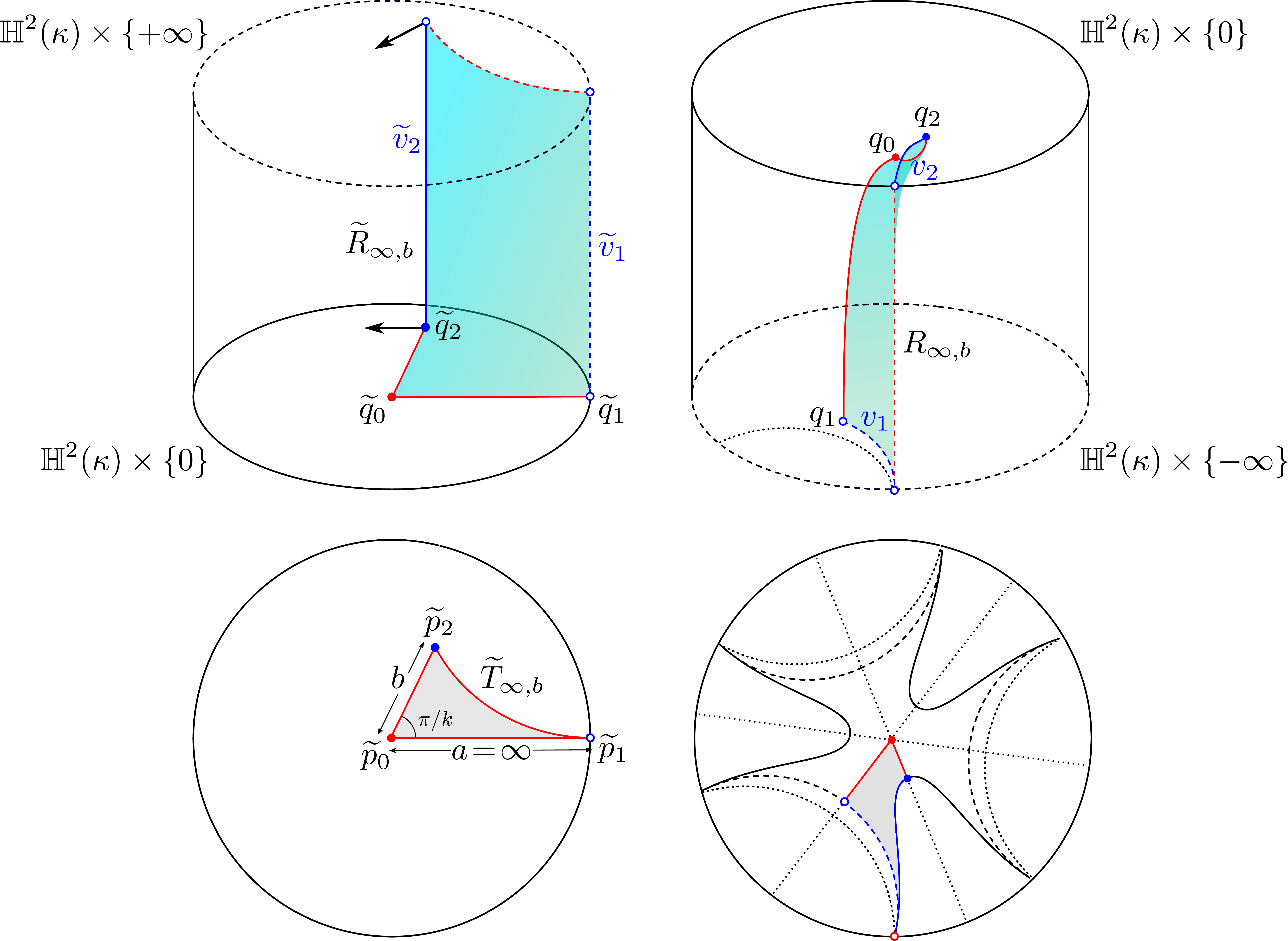}
  \caption{The fundamental piece $\widetilde R_{\infty,b}$ and its conjugate $R_{\infty,b}$ of a $(H,k)$-noid for $k=3$ and $4H^2+\kappa<0$, and their projections to $\mathbb{H}^2(\kappa)$. Observe that the asymptotic $H$-cylinders are approached from the concave side. Dotted lines are geodesics. }\label{fig:H-noids}
\end{figure}

\begin{figure}[htb]
  \includegraphics[width=\textwidth]{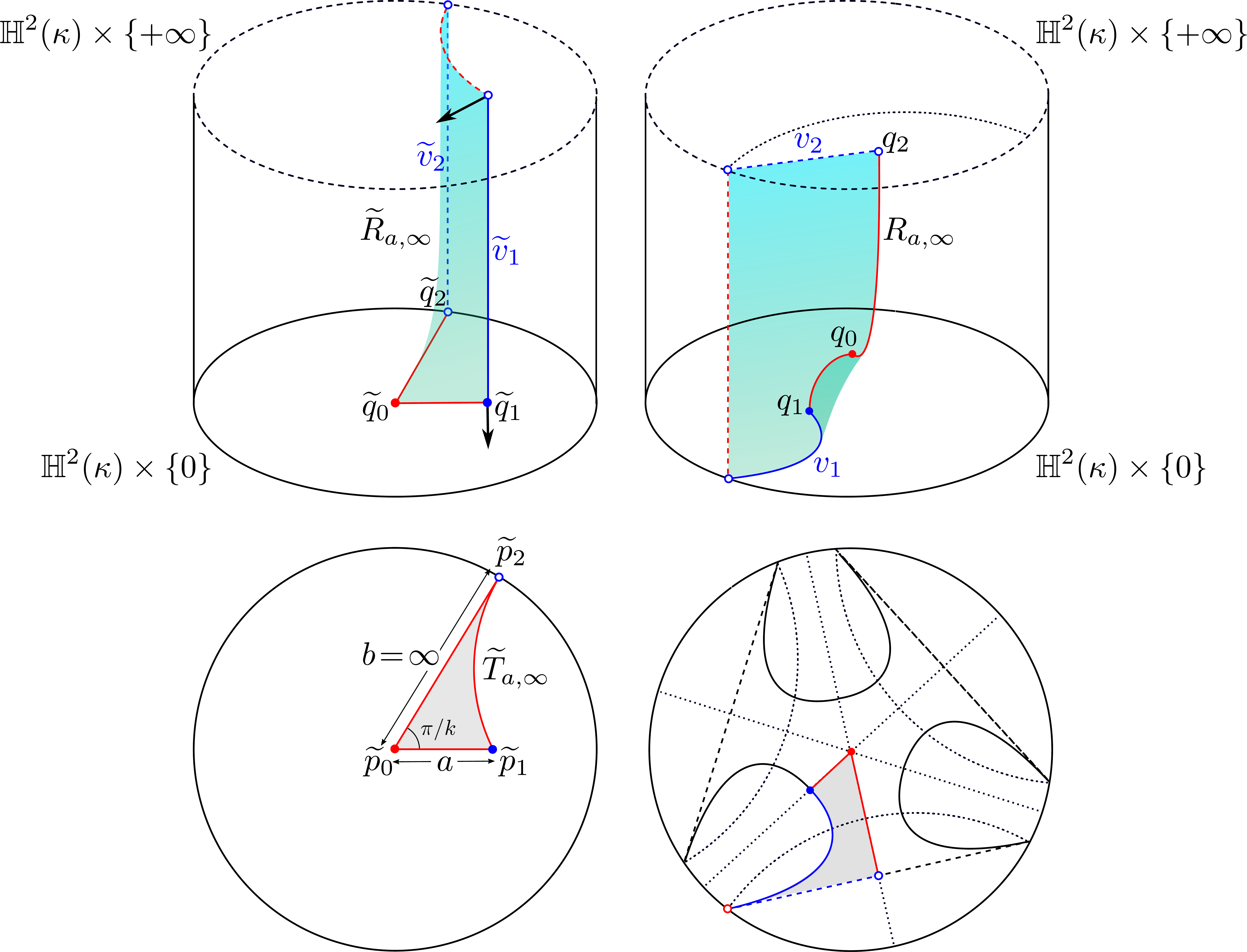} 
  \caption{The fundamental piece $\widetilde R_{a,\infty}$ and its conjugate $R_{a,\infty}$ of a $(H,k)$-nodoid for $k=3$ and $4H^2+\kappa<0$, and their projections to $\mathbb{H}^2(\kappa)$. Observe that the asymptotic $H$-cylinders are approached from the convex side. Dotted lines are geodesics.}\label{fig:H-nodoids}
\end{figure}

\begin{remark}\label{rmk:H-noids-selfintersections}
  Proposition~\ref{prop:angle-H-knoids} shows that $\Sigma_{a,b}$ behaves differently when $H=0$ and $H>0$. In the former case, the angle function $\nu_{a,b}$ only takes the value $1$ at $\widetilde q_0$ for all $k\geq 2$. However, in the later case, the height function of the conjugate surface (i.e., the projection to the factor $\R$) has a local minimum at the conjugate of $\widetilde q_0$ and saddle points at the conjugate point of $\widehat q_1$ and its symmetric points for all $k\geq 3$. So, the shape of $\Sigma_{a,b}$ is somewhat different in the cases $H=0$ and $H>0$. For instance, this situation leads to type II self-intersections for certain values $a,b<\infty$ (i.e., in the case of saddle towers) and $H>0$ because the fundamental piece escapes the slab where the boundary is contained (see~\cite[Prop.~4.6]{CMR}). 
\end{remark}

\begin{remark}\label{remark:horo-dis} To see the differences between the cases $4H^2+\kappa<0$ and $4H^2+\kappa=0$, we define the function $\rho=\rho(a,b)$ (resp.\ $d=d(a,b)$) as the distance in $\mathbb{H}^2(\kappa)$ from the center $\pi(q_0)$ of $\Omega_{a,b}$, where $q_0$ is the conjugate point of $\widetilde q_0$, to the curves $\pi(v_1)$ (resp.\ $\pi(v_2)$) in the projection of the (possibly asymptotic) boundary of $\Sigma_{a,b}$. In~\cite[Lem.~4.1]{CMR} it is shown that for $4H^2+\kappa<0$ the functions $a\mapsto d(a,\infty)$ and $b\mapsto d(\infty,b)$ are strictly increasing and range from $0$ to a finite number $d_\infty$. If $4H^2+\kappa=0$, then $d(a,\infty)=\rho(\infty,b)=\infty$ for all $a,b\in(0,\infty)$, so the $H$-cylinders asymptotic to $\Sigma_{a,\infty}$ and $\Sigma_{\infty,b}$ disappear at infinity when $4H^2+\kappa=0$. 

We also remark that, if such horocylinders did not disappear, then we would have encountered a contradiction to the halfspace theorem for surfaces of critical mean curvature given by Hauswirth, Rosenberg and Spruck~\cite{HRS}.
\end{remark}

\subsubsection{Embeddedness}\label{subsubsec:knoids-embeddedness}
We begin by observing that the embeddedness of $\Sigma_{a,b}^*$ in case $H=0$ easily follows from the Krust property (see Proposition~\ref{prop:Krust}), so we will restrict to the case $H>0$. We have already pointed out that type II self-intersections may occur even if $\Sigma_{a,b}$ is embedded (see Remark~\ref{rmk:H-noids-selfintersections}). Using Proposition~\ref{prop:angle-H-knoids} and the maximum principle with respect to horizontal planes, it is not difficult to see that this behavior is not possible for $k=2$, see~\cite[Prop.~4.6]{CMR} for the details. Said this, we cannot discard type I self-intersections either, because the counterexamples to the Krust property will actually come from this type of self-intersections. We will analyze the cases $a=\infty$ and $b=\infty$ separately.

\begin{enumerate}
  \item The embeddedness of $(H,k)$-noids: As the geodesics $\widetilde v_{2i-1}$ are ideal and the angle of rotation of the normal along $\widetilde v_{2i}$ satisfies $\theta_{2i}'>0$ for $i\in\{1,\ldots,k\}$, Lemma~\ref{lem:vertical-geodesics} and the maximum principle with respect to horizontal planes give the inclusion $\Sigma_{a,b}\subset\mathbb H^2(\kappa)\times (-\infty,0]$, where the symmetry curves of $\Sigma_{a,b}$ are contained in $\mathbb H^2(\kappa)\times\{0\}$. By Proposition~\ref{prop:rotation-curvature}, if $\int_{\widetilde v_{2i}}\theta'_{2i}\leq \pi$, then the conjugate curves $v_{2i}$ are embedded and by the symmetries of the surface the same happens to $\Sigma_{a,b}$. Therefore, the embeddedness of the $(H,k)$-noids is proved when $\int_{\widetilde v_{2i}}\theta'_{2i}\leq \pi$, i.e., when $\widetilde\Omega_{a,b}$ is a convex domain of $\mathbb M^2(4H^2+\kappa)$. This condition is always satisfies for $k=2$, so all $H$-catenoids are embedded.
  
  \item The embeddedness of $(H,k)$-nodoids: If $k\geq 3$, we do not have in general the inclusion $\Sigma_{a,b}\subset \mathbb H^2(\kappa)\times[0,+\infty)$ if we assume that $v_1$ lies in $\mathbb H^2(\kappa)\times\{0\}$. The maximum principle with respect to horizontal slices does not apply. However, for $4H^2+\kappa<0$ and $a$ large enough that inclusion can be proved using the continuity of the conjugation (Proposition~\ref{prop:continuity}), as well as the fact that $\Sigma_{a,\infty}$ converges to a Scherk $H$-graph $\Sigma_{\infty,\infty}$ as $a\to\infty$ after appropriate vertical translations (see Figure~\ref{fig:non-emb}) and the fact that the geodesic curvatures of $v_{2i-1}$ in the horizontal plane of symmetry converge to $2H$ with respect to the exterior conormal. In other words, $\Sigma_{a,\infty}$ is embedded for $a$ large enough.
  
  On the other hand, the hyperbolic distance from $\pi(v_{2})$ to the origin $\pi(q_0)$ ranges from $0$ to $d_{\infty}$ as $a$ runs from $0$ to $\infty$, see Remark~\ref{remark:horo-dis}. Since $\Sigma_{a,\infty}$ lies in the convex side of the cylinders over $v_{2i}$, there exists $a_1>0$, depending on $k$ and $H$, such that the asymptotic equidistant curves $v_2$ and $v_4$ in $\Sigma_{a_1,\infty}$ share at least one endpoint at infinity. This implies that the endpoints of the curve $v_3$ coincide when $a=a_1$ and there are type I self-intersections if and only if $a<a_1$, see Figure~\ref{fig:non-emb}. By symmetry, the same happens for any $v_{2i-1}$, so $\Sigma_{a,\infty}$ (and hence $\Sigma_{a,\infty}^*$) is not embedded in that case. If $4H^2+\kappa=0$, the curves $v_{2i}\subset\Sigma_{a,b}^*$ get close to horocycles at the same time that they diverge to infinity, see Remark~\ref{remark:horo-dis}. As $\Sigma_{a,\infty}^*$ lies locally in the convex side of the horocylinders, the curves $v_{2i-1}$ cannot be embedded. 
\end{enumerate}

All in all, if $4H^2+\kappa<0$, then $\Sigma_{a,\infty}$ is not embedded if $a<a_1$; if $4H^2+\kappa=0$, then $\Sigma_{a,\infty}$ is never embedded. In the case $k=2$, this non-embeddedness yields the counterexamples to the Krust property with $4H^2+\kappa\leq 0$, since the original minimal surface $\widetilde\Sigma_{a,b}$ is a vertical graph over a convex quadrilateral. In the case $4H^2+\kappa>0$, there are also counterexamples to the Krust property constructed by similar methods, see~\cite[\S5]{CMR}.

\begin{figure}[htb]
  \includegraphics[width=0.9\textwidth]{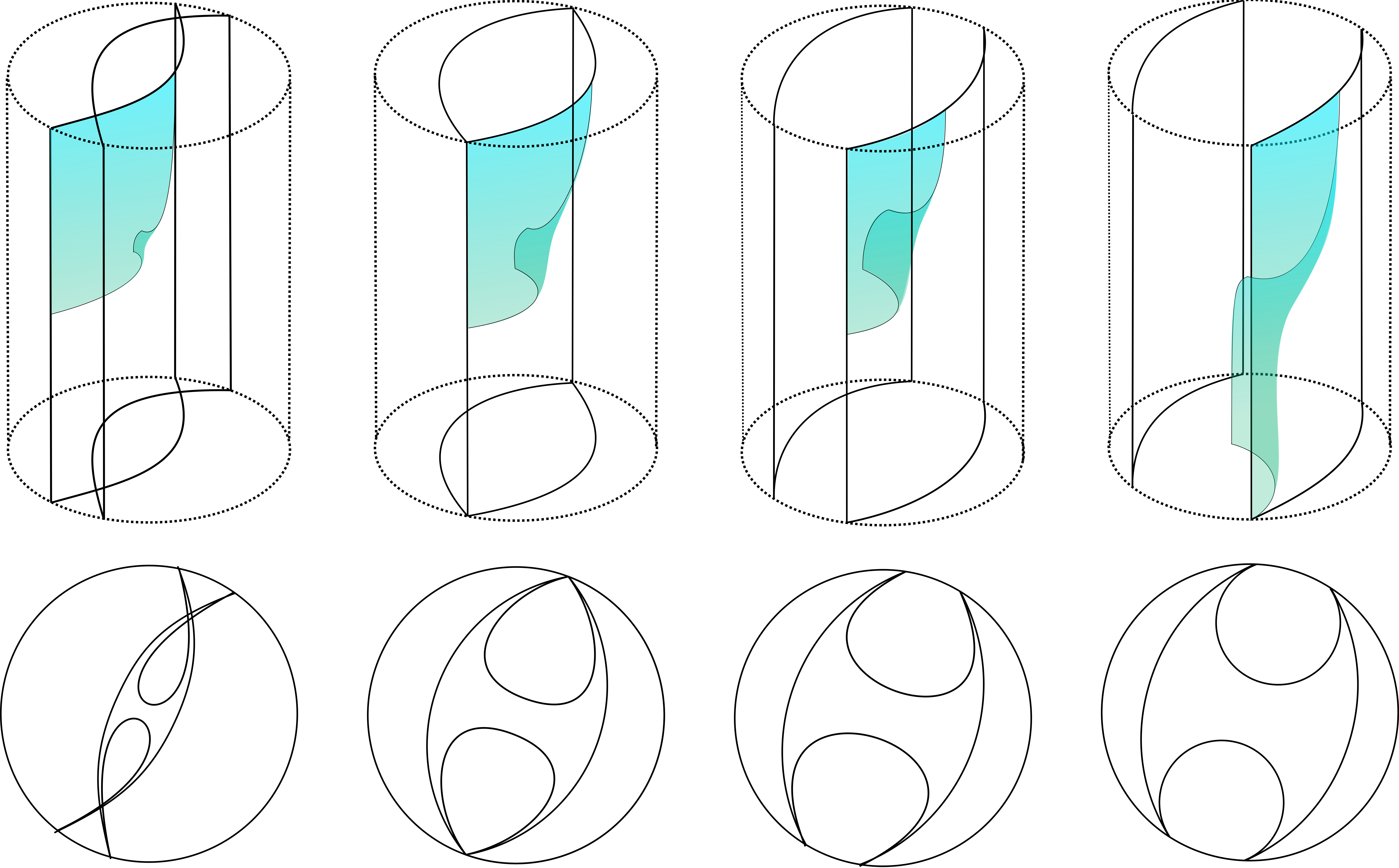}
  \caption{The fundamental piece $R_{a,\infty}$ of an $H$-catenodoid and its projection to $\mathbb{H}^2(\kappa)$. From left to right: $a<a_1$ (non-embedded), $a=a_1$ (vertical planes touching at infinity), $a>a_1$ (embedded), $a=\infty$ (limit Scherk $H$-graph after suitable vertical translations). }\label{fig:non-emb} 
\end{figure}

\begin{remark}
  It is expected that $(H,k)$-noids and $(H,k)$-nodois have finite total curvature as in the minimal case. By the properties of conjugation this problem is equivalent that the solutions of the Jenkins--Serrin problem $\widetilde R_{\infty,b}$ and $\widetilde R_{a,\infty}$ have finite total curvature in $\widetilde{\mathrm{SL}}_2(\mathbb{R})$. However, this problem is much more involved than in $\mathbb{H}^2(\kappa)\times\R$ because the Gauss curvature of minimal surfaces in $\widetilde{\mathrm{SL}}_2(\mathbb{R})$ may change sign, see the Gauss equation in~\S\ref{subsec:fundamental-data}.
\end{remark}

\subsection{Genus one minimal $k$-noids in $\mathbb{H}^2(\kappa)\times\mathbb{R}$}\label{sec:genusone}

The last application we present in this survey is a construction of minimal surfaces in $\mathbb H^2(\kappa)\times\mathbb R$ with finite total curvature by means of conjugating a solution to a Jenkins--Serrin problem in $\mathbb H^2(\kappa)\times\mathbb R$. They are analogous to the genus $1$ minimal $k$-noids in $\mathbb{R}^3$ obtained by Mazet~\cite{Mazet} and their construction is inspired by the work of Plehnert~\cite{Ple12}, who obtained similar surfaces in $\mathbb H^2(\kappa)\times\mathbb R$ with critical mean curvature. 

In the minimal case, we take advantage of the Krust property (see Proposition~\ref{prop:Krust}) as well as of the fact that our surfaces have finite total curvature, which enables a finer control of the asymptotic behaviour. Hauswirth, Nelli, Sa Earp and Toubiana~\cite{HNST}, and Hauswirth, Menezes and Rodríguez~\cite{HMR} proved that a complete minimal surface immersed in $\mathbb{H}^2(\kappa)\times\mathbb{R}$ has finite total curvature if and only if it is proper, has finite topology and each of its ends is asymptotic to an admissible polygon consisting of finitely-many complete vertical and horizontal ideal geodesics, in which case the total curvature must be a negative multiple of $2\pi$, see~\cite[Thm.~3.1]{HR}. Such a polygon is particularly well controlled when we want to obtain the surface by conjugation in view of Proposition~\ref{prop:conjugation-JS}. Also,~\cite[Thm.\ 6]{HMR} says that we have to prescribe a symmetry with respect to a horizontal slice if we want our surfaces to have finite total curvature and embedded ends.

It is worth pointing out that the literature on this topic does not contain many examples of minimal surfaces with finite total curvature in $\h^2(\kappa)\times\R$, and the surfaces given here are the first examples with genus $1$ and an arbitrary number of ends $k\geq 3$. There are two important remarks to this claim. On the one hand, our result cannot be extended to the case $k=2$ since it would contradict the uniqueness of the horizontal catenoids given by Hauswirth, Nelli, Sa Earp and Toubiana~\cite{HNST} (as usual, the condition $k\geq 3$ will appear as a natural restriction in the conjugate construction). On the other hand, Martín, Mazzeo and Rodríguez~\cite{MMR}, by means of gluing methods, obtained properly embedded minimal surfaces with finite total curvature in $\mathbb{H}^2(\kappa)\times\mathbb{R}$ of genus $g$ and $k$ ends asymptotic to vertical planes, for arbitrary genus $g\geq 0$. Nonetheless, in their result, $k$ is not arbitrary in principle but sufficiently large depending on $g$ and $\kappa$. 

\begin{theorem}[{\cite[Thm.~1]{CM}}]\label{th:knoids-genus-1}
  For each $k\geq 3$, there exists a $1$-parameter family $\Sigma_\varphi^*$, with $\frac\pi k\leq\varphi\leq\frac\pi 2$, of properly Alexandrov-embedded minimal surfaces in $\mathbb{H}^2(\kappa)\times\mathbb{R}$ with genus $1$ and $k$ ends. They are invariant by mirror symmetries about a horizontal plane and about $k$ equiangular vertical planes, and have finite total curvature $-4k\pi$. Moreover, each of their ends is embedded and asymptotic to a vertical plane. 
\end{theorem}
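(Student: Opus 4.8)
The plan is to realize $\Sigma_\varphi^*$ as the conjugate surface (an associate surface, since here $H=0$ and both spaces are $\h^2(\kappa)\times\R$) of a Jenkins--Serrin graph, following the philosophy of \S\ref{sec:knoids} but modifying the fundamental domain so that the completion acquires exactly one handle. First I would fix $k\geq 3$ and, for each admissible $\varphi\in[\tfrac\pi k,\tfrac\pi2]$, prescribe a geodesic polygon $\widetilde\Omega_\varphi\subset\h^2(\kappa)$ on which to solve a Jenkins--Serrin problem: finite value $0$ along the edges that will become reflection curves (these conjugate to curves lying in the $k$ equiangular vertical planes and in the horizontal slice by Lemmas~\ref{lem:horizontal-geodesics} and~\ref{lem:vertical-geodesics}), and value $+\infty$ along the edges that will generate the $k$ ends. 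The parameter $\varphi$ is the opening angle at the central vertex and governs the size of the neck; the essential difference with the genus-zero $k$-noids of \S\ref{sec:knoids} is that near the axis the domain is opened up into a short finite geodesic rather than collapsed to a single vertex, so that its iterated reflections close up into the waist of a handle. Existence (and, up to vertical translation, uniqueness) of the graph $\widetilde\Sigma_\varphi$ would follow from Theorem~\ref{thm:general-JS}(b) once I verify the inscribed-polygon inequalities $\max\{\alpha(\mathcal P),\beta(\mathcal P)\}<\tfrac12\gamma(\mathcal P)$ for every inscribed polygon $\mathcal P$ (there being finite-valued edges, the balancing equality is not required); this is where both the range of $\varphi$ and the hypothesis $k\geq 3$ should enter.

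Next I would analyze the angle function $\nu_\varphi$ of $\widetilde\Sigma_\varphi$ by the maximum-principle strategies of \S\ref{subsubsec:angle-control}, comparing with vertical cylinders and umbrellas, so as to locate its zeros and its $\pm1$ points along the boundary. This is what certifies that the conjugate boundary $\Gamma_\varphi$ has the expected shape: the horizontal geodesics of $\widetilde\Gamma_\varphi$ become the planar symmetry curves of $\Sigma_\varphi$, and the vertical geodesics become curves in the horizontal slice. Using Proposition~\ref{prop:conjugation-JS}, the ideal horizontal geodesics of $\partial_\infty\widetilde\Sigma_\varphi$ become ideal vertical geodesics of $\partial_\infty\Sigma_\varphi$, which is precisely the assertion that each end is asymptotic to a vertical plane. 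Since all interior angles of $\Gamma_\varphi$ are integer divisors of $\pi$ (in fact $\tfrac\pi2$), Proposition~\ref{prop:conjugation-completion-by-simmetries} then extends $\Sigma_\varphi$ to a complete minimal surface $\Sigma_\varphi^*$ by mirror symmetries about the $k$ vertical planes and the horizontal slice, endowing it with the claimed symmetry group, while continuity of the family in $\varphi$ is inherited from uniqueness in the Jenkins--Serrin problem through Proposition~\ref{prop:continuity}.

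To identify the topology I would count the congruent copies of the fundamental piece assembling $\Sigma_\varphi^*$ (the reflection group has order $4k$) and apply Gauss--Bonnet to one such piece, whose finite boundary arcs are intrinsic geodesics because they are fixed-point sets of the reflections; summing over the $4k$ copies and accounting for the ideal vertices should yield $\chi(\Sigma_\varphi^*)=-k$, hence genus $1$ with $k$ ends, and simultaneously the total curvature $\int_{\Sigma_\varphi^*}K=-4k\pi$. Embeddedness I would draw from the Krust property (Proposition~\ref{prop:Krust}): the fundamental piece $\widetilde\Sigma_\varphi$ is a graph over a convex subdomain of $\h^2(\kappa)$, so its conjugate fundamental piece is again a graph and therefore embedded. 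As global embeddedness need not persist through the completion, I would claim only proper Alexandrov-embeddedness of $\Sigma_\varphi^*$, together with embeddedness of each individual end, the latter following from the convex asymptotic polygon furnished by Proposition~\ref{prop:conjugation-JS} and the finite-total-curvature description of ends in \cite{HNST,HMR}.

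Finally I would confirm finite total curvature via the characterization of \cite[Thm.~3.1]{HR}: $\Sigma_\varphi^*$ is proper, has finite topology, and each end is asymptotic to an admissible ideal polygon of vertical and horizontal geodesics, so the total curvature is automatically a negative integer multiple of $2\pi$, consistent with the Gauss--Bonnet value $-4k\pi$. I expect the main obstacle to be twofold and essentially global. First, one must design $\widetilde\Omega_\varphi$ and verify the Jenkins--Serrin inequalities so that the completion closes up into \emph{exactly} one handle, rather than into a higher-genus or singular configuration, uniformly over the whole range $\tfrac\pi k\le\varphi\le\tfrac\pi2$. Second, one must control the non-compact completion tightly enough to guarantee that the ends are embedded and asymptotic to vertical planes, since the Krust property only governs the compact fundamental piece and fails once $H>0$; this is exactly why the argument is special to the minimal case, and why the analogous statement cannot be pushed to $k=2$ without contradicting the uniqueness of the horizontal catenoids of \cite{HNST}.
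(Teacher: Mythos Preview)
Your outline captures the conjugate-Plateau philosophy correctly, but it omits the central difficulty of the genus-one construction: the \emph{period problems}. In the paper's proof (following~\cite{CM} and~\cite{Ple12}), the initial Jenkins--Serrin graph depends on \emph{three} parameters $(a,\varphi,b)$, not just $\varphi$: the domain is a triangle $\widetilde\Delta(a,\varphi)$ with one ideal vertex, and the boundary data are $0$ on one finite edge, $b>0$ on the other finite edge (of length $a$), and $+\infty$ on the ideal edge. The jump from $0$ to $b$ at the finite vertex $\widetilde p_2$ produces a \emph{vertical segment} $\widetilde v_2$ of length $b$ in the boundary of the graph; after conjugation this becomes a curve in a horizontal slice that is the germ of the handle. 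Your description ``finite value $0$ along the edges that will become reflection curves'' and ``the domain is opened up into a short finite geodesic'' mislocates the mechanism: the domain is still a triangle, and the handle arises from the height jump $b$, not from altering the domain's shape.

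With three parameters, the conjugate piece $\Sigma(a,\varphi,b)$ does \emph{not} in general close up under the reflection group. Two obstructions must vanish simultaneously: (i) the horizontal curves $v_2$ and $v_3$ must lie in the same slice, i.e., the first period $\mathcal P_1(a,\varphi,b)$ (the height difference of the endpoints of $h_1$) must be zero; and (ii) the vertical planes containing $h_1$ and $h_3$ must meet at angle $\tfrac{\pi}{k}$, i.e., the second period must satisfy $\mathcal P_2=\cos(\tfrac{\pi}{k})$. The bulk of the argument (Lemma~\ref{lem:periods-problems}) shows that for each $(a,\varphi)$ there is a unique $b=f(a,\varphi)$ solving $\mathcal P_1=0$, and then runs an intermediate-value argument on $a\mapsto\mathcal P_2(a,\varphi,f(a,\varphi))$, whose limit as $a\to 0$ is $\cos\varphi$. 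It is precisely this step---not the Jenkins--Serrin inequalities---that forces $\varphi>\tfrac{\pi}{k}$ (so that $\cos\varphi<\cos\tfrac{\pi}{k}$) and hence $k\geq 3$. Without solving both periods you cannot conclude that the completion has the claimed dihedral symmetry, let alone genus one; this is the missing idea in your proposal.
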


This construction can be adapted to produce minimal surfaces in $\h^2(\kappa)\times\R$ invariant by an arbitrary vertical translation, with genus $1$ and finite total curvature in the quotient of $\h^2(\kappa)\times\R$ by the vertical translation, see~\cite[Thm.~2]{CM}. These are the genus-$1$ counterparts of Morabito and Rodríguez' saddle towers~\cite{MR}.

\subsubsection{The minimal surface in $\mathbb{H}^2(\kappa)\times\R$}
Given $a>0$ and $0<\varphi<\tfrac{\pi}{2}$, consider the triangle $\widetilde \Delta(a,\varphi)\subset\h^2(\kappa)$ with one ideal vertex $\widetilde p_1$ and two interior vertexes $\widetilde p_2$ and $\widetilde p_3$, such that the finite edge $\widetilde p_2\widetilde p_3$ has length $a$ and the angle in the vertex $\widetilde p_2=(0,0)$ is equal to $\varphi$ (see Figure~\ref{fig:conjugate-genus-1} bottom left). We will work in the global Cartan model given in \S\ref{subsec:working-coordinates} assuming that $\widetilde p_2=(0,0)$ and $\widetilde p_1=(\frac{2}{\sqrt{-\kappa}},0)$.

Our initial minimal piece is the unique minimal vertical graph $\widetilde \Sigma(a,\varphi,b)$ in $\h^2(\kappa)\times\R$ that solves the Jenkins--Serrin problem over $\widetilde \Delta(a,\varphi)$ with boundary data $b$ over $\widetilde p_2\widetilde p_3$, $+\infty$ over $\widetilde p_1\widetilde p_3$ and $0$ over $\widetilde p_1\widetilde p_2$. The existence and uniqueness of solution is guaranteed by Theorem~\ref{thm:general-JS} and Proposition~\ref{prop:conjugation-JS}. The finite boundary of $\widetilde \Sigma(a,\varphi,b)$ consists of the vertical segment $\widetilde v_2$ of length $b$ projecting to $\widetilde p_2$, the vertical half-line $\widetilde v_3$ projecting to $\widetilde p_3$, and the horizontal geodesics $\widetilde h_1$ and $\widetilde h_3$ contained in $\h^2(\kappa)\times\{b\}$ and $\h^2(\kappa)\times\{0\}$ that project to $\widetilde p_2\widetilde p_3$ and $\widetilde p_1\widetilde p_2$, respectively. Moreover, the asymptotic boundary of $\widetilde \Sigma(a,\varphi,b)$ consists of a vertical half-line $\widetilde v_1$ projecting to $\widetilde p_1$ and the horizontal geodesic $\widetilde h_2$ contained in $\h^2(\kappa)\times\{+\infty\}$ that projects to ${\widetilde p_1\widetilde p_3}$, see Figure~\ref{fig:conjugate-genus-1}. The interior of $\widetilde\Sigma(a,\varphi,b)$ is a graph, where we will assume that the angle function is positive. Next, we analyze the horizontal and vertical points as in the previous constructions (again, this analysis follows from the boundary maximum principle together with Strategy 2 in~\S\ref{subsubsec:angle-control}).

\begin{proposition}[{\cite[Lem.~2]{CM}}]\label{prop:angle01:genus1}
  Let $\nu_{a,\varphi,b}$ be the angle function of the minimal surface $\widetilde \Sigma(a,\varphi,b)$, which will be assumed positive in the interior.
  \begin{enumerate}[label=\emph{(\alph*)}]
    \item The points with $\nu_{a,\varphi,b}=0$ are precisely those at $\widetilde v_2\cup \widetilde v_3$.
    
    \item  There is exactly one point with $\nu_{a,\varphi,b}=1$ and it belongs to $\widetilde h_1$.
  \end{enumerate}
\end{proposition}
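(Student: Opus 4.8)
The plan is to read both statements off the graph structure of $\widetilde\Sigma(a,\varphi,b)$ together with the comparison principle behind Strategy 2 of \S\ref{subsubsec:angle-control}, keeping in mind that here $\widetilde\tau=0$, so that the umbrella $\mathcal U_p$ of Example~\ref{ex:umbrellas} is simply the horizontal slice $\mathbb{H}^2(\kappa)\times\{u(p)\}$. Throughout I identify $\widetilde\Sigma(a,\varphi,b)$ with the graph $z=u(x,y)$ over the interior of $\widetilde\Delta(a,\varphi)$; then $\nu>0$ in the interior by the choice of sign, $\nu=0$ exactly where the tangent plane contains $\xi$, and $\nu=1$ exactly where the tangent plane is horizontal, i.e.\ at the critical points of $u$.

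For (a): along the vertical segments $\widetilde v_2$ and $\widetilde v_3$ the tangent plane contains $\xi$, so $\nu\equiv 0$ there. Since the interior is a graph, $\nu>0$ on it, and it remains only to control $\nu$ on the two finite horizontal edges $\widetilde h_1$ and $\widetilde h_3$ (the asymptotic pieces $\widetilde v_1,\widetilde h_2$ sit at infinity and are excluded from the statement). These edges are horizontal geodesics carrying the finite constant boundary values $b$ and $0$, so by the reflection principle of Proposition~\ref{prop:conjugation-completion-by-simmetries} (equivalently Sa Earp--Toubiana) the graph is $\mathcal C^1$ up to the interior of each of them with finite gradient, whence $\nu>0$ there. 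This exhausts the surface and identifies the zero set of $\nu$ as $\widetilde v_2\cup\widetilde v_3$.

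For (b) I would first exhibit one critical point on $\widetilde h_1$ by a sign argument. Let $\eta$ be the inner unit normal of $\widetilde\Delta(a,\varphi)$ along the edge $\widetilde p_2\widetilde p_3$. Comparing $u$ with the constant $b$ by the maximum principle, the values are trapped between $0$ and $b$ near the corner $\widetilde p_2$ and between $b$ and $+\infty$ near the corner $\widetilde p_3$, so $u<b$ just inside $\widetilde h_1$ near $\widetilde p_2$ and $u>b$ just inside near $\widetilde p_3$; hence $\partial_\eta u<0$ near $\widetilde p_2$ and $\partial_\eta u>0$ near $\widetilde p_3$. Since $u\equiv b$ on $\widetilde h_1$ kills the tangential derivative, a zero of the continuous function $\partial_\eta u$ yields $\nabla u=0$, i.e.\ a point $p_*\in\widetilde h_1$ with $\nu(p_*)=1$. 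To see $p_*$ is the only such point I invoke Strategy 2: on $\widetilde h_3$ the value $0$ is the global minimum, so the Hopf lemma gives $\partial_\eta u>0$ and $\nu<1$, while $\nu\to 0$ along $\widetilde v_1$ and $\widetilde h_2$ rules out the asymptotic boundary. For any further point $q$ with $\nu(q)=1$ at height $t_0=u(q)$, the tangent slice meets $\widetilde\Sigma$ in at least four rays of $\{u=t_0\}$ issuing from $q$; a ray cannot close into a loop, since a loop would bound a graph over a domain with constant boundary height $t_0$, forcing $u\equiv t_0$ by the maximum principle. Thus each ray reaches the boundary at height $t_0$, but for $t_0\neq b$ that level meets the finite boundary in at most one point (on $\widetilde v_2$ if $t_0\in(0,b)$, on $\widetilde v_3$ if $t_0>b$) and, by the asymptotics near the ideal vertex in Proposition~\ref{prop:conjugation-JS}, in a single ideal end; two endpoints cannot absorb four rays, a contradiction. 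Hence every $\nu=1$ point lies on $\widetilde h_1$, and the same count at height $b$ leaves room for exactly one, namely $p_*$.

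The hard part is precisely this last counting step: tracking the level curves $\{u=t_0\}$ out to the boundary and, above all, pinning down how many of their ends escape near the ideal vertex $\widetilde p_1$. Showing that at each height there is exactly one ideal end — so that the four-ray requirement of Strategy 2 genuinely fails off $\widetilde h_1$ and is met exactly once on it — is where the finite-total-curvature/Jenkins--Serrin asymptotics of Proposition~\ref{prop:conjugation-JS} must be used with care; the interior and $\mathcal C^1$-boundary inputs, by contrast, are routine.
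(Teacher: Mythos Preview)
Your argument follows the route the paper sketches (boundary maximum principle plus Strategy~2), and part~(a), the existence of a point with $\nu=1$ on $\widetilde h_1$, and the exclusion of $\widetilde h_3$ are all sound. The gap is exactly where you place it, but the tool you invoke does not fit: Proposition~\ref{prop:conjugation-JS} describes the ideal boundary of the \emph{conjugate} surface $\Sigma$, not the level sets of the Jenkins--Serrin graph $\widetilde\Sigma$ itself, so it says nothing about how many arcs of $\{u=t_0\}$ are asymptotic to $\widetilde p_1$. Without that, your sentence ``two endpoints cannot absorb four rays'' is unjustified precisely at the ideal endpoint, because two rays running to $\widetilde p_1$ bound an \emph{unbounded} region on which the ordinary maximum principle does not apply.

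What the four-ray count actually needs is a maximum principle on the unbounded lune $R\subset\widetilde\Delta$ bounded by two level arcs $\gamma_1,\gamma_2$ both running to $\widetilde p_1$. A clean substitute is a flux argument in the base: write the minimal-graph equation as $\Div(\nabla u/W)=0$ with $W=\sqrt{1+\lVert\nabla u\rVert^2}$, truncate $R$ by horoballs $H_n$ shrinking to $\widetilde p_1$, and apply the divergence theorem on the relatively compact sets $R\setminus H_n$. The contribution across $\partial H_n\cap R$ is bounded by $\Length(\partial H_n\cap\widetilde\Delta)\to 0$, since the two geodesic sides of $\widetilde\Delta$ issuing from $\widetilde p_1$ are asymptotic in $\mathbb{H}^2(\kappa)$; on the level arcs $\gamma_i$ the vector $\nabla u/W$ is normal to $\gamma_i$ and has a strict sign against the outer conormal of $R$ whenever $u-t_0$ has constant sign in $R$. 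These two facts force $\nabla u\equiv 0$ along $\gamma_1\cup\gamma_2$, which is absurd. With this replacement for your appeal to Proposition~\ref{prop:conjugation-JS}, both the interior exclusion and the uniqueness on $\widetilde h_1$ go through as you outline.
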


We also remark here that, if $b>0$, the angles of rotation of $\theta_2$ and $\theta_3$ of the normal $\widetilde N$ along $\widetilde v_2$ and $\widetilde v_3$ satisfy $\theta_2'>0$ and $\theta_3'>0$, respectively, see Figure~\ref{fig:conjugate-genus-1}.

\subsubsection{The conjugate minimal surface}\label{subsubsec:genus-1-conjugate}

Since $\widetilde\Delta(a,\varphi)$ is convex, the Krust property (Proposition~\ref{prop:Krust}) implies that the conjugate surface $\Sigma(a,\varphi,b)$ is also a vertical minimal graph in $\mathbb{H}^2(\kappa)\times\R$ over some domain $\Delta(a,\varphi,b)\subset\h^2(\kappa)$. By Lemma~\ref{lem:vertical-geodesics}, the curves $v_2 $ and $v_3$ in $\partial\Sigma(a,\varphi,b)$ are contained in horizontal slices, being their projections convex with respect to the inner-pointing conormal to $\Delta(a,\varphi,b)$ along its boundary. Moreover, $\Sigma(a,\varphi,b)$ lies locally above the horizontal planes containing $v_2$ and $v_3$. By Lemma~\ref{lem:horizontal-geodesics}, the conjugate curves $h_1$ and $h_3$ lie in vertical planes, and it can be shown that the component of $h_1$ in the factor $\mathbb R$ has a minimum at the unique point where $\nu=1$. The asymptotic boundary of $\Sigma(a,\varphi,b)$ is composed of the half horizontal geodesics $v_1$ in $\h^2(\kappa)\times\{-\infty\}$ and of the ideal vertical half-line $h_2$, see Figure~\ref{fig:conjugate-genus-1}.

\begin{figure} 
  \includegraphics[width=0.9\textwidth]{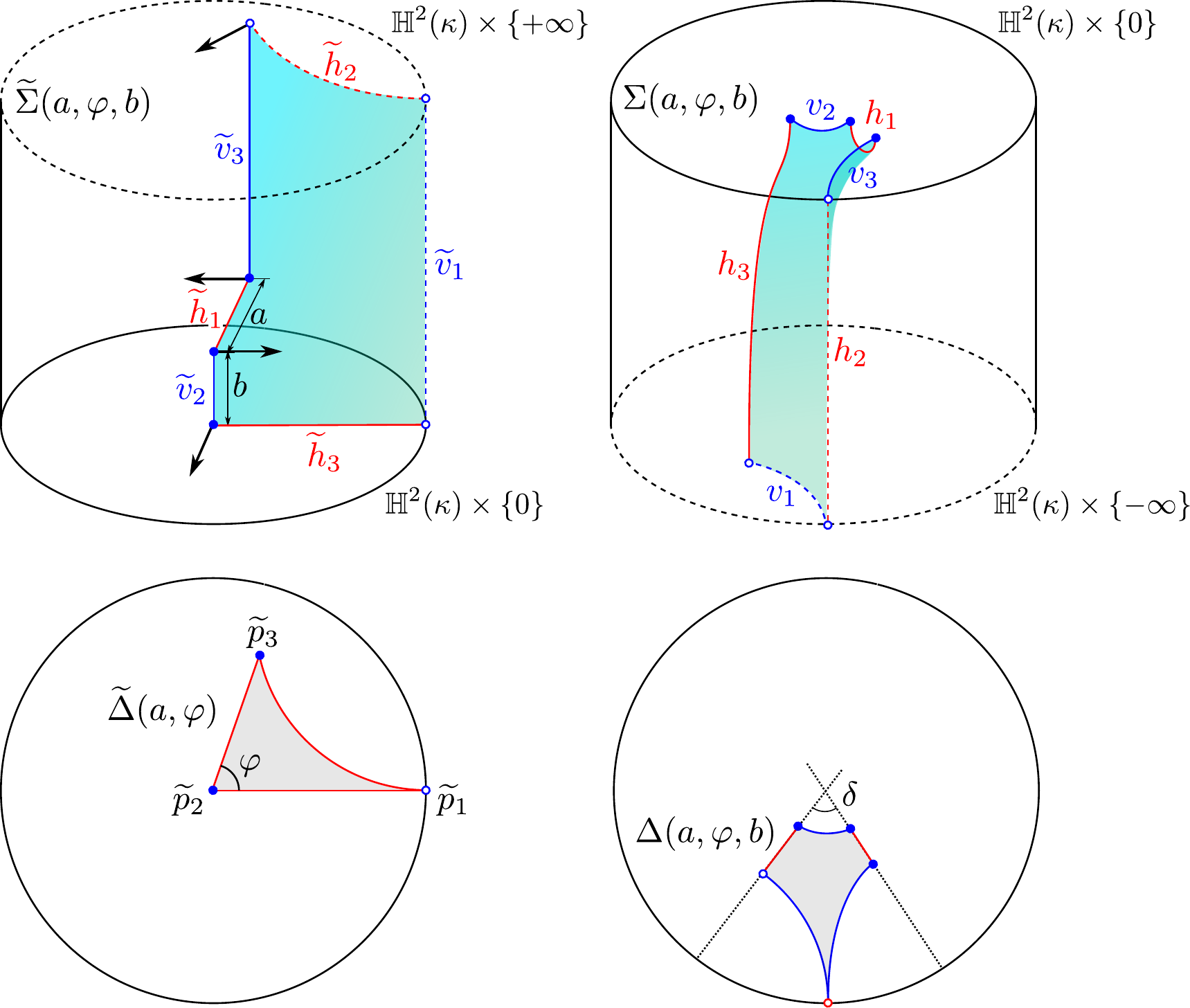}
  \caption{The conjugate surfaces $\widetilde \Sigma(a,\varphi,b)$ and $\Sigma(a,\varphi,b)$ and their projections to $\mathbb H^2(\kappa)$. The arrows represent the normal $\widetilde N$ at the endpoints of $\widetilde v_2$ and $\widetilde v_3$. The period $\mathcal{P}_1$ is zero when $v_2$ and $v_3$ are at the same height; the period $\mathcal P_2$ is the cosine of $\delta$, if such an angle exists.}\label{fig:conjugate-genus-1}
\end{figure}

We aim at showing that appropriate parameters give rise to a complete minimal surface $\Sigma^*(a,\varphi,b)$ of genus $1$ in $\h^2(\kappa)\times \R$ after extending $\Sigma(a,\varphi,b)$ by mirror symmetries over the horizontal and vertical planes. We will sketch the proof that for each $\frac\pi k<\varphi<\frac\pi2$, there exist $a_\varphi$ and $b_\varphi$ such that $\Sigma_\varphi^*=\Sigma^*(a_\varphi,\varphi,b_\varphi)$ has the desired properties. This will be accomplished if the following two periods are closed, inspired by~\cite[\S6.3]{Ple12}.
\begin{enumerate}
  \item \textbf{First period problem.} We call $\mathcal{P}_1(a,\varphi,b)$ the difference of heights of the endpoints of $h_1$, which must be zero so each end of $\Sigma(a,\varphi,b)$ is an annulus. Parametrizing $h_1:[0,a]\to\mathbb{H}^2(\kappa)\times\mathbb{R}$ with endpoints $h_1(0)\in v_2$, $ h_1(a)\in v_3$ and unit speed, the properties of the conjugation yield
  \begin{equation}\label{eqn:p1}
    \mathcal{P}_1(a,\varphi,b)=\int_{ h_1}\langle h_1',\xi\rangle=\int_{\widetilde h_1}\langle \eta,\xi\rangle,
  \end{equation}
  where $\eta=-J\widetilde h_1'$ is the unit inward conormal vector to $\widetilde \Sigma(a,\varphi,b)$ along $\widetilde h_1$. 
  
  \item \textbf{Second period problem.} Assume that the vertical planes containing the symmetry curves $h_1$ and $h_3$ intersect each other at a non-oriented angle $\delta$, and call $\mathcal P_2$ the cosine of the angle $\delta$. To give an analytic expression for $\mathcal P_2$, we will consider the half-space model, see~\S\ref{subsubsec:halfspace-model}.
  
  Parametrize $v_2(t)=(x(t),y(t),0)$ for $t\in[0,b]$ and assume, up to an ambient isometry, that $h_3$ and $v_2$ lie in the vertical plane $\{x=0\}$ and the horizontal plane $\{z=0\}$, respectively, and also $x(0)=0$, $y(0)=1$, and $x(t)<0$ when $t$ is close to $0$, see Figure~\ref{fig:horocycle-foliation}. Let $\psi\in\mathcal C^\infty[0,b]$ be the angle of rotation of $v_2$ with respect to the horocycle foliation in the sense of Remark~\ref{rmk:horocycle-rotation}, where we choose the initial angle $\psi(0)=\pi$. The second period is given by
  \begin{equation}\label{eqn:p2}
    \mathcal P_2(a,\varphi,b)=\cos(\delta)=\frac{x(b)\sin(\psi(b))}{y(b)}-\cos(\psi(b)).
  \end{equation}
  Incidentally, the right-hand side of~\eqref{eqn:p2} is well defined even when the  vertical planes containing the symmetry curves $h_1$ and $h_3$ do not intersect. However, provided that the first period is solved, it can be shown that the vertical planes containing the symmetry curves $h_1$ and $h_3$ intersect each other with an angle $\delta$ if and only if $\mathcal P_2(a,\varphi,b)=\cos(\delta)$, see~\cite[Lem.~6]{CM}.
\end{enumerate}

\begin{figure}[htb]
\includegraphics[height=5.8cm]{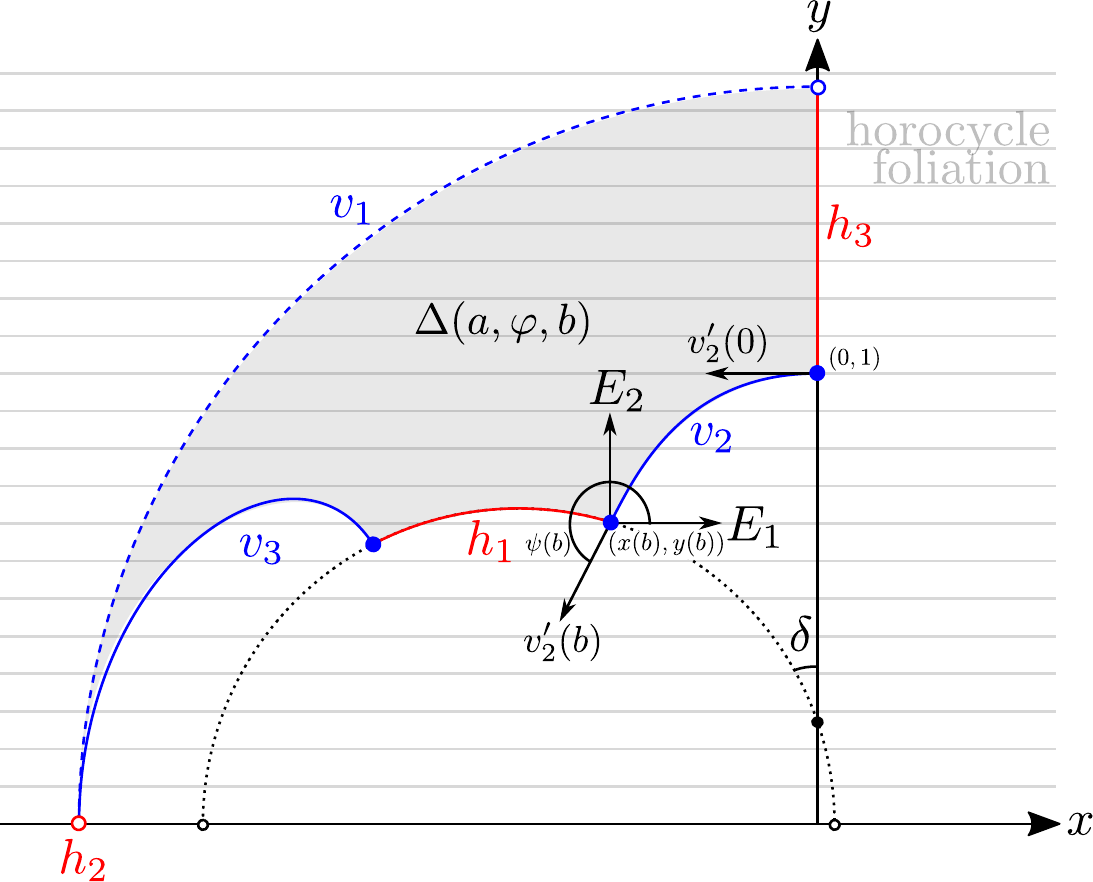}
\caption{The angle $\psi(b)$ of rotation of $v_2$ with respect to the horocycle foliation at $v_2(b)$, where we identify $\mathbb{H}^2\times\{0\}\equiv\mathbb H^2$. The surface $\Sigma(a,\varphi,b)$ projects onto the shaded region $\Delta(a,\varphi,b)$ with boundary. The complete geodesic  containing the projection of $h_1$ appears in dotted line in the case it intersects the $y$-axis with (non-oriented) angle $\delta$.}\label{fig:horocycle-foliation}
\end{figure}

\subsubsection{Solving the period problems}
We will start by discussing the first period. We will assume that $b$ is any non-negative real number and we have the restrictions $0<\varphi<\tfrac{\pi}{2}$ and $0<a<a_{\mathrm{max}}(\varphi)=2\arctanh\left(\cos(\varphi)\right)$. By hyperbolic trigonometry, this means that the angle of $\widetilde\Delta(a,\varphi)$ at $\widetilde p_3$ is strictly less than $\varphi$. This assumption is natural since a simple comparison argument shows that the first period problem cannot be solved if $a>a_{\mathrm{max}}(\varphi)$, see~\cite[Rmk.~1]{CM}. The following result guarantees that both period problems can be solved simultaneously.

\begin{lemma}[{\cite[Lem.~5 and~6]{CM}}]\label{lem:periods-problems}
  Let $\Omega=\left\{(a,\varphi)\in\mathbb{R}^2:0<\varphi<\tfrac{\pi}{2},0<a<a_{\mathrm{max}}(\varphi)\right\}$.
  \begin{enumerate}[label=\emph{(\alph*)}]      
    \item There exists a unique function $f:\Omega\to\mathbb{R}_+$ such that $\mathcal P_1(a,\varphi,f(a,\varphi))=0$ for all $(a,\varphi)\in\Omega$, which is continuous. For a fixed $\varphi_0\in(0,\frac\pi2)$, it has limits
    \[\lim\limits_{a\to a_{\mathrm{max}}(\varphi_0)}f(a,\varphi_0)=+\infty,\quad\lim\limits_{(a,\varphi)\to (0,\varphi_0)}f(a,\varphi)=0.\] 
    
    \item If $\varphi_0\in(0,\frac\pi2)$ and $b=f(a,\varphi_0)$, then the inequalities $x(t)<0$ and $\pi<\psi(t)<2\pi$ hold true for all $t\in(0,b]$ (that is, along the curve $v_2$, see~\S\ref{subsubsec:genus-1-conjugate}). We have the limits
    \[\lim_{a\to 0}\mathcal P_2(a,\varphi_0,f(a,\varphi_0))=\cos(\varphi_0),\quad 
    \lim_{a\to a_{\mathrm{max}}(\varphi_0)}\mathcal P_2(a,\varphi_0,f(a,\varphi_0))=+\infty.\]
  \end{enumerate} 
\end{lemma}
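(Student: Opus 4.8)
The plan is to decouple the two periods: first use \eqref{eqn:p1} to solve $\mathcal{P}_1=0$ for $b$ as a function of $(a,\varphi)$, obtaining $f$, and then study the single-variable map $a\mapsto\mathcal{P}_2(a,\varphi_0,f(a,\varphi_0))$ via \eqref{eqn:p2}. Throughout, the continuous dependence of the Jenkins--Serrin graph $\widetilde\Sigma(a,\varphi,b)$ on its parameters (Theorem~\ref{thm:general-JS}) together with the continuity of the conjugation (Proposition~\ref{prop:continuity}) guarantees that both $\mathcal{P}_1$ and $\mathcal{P}_2$ are continuous in $(a,\varphi,b)$, so the whole argument reduces to monotonicity, sign, and limit analyses.

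For part (a) I would first show that, for fixed $(a,\varphi)\in\Omega$, the map $b\mapsto\mathcal{P}_1(a,\varphi,b)$ is strictly monotone and changes sign. Monotonicity should follow by comparing the graphs for two values $b_1<b_2$: raising the finite datum on $\widetilde p_2\widetilde p_3$ while keeping the data on $\widetilde p_1\widetilde p_2$ and $\widetilde p_1\widetilde p_3$ fixed orders the solutions by the maximum principle and forces a monotone variation of the conormal integrand $\langle\eta,\xi\rangle$ in \eqref{eqn:p1}. For the change of sign I would examine the extremes $b\to0$ (where $\widetilde v_2$ collapses and the two endpoints of $h_1$ acquire a definite height order) and $b\to\infty$ (where the opposite order prevails); the intermediate value theorem then produces a unique zero $f(a,\varphi)\in(0,\infty)$, whose continuity is inherited from that of $\mathcal{P}_1$. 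The boundary limits of $f$ would be obtained by comparison arguments in the hyperbolic triangle: as $a\to a_{\mathrm{max}}(\varphi_0)=2\arctanh(\cos\varphi_0)$ the angle of $\widetilde\Delta(a,\varphi_0)$ at $\widetilde p_3$ increases to $\varphi_0$ and balancing the height forces $b\to+\infty$, whereas as $a\to0$ the triangle degenerates and the required $b$ tends to $0$.

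For part (b), working in the half-space model of \S\ref{subsubsec:halfspace-model} and recalling $H=0$, I would combine $\kappa_g^P=-\psi'-\sqrt{-\kappa}\cos\psi$ from Remark~\ref{rmk:horocycle-rotation} with $\kappa_g^P=-\theta_2'$ from Lemma~\ref{lem:vertical-geodesics} to obtain the evolution equation $\psi'=\theta_2'-\sqrt{-\kappa}\cos\psi$ with $\psi(0)=\pi$, where $\theta_2'>0$ by the orientation discussion preceding the lemma and Proposition~\ref{prop:angle01:genus1}. Since $\psi'(0)=\theta_2'(0)+\sqrt{-\kappa}>0$, the angle leaves $\pi$ increasingly; to confine it to $(\pi,2\pi)$ I would treat $\psi=2\pi$ as a candidate barrier for the ODE, where $\psi'=\theta_2'-\sqrt{-\kappa}$, and rule out the crossing using the finite-length bound on $\theta_2$ coming from the geometry of $\widetilde v_2$. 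Because $\tau=0$ gives $x'=\sqrt{-\kappa}\,y\cos\psi$, the coordinate $x$ decreases while $\psi\in(\pi,\tfrac{3\pi}{2})$ and can only increase once $\psi\in(\tfrac{3\pi}{2},2\pi)$; showing that this increase never returns $x$ to $0$ (so that $x<0$ on $(0,b]$) is the subtle point here, and I expect it to rely on the embeddedness and convexity of $v_2$ granted by Lemma~\ref{lem:vertical-geodesics}.

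Finally I would compute the two limits of $\mathcal{P}_2=\frac{x(b)\sin\psi(b)}{y(b)}-\cos\psi(b)$ along $b=f(a,\varphi_0)$. As $a\to0$ we have $b\to0$ by part (a), but the convex curve $v_2$ of length $b$ has geodesic curvature blowing up, so $\psi$ sweeps an $O(1)$ amount with $\psi(b)\to\pi+\varphi_0$ while $x(b)\to0$ and $y(b)\to1$; this yields $\mathcal{P}_2\to-\cos(\pi+\varphi_0)=\cos\varphi_0$, a conclusion I would make rigorous either by this asymptotic expansion or, more robustly, by a blow-up argument invoking the compatibility of conjugation with rescaling (Remark~\ref{rmk:rescaling}) to reduce to the flat model where the limiting angle is exactly $\varphi_0$. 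As $a\to a_{\mathrm{max}}(\varphi_0)$ we have $b\to\infty$, the curve $v_2$ becomes unbounded with $y(b)\to0$ and $\psi(b)\to2\pi$, and a careful estimate of the rates shows $\mathcal{P}_2\to+\infty$. I expect the main obstacles to be the strict monotonicity of $\mathcal{P}_1$ in part (a), the confinement of $\psi$ and the sign of $x$ in part (b), and above all the quantitative blow-up analysis of $\mathcal{P}_2$ as $a\to a_{\mathrm{max}}(\varphi_0)$.
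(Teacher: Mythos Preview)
Your overall strategy coincides with the paper's: for (a) you prove strict monotonicity of $b\mapsto\mathcal P_1(a,\varphi,b)$ via the boundary maximum principle comparing $\widetilde\Sigma(a,\varphi,b_1)$ and $\widetilde\Sigma(a,\varphi,b_2)$ along $\widetilde h_1$ (this is exactly Strategy~3 in \S\ref{subsubsec:angle-control}), establish $\mathcal P_1(a,\varphi,0)>0$ and $\mathcal P_1(a,\varphi,b)<0$ for large $b$, and deduce existence, uniqueness and continuity of $f$; the limit $a\to 0$ is handled by the rescaling of Remark~\ref{rmk:rescaling} reducing to a Euclidean strip, just as you suggest. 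Your limits for $\mathcal P_2$ are also argued as in the paper: as $a\to 0$ one has $b\to 0$, $(x(b),y(b))\to(0,1)$ and $\psi(b)\to\pi+\varphi_0$, whence $\mathcal P_2\to\cos\varphi_0$; the limit $a\to a_{\max}(\varphi_0)$ is acknowledged as the technical point and is obtained via limits of surfaces.

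The one methodological divergence is in part (b). You attack $\pi<\psi<2\pi$ and $x<0$ through the ODE $\psi'=\theta_2'-\sqrt{-\kappa}\cos\psi$ and a barrier at $\psi=2\pi$; this is delicate because at $\psi=2\pi$ one has $\psi'=\theta_2'-\sqrt{-\kappa}$, whose sign is not controlled pointwise (only $\int_0^b\theta_2'=\varphi$ is known), so the barrier is not self-evident. The paper instead integrates $\theta'=-\kappa_g^P$ to get the total curvature $\int_0^b\kappa_g=-\varphi$, and then applies Gau\ss--Bonnet to domains bounded by subarcs of $v_2$ and geodesic chords; this converts the integral information into the required sign and range constraints without any pointwise control on $\theta_2'$. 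Your ODE viewpoint is not wrong, but the Gau\ss--Bonnet route is what actually closes the argument cleanly, and it also yields the key identity $\varphi=\psi(b)-\pi+\sqrt{-\kappa}\int_0^b\cos\psi\,\df t$ that you use implicitly for the $a\to 0$ limit.
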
 

We will highlight some ideas in the proof but we refer to~\cite{CM} for further details. In the proof of item (a), the existence of the function $f$ is based on a comparison along the boundary using Strategy 3 in~\S\ref{subsubsec:angle-control}. We show that the first period function $\mathcal P_1$ is strictly decreasing with respect to the third argument since we can compare $\widetilde \Sigma(a,\varphi,b_1)$ and $\widetilde \Sigma(a,\varphi,b_2)$ for $0<b_1<b_2$ and their conormals along the horizontal geodesic $\widetilde h_1$ by the boundary maximum principle. Moreover, we show that $\mathcal P_1(a,\varphi,0)>0$ and $\mathcal P_1(a,\varphi,b)<0$ for $b>0$ large enough. The monotonicity of $\mathcal P_1$ implies that there is a unique $b_0>0$ such that $\mathcal P_1(a,\varphi,b_0)=0$, and this uniqueness implies in turn the uniqueness and continuity of $f$.

The computation of the limits in (a) is also based on the same comparison idea with the limit surfaces. For instance, in the limit as $a\to 0$, we use Proposition~\ref{prop:continuity} and Remark~\ref{rmk:rescaling} to rescale the space while keeping $a$ constant. The limit surface lies in Euclidean space $\R^3$ and is a minimal graph over a truncated strip to which we can also apply a similar comparison argument.

As for item (b), the result is based of a careful use of the formula $\theta'=-\kappa_g^P=\psi'+\sqrt{-\kappa}\cos(\psi)$ given by Lemma~\ref{lem:vertical-geodesics} and Remark~\ref{rmk:horocycle-rotation}, where $\theta$ is the angle of rotation of the normal $\widetilde N$ along $\widetilde v_2$ and $\psi$ is the angle of rotation of $v_2$ with respect the horocycle foliation. On the one hand, we can integrate $\theta'=-\kappa_g^P$ to get $\int_0^b\kappa_g(t)dt=-\int_0^b \theta'(t)dt=-\varphi$. This gives estimates for the total geodesic curvature of subsets of $v_2$, that can be used to prove the inequalities $x(t)<0$ and $\pi<\psi(t)<2\pi$ via the Gau\ss--Bonnet formula (applied to appropriate domains, see~\cite[Fig.~5]{CM}). On the other hand, we can integrate $\theta'=\psi'+\sqrt{-\kappa}\cos(\psi)$ to obtain $\varphi=\psi(b)-\pi+\sqrt{-\kappa}\int_0^b\cos(\psi(t))\df t$. In particular, $\psi(b)\to\varphi+\pi$ and $(x(b),y(b))\to(0,1)$ as $b\to 0$. Using this and the limits in item (a), the first limit in item (b) can be easily deduced. We will skip the proof of the second one, which is more technical and involves the limit of surfaces.

In view of Lemma~\ref{lem:periods-problems}, it is not difficult to see how to finish the proof of Theorem~\ref{th:knoids-genus-1}. Given $k\geq 3$, for each $\frac{\pi}{k}<\varphi<\frac{\pi}{2}$, we choose $b=f(a,\varphi)$ to solve the first period problem. Observe that $\mathcal P_2(a,\varphi,f(a,\varphi))$ tends to $\cos(\varphi)$ when $a\to 0$ and tends to $+\infty$ when $a\to a_{\mathrm{max}}(\varphi)$; since $\cos(\varphi)<\cos(\frac{\pi}{k})$ and $\mathcal P_2$ is continuous, there exists some $a_\varphi\in(0,a_{\mathrm{max}}(\varphi))$ such that $\mathcal P_2(a,\varphi,f(a_\varphi,\varphi))=\cos(\frac{\pi}{k})$, though it might not be unique. Therefore, we choose  $b_\varphi=f(a_\varphi,\varphi)$ so that $\Sigma_\varphi^*=\Sigma^*(a_\varphi,\varphi,b_\varphi)$ solves both period problems. By a similar argument to that of Collin and Rosenberg in~\cite[Rmk.~7]{CR} using Fatou's Lemma, it follows that $\Sigma_\varphi^*$ has finite total curvature. This is also a consequence of the characterization of minimal surfaces with finite total curvature of Hauswirth, Menezes and Rodríguez in~\cite{HMR}, since $\Sigma_\varphi^*$ is proper, has finite topology and each of its end is asymptotic to a vertical plane, in particular $\Sigma_\varphi^*$ is asymptotic to an admissible polygon at infinity. Either way, the fact that $\Sigma_\varphi^*$ has finite total curvature enables a better understanding of the asymptotic behavior. For instance, it follows from~\cite{HMR} that, if an end of $\Sigma_\varphi^*$ is embedded, then it is a horizontal graph in the sense of~\cite[Def.~7 and~8]{HMR}. 

\subsubsection{Embeddedness}
The analysis of the second period function does not allow us to prove the uniqueness of  $a_\varphi$ since we have not been able to control the dependence of the second problem with respect the parameter $a$. Nevertheless, we expect that there do exist values of $\varphi$ for which the complete surface $\Sigma_\varphi^*$ will be embedded. Observe that the fundamental piece $\Sigma(a_\varphi,\varphi,b_\varphi)$ is a vertical graph contained in the half-space $\h^2(\kappa)\times(-\infty,0]$, but we can find self-intersections of type II after reflecting it about the vertical planes of symmetry, even if all periods are closed. This actually happens if $\varphi\to\frac{\pi}{k}$ because it implies that $a_\varphi\to 0$, and the surfaces $\Sigma_\varphi^*$ converge, after rescaling, to a genus $1$ minimal $k$-noid in $\R^3$, which is not globally embedded.

We can ensure that $\Sigma_\varphi^*$ is embedded if the value $a_\varphi$ that solves both period problem is bigger than the quantity $a_{\text{emb}}(\varphi)=\text{arcsinh}(\cot(\varphi))$. By a simple application of hyperbolic trigonometry, the inequality $a_\varphi\geq a_{\text{emb}}(\varphi)$ amounts to saying that the angle of $\widetilde\Delta(a_\varphi,\varphi)$ at $\widetilde p_3$ is at most $\frac\pi2$, so that the fundamental piece is still a vertical graph over a convex domain after extending it by axial symmetry about $\widetilde h_1$. Although we are not able to prove that there are values of $a_\varphi$ that satisfy this inequality, we expect that the surface $\Sigma_\varphi^*$ is embedded if $\varphi$ is close to $\frac\pi 2$.

The fact that the ends of $\Sigma_\varphi^*$ are embedded is a consequence of the fact that each of them is contained in four copies of the fundamental piece that form a symmetric embedded bigraph. This claim follows from the fact that two of these four pieces come from the fundamental piece extended by axial symmetric about $\widetilde h_2$, and the extended surface projects to a convex quadrilateral of $\mathbb{H}^2(\kappa)$. The Krust property guarantees that the conjugate surface is graph, and the other two copies needed to produce the aforesaid bigraph are their symmetric ones with respect to the slice $\h^2(\kappa)\times\{0\}$ containing $\widetilde v_2$ and $\widetilde v_3$.

\begin{remark}  
  If $\mathcal P_2(a,\varphi,f(a,\varphi))\geq 1$, then the completion $\Sigma^*(a,\varphi,f(a,\varphi))$ is a surface invariant by a discrete group of parabolic or hyperbolic translations (depending on whether $\mathcal P_2=1$ or $\mathcal P_2>1$, respectively), instead of a discrete group of rotations. We call these examples \emph{parabolic} and \emph{hyperbolic $\infty$-noids}, respectively, see Figure~\ref{fig:infty-noids}. The former are obtained when the vertical planes of symmetry of $\Sigma(a,\varphi_0,f(a,\varphi_0))$ are asymptotic, whereas in the latter these planes lie at positive distance. Parabolic and hyperbolic $\infty$-noids have genus $0$ and infinitely many ends, each of them asymptotic to a vertical plane and having finite total curvature. We also remark that these surfaces induce surfaces with finite total curvature in some quotients of $\mathbb{H}^2(\kappa)\times\mathbb{R}$ in which minimal surfaces of finite total curvature have been described by Hauswirth and Menezes~\cite{HM}. In the case of hyperbolic $\infty$-noids, we can always find values of the parameters such that $a>a_{\rm emb}(\varphi)$, so that family always contain embedded examples.
\end{remark}

\begin{figure}[t]
  \includegraphics[width=\textwidth]{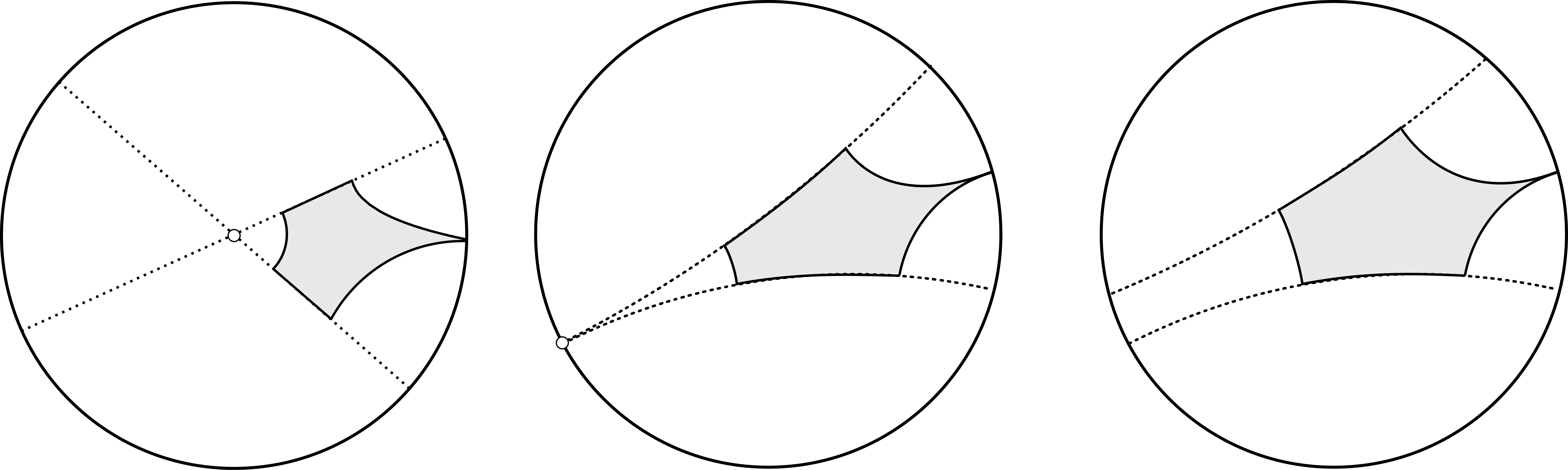}
  \caption{The fundamental domains of a $3$-noid (left), a parabolic $\infty$-noid (center), and a hyperbolic $\infty$-noid (right).} \label{fig:infty-noids}
\end{figure}


\section[Numerical examples]{Numerical examples of minimal surfaces in product spaces}
\label{sec:numerical-examples}

The aim of this last section is to present some numerical experiments that help us to visualize the minimal surfaces constructed in~\S\ref{sec:periodic}. To this end, we will use Kenneth Brakke's \emph{Surface Evolver} \cite{Brakke1992} (version 2.70), which is publicly available in \url{https://facstaff.susqu.edu/brakke/evolver/evolver.html}. This software has been successfully used to approximate both minimal and non-zero constant mean curvature surfaces in the Euclidean space, see for example~\cite{HKS1992, Grosse-Brauckmann1997} and Brakke's gallery of triply periodic minimal surfaces~\cite{BrakkeLib}. Surface Evolver is also able to perform the computation of the adjoint\footnote{See \url{http://facstaff.susqu.edu/brakke/evolver/html/scripts.htm\#adjoint.cmd}} of a minimal surface in $\mathbb{R}^3$ as well as the conjugate\footnote{See \url{http://facstaff.susqu.edu/brakke/evolver/html/scripts.htm\#cmccousin.cmd}} discrete constant mean curvature surface in $\mathbb{R}^3$ of a minimal surface in $\mathbb{S}^3$. The scripts are implemented following the algorithm developed by Pinkall and Polthier~\cite{PP1993} and Oberknapp and Polthier~\cite{OP1997}. These procedures are based on the computation of the discrete conjugate harmonic map, which is possible in space forms thanks to the close relation between harmonic maps and minimal surfaces. However, this approach is not available in $\mathbb{E}(\kappa,\tau)$-spaces.

Surface Evolver is an interactive program that minimizes energies of triangulated surfaces subject to constraints and boundary conditions. The default energy is the surface tension or the area functional, but Surface Evolver is able to operate with many other quantities like gravitation or even user-defined ones. A \emph{surface} is implemented as a triangulation, initially defined by the user in an input \emph{datafile} by prescribing the vertexes and the incidence relations between edges and faces of the triangulation. The program iteration \emph{evolves} the initial surface by minimizing the energy towards a possible local minimum close to the initial configuration by a gradient descent method. However, the numerical algorithm can also find critical saddle points.

To avoid problems with the triangulation in the evolution process, Surface Evolver provides commands for vertex averaging (\texttt{V}) and equitriangulation (\texttt{u}) as well as commands to modify the triangulation by eliminating elongated triangles (\texttt{K}), small edges (\texttt{l} and \texttt{t}) or faces (\texttt{w}). The usual evolution consists in iterating by gradient descent with the \texttt{g} command, refining the triangulation when necessary with the \texttt{r} command, and using the previous commands to keep the triangulation in good shape throughout the process. In general, after a reasonable amount of iterations the energy stalls and the resulting surface is usually near a critical point of the energy. However, some subtleties might be in place (see, for example, the evolution of the unstable catenoid in the Surface Evolver manual).

For our purposes, a key feature of Surface Evolver is its ability to operate with any Riemannian metric in the Euclidean space. However, according to its \href{http://facstaff.susqu.edu/brakke/evolver/html/model.htm#Riemannian-metric}{manual}\footnote{See \url{http://facstaff.susqu.edu/brakke/evolver/html/model.htm\#Riemannian-metric}} \emph{``the metric is used solely to calculate lengths and areas''}. For instance, it is not used for computing the enclosed volume so in order to get a volume constraint the user needs to define his own \emph{named quantity} (see~\S\ref{subsec:evolver-final-remarks} for further details).

We consider in the punctured Euclidean space $\mathbb{R}^3_*=\R^3-\{(0,0,0)\}$ the conformal metric $g = \frac{1}{x^2+y^2+z^2}g_0$, where $g_0$ stands for the usual inner product and $(x,y,z)$ are the standard coordinates. It follows that $\mathbb{S}^2\times \mathbb{R}$ is isometric to $(\mathbb{R}^3_*, g)$ via the map $F(p,t)=e^t p$, i.e., $\mathbb{S}^2\times \mathbb{R}$ is conformally flat. We will work with this identification from now on.
\begin{itemize}
  \item Horizontal slices $\mathbb{S}^2 \times \{t_0\}$, $t_0 \in \mathbb{R}$, are in correspondence with spheres $S$ of radius $e^{t_0}$ centered at the origin. Moreover, the reflection about the slice $\mathbb{S}^2\times \{t_0\}$ corresponds to an inversion in $\mathbb{R}^3$ with respect to the sphere $S$.
  \item Vertical cylinders $\gamma\times\mathbb{R}$, where $\gamma$ is a geodesic of $\mathbb{S}^2$, correspond to linear planes $P \subset \mathbb{R}^3$ through the origin. Moreover, the reflection about $\gamma \times \mathbb{R}$ corresponds to the Euclidean reflection about $P$.
  \item Vertical geodesics $\{p\} \times \mathbb{R} \subset \mathbb{S}^2 \times \mathbb{R}$ corresponds to straight lines through the origin and rotations around them corresponds to linear rotations in the Euclidean space.
\end{itemize}

We present in the following sections two numerical experiments. The first one, concerning the minimal sphere $\mathbb{S}^2 \times \{t_0\} \subset \mathbb{S}^2 \times \mathbb{R}$, is a toy example that helps us to understand better how Surface Evolver operates with the new metric and to know its limitations. The aim of the second one is to get an approximation of a singly periodic  minimal surface that produces the compact genus $g \geq 3$ minimal surface $\Sigma_{g,\eta}$ in the quotient $\mathbb{S}^2 \times \mathbb{S}^1(\eta)$ for certain $\eta$ (see~\S\ref{sec:genus} and Theorem~\ref{thm:orientable-minimal-arbitrary-genus-examples-S2xS1}). 


\subsection{Evolution to the minimal sphere}\label{subsec:evolver-minimal-sphere}

Our first goal is to evolve an initial parallelepiped to a sphere centered at the origin that corresponds, via the isometry $F$, with a slice $\mathbb{S}^2\times \{t_0\}$. As we will see, the choice of initial parallelepiped will determine the slice (that is, the radius of the sphere) in the final evolution. The slices $\mathbb{S}^2\times \{t_0\}$, $t_0 \in \mathbb{R}$, are stable minimal surfaces~\cite{TU2014} so Surface Evolver is expected to approximate properly such a surface.

We start loading a datafile into Surface Evolver with a parallelepiped inscribed in the sphere of radius $1$ centered at the origin, where we specify the command \verb|conformal_metric 1/(x^2 + y^2 + z^2)| at the beginning of the file to set the aforesaid conformal metric. We first refine the rough initial triangulation (see Figure~\ref{fig:evolver-sphere} left) three times and then use the \verb|V| and \verb|u| commands (see Figure~\ref{fig:evolver-sphere} center). Finally, we evolve the surface 100 times (see Figure~\ref{fig:evolver-sphere}) using the \verb|g| command. After that, each step in the gradient descent method only decreases the area by approximately $10^{-4}$, giving a value about $12.6557$. This approximates the expected value $4\pi \approx 12.5664$ within an error of order $10^{-2}$. 

\begin{figure}[htbp]
  \centering
  \includegraphics[width=0.3\linewidth]{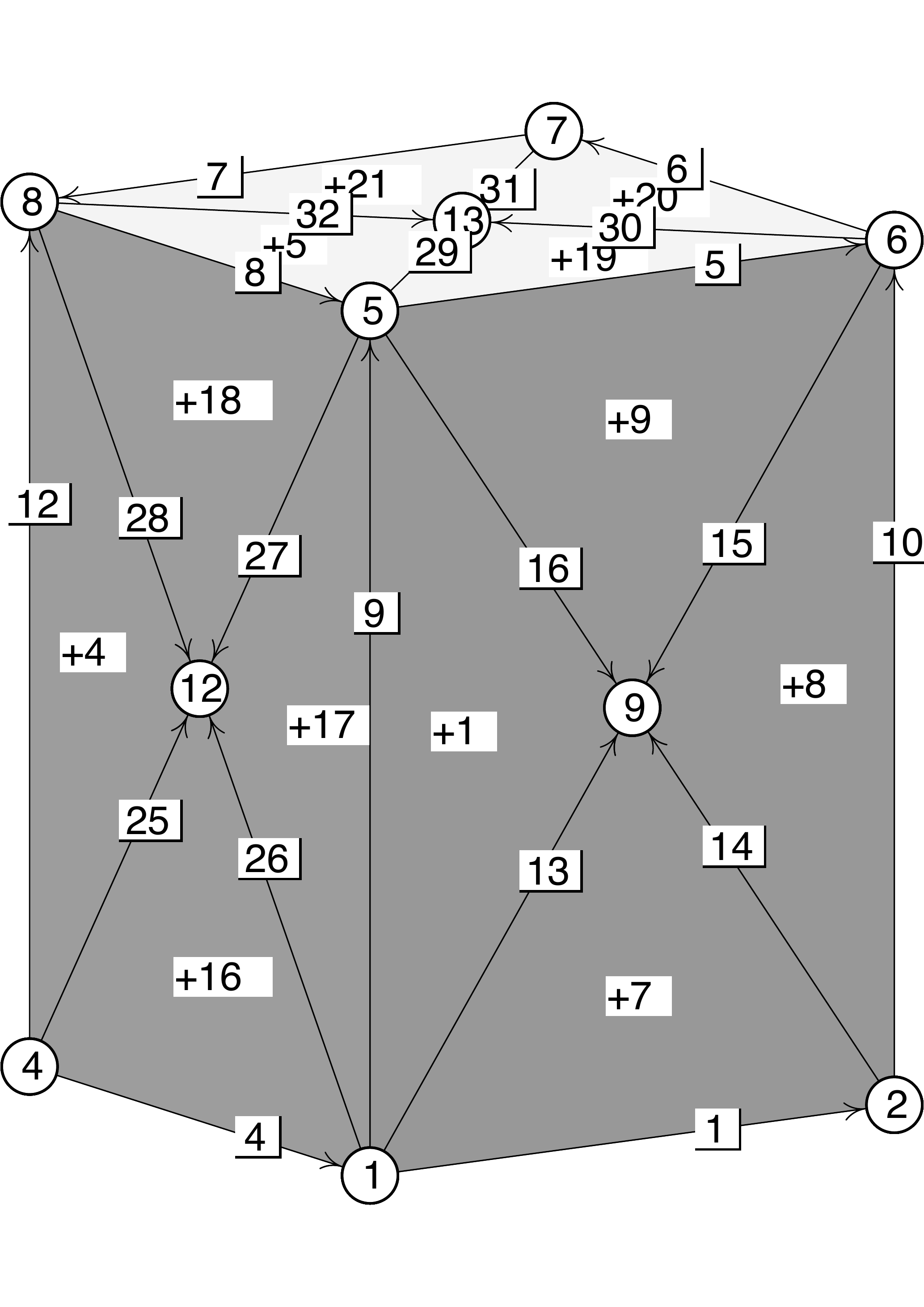}
  \quad
  \includegraphics[width=0.3\linewidth]{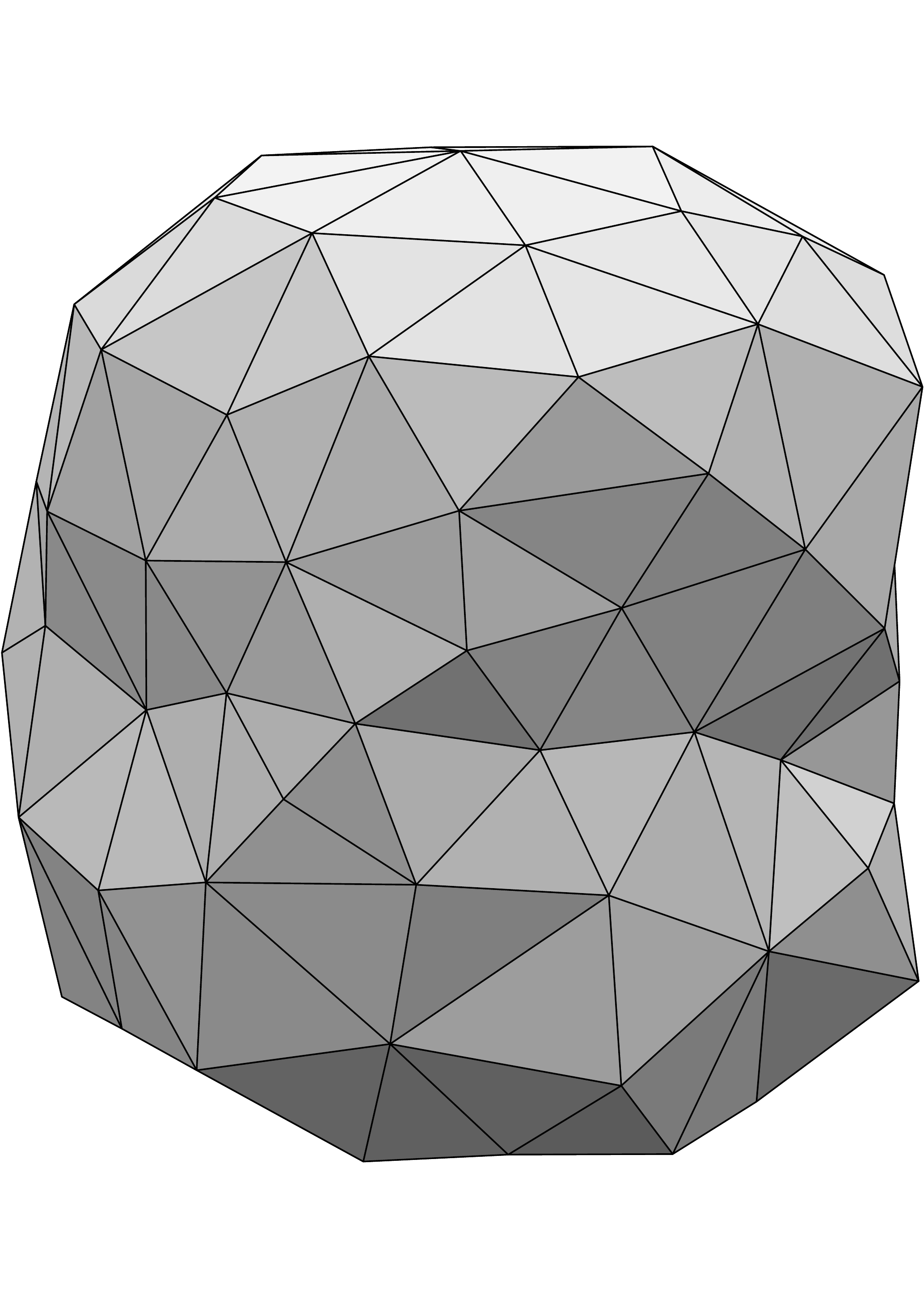}
  \quad
  \includegraphics[width=0.3\linewidth]{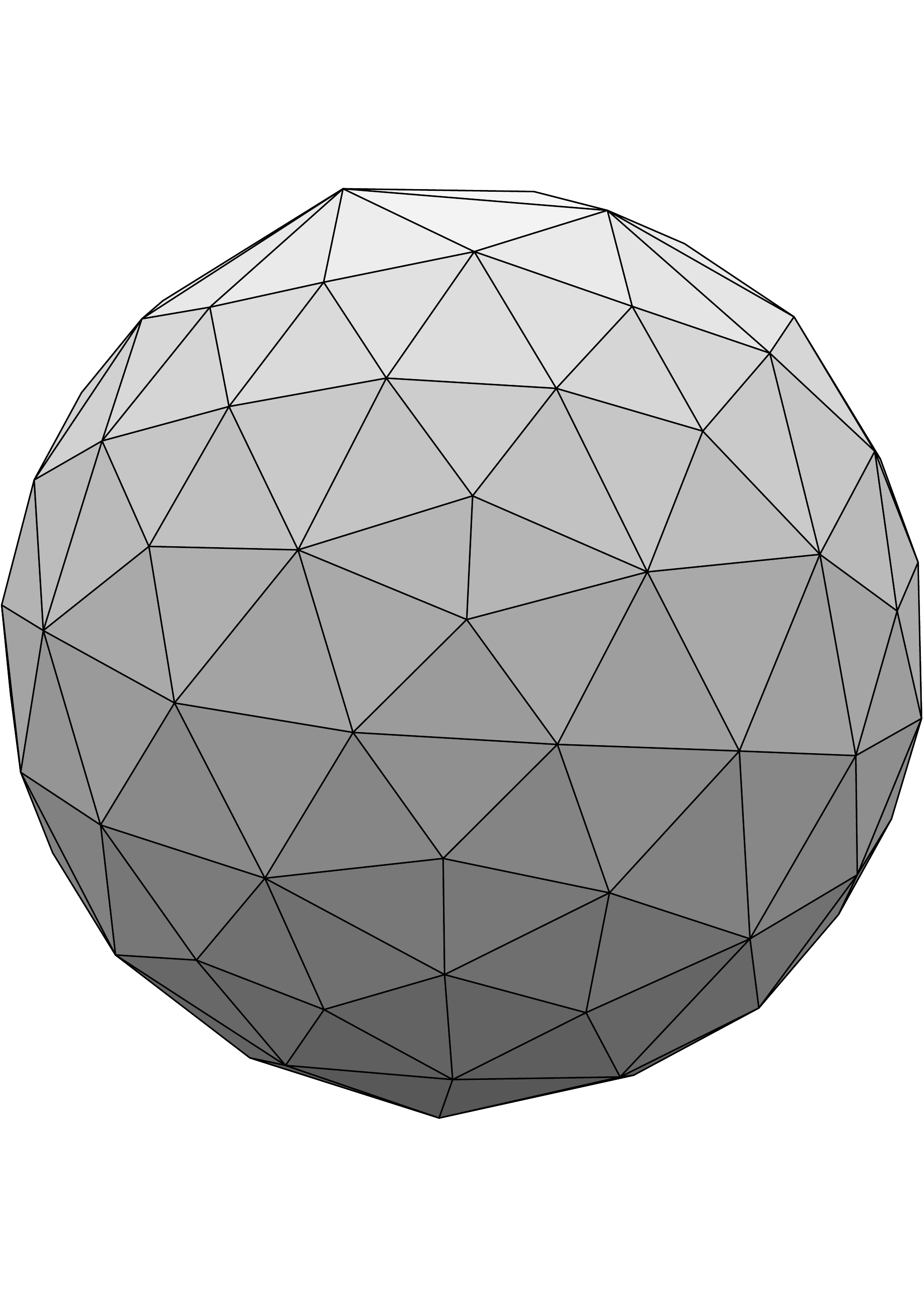}
  \caption{Initial triangulation (left) where vertexes are marked in circled numbers while edges in squared ones (faces have a plus or minus sign depending on its orientation). Triangulation after refining (center) using \texttt{r}, \texttt{u} and \texttt{V} commands. Final state of the evolution (right) after $100$ steps with the \texttt{g} command.}
  \label{fig:evolver-sphere}
\end{figure}

Surface Evolver can show the Euclidean volume enclosed by the surface. In the final step of evolution such enclosed volume is approximately $1.225$ so the surface approaches a sphere of radius $0.66$. If we start with the same parallelepiped but inscribed in a sphere of radius $2$, the same evolution yields a sphere of approximately the same area $12.6557$ (which is the expected behavior) but of Euclidean volume approximately $9.796$, i.e., the radius of the sphere is approximately $1.327$. Finally, Surface Evolver is able to compute the Euclidean discrete mean curvature of the triangulated surface giving an average of $1.517$ and $0.758$ in the first and second cases which approximately agrees with the computed radii.

\subsection{Singly periodic minimal surfaces in $\mathbb{S}^2 \times \mathbb{R}$}\label{subsec:evolver-single-periodic}

Now, we are interested in getting an approximation of the compact minimal surfaces in $\mathbb{S}^2\times \mathbb{S}^1(\eta)$ with arbitrary genus $g \geq 3$ obtained in~\S\ref{sec:compact}. We recall that $\eta \geq 2\sqrt{\kappa}$ has to be large enough to guarantee the existence of the surface (see Theorem~\ref{thm:orientable-minimal-arbitrary-genus-examples-S2xS1}). Those surfaces are obtained as the quotient of singly periodic (by a vertical translation) minimal surfaces in $\mathbb{S}^2 \times \mathbb{R}$. We will use Surface Evolver to approximate the latter (see Figures~\ref{fig:evolver-single-periodic-fundamental-piece} and~\ref{fig:evolver-genus}).

\begin{figure}[htbp]
  \centering
  \includegraphics[width=0.3\textwidth]{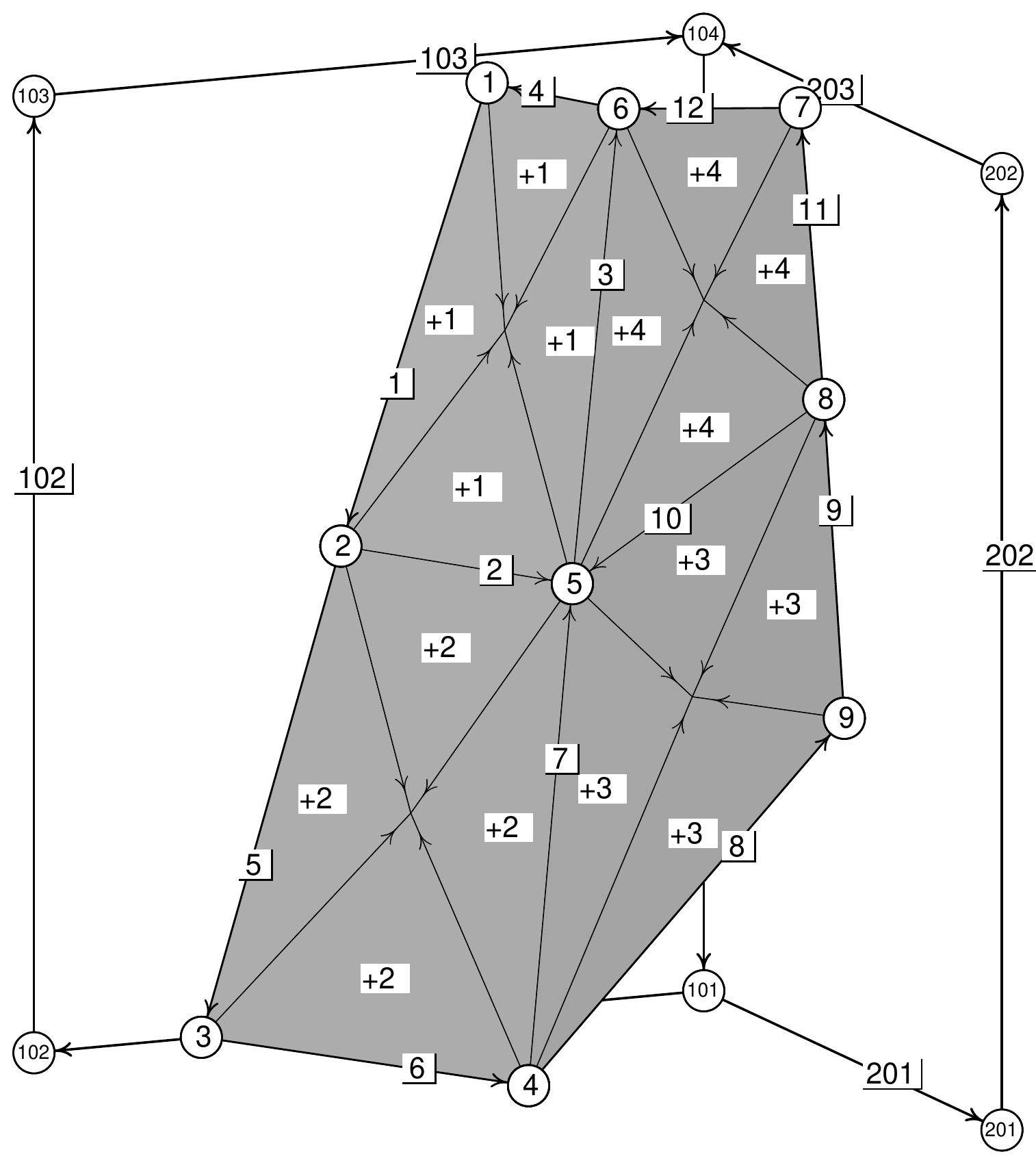}
  \quad
  \includegraphics[width=0.3\textwidth]{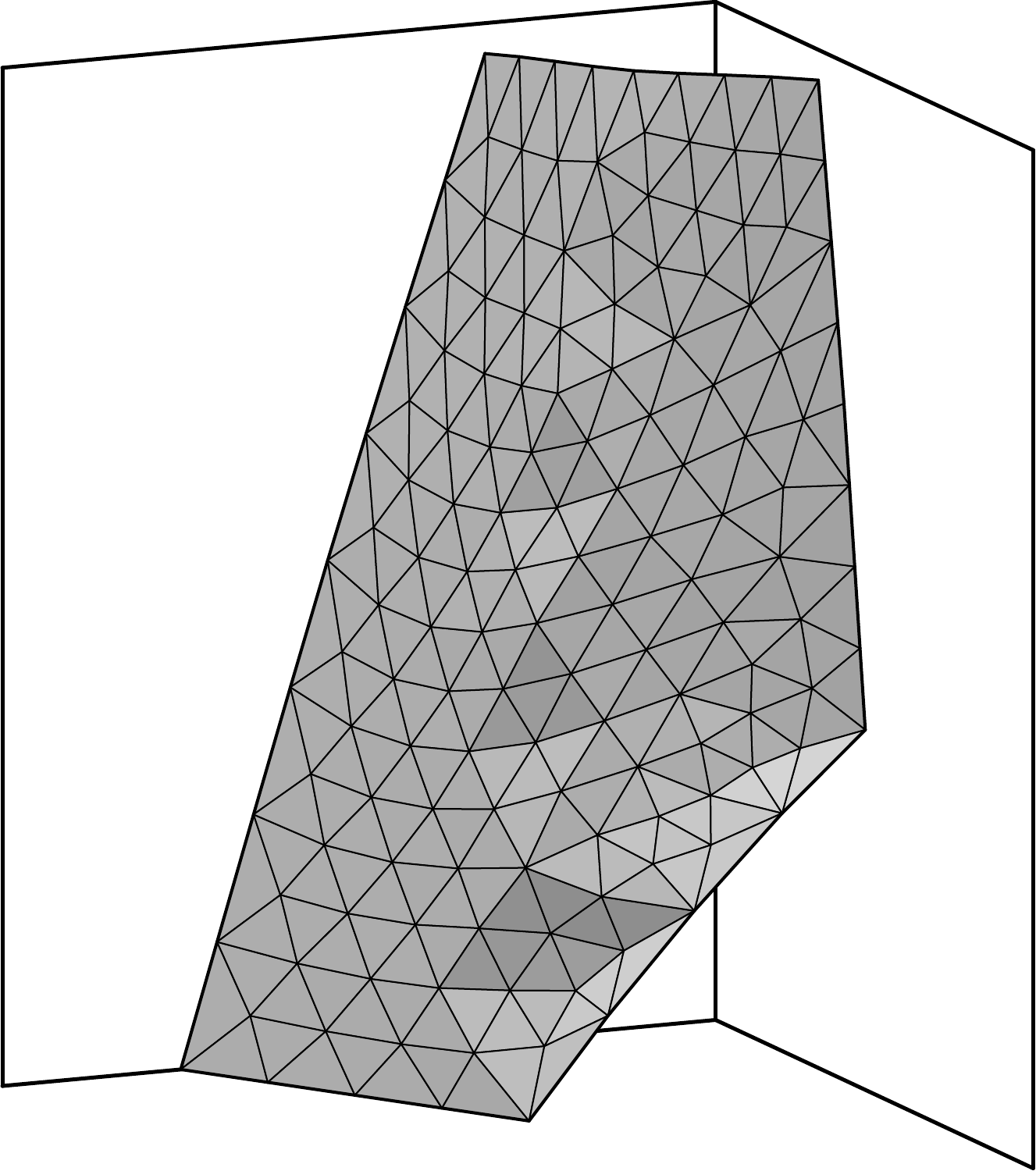}
  \quad
  \includegraphics[width=0.3\textwidth]{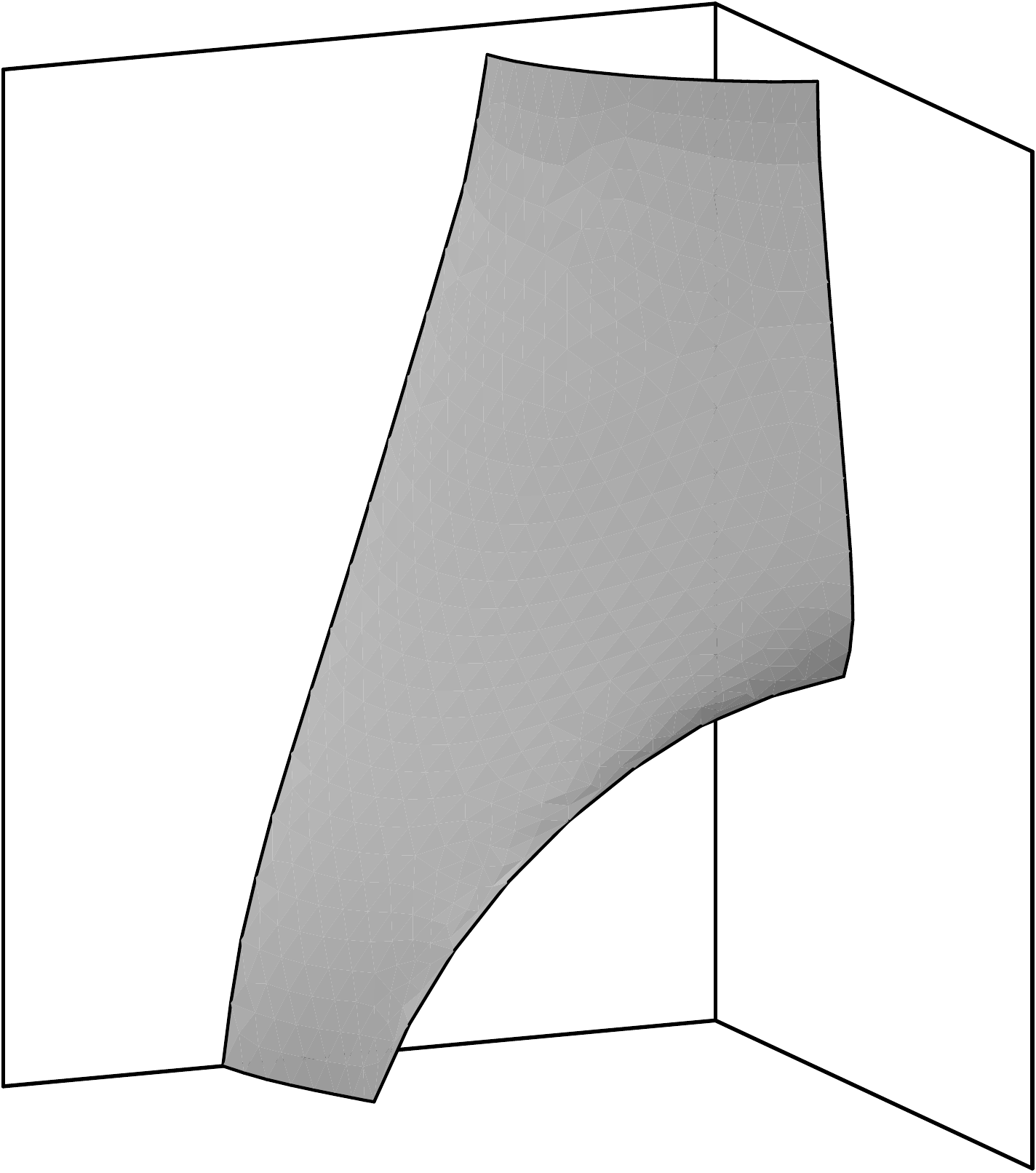}
  \caption{Initial triangulation (left) for genus $g = 3$: the edges $\overline{49}$ and $\overline{17}$ are respectively constrained to the slice $\mathbb{S}^2\times \{0\}$ (sphere of radius $1$) and the slice $\mathbb{S}^2\times \{h\}$ (sphere of radius $e^h$, $h \approx 0.7$ in the figure); the edges $\overline{13}$, $\overline{34}$ and $\overline{79}$ are constrained to the vertical planes of symmetry (planes $y = 0$, $z = 0$ and $-x \sin(\frac{\pi}{g-1}) + y \cos(\frac{\pi}{g-1}) = 0$ respectively). Triangulation after refining using \texttt{r}, \texttt{u} and \texttt{V} commands (center). Final state of evolution after $150$ steps using the \texttt{g} commands and taking care of the triangulation each $50$ steps (right). The symmetry planes are drawn to help visualization}
  \label{fig:evolver-single-periodic-fundamental-piece}
\end{figure}

The triangulation defined in the datafile used to generate Figures~\ref{fig:evolver-single-periodic-fundamental-piece} and~\ref{fig:evolver-genus} depends on several parameters that provide an easy way to test the evolution process for different initial configurations. It is possible to change the genus, that actually controls the angle between the symmetry planes drawn in Figure~\ref{fig:evolver-single-periodic-fundamental-piece}, as well as the \emph{height} $h$ of the fundamental piece (i.e.\ half the length of the vertical translation that leaves the surface invariant, see Figures~\ref{fig:minimal-S2xS1-arbitrary-genus} and~\ref{fig:P-Schwarz-initial} right). After loading the datafile in Surface Evolver we first improve the initial triangulation (with \texttt{u} and \texttt{V} commands) and then we evolve the surface 150 steps (with \texttt{g} command) taking care of the triangulation each 50 steps. At this stage, we observe that the area only decreases by $0.01$ each step. Further iteration with the \texttt{g} commands just decreases the area slightly, which insinuates  that the surface might be in a critical saddle point. We also notice that the \emph{scale factor} (a real number that controls the size of the motion at each step of the iteration) becomes small due to the fact that the area of some faces approaches zero as the triangulation accumulates around the boundary $\overline{49}$ (see Figure~\ref{fig:evolver-single-periodic-fundamental-piece}), which is constrained to the sphere of radius one. This suggests that there is an obstacle to the evolution\footnote{The same behaviour is observed in the free boundary example \texttt{free\_bdry.fe} (see \url{http://facstaff.susqu.edu/brakke/evolver/workshop/html/day2.htm}) where the scale factor converges to zero as we iterate with the \texttt{g} command.}. However, trying to overcome this issue either by activating the \emph{conjugate gradient method} (with \texttt{U} command) or by removing the small faces (with \texttt{w} command), as suggested in Surface Evolver manual, and then evolving the surface (with \texttt{g} command) produces a collapse near the boundary $\overline{49}$. 

\begin{figure}[htbp]
  \centering
  \includegraphics[height=0.25\textheight]{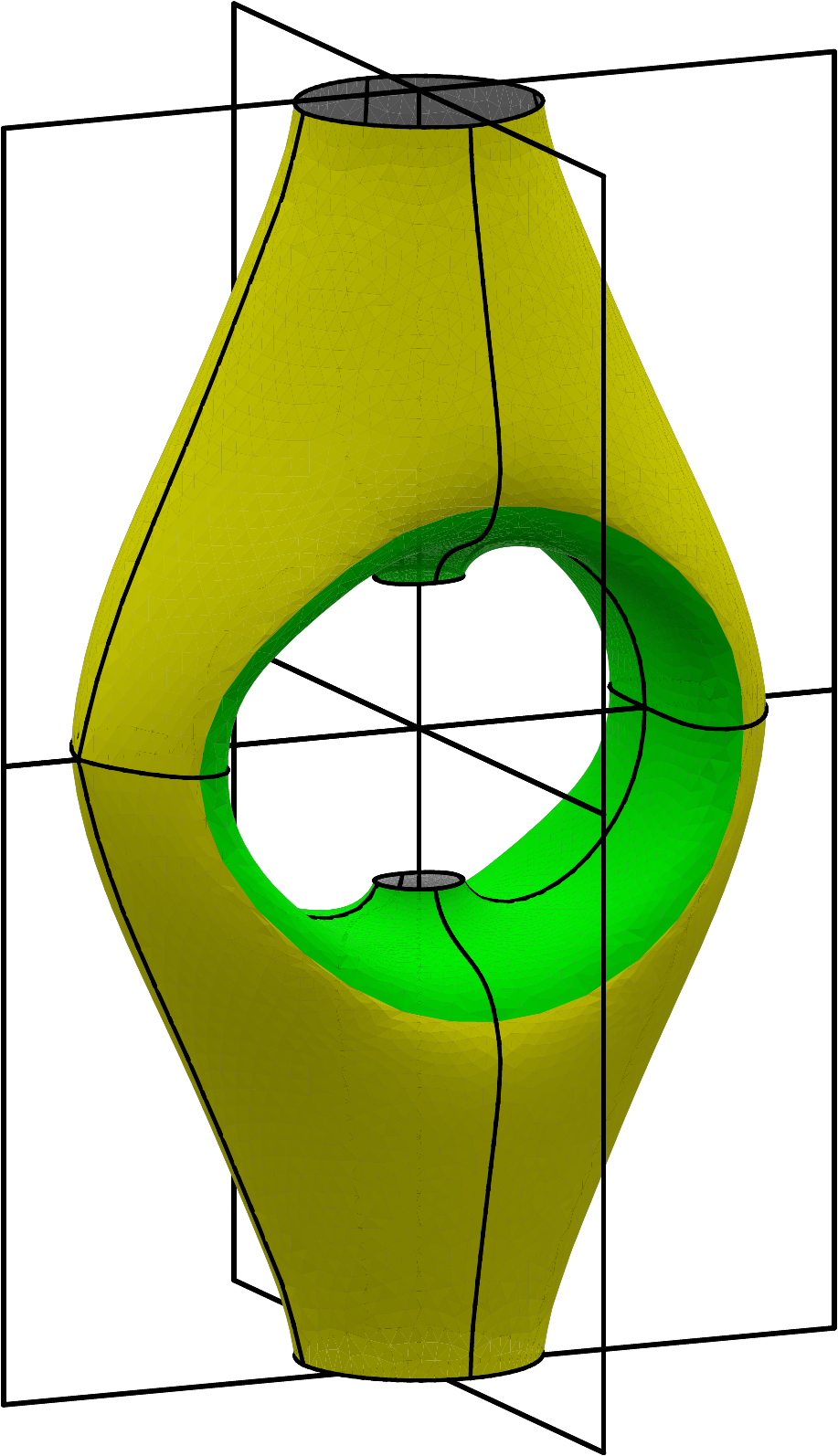}
  \hfill
  \includegraphics[height=0.25\textheight]{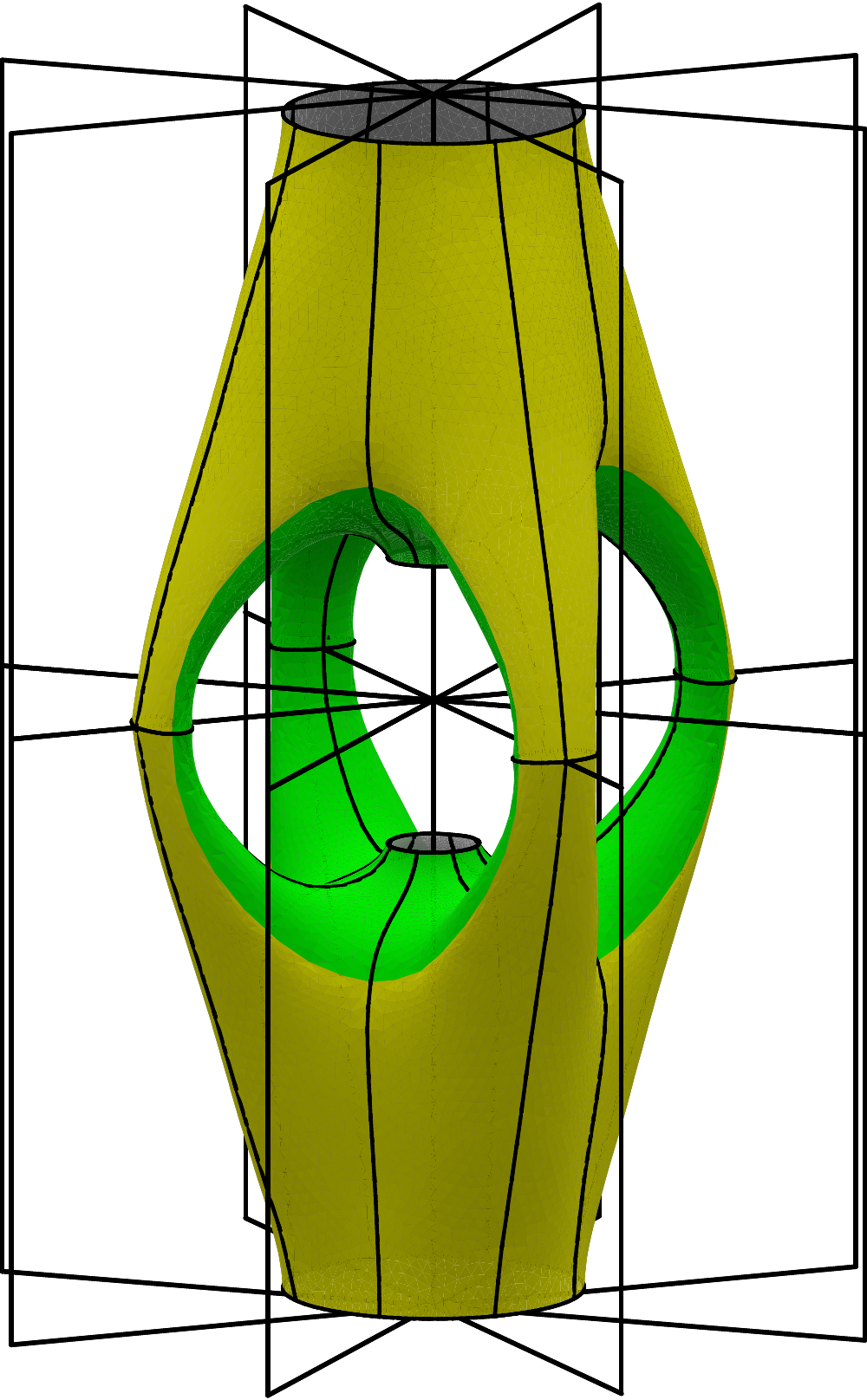}
  \hfill
  \includegraphics[height=0.25\textheight]{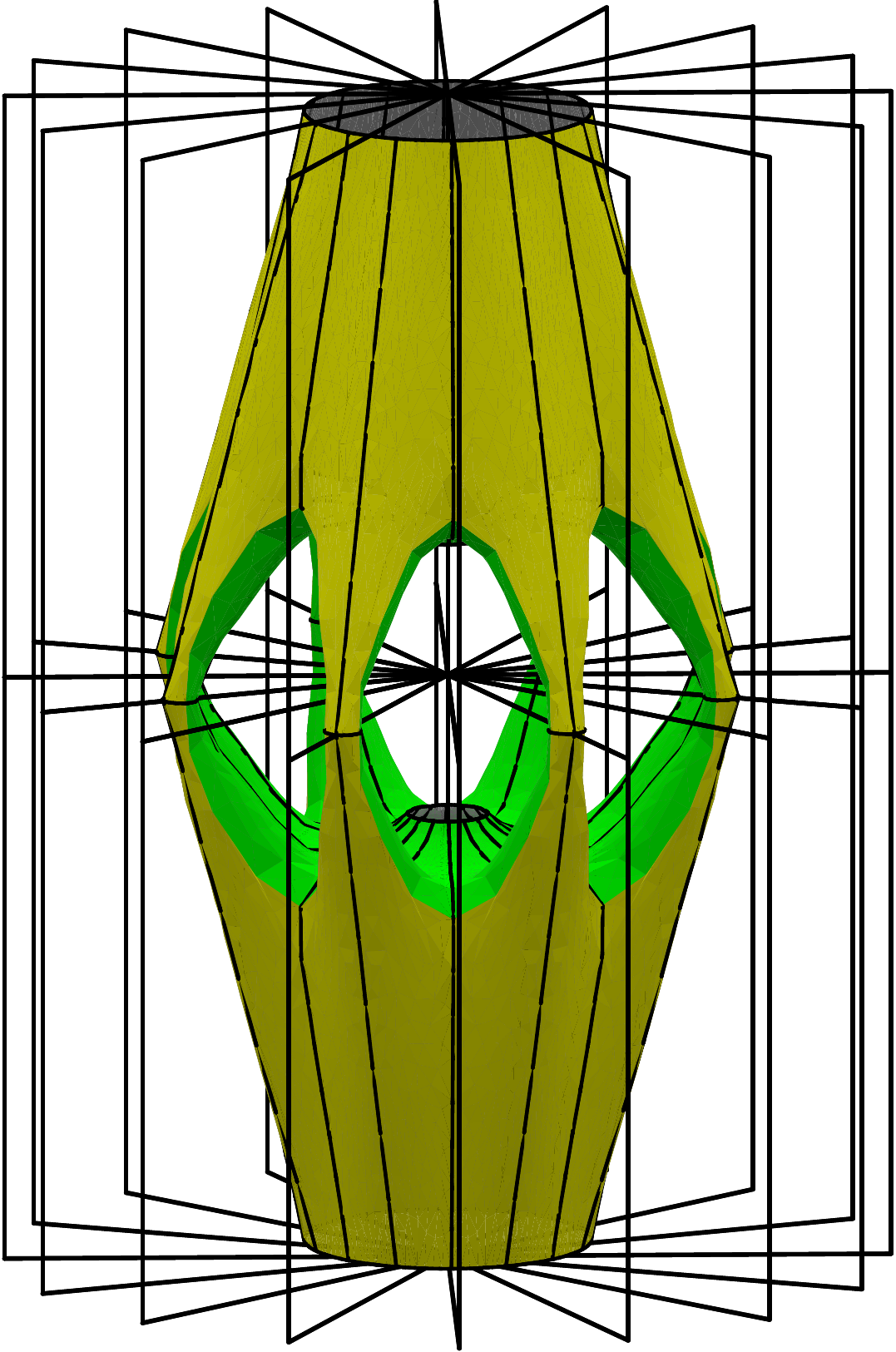}
  \caption{From left to right: visual approximation in $(\mathbb{R}^3_*, g)$ of three different singly periodic minimal surfaces of $\mathbb{S}^2 \times \mathbb{R}$. These surfaces, from left to right, produces compact minimal examples of genus $3$, $5$ and $9$ in the quotient $\mathbb{S}^2\times \mathbb{S}^1(\tfrac{\pi}{h})$ ($h \approx 0.7$ in the figure). The green and yellow parts are congruent in $\mathbb{S}^2 \times \mathbb{R}$, i.e.\ they are congruent in the figure by an inversion of the sphere of radius $1$.}
  \label{fig:evolver-genus}
\end{figure}

\subsection{Final remarks and future work}\label{subsec:evolver-final-remarks}

The experiments of the previous section have produced surfaces that resemble the theoretical ones (see the detailed description in \S\ref{subsubsec:P-Schwarz-embeddedness} and compare Figures~\ref{fig:P-Schwarz-initial} and~\ref{fig:evolver-single-periodic-fundamental-piece}). However, due to the change of metric, the program exhibits some limitations in order to check the precision of the approximation. On the one hand, even after prescribing the metric, Surface Evolver computes the discrete mean curvature at each vertex of the triangulation with respect to the Euclidean metric, as shown in~\S\ref{subsec:evolver-minimal-sphere}. As a consequence, we cannot check the precision of the approximation by computing the deviation of the discrete mean curvature from zero.  On the other hand, the change of metric does not affect other quantities like the volume (see next paragraph) or the implementation of the Willmore functional\footnote{\texttt{star\_perp\_sq\_mean\_curvature}, see \url{http://facstaff.susqu.edu/brakke/evolver/workshop/doc/quants.htm\#star_perp_sq_mean_curvature}}, both of which computed in the Euclidean metric. The latter could have been used to easily check if the evolution is near a minimal surface. Other problem we have found (see~\S\ref{subsec:evolver-single-periodic}) is that the scale factor approaches zero as we iterate using the command \texttt{g} so the conducted experiments invariably lead to a collapse near one of the necks of the surface. This could indicate some instability in the discrete free boundary problem.

However, Surface Evolver is flexible enough to allow the user to define his own \emph{quantities}. In this sense and as a future work, it will be interesting to extend Surface Evolver's functionality to overcome the mentioned issues. For instance, an implementation of the enclosed volume with respect the new metric will be extremely useful to get approximations of the $H$-surfaces constructed in~\S\ref{sec:genus} decreasing the area functional with a constrain on the volume. Another quite interesting solution is to look for an algorithm able to compute the discrete conjugate of a minimal surface in $\mathbb{E}(\kappa,\tau)$. We also propose a final approach, motivated by the fact that the initial minimal surface seems to be more tractable (Surface Evolver finds satisfactorily the unique solution to a Plateau problem given by Proposition~\ref{prop:plateau-existencia}). Therefore, it should be possible to get good approximations of the angle function (resp.\ rotation of the normal) along the horizontal (resp.\ vertical) geodesics of the boundary, so the different period problems that we have encountered in our constructions can be potentially solved numerically \emph{a priori}. This would provide us with precious information to be plugged into the initial configuration of the conjugate surface.



\begin{thebibliography}{00}

\bibitem{AR}
U.\ Abresch, H.\ Rosenberg.
\newblock Generalized Hopf differentials.
\newblock \emph{Mat.\ Contemp.}, \textbf{28} (2005), 1--28.

\bibitem{AEG08}
J.\ A.\ Aledo, J.\ M.\ Espinar, J.\ A.\ Gálvez.
\newblock Height estimates for surfaces with positive constant mean curvature in $\mathbb{M}^2\times\R$.
\newblock \emph{Illinois J.\ Math.} \textbf{52} (2008), no.\ 1, 203--211.

\bibitem{AL2015} B.\ Andrews, H.\ Li.
\newblock Embedded constant mean curvature tori in the three-sphere.
\newblock J.\ Differential Geom.\ \textbf{99}, no.\ 2 (2015), 169--89.

\bibitem{Bers}
L.\ Bers.
\newblock Survey of local properties of solutions of elliptic partial differential equations.
\newblock \emph{Comm. Pure Appl. Math.} \textbf{9} (1956), 339--350.

\bibitem{BHS2021} A.\ Bobenko, S.\ Heller, N.\ Schmitt. 
\newblock Constant mean curvature surfaces based on fundamental quadrilaterals.
\newblock Preprint (2021). Available at~\href{https://arxiv.org/abs/2102.03153}{arXiv:2102.03153}.

\bibitem{Brakke1992} K.\ Brakke.
\newblock The surface evolver
\newblock \emph{Experimental Mathematics} \textbf{1} (1992), no.\ 2,
141--165.

\bibitem{BrakkeLib} K.\ Brakke.
\newblock Triply periodic minimal surface. Online resource available at \url{http://facstaff.susqu.edu/brakke/evolver/examples/periodic/periodic.html}.

\bibitem{Cartan} É.\ Cartan.
\newblock Leçons sur la Géométrie des Espaces de Riemann. 
\newblock 2d ed. Gauthier-Villars, Paris, 1946.

\bibitem{CM} J.\ Castro-Infantes, J.\ M.\ Manzano.
\newblock Genus one minimal $k$-noids and saddle towers in $\mathbb{H}^2\times\mathbb{R}$.
\newblock \emph{J.\ Inst.\ Math.\ Jussieu} (to appear). Available at~\href{https://arxiv.org/abs/2001.07028}{arXiv:2001.07028}.

\bibitem{Castro} J.\ Castro-Infantes.
\newblock On the asymptotic Plateau problem in $\SL$.
\newblock \emph{J.\ Math.\ Anal.\ Appl.} \textbf{507} (2022), no.\ 2, Paper No.\ 125831, 23 pp.

\bibitem{CMR} J.\ Castro-Infantes, J.\ M.\ Manzano, M.\ Rodríguez.
\newblock A construction of constant mean curvature surfaces in $\mathbb{H}^2\times\mathbb{R}$ and the Krust property.
\newblock \emph{Int.\ Mat.\ Res.\ Not.} (2021), rnab353 (to appear).

\bibitem{CJ2017} H.\ Chen, C.\ Jin. 
\newblock Competition brings out the best: modelling the frustration between curvature energy and chain stretching energy of lyotropic liquid crystals in bicontinuous cubic phases. 
\newblock \emph{Interface Focus} \textbf{7}: 20160114. \textsc{doi}: \texttt{10.1098/rsfs.2016.0114}.

\bibitem{CS} H.\ I.\ Choi, R.\ Schoen.
\newblock The space of minimal embeddings of a surface into a three-dimensional manifold of positive Ricci curvature.
\newblock \emph{Invent.\ Math.} \textbf{81} (1985), no. 3, 387--394.

\bibitem{CR} P.\ Collin,  H.\ Rosenberg.
\newblock Construction of harmonic diffeomorphisms and minimal graphs.
\newblock \emph{Ann.\ of Math.\ (2)}, \textbf{172} (2010), no.\ 3, 1879--1906.

\bibitem{Dan} B.\ Daniel.
\newblock Isometric immersions into 3-dimensional homogeneous manifolds.
\newblock \emph{Comment.\ Math.\ Helv.} \textbf{82} (2007), no.\ 1, 87--131.

\bibitem{DDV} B.\ Daniel, I.\ Domingos, F.\ Vitório.
\newblock Constant mean curvature isometric immersions into $\mathbb{S}^2\times\mathbb{R}$ and $\mathbb{H}^2\times\mathbb{R}$ and related results.
\newblock Preprint available at \href{https://arxiv.org/abs/1911.12630}{arXiv:1911.12630}.

\bibitem{DFM} B.\ Daniel, I.\ Fernández, P.\ Mira.
\newblock The Gauss map of surfaces in $\widetilde{\mathrm{PSL}}_2(\R)$.
\newblock \emph{Calc.\ Var.} \textbf{52}, no.\ 3--4, 507--528.

\bibitem{DH} B.\ Daniel, L.\ Hauswirth.
\newblock Half-space theorem, embedded minimal annuli and minimal graphs in the Heisenberg group.
\newblock \emph{Proc.\ London Math.\ Soc.} \textbf{98} (2009), no.\ 3, 445--470. 

\bibitem{KIAS}
B.\ Daniel, L.\ Hauswirth, P.\ Mira.
\newblock Lecture notes on homogeneous $3$-manifolds.
\newblock Korea Institute for Advanced Study, Seoul, Korea. 4$^\text{th}$ KIAS workshop on Differential Geometry, 2009.

\bibitem{DomMan}
M.\ Domínguez-Vázquez, J.\ M.\ Manzano.
\newblock Isoparametric surfaces in $\mathbb{E}(\kappa,\tau)$-spaces.
\newblock \emph{Ann.\ Sc.\ Norm.\ Super.\ Pisa Cl.\ Sci.(5)} \textbf{22} (2021), no.\ 1, 269--285.

\bibitem{DPW1998} J.\ Dorfmeister, F.\ Pedit, H.\ Wu. 
\newblock Weierstrass type representation of harmonic maps into symmetric spaces. 
\newblock \emph{Comm. Anal. Geom.} \textbf{6} (1998), no.\ 4, 633--668.

\bibitem{ElcLan}
A.\ R.\ Elcrat, K.\ E.\ Lancaster.
\newblock  On the behavior of a non-parametric minimal surface in a non-convex quadrilateral. 
\newblock \emph{Arch. Rational Mech. Anal.} \textbf{94} (1986), no.\ 3, 209--226.

\bibitem{ER} J.\ M.\ Espinar, H.\ Rosenberg.
\newblock Complete constant mean curvature surfaces in homogeneous spaces.
\newblock \emph{Comment.\ Math.\ Helv.}, \textbf{86} (2011), no.\ 3, 659--674.

\bibitem{FM07} I.\ Fernández, P.\ Mira.
\newblock A characterization of constant mean curvature surfaces in homogeneous 3-manifolds.
\newblock\emph{Differential Geom.\ Appl.} \textbf{25} (2007), no.\ 3, 281--289.

\bibitem{FM} I.\ Fernández, P.\ Mira.
\newblock Holomorphic quadratic differentials and the Bernstein problem in Heisenberg space. 
\newblock\emph{Trans.\ Am.\ Math.\ Soc.} \textbf{361} (2011), no.\ 11, 5737--5752.

\bibitem{Finn} R.\ Finn.
\newblock Remarks relevant to minimal surfaces, and to surfaces of prescribed mean curvature. 
\newblock \emph{J. Analyse Math.} \textbf{14} (1965), 139--160.

\bibitem{GMM} J.\ A.\ Gálvez, A.\ Martínez, P.\ Mira.
\newblock The Bonnet problem for surfaces in homogeneous $3$-manifolds.
\newblock\emph{Comm.\ Anal.\ Geom.} \textbf{16} (2008), no.\ 5, 907--935.

\bibitem{G} K.\ Gro\ss{}e-Brauckmann.
\newblock New surfaces of constant mean curvature.
\newblock \emph{Math.\ Z.} \textbf{214} (1993), no.\ 4, 527--565.

\bibitem{Grosse-Brauckmann1997} K.\ Gro\ss{}e-Brauckmann.
\newblock Gyroids of constant mean curvature
\newblock \emph{Experimental Mathematics} \textbf{6} (1997), no.\ 1,
33--50.

\bibitem{GP1997} K.\ Gro\ss{}e-Brauckmann and K.\ Polthier.
\newblock Constant mean curvature surfaces derived from Delaunay’s and Wente’s examples.
\newblock In \emph{Visualization and Mathematics} (1997), 119–134.

\bibitem{GKS} K.\ Gro\ss{}e-Brauckmann R.\ B.\ Kusner, J.\ M.\ Sullivan.
\newblock Triunduloids: embedded constant mean curvature surfaces with three ends and genus zero.
\newblock\emph{J.\ Reine Angew.\ Math.} \textbf{564} (2003), 35--61. 

\bibitem{GW} K.\ Gro\ss{}e-Brauckmann, M.\ Wohlgemuth.
\newblock The gyroid is embedded and has constant mean curvature companions.
\newblock \emph{Calc.\ Var.} \textbf{4} (1996), 499-523.

\bibitem{HM} L.\ Hauswirth, A.\ Menezes.
\newblock On doubly periodic minimal surfaces in $\mathbb H^2\times\mathbb R$ with finite total curvature in the quotient space.
\newblock \emph{Ann.\ Mat.\ Pura Appl.}, \textbf{195} (2016), no.\ 5, 1491--1512.

\bibitem{HMR} L.\ Hauswirth, A.\ Menezes, M.\ Rodríguez.
\newblock On the characterization of minimal surfaces with finite total curvature in $\mathbb{H}^2\times\mathbb{R}$ and $\widetilde{PSL}_2(\mathbb{R})$.
\newblock \emph{Calc.\ Var.}, \textbf{58} (2019), no.\ 2, Art 80, 24 pp.

\bibitem{HNST} L.\ Hauswirth, B.\ Nelli, R.\ Sa Earp, E.\ Toubiana.
\newblock Minimal ends in $\mathbb{H}^2\times\mathbb{R}$ with finite total curvature and a Schoen type theorem.
\newblock \emph{Adv.\ Math.}, \textbf{274} (2015), 199--240.

\bibitem{HR} L.\ Hauswirth, H.\ Rosenberg.
\newblock Minimal surfaces of finite total curvature in $\mathbb{H}^2\times\mathbb{R}$. 
\newblock \emph{Mat.\ Contemp.}, \textbf{31} (2006), 65--80. 

\bibitem{HRS} L.\ Hauswirth, H.\ Rosenberg, J.\ Spruck.
\newblock On complete mean curvature $\tfrac{1}{2}$ surfaces in $\mathbb{H}^2\times\mathbb{R}$.
\newblock\emph{Comm.\ Anal.\ Geom.} \textbf{16} (2008), no.\ 5, 989--1005.

\bibitem{HST} L.\ Hauswirth, R.\ Sa Earp, E.\ Toubiana.
\newblock Associate and conjugate minimal immersions in $M\times\mathbb{R}$.
\newblock\emph{Tohoku Math.\ J.\ (2)} \textbf{60} (2008), no.\ 2, 267--286.

\bibitem{HS2015} S.\ Heller, N.\ Schmitt. 
\newblock Deformations of symmetric CMC surfaces in the 3-sphere 
\newblock \emph{Exp. Math.} \textbf{24} (2015), no.\ 1, 65--75.

\bibitem{HHT2021} L.\ Heller, S.\ Heller,  M. Traizet. 
\newblock Complete families of embedded high genus CMC surfaces in the 3-sphere. 
\newblock Preprint (2021). Available at \href{https://arxiv.org/abs/2108.10214}{arXiv:2108.10214}.

\bibitem{Hitchin1990} N.\ Hitchin. 
\newblock Harmonic maps from a 2-torus to the 3-sphere. 
\newblock \emph{J.\ Differential Geom.} \textbf{31} (1990), no.\ 3, 627--710.

\bibitem{HTW} D.\ Hoffman, M.\ Traizet, B.\ White.
\newblock Helicoidal minimal surfaces of prescribed genus.
\newblock \emph{Acta Math.} \textbf{216} (2016), no.\ 2, 217--323.

\bibitem{HH1989} W.-T.\ Hsiang,\ W.-Y.\ Hsiang. 
\newblock On the uniqueness of isoperimetric solutions and imbedded soap bubbles in non-compact symmetric spaces, I
\newblock \emph{Invent.\ Math.} \textbf{98}, no.\ 1 (1989), 39--58.

\bibitem{HKS1992} L.\ Hsu, R.\ Kusner, J.\ Sullivan.
\newblock Minimizing the squared mean curvature integral for surfaces in Space Forms.
\newblock \emph{Experimental Mathematics} \textbf{1} (1992), no.\ 3,
33--50.

\bibitem{JM83}
L.\ Jorge, W.\ H.\ Meeks.
\newblock The topology of complete minimal surfaces of finite total Gaussian curvature.
\newblock\emph{Topology} \textbf{22} (1983), no.\ 2, 203--221.

\bibitem{Kapouleas1991}  N.\ Kapouleas. 
\newblock Compact Constant Mean Curvature Surfaces in Euclidean Three-Space. 
\newblock \emph{J.\ Differential Geom.} \textbf{33} (1991), 683--715. 

\bibitem{Kapouleas1995}  N.\ Kapouleas. 
\newblock Constant Mean Curvature Surfaces Constructed by Fusing Wente Tori. 
\newblock \emph{Invent.\ Math.} \textbf{119} (1995), 443--518.

\bibitem{Kapouleas2010} N.\ Kapouleas, S.-D.\ Yang.
\newblock Minimal surfaces in the three-sphere by doubling the Clifford torus.
\newblock \emph{Amer.\ J.\ Math.} \textbf{132} (2010), no.\ 2, 257--295.

\bibitem{Kapouleas2017} N.\ Kapouleas.
\newblock Minimal surfaces in the round three-sphere by doubling the equatorial two-sphere, I.  
\newblock \emph{J.\ Differential Geom.} \textbf{106} (2017), no.\ 3, 393--449. 

\bibitem{K88} H.\ Karcher.
\newblock Embedded minimal surfaces derived from Scherk's examples.
\newblock \emph{Manuscripta Math.} \textbf{62} (1988), 83--114.

\bibitem{K89a} H.\ Karcher.
\newblock The triply periodic minimal surfaces of Alan Schoen and their constant mean curvature companions.
\newblock \emph{Manuscripta Math.} \textbf{64} (1989), no.\ 3, 291--357.

\bibitem{K89b} H.\ Karcher.
\newblock Construction of minimal surfaces.
\newblock\emph{Surveys in Geometry}, 1--96. University of Tokyo, 1989, and Lecture Notes No.\ 12, SFB256, Bonn, 1989.

\bibitem{K05a} H.\ Karcher.
\newblock Introduction to conjugate Plateau constructions. 
\newblock In Global theory of minimal surfaces, 137–-161, \emph{Clay Math.\ Proc.}, 2, Amer.\ Math.\ Soc., Providence, RI, 2005.

\bibitem{K05b} H.\ Karcher.
\newblock Hyperbolic surfaces of constant mean curvature one with compact fundamental domains.
\newblock In Global theory of minimal surfaces, 311--323. \emph{Clay Math.\ Proc.} \textbf{2}, Amer.\ Math.\ Soc., Providence, RI, 2005.

\bibitem{KPS88}
H.\ Karcher, U.\ Pinkall, I.\ Sterling.
\newblock New minimal surfaces in ${S\sp 3}$.
\newblock \emph{J.\ Differential Geom.}, \textbf{28} (1988), no.\ 2, 169--185.

\bibitem{KP}
H.\ Karcher, K.\ Polthier.
\newblock Construction of triply periodic minimal surfaces.
\newblock \emph{Philos. Trans. Roy. Soc. London Ser. A} \textbf{354} (1996), no. 1715, 2077--2104.

\bibitem{KKLSY} Y.\ W.\ Kim, S.\ E.\ Koh, H.\ Y.\ Lee, H.\ Shin, S.\ D.\ Yang.
\newblock Helicoidal killing fields,  helicoids and ruled minimal surfaces in homogeneous three-manifolds.
\newblock \emph{J.\ Korean Math.\ Soc.} \textbf{55} (2018), no.\ 5, 1235--1255.

\bibitem{Law} H.\ B.\ Lawson.
\newblock Complete Minimal Surfaces in $S^3$.
\newblock \emph{Ann. of Math. (2)}, \textbf{92} (1970), no.\ 3, 335--374.

\bibitem{Leite} M.\ L.\ Leite.
\newblock An elementary proof of the Abresch--Rosenberg theorem on constant mean curvature surfaces immersed in $\mathbb{S}^2\times\mathbb{R}$ and $\mathbb{H}^2\times\mathbb{R}$.
\newblock\emph{Quart. J. Math.}, \textbf{58} (2007), no. 4, 479--487.

\bibitem{LerMan} A.\ M.\ Lerma, J.\ M.\ Manzano.
\newblock Compact stable surfaces with constant mean curvature in Killing submersions.
\newblock \emph{Ann.\ Mat.\ Pura Appl.} \textbf{196} (2017), no.\ 4, 1345--1364.

\bibitem{Leung} D.\ S.\ P.\ Leung.
\newblock The reflection principle for minimal submanifolds of Riemannian symmetric spaces.
\newblock \emph{J.\ Differential Geometry} \textbf{8} (1973), 153--160.

\bibitem{Man12}
J.\ M.\ Manzano. 
\newblock Superficies de curvatura media constante en espacios homogéneos. 
\newblock \emph{PhD thesis}. Universidad de Granada. \textsc{isbn}:978-849028269-4. Available at the \href{https://www4.ujaen.es/~jmprego/datos/investigacion/tesis-manzano.pdf}{second author's website}.

\bibitem{Man13} J.\ M.\ Manzano.
\newblock Estimates for constant mean curvature graphs in $M\times\mathbb{R}$.
\newblock \emph{Rev.\ Mat.\ Iberoam.} \textbf{29} (2013), no.\ 4, 1263--1281.

\bibitem{Man14} J.\ M.\ Manzano.
\newblock On the classification of Killing submersions and their isometries.
\newblock \emph{Pac.\ J.\ Math.} \textbf{270} (2014), no.\ 2, 367--692.

\bibitem{Man19} J.\ M.\ Manzano.
\newblock Dual quadratic differentials and entire minimal graphs in Heisenberg space.
\newblock \emph{Ann.\ Glob.\ Anal.\ Geom.} \textbf{55} (2019), no.\ 2, 197--213.

\bibitem{MPT} J.\ M.\ Manzano, J.\ Plehnert, F.\ Torralbo.
\newblock Compact embedded minimal surfaces in $\mathbb{S}^2\times\mathbb{S}^1$.
\newblock \emph{Comm. Anal. Geom.}\ \textbf{24} (2016), no.\ 2, 409--429.

\bibitem{MN} J.\ M.\ Manzano, B.\ Nelli.
\newblock Height and Area Estimates for Constant Mean Curvature Graphs in $\mathbb{E}(\kappa,\tau)$-Spaces.
\newblock \emph{J. Geom. Anal.}\ \textbf{27} (2017), no.\ 4, 3441--3473.

\bibitem{ManRod} J.\ M.\ Manzano, M.\ Rodríguez
\newblock On complete constant mean curvature vertical multigraphs in $\mathbb{E}(\kappa,\tau)$.
\newblock \emph{J. Geom. Anal.}\ \textbf{25} (2015), no.\ 1, 336--346.

\bibitem{MPR}
J.\ M.\ Manzano, J.\ Pérez, M.\ Rodríguez.
\newblock Parabolic stable surfaces with constant mean curvature.
\newblock \emph{Calc.\ Var.\ Partial Differential Equations}, \textbf{42} (2011), no.~1--2, 137--152.

\bibitem{MT14} J.\ M.\ Manzano, F.\ Torralbo.
\newblock New examples of constant mean curvature surfaces in $\mathbb{S}^2\times\mathbb{R}$ and $\mathbb{H}^2\times\mathbb{R}$.
\newblock \emph{Michigan Math.\ J.} \textbf{63} (2014), no.\ 4, 701--723.

\bibitem{MT20a} J.\ M.\ Manzano, F.\ Torralbo.
\newblock Compact embedded surfaces with constant mean curvature in $\mathbb{S}^2\times\mathbb{R}$.
\newblock \emph{Amer.\ J.\ Math.} \textbf{142} (2020), no.\ 4, 1981--1994.

\bibitem{MT20} J.\ M.\ Manzano, F.\ Torralbo.
\newblock Horizontal Delaunay surfaces with constant mean curvature in $\mathbb{S}^2\times\mathbb{R}$ and $\mathbb{H}^2\times\mathbb{R}$.
\newblock Preprint available at~\href{https://arxiv.org/abs/2007.06882}{arXiv:2007.06882}.

\bibitem{MMR} F.\ Martín, R.\ Mazzeo, M.\ Rodríguez.
\newblock Minimal surfaces with positive genus and finite total curvature in $\mathbb{H}^2\times\mathbb{R}$.
\newblock \emph{Geom.\ Top.}, \textbf{18} (2014), 141--177.

\bibitem{MarR} F.\ Martín, M.\ Rodríguez.
\newblock Non-simply connected minimal planar domains in $\mathbb{H}^2\times\mathbb{R}$.
\newblock \emph{Trans.\ Amer.\ Math.\ Soc.}, \textbf{365} (2013), no.\ 12, 6167--6183.

\bibitem{Mazet} L.\ Mazet.
\newblock The Plateau problem at infinity for horizontal ends and genus 1.
\newblock \emph{Indiana Univ.\ Math.\ J.}, \textbf{55} (2006), no.\ 1, 15--64.

\bibitem{MRR11} L.\ Mazet, M.\ Rodr\'{i}guez, H.\ Rosenberg.
\newblock The Dirichlet problem for the minimal surface equation --with possible infinite boundary data-- over domains in a Riemannian surface.
\newblock \emph{Proc.\ London Math.\ Soc. (3)} \textbf{102} (2011), no.\ 6, 985--1023.

\bibitem{MRR14} L.\ Mazet, M.\ Rodríguez, H.\ Rosenberg.
\newblock Periodic constant mean curvature surfaces in $\mathbb{H}^2\times\mathbb{R}$.
\newblock \emph{Asian J.\ Math.} \textbf{18} (2014), no.\ 5, 829--858.

\bibitem{MeeksPerez}
W.\ H.\ Meeks, J.\ Pérez.
\newblock Constant mean curvature surfaces in metric Lie groups.
\newblock In \emph{Geometric Analysis: Partial Differential Equations and Surfaces}, Contemporary Mathematics (AMS), \textbf{570} (2012), 25--110.

\bibitem{MY82}
W.\ H.\ Meeks, S.-T.\ Yau.
\newblock The existence of embedded minimal surfaces and the problem of uniqueness.
\newblock \emph{Math.\ Z.} \textbf{179} (1982), no.~2, 151--168.

\bibitem{MY82b}
W.\ H.\ Meeks, S.-T.\ Yau.
\newblock The classical Plateau problem and the topology of three-dimensional manifolds.
\newblock \emph{Topology} \textbf{21} (1982), no.~4, 409--442.

\bibitem{Melo} S.\ Melo.
\newblock Minimal graphs in $\widetilde{\mathrm{PSL}}_2(\mathbb{R})$ over unbounded domains.
\newblock \emph{Bull.\ Braz.\ Math.\ Soc.} \textbf{45} (2014), no.\ 1, 91--116.

\bibitem{MR} F.\ Morabito,\ M.\ Rodríguez.
\newblock Saddle towers and minimal $k$-noids in $\mathbb{H}^2\times\mathbb{R}$.
\newblock \emph{J.\ Inst.\ Math.\ Jussieu} {\bf 11} (2012), 333--349.

\bibitem{NR} B.\ Nelli, H.\ Rosenberg.
\newblock Minimal surfaces in $\mathbb{H}^2\times\mathbb{R}$.
\newblock \emph{Bull.\ Braz.\ Math.\ Soc.} \textbf{33} (2002), no.\ 2, 263--292.

\bibitem{OP1997} B.\ Oberknapp, K.\ Polthier.
\newblock An algorithm for discrete constant mean curvature surfaces. 
\newblock \emph{Visualization and mathematics} (Berlin-Dahlem, 1995), 141--161, Springer, Berlin, 1997. 

\bibitem{OM} I.\ Onnis, S.\ Montaldo.
\newblock Invariant CMC surfaces in $\mathbb{H}^2\times\R$.
\newblock \emph{Glasgow Math.\ J.} \textbf{46} (no.\ 2), 311--321.

\bibitem{PR} R.\ Pedrosa, M.\ Ritoré.
\newblock Isoperimetric domains in the Riemannian product of a circle with a simply connected space form and applications to free boundary problems.
\newblock \emph{Indiana Univ.\ Math.\ J.} \textbf{48} (1999), no.\ 4, 1357--1394.

\bibitem{PP1993} U.\ Pinkall, K.\ Polthier.
\newblock Computing discrete minimal surfaces and their conjugates. 
\newblock \emph{Experim.\ Math.} \textbf{2} (1993), no.\ 1,
15--36.

\bibitem{PS1989} U.\ Pinkall, I.\ Sterling. 
\newblock On the classification of constant mean curvature tori. 
\newblock \emph{Ann.\ of Math.\ (2)} \textbf{130} (1989), no.\ 2, 407--451.

\bibitem{Ple12} J.\ Plehnert.
\newblock Surfaces with constant mean curvature $\frac{1}{2}$ and genus one in $\mathbb{H}^2\times\mathbb{R}$.
\newblock Preprint available at \href{https://arxiv.org/abs/1212.2796v2}{arXiv:1212.1796}.

\bibitem{Ple14} J.\ Plehnert.
\newblock Constant mean curvature $k$-noids in homogeneous manifolds.
\newblock \emph{Illinois J.\ Math.} \textbf{58} (2014), no.\ 1, 233--249.

\bibitem{P} J.\ Pyo.
\newblock New complete embedded minimal surfaces in $\mathbb{H}^2\times\mathbb{R}$.
\newblock \emph{Ann.\ Global Anal.\ Geom.} \textbf{40} (2011), no.\ 2, 167--176.

\bibitem{Rakotoniaina1985} C.\ Rakotoniaina.
\newblock Cut locus of the B-spheres.
\newblock \emph{Ann.\ Global Anal.\ Geom.} \textbf{3} (1985), no.\ 3 , 313--327.

\bibitem{Rod} M.\ Rodríguez.
\newblock Minimal surfaces with limit ends in $\mathbb{H}^2\times\mathbb{R}$.
\newblock \emph{J.\ Reine Angew.\ Math.} \textbf{685} (2013), 123--141.

\bibitem{Rosenberg}
H.\ Rosenberg.
\newblock Minimal surfaces in $\mathbb{M}^2\times\mathbb{R}$.
\newblock \emph{Illinois J.\ Math.}, \textbf{46}, no.\ 4, 1177--1195 (2002).

\bibitem{RST} H.\ Rosenberg, R.\ Souam, E.\ Toubiana.
\newblock General curvature estimates for stable $H$-surfaces in 3-manifolds and applications.
\newblock \emph{J.\ Differential Geom.} \textbf{84} (2010), no.\ 3, 623--648.

\bibitem{Rossman} W.\ Rossman.
\newblock Mean curvature one surfaces in hyperbolic space, and their relationship to minimal surfaces in Euclidean space.
\newblock \emph{J.\ Geom.\ Anal.} \textbf{11} (2001), no.\ 4, 669--692.

\bibitem{Sa08} R.\ Sa Earp.
\newblock Parabolic and hyperbolic screw motion surfaces in $\mathbb{H}^2\times\mathbb{R}$.
\newblock \emph{J.\ Aust.\ Math.\ Soc.} \textbf{85} (2008), no.\ 1, 113--143.

\bibitem{SaT} R.\ Sa Earp, E.\ Toubiana.
\newblock A reflection principle for minimal surfaces in smooth three manifolds.
\newblock Preprint (2017). Available at \href{https://arxiv.org/abs/1711.00759}{arXiv:1711.00759}.

\bibitem{SaT2} R.\ Sa Earp, E.\ Toubiana.
\newblock Classical Schwarz reflection principle for Jenkins–Serrin type minimal surfaces.
\newblock \emph{Ann.\ Glob.\ Anal.\ Geom.} \textbf{57} (2020), no.\ 2, 365--379.

\bibitem{Schoen} A.\ H.\ Schoen.
\newblock Infinite periodic minimal surfaces without selfintersections. 
\newblock NASA Technical Note No.\ TN D-5541 (1970).

\bibitem{grape} \textsc{sonderforschungsbereich} 256.
\newblock \textsc{grape}.
\newblock University of Bonn. (1995). \url{https://page.mi.fu-berlin.de/polthier/Software/Grape/index.html}

\bibitem{ST} R.\ Souam, E.\ Toubiana.
\newblock Totally umbilic surfaces in homogeneous 3-manifolds.
\newblock \emph{Comment.\ Math.\ Helv.} \textbf{84} (2009), no.\ 3, 473--704.

\bibitem{Ste}
N.\ Steenrod.
\newblock The Topology of Fibre Bundles.
\newblock {\em Princeton Mathematical Series}, vol. 14. {\em Princeton University Press}, 1951. ISBN: 978-069100548-5.

\bibitem{Tor10} F.\ Torralbo.
\newblock Rotationally invariant constant mean curvature surfaces in homogeneous 3-manifolds.
\newblock \emph{Differential Geom.\ Appl.} \textbf{28} (2010), no.\ 5, 2561--2567.

\bibitem{Tor12} F.\ Torralbo. 
\newblock Compact minimal surfaces in the Berger spheres.
\newblock \emph{Ann.\ Global Anal.\ Geom.} \textbf{41} (2012), 391--405.

\bibitem{TU2014} F.\ Torralbo, F.\ Urbano.
\newblock On stable compact minimal submanifolds.
\newblock \emph{Proc.\ of the Amer.\ Math.\ Soc.} \textbf{142} (2014), no.\ 2, 651--658

\bibitem{Weber} M.\ Weber.
\newblock The Minimal Surface Archive.
\newblock Online resource available at \url{https://minimal.sitehost.iu.edu/archive/index.html}.

\bibitem{Younes} R.\ Younes.
\newblock Minimal surfaces in $\widetilde{PSL_2(\mathbb{R})}$.
\newblock \emph{Illinois J.\ Math.} \textbf{54} (2010), no.\ 2,
671--712.

\end{thebibliography}
\end{document}